\newtheorem{thm}{{{Theorem}}}[section]
\newtheorem{prop}[thm]{{Proposition}}
\newtheorem{lem}[thm]{{Lemma}}
\newtheorem{cor}[thm]{{Corollary}}
\newtheorem{remark}[thm]{Remark}
\numberwithin{equation}{section}
\newtheorem{Def}[equation]{Definition}
\def\N{\mathbb{N}}
\def\Z{\mathbb{Z}}
\def\Q{\mathbb{Q}}
\def\R{\mathbb{R}}
\def\C{\mathbb{C}}
\def\A{\mathbb{A}}
\def\diag{{\mathop{\mathrm{diag}}}}
\def\sgn{{\mathop{\mathrm{sgn}}}}
\def\vol{{\mathop{\mathrm{vol}}}}
\def\d{{\mathrm{d}}}
\def\ve{\varepsilon}
\def\tr{{\rm tr}}
\def\bsl{\backslash}
\def\inf{\infty}
\def\fin{{\mathrm{fin}}}
\def\reg{{\mathrm{reg}}}
\def\min{{\mathrm{min}}}
\def\bK{{\mathbf{K}}}
\def\cL{{\mathcal{L}}}
\def\bs{{\backslash}}
\def\ds{\displaystyle}
\def\ord{{\rm ord}}
\def\lra{{\longrightarrow}}
\def\G{{\Gamma}}
\def\trep{{\mathbf{1}}}
\numberwithin{equation}{section}
\title[An equidistribution theorem for holomorphic Siegel modular forms for $GSp_4$]
{An equidistribution theorem for holomorphic Siegel modular forms for $GSp_4$ and its applications}
\author{Henry H. Kim, Satoshi Wakatsuki and Takuya Yamauchi}
\keywords{trace formula, Hecke operators, Siegel modular forms}
\thanks{The first author is partially supported by NSERC. The second author is partially supported by JSPS Grant-in-Aid for Scientific Research (No. 26800006, 25247001, 15K04795). The third author is partially supported by JSPS Grant-in-Aid for Scientific Research (C) No.15K04787.}
\subjclass[2010]{}
\address{Henry H. Kim \\
Department of mathematics \\
 University of Toronto \\
Toronto, Ontario M5S 2E4, CANADA \\
and Korea Institute for Advanced Study, Seoul, KOREA}
\email{henrykim@math.toronto.edu}
\address{Satoshi Wakatsuki \\
Faculty of Mathematics and Physics, Institute of Science and Engineering\\
Kanazawa University\\
Kakumamachi, Kanazawa, Ishikawa, 920-1192, JAPAN}
\email{wakatsuk@staff.kanazawa-u.ac.jp}
\address{Takuya Yamauchi \\ 
Mathematical Inst. Tohoku Univ.\\
 6-3,Aoba, Aramaki, Aoba-Ku, Sendai 980-8578, JAPAN}
\email{tyamauchi@m.tohoku.ac.jp}
\begin{document}
\begin{abstract}
We prove an equidistribution theorem for a family of holomorphic Siegel cusp forms for $GSp_4/\Q$ in  
various aspects. A main tool is Arthur's invariant trace formula. 
While Shin \cite{Shin} and Shin-Templier \cite{ST} used Euler-Poincar\'e functions at infinity in the formula, we 
use a pseudo-coefficient of a holomorphic discrete series to extract holomorphic Siegel cusp forms. Then the non-semisimple contributions arise from the geometric side, and this provides new second main terms $A, B_1$ in Theorem \ref{main} which have not been studied and 
a mysterious second term $B_2$ also appears in the second main term coming from the semisimple elements. 
Furthermore our explicit study enables us to treat more general aspects in the weight.  
We also give several applications 
including the vertical Sato-Tate theorem, the unboundedness of Hecke fields and low-lying zeros for degree 4 spinor $L$-functions and degree 5 standard $L$-functions of holomorphic Siegel cusp forms.  
\end{abstract}
\maketitle

\tableofcontents 

\section{Introduction}
Recently equidistribution theorems for a family of automorphic forms or automorphic representations of a 
reductive group $G$ over a number field have been studied in various aspects. The basic conceptual studies have been proposed by 
Sarnak and Serre in the case $G=GL_2$ (cf. \cite{Sarnak}, \cite{Serre}) though 
the case when $G$ has the compact symmetric space has been studied thoroughly. Since then, the trace formula for $G$ has become one of most powerful tools to analyze 
equidistribution theorems related to a distribution of eigenvalues for a fixed operator acting on a family of 
automorphic forms for $G$. 
After Selberg's celebrated work, the trace formula for Hecke operators on the 
space of automorphic representations for $G$ whose symmetric space is non-compact, has been developed by many people   
and it took much time and several stages to reach the current form which is invariant under conjugation  
(see \cite{Arthur1} and the references therein), which is called Arthur's invariant trace formula.  

In \cite{Shin}, Shin made good use of Sauvageot's important results \cite{sau} to show that the limit of an automorphic counting measure is the Plancherel measure. It implies the 
equidistribution of Hecke eigenvalues of automorphic forms on $G$. His key idea is to relate the automorphic counting measure with the spectral side of Arthur's invariant trace formula and then estimate its geometric side. After that, in \cite{ST}, he and Templier tackled a 
difficult problem of making it explicit to obtain a power-saving error term for the purpose of an application to low-lying zeros of a 
family of 
automorphic $L$-functions. To do that, as in \cite{Shin}, they used the geometric expansion of the error term 
mainly consisting of global coefficients, invariant distributions, and orbital integrals.   
Then they estimated each invariant in a uniform way.   
One of main difficulties seems to be a uniform boundedness of the orbital integrals. However since they chose the Euler-Poincar\'e function at infinity, they had only to consider the semisimple contributions in the geometric side. This would be a usual way to get around the non-semisimple contributions. Their results are fully general as much as possible within current knowledge under certain hypotheses such as Langlands functoriality conjecture but they work on all automorphic forms or automorphic representations 
which exhaust $L$-packets of the discrete series representations at infinity. 

It is quite natural to consider the automorphic counting measure on automorphic representations with a fixed discrete series representation at infinity. In \cite{Shin} Shin did not address this problem but he claimed that it may be possible to do so. In this paper we carry it out, namely,
we study equidistributions of Hecke eigenvalues of holomorphic Siegel modular forms of degree two, i.e., cuspidal representations which have a holomorphic discrete series at infinity. 
The strategy is similar to Shin \cite{Shin} and Shin-Templier \cite{ST}, but in addition to 
the semisimple contributions, 
we also have to estimate the non-semisimple contributions which have not been understood well in general. 
This makes the situation more difficult but as a payoff we will be able to observe the meaning of the second main terms $A, B_1$ coming from 
the non-semisimple contribution in 
comparison with the spectral side and a mysterious contribution $B_2$ also in the second main term from the semisimple part. 
Furthermore our results are unconditional in contrary to \cite{ST}.       
To explain our main results, we fix our notation. 

Let $G=GSp_4$ and $S'$ be a finite set of rational primes. Note that the symbol $S$ will be used for a finite set of places 
including $\infty$ in Section \ref{s3} and Section \ref{sec6}.  
Let $\A$ (resp. $\A_f$) be the ring of (resp. finite) adeles of $\Q$, $\Q_{S'}=\prod_{p\in S'}\Q_p$, and $\A^{S',\infty}=\prod'_{p\notin S'\cup \{\infty\}} \Bbb Q_p$. 
Let $\widehat{\Z}$ be the profinite completion of $\Z$. 
We denote by $\widehat{G(\Q_{S'})}$ the unitary dual of $G(\Q_{S'})=\prod_{p\in S'}G(\Q_p)$ equipped with Fell topology. 
Put $A_{G,\infty}=Z_{G}(\R)^\circ \simeq \R_{>0}$. Fix a Haar measure $\mu^{S',\infty}$ of $G(\A^{S',\infty})$ so that 
$\mu^{S',\infty}(G(\widehat{\Z}^{S'}))=1$, and let $U$ be a compact open subgroup of $G(\Bbb A^{S',\infty})$. 

Consider the 
algebraic representation 
$\xi=\xi_{\underline{k}}$ for $\underline{k}=(k_1,k_2),\ k_1\ge k_2 \ge 3$ as in (\ref{algebraic-char}), and let $D_{l_1,l_2}^{\rm hol}$ be the holomorphic discrete series of $G(\Bbb R)$ with the Harish-Chandra parameter $(l_1,l_2)=(k_1-1,k_2-2)$, and whose central character equal to $\chi_{\xi^{\vee}}$ on $A_{G,\infty}$. We choose the test function $f_{S'}=f_\xi f_U$ such that $f_\xi$ is a pseudo-coefficient of $D_{l_1,l_2}^{\rm hol}$. Then we define a measure on $\widehat{G(\Q_{S'})}$ by 

\begin{equation}\label{mu}
\widehat{\mu}_{U,\xi_{\underline{k}},D_{l_1,l_2}^{\rm hol}}:
=\frac{1}{{\rm vol}(G(\Q)A_{G,\infty}\bs G(\A))\cdot {\rm dim}\, \xi_{\underline{k}}} \sum_{\pi^0_{S'}\in \widehat{G(\Q_{S'})}}
\mu^{{S'},\infty}(U) m_{\rm cusp}(\pi^0_{S'}; U,\xi_{\underline{k}},D_{l_1,l_2}^{\rm hol})\delta_{\pi^0_{S'},\xi},
\end{equation}
where $\delta_{\pi^0_{S'},\xi}$ is a normalized Dirac delta measure supported on $\pi^0_{S'}$ with respect to the Plancherel measure 
$\widehat{\mu}^{{\rm pl}}_{S'}$ on $\widehat{G(\Q_{S'})}$ (see (\ref{n-dirac})), 
and for a given unitary representation $\pi^0_{S'}$ of $G(\Q_{S'})$, 

\begin{equation}\label{mult-more}
m_{\rm cusp}(\pi^0_{S'}; U,\xi_{\underline{k}},D_{l_1,l_2}^{\rm hol})= \sum_{\pi\in \Pi(G(\A))\atop \pi_{S'}\simeq \pi^0_{S'},\,\pi_\infty\simeq D_{l_1,l_2}^{\rm hol}}m_{\rm cusp}(\pi){\rm tr}(\pi^{{S'},\infty}(f_U))\cdot \tr(\pi_\infty(f_\xi)).
\end{equation} 

To state the equidistribution theorem, we need to introduce the Hecke algebra $C^\infty_c(G(\Q_{S'}))$  which is dense 
under the map $f_{S'}\mapsto \widehat{f}_{S'}=[\widehat{f}_{S'}:\pi_{S'}\mapsto {\rm tr} \pi_{S'}(f_{S'})]$ in 
a reasonable space $\mathcal{F}(\widehat{G(\Q_{S'})})$ consisting of suitable $\widehat{\mu}^{{\rm pl}}_{S'}$-measurable functions 
on $\widehat{G(\Q_{S'})}$ (see Section 2.3 of \cite{Shin} for that space). Put $K_p=G(\Z_p)$. The spherical Hecke algebra  
$H^{{\rm ur}}(G(\Q_p))$ is defined by the subalgebra consisting of $K_p$-bi-invariant functions in $C^\infty_c(G(\Q_p))$.  
It is well-known that 
$$H^{{\rm ur}}(G(\Q_p))=\C[h_{a_1,a_2,a_3} \mid a_1,a_2,a_3\in\Z , \quad a_3\geq a_1\geq  a_2\geq 0 ]$$
where  
$h_{a_1,a_2,a_3}$ is the characteristic function of $K_p\diag(p^{-a_1},p^{-a_2},p^{a_1-a_3},p^{a_2-a_3})K_p$. 
For any $\kappa \in \Z_{\ge 0}$ we denote by $H^{{\rm ur}}(G(\Q_p))^{\kappa}$ the $\C$-span of 
the functions $h_{a_1,a_2,a_3}$ satisfying $a_3\leq \kappa$.

In this paper, we restrict ourselves to $U=K(N)$ for $N\in \Z_{>0}$ which is the kernel of the natural quotient map from 
$G(\widehat{\Z})$ to $G(\widehat{\Z}/N\widehat{\Z})$. Put $\G(N)=Sp_4(\Q)\cap K(N)$. Let ${S'}=\{p\}$ for $p\nmid N$. 
Let $\widehat{\mu}^{{\rm pl}}_p=\widehat{\mu}^{{\rm pl}}_{\{p\}}$.
Then our main theorem is as follows: 

\begin{thm}\label{main}
Let $\{(K(N),\xi_{\underline{k}})\}$ be a family of weights and levels so that $p\nmid N$, $k_1\ge k_2\ge 3$ and 
$N+k_{1}+k_{2}\lra \infty$. For any $f\in H^{{\rm ur}}(G(\Q_p))^\kappa$, 
$$\lim_{k_1+k_2+N\to \infty} \widehat{\mu}_{K(N),\xi_{\underline{k}},D_{l_1,l_2}^{\rm hol}}(\widehat{f})=
\widehat{\mu}^{{\rm pl}}_p(\widehat{f}).
$$
More precisely, there exist constants $a,b,a'$ and $b'$ depending only on $G$ such that 

\begin{enumerate}
\item (level-aspect) Fix $k_1,k_2$. Then for $N\gg p^{10\kappa}$, 
$$\widehat{\mu}_{K(N),\xi_{\underline{k}},D_{l_1,l_2}^{\rm hol}}(\widehat{f})=\widehat{\mu}^{{\rm pl}}_p(\widehat{f})+
A+O(p^{a\kappa+b} \varphi(N)N^{-3}),\quad A=O(p^\kappa \varphi(N)N^{-2}),
$$
where $\varphi$ stands for Euler's phi function:
\item (weight-aspect) Fix $N$. Then as $k_1+k_2\to\infty$,
$$\widehat{\mu}_{K(N),\xi_{\underline{k}},D_{l_1,l_2}^{\rm hol}}(\widehat{f})=\widehat{\mu}^{{\rm pl}}_p(\widehat{f})+
B_1+B_2+O(\frac{p^{a'\kappa+b'}}{(k_1-k_2+1)(k_1-1)(k_2-2)}),$$
$$B_1=O(\frac{p^\kappa}{(k_1-1)(k_2-2)}),\ B_2=O(\frac{p^\kappa}{(k_1-k_2+1)(k_1+k_2-3)}).$$
\end{enumerate}
\end{thm}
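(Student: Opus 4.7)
The plan is to apply Arthur's invariant trace formula to the global test function $f=f_\infty\cdot f_p\cdot \mathbf{1}_{K(N)}^{p,\infty}$, where $f_\infty=f_\xi$ is the pseudo-coefficient of $D_{l_1,l_2}^{\rm hol}$, $f_p\in H^{\rm ur}(G(\Q_p))^\kappa$, and $\mathbf{1}_{K(N)}^{p,\infty}$ is the characteristic function of $K(N)$ away from $p$ and infinity. Because $f_\xi$ is a pseudo-coefficient, its trace against an irreducible tempered $\pi_\infty$ is (essentially) $1$ when $\pi_\infty\simeq D_{l_1,l_2}^{\rm hol}$ and vanishes outside its $L$-packet, so after normalizing by $\mu^{S',\infty}(K(N))\cdot \vol(G(\Q)A_{G,\infty}\bs G(\A))\cdot \dim\xi_{\underline{k}}$ the cuspidal part of the spectral side reproduces exactly $\widehat{\mu}_{K(N),\xi_{\underline{k}},D_{l_1,l_2}^{\rm hol}}(\widehat{f})$. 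The residual and one-dimensional contributions have to be bounded separately and absorbed into the error.

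On the geometric side, the identity class $\gamma=1$ yields the main term: combining Harish-Chandra's Plancherel formula at $p$ with the formal-degree property of $f_\xi$ at infinity and the index computation of $K(N)$ gives precisely $\widehat{\mu}^{\rm pl}_p(\widehat{f})$. All remaining geometric terms must either match one of $A,B_1,B_2$ or be swept into the error. In contrast with \cite{ST}, where the Euler--Poincar\'e function forces all non-semisimple contributions to vanish at infinity, the pseudo-coefficient $f_\xi$ has non-trivial (weighted) orbital integrals on unipotent and mixed classes, and these are precisely the origin of the new secondary main terms $A$ (level aspect) and $B_1$ (weight aspect). The second-order semisimple contribution $B_2$ comes from a specific family of non-identity semisimple (central or quasi-central) classes whose weighted orbital integrals decay like $1/((k_1-k_2+1)(k_1+k_2-3))$, strictly slower than the generic elliptic-regular terms.

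Concretely the work splits into: (a) an explicit evaluation of $f_\xi$ on the relevant semisimple and unipotent classes using the character identity for pseudo-coefficients of holomorphic discrete series together with the Blattner/Hirai-type expansion for $D_{l_1,l_2}^{\rm hol}$; (b) uniform bounds $O(p^{a\kappa+b})$ for orbital and weighted orbital integrals of $f_p$, obtained from the support condition $a_3\le\kappa$ by reducing to lattice counts in $\mathrm{M}_4(\Z_p)$; (c) congruence savings at primes $\ell\mid N$, where $\gamma\in K(N)_\ell$ forces conditions yielding the factors $\varphi(N)N^{-2}$ and $\varphi(N)N^{-3}$; and (d) control of Arthur's global coefficients $a^M(S,\gamma)$ and of the weighted characters $\Phi_M(\gamma,f_\xi)$ for every Levi $M$ of $GSp_4$, assembled via the $(G,M)$-family formalism. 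The threshold $N\gg p^{10\kappa}$ in the level aspect ensures the congruence savings dominate the growth in $p^\kappa$ and in the polynomial factors coming from the remaining Levis.

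The main obstacle is the combination of (a) and (d): separating $B_1$ from $B_2$ and both of them from the true error at the precise polynomial rates $1/((k_1-1)(k_2-2))$ and $1/((k_1-k_2+1)(k_1+k_2-3))$ requires a Levi-by-Levi analysis of weighted orbital integrals of a pseudo-coefficient, a calculation with no direct analogue in \cite{Shin} or \cite{ST}. The unipotent weighted orbital integrals at infinity must be evaluated essentially in closed form (to exhibit the correct polynomial decay in $\underline{k}$), and their global coefficients must be computed against the background of Arthur's decomposition of the unipotent contribution for $GSp_4$. Once these explicit formulas are in place, the remaining ingredients — Plancherel at $p$, volume of $K(N)$, and uniform bounds of \cite{sau} type — fit together to produce the asymptotic formulas and error estimates stated in the theorem.
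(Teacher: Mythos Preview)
Your strategy is the paper's strategy: plug $f=f_\xi\cdot f_p\cdot \mathbf{1}_{K(N)}$ into Arthur's invariant trace formula, read off the cuspidal counting measure from the spectral side (after removing residual and trivial-representation contributions when $k_2=3$), and extract the Plancherel main term from the central classes on the geometric side. The remaining geometric terms give $A,B_1,B_2$ and the error.

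Where your execution plan drifts from what actually happens is in the role of \emph{weighted} orbital integrals. In the paper's decomposition $I_{\rm geom}=I_1+\cdots+I_7$, all three secondary terms $A,B_1,B_2$ come from the $M=G$ part (ordinary orbital integrals), not from proper Levi subgroups. Specifically, $A$ and $B_1$ arise from $I_2$, the contribution of $Z_G(\Q)\{u_{\min}\}_G$ with $u_{\min}$ the minimal unipotent; the key input is Rossmann's limit formula, which gives $J_G(zu_{\min},f_\xi)=c\,\chi_\xi(z)^{-1}(l_1-l_2)(l_1+l_2)$, and one must also check (via limit formulas again) that the regular and subregular unipotent orbital integrals of $f_\xi$ vanish. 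The term $B_2$ arises from $I_3$, the contribution of $Z_G(\Q)\{\delta_1\}_G$ with $\delta_1=\diag(1,-1,1,-1)$; here $J_G(z\delta_1,f_\xi)=c'\,\chi_\xi(z)^{-1}(-1)^{l_2}l_1l_2(1+(-1)^{l_1-l_2-1})$. Dividing these growths by $\dim\xi\asymp l_1l_2(l_1^2-l_2^2)$ produces the stated rates. The weighted terms for proper Levi $M$ (your item (d)) are comparatively easy: one shows $I_M^G(\gamma,f_\xi)=O(l_1+l_2)$ uniformly, which lands them in the error, and $I_7$ (non-semisimple, $M\ne G$) vanishes outright. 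Finally, the threshold $N\gg p^{10\kappa}$ is not used merely to ``dominate'' growth but, via \cite[Lemma~8.4]{ST} applied through the adjoint embedding $PGSp_4\hookrightarrow GL_{10}$, to force $I_3=I_4=I_5=0$ entirely in the level aspect, so that only $I_2$ and the central pieces of $I_6$ survive beyond the main term.
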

\begin{remark} Here the second main terms $A,B_1$ come from non-semisimple contributions ($I_2(f)$ in Proposition \ref{level-est} and \ref{weight-est}), 
while 
$B_2$ comes from semisimple contributions ($I_3(f)$ in Proposition \ref{weight-est}).
\end{remark}

We explain several applications of Theorem \ref{main}. First, we consider 
the vertical Sato-Tate theorem which is formulated in terms of the classical setting. 
Let $S_{\underline{k}}(\G(N),\chi)$ be the space of classical holomorphic Siegel cusp forms of level $\G(N)$ 
with a central character $\chi:(\Z/N\Z)^\times\lra \C^\times$ and weight $\underline{k}=(k_1,k_2), k_1\ge k_2\ge 3$ 
(see (\ref{mfc}) of Section \ref{class}). 
For a prime $p\nmid N$, let $T(p^n)$ be the Hecke operator with the similitude $p^n$ (see Section \ref{HP}). 
Any eigenform with respect to $T(p^n)$ for any non-negative integer $n$ and any prime $p\nmid N$ is called 
a Hecke eigen cusp form.  
Let $HE_{\underline{k}}(\G(N),\chi)$ be a basis of $S_{\underline{k}}(\G(N),\chi)$ consisting of Hecke eigen forms outside $N$. 
Let $S_{\underline{k}}(\G(N),\chi)^{{\rm tm}}$ be the subspace of $S_{\underline{k}}(\G(N),\chi)$ 
generated by any Hecke eigen form $F$ outside $N$ so that $\pi_{F,p}$ is tempered for any $p\nmid N$. 
Put $HE_{\underline{k}}(\G(N),\chi)^{{\rm tm}}=S_{\underline{k}}(\G(N),\chi)^{{\rm tm}}\cap HE_{\underline{k}}(\G(N),\chi)$. 
For each $p\nmid N$, we fix a square root $\chi(p)^{\frac{1}{2}}$ in $\C$ and we write 
$\chi(p)^{-\frac{1}{2}}$ for $(\chi(p)^{\frac{1}{2}})^{-1}$. 
For a Hecke eigen form $F\in S_{\underline{k}}(\G(N),\chi)^{{\rm tm}}$, the Satake parameter of $\pi_{F,p}$ at $p\nmid N$ is given by
$\{\alpha_{0p}, \alpha_{0p}\alpha_{1p}, \alpha_{0p}\alpha_{1p},\alpha_{0p}\alpha_{1p}\alpha_{2p}\}$. Since $\alpha_{0p}^2\alpha_{1p}\alpha_{2p}=\chi(p)^2$, if we let $\alpha_{F,p}=\alpha_{0p}$ and $\beta_{F,p}=\alpha_{0p}\alpha_{1p}$, it can be written as
$\{\alpha^{\pm}_{F,p},\beta^{\pm}_{F,p}\}$ 
Then it follows from the temperedness that if we set
$$a_{F,p}:=\alpha_{F,p}\chi(p)^{-\frac{1}{2}}+\alpha^{-1}_{F,p}\chi(p)^{\frac{1}{2}},\ 
b_{F,p}:=\beta_{F,p}\chi(p)^{-\frac{1}{2}}+\beta^{-1}_{F,p}\chi(p)^{\frac{1}{2}},
$$
then 
$$a_{F,p}, b_{F,p}\in [-2,2].
$$ 
We introduce a suitable measure $$\mu_p=f_p(x,y)g^+_p(x,y)g^-_p(x,y)\cdot \mu^{{\rm ST}}_{\infty}$$ on $\Omega:=[-2,2]\times [-2,2]$, where  
$$f_p(x,y)=\frac{1}{\left(\left(\sqrt{p}+\frac{1}{\sqrt{p}}\right)^2-x^2\right)
\left(\left(\sqrt{p}+\frac{1}{\sqrt{p}}\right)^2-y^2\right)},\ 
 \mu^{{\rm ST}}_{\infty}=\frac{(x-y)^2}{\pi^2}\sqrt{1-\frac{x^2}{4}}\sqrt{1-\frac{y^2}{4}},$$
$$g^{\pm}_p(x,y)=\frac{1}{\left(\sqrt{p}+\frac{1}{\sqrt{p}}\right)^2-2\left(1+\frac{xy}{4}\pm\sqrt{1-\frac{x^2}{4}}\sqrt{1-\frac{y^2}{4}}\right)}.
$$
Note that the denominator of $g^{+}(x,y)g^-(x,y)$ is $x^2+y^2-xy(p+p^{-1})-4+(p+p^{-1})^2$. 
Let $C^0(\Omega,\R)$ be the space of $\R$-valued continuous functions on $\Omega$. To control non-tempered part of 
$S_{\underline{k}}(\G(N))$ we need to assume that $(N,11!)=1$ which might be unnecessary. 
Then we have  
\begin{thm}\label{Sato-Tate}
Let  $p\nmid N$, $k_1\geq k_2\geq 3$, and  
$N+k_{1}+k_{2}\lra \infty$ satisfying $(N,11!)=1$. 
Put $d^{{\rm tm}}_{\underline{k},N}(\chi)={\rm dim}S_{\underline{k}}(\G(N),\chi)^{{\rm tm}}$. Then 
the set 
$$\{ (a_{F,p},b_{F,p})\in \Omega\ |\  F\in HE_{\underline{k}}(\G(N),\chi)^{{\rm tm}}\}
$$
is $\mu_p$-equidistributed in $\Omega$, namely, for any $f\in C^0(\Omega,\R)$,

$$\lim_{N+k_1+k_2\to \infty \atop  p\nmid N,\,(N,11!)=1}\frac{1}{d^{{\rm tm}}_{\underline{k},N}(\chi)}\sum_{F\in 
HE_{\underline{k}}(\G(N),\chi)^{{\rm tm}}}f(a_{F,p},b_{F,p})=
\int_{\Omega}f(x,y)\mu_p.
$$
\end{thm}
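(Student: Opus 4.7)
The strategy is to deduce Theorem \ref{Sato-Tate} from the adelic equidistribution Theorem \ref{main} by testing both sides against elements of the spherical Hecke algebra that encode polynomials in $(a,b)$. The principal obstruction will be the separation of tempered eigenforms from the non-tempered (CAP and endoscopic) spectrum, for which the hypothesis $(N,11!)=1$ provides the needed input.

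By the Stone--Weierstrass theorem, since $\Omega$ is compact it suffices to prove the limit for monomials $f(x,y)=x^m y^n$. Via the Satake isomorphism, such a monomial corresponds to an element $f_{m,n}\in H^{{\rm ur}}(G(\Q_p))^{\kappa}$---an explicit $\C$-linear combination of the generators $h_{a_1,a_2,a_3}$ with $\kappa$ depending linearly on $m+n$---whose transform $\widehat{f_{m,n}}$ equals $a^m b^n$ on the tempered unramified part and satisfies $|\widehat{f_{m,n}}(\pi_p)|\ll p^{\kappa}$ uniformly on the non-tempered part. I then apply Theorem \ref{main} to $f_{m,n}$.

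On the spectral side, the Plancherel measure $\widehat{\mu}^{{\rm pl}}_p$ on the unramified tempered dual of $G(\Q_p)$ is given by the Macdonald formula for $|c_p|^{-2}$, and its pushforward under the coordinates $(a,b)$ equals $\mu_p$: the factor $\mu^{{\rm ST}}_\infty$ is the Weyl integration density on the compact form $\mathrm{USp}(4)$, while $f_p(x,y)g^+_p(x,y)g^-_p(x,y)$ is the rational Macdonald factor associated to the four positive roots of $Sp_4$; this identification is a direct computation I would isolate as a lemma. On the automorphic side, because $f_\xi$ is a pseudo-coefficient of $D^{{\rm hol}}_{l_1,l_2}$ with trace $1$ on this representation and $0$ on the rest of the relevant packet, the definition (\ref{mu})--(\ref{mult-more}), combined with the usual adelic-to-classical dictionary and multiplicity one for $GSp_4$ cusp forms, collapses $\widehat{\mu}_{K(N),\xi_{\underline{k}},D^{{\rm hol}}_{l_1,l_2}}(\widehat{f_{m,n}})$ to $d_{\underline{k},N}(\chi)^{-1}\sum_{F\in HE_{\underline{k}}(\G(N),\chi)} \widehat{f_{m,n}}(\pi_{F,p})$, up to lower-order error, where $d_{\underline{k},N}(\chi)=\dim S_{\underline{k}}(\G(N),\chi)$.

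The main obstacle is then the non-tempered eigenforms, which contribute to the adelic sum but lie outside the set being averaged, and whose Satake parameters need not satisfy $a_{F,p},b_{F,p}\in[-2,2]$. To control them I would invoke Arthur's endoscopic classification for $GSp_4$ together with the explicit description of the non-tempered discrete spectrum (Saito--Kurokawa, Soudry, and Howe--Piatetski-Shapiro type lifts) to show, under the hypothesis $(N,11!)=1$---which is what rules out small-prime pathologies in the CAP/endoscopic classification---that $d_{\underline{k},N}(\chi)-d^{{\rm tm}}_{\underline{k},N}(\chi)=o(d^{{\rm tm}}_{\underline{k},N}(\chi))$ as $N+k_1+k_2\to\infty$. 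Combined with the uniform bound $|\widehat{f_{m,n}}(\pi_p)|\ll p^{\kappa}$ on the non-tempered part, this shows that non-tempered summands contribute $o(1)$ after normalization by $d^{{\rm tm}}_{\underline{k},N}(\chi)$, so the limit predicted by Theorem \ref{main} becomes $\int_\Omega x^m y^n\,\mu_p$, as required.
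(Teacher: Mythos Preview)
Your approach is essentially the same as the paper's: Stone--Weierstrass to reduce to Hecke elements, identification of the restricted Plancherel measure with $\mu_p$, application of Theorem~\ref{main} (via its classical reformulation Theorem~\ref{claim1}), and control of the non-tempered contribution to pass from $S_{\underline{k}}(\G(N),\chi)$ to $S_{\underline{k}}(\G(N),\chi)^{{\rm tm}}$.

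Two small corrections. First, you do not need (and should not invoke) multiplicity one for $GSp_4$: the counting measure \eqref{measures} already incorporates multiplicities, and the passage to classical forms goes through the isomorphism \eqref{isom1} without any such hypothesis. Second, rather than appealing to Arthur's endoscopic classification (which the paper deliberately avoids), the bound $d_{\underline{k},N}(\chi)-d^{{\rm tm}}_{\underline{k},N}(\chi)=o(d_{\underline{k},N}(\chi))$ is obtained in the paper from the explicit dimension estimates of Sections~\ref{class-endo} and~\ref{sccf}: Theorem~\ref{dim-en1} for endoscopic lifts (via Roberts' packets and the transfer pair from \cite{Fer}, where $(N,11!)=1$ enters) and Theorem~\ref{dim-cap} for Siegel-parabolic CAP forms (via Schmidt's Saito--Kurokawa packets). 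These give power savings in both $N$ and the weight relative to $d_{\underline{k},N}(\chi)$, which combined with the trivial bound on $\widehat{f_{m,n}}$ over the non-tempered locus is what you need.
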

It follows immediately from this that 
\begin{cor}\label{hecke1}
Let the notation be in Theorem \ref{Sato-Tate}. 
Then $$\sup_{N+k_1+k_2\to \infty\atop  (N,11!)=1}\{[\Q_F:\Q]\ |\ F\in 
HE_{\underline{k}}(\G(N),\chi)^{{\rm tm}}\}=\infty$$
where $\Q_F$ is the Hecke field of $F$ in Definition \ref{Hecke-f}.
\end{cor}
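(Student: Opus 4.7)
The plan is to derive Corollary \ref{hecke1} from Theorem \ref{Sato-Tate} by contradiction, in the spirit of Serre: if the Hecke fields of forms in the family were uniformly bounded in degree, then the eigenvalue pairs $(a_{F,p},b_{F,p})$ would be confined to a finite subset of $\Omega$, contradicting equidistribution with respect to the absolutely continuous measure $\mu_p$. Concretely, fix any prime $p$ coprime to $11!$ and restrict to the subfamily with $p\nmid N$; Theorem \ref{Sato-Tate} still applies. Assume for contradiction that $[\Q_F:\Q]\leq d$ for every $F\in HE_{\underline{k}}(\G(N),\chi)^{\mathrm{tm}}$ arising in this subfamily. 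If we can show that $(a_{F,p},b_{F,p})$ lies in a finite subset $T_p\subset\Omega$ depending only on $d$ and $p$, then we can choose $f\in C^0(\Omega,\R)$ non-negative, vanishing on $T_p$, and with $\int_\Omega f\,\mu_p>0$ (possible because $\mu_p$ has a continuous strictly positive density on $\Omega$ and $T_p$ is finite); applying Theorem \ref{Sato-Tate} to $f$ then gives $0=\int_\Omega f\,\mu_p>0$, a contradiction.

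The finiteness of $T_p$ will come from Northcott's theorem applied to $a_{F,p}$ (and similarly $b_{F,p}$). First, the central character $\chi$ takes values in $\Q_F$ (as one sees from standard quadratic relations among Hecke eigenvalues that express $\chi(p)$ in terms of $\lambda_F(p)$ and $\lambda_F(p^2)$), so the bounded-degree hypothesis forces $\chi$ to have bounded order; consequently $\chi(p)^{1/2}$ lies in a cyclotomic field of bounded degree and $a_{F,p}\in\Q_F(\chi(p)^{1/2})$ has degree $\leq C(d)$ over $\Q$. Moreover, the Satake parameters $\alpha_{F,p},\beta_{F,p}$ are roots of the $p$-Hecke polynomial whose coefficients are algebraic integers in $\Q_F$, so a bounded power of $p$ suffices to clear the denominator of $a_{F,p}$. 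Second, $\mathrm{Aut}(\C/\Q)$ acts on $S_{\underline{k}}(\G(N),\chi)$ via conjugation of Fourier coefficients, carrying $F$ to a Hecke eigenform $F^\sigma\in HE_{\underline{k}}(\G(N),\chi^\sigma)^{\mathrm{tm}}$ whose $p$-Satake parameters are the $\sigma$-conjugates of those of $F$; in particular temperedness at $p$ is preserved, so $\sigma(a_{F,p})=a_{F^\sigma,p}\in[-2,2]$ for every $\sigma$. Combining bounded degree, bounded denominator, and all Galois conjugates in $[-2,2]$, Northcott yields the desired finite set.

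The main obstacle will be the two arithmetic inputs feeding into the Northcott step: (i) the explicit verification that $a_{F,p}$, and not merely the unnormalized Hecke eigenvalue $\lambda_F(p)$, has bounded denominator in a field of bounded degree, which requires tracking how $a_{F,p}$ is built from the eigenvalues of the generators $h_{a_1,a_2,a_3}\in H^{\mathrm{ur}}(G(\Q_p))$ together with $\chi(p)^{1/2}$; and (ii) the Galois stability of the tempered subspace $HE_{\underline{k}}(\G(N),\chi)^{\mathrm{tm}}$, which rests on the arithmeticity of holomorphic Siegel cusp forms (Fourier coefficients in a number field) and the Galois equivariance of the $p$-Hecke polynomial. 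Once these two inputs are secured, the construction of the test function $f$ and the invocation of Theorem \ref{Sato-Tate} are routine.
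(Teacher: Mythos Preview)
Your proposal is correct and follows exactly the route the paper intends: the paper gives no detailed argument, stating only that the corollary ``follows immediately'' from Theorem~\ref{Sato-Tate} (and in Section~\ref{Hecke-field} calls it a ``direct consequence''), which is precisely Serre's equidistribution-plus-Northcott mechanism that you have spelled out.

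A couple of remarks on the two obstacles you flag. For (ii), the Galois stability of temperedness is not a formal consequence of the algebraic action on Fourier coefficients, since $\sigma$ need not preserve absolute values of Satake parameters; what makes it work here is the classification in Section~\ref{sccf}: for cohomological weight $k_1\ge k_2\ge 3$, the non-tempered Hecke eigenforms are exactly the CAP forms, and being CAP (a statement about near-equivalence with a parabolically induced representation) is visibly preserved under conjugation of Hecke eigenvalues. For (i), note that $a_{F,p}$ and $b_{F,p}$ individually are only determined up to the Weyl swap $a\leftrightarrow b$, so they lie a priori in a quadratic extension of $\Q_F(\chi(p)^{1/2})$ rather than in $\Q_F(\chi(p)^{1/2})$ itself; this merely doubles the degree bound fed into Northcott and does not affect the argument. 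With these points clarified, the construction of the test function $f$ and the contradiction with the absolutely continuous limit measure $\mu_p$ go through exactly as you describe.
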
 
The space $S_{\underline{k}}(\G(N),\chi)^{{\rm tm}}$ contains endoscopic lifts from elliptic modular forms which are  
called Yoshida lifts. 
The above corollary is still true even if we restrict $F$ to be a non-endoscopic lift but as mentioned before we set a condition on the level so that $N$ is 
coprime to $11!$ to control the conductor under the functoriality:
\begin{cor}\label{hecke2} Let the notation be as above. Then 
$$\sup_{N+k_1+k_2\to \infty \atop (N,11!)=1}\{[\Q_F:\Q]\ |\ F\in 
HE_{\underline{k}}(\G(N),\chi)^{{\rm tm}}:\text{non-endoscopic}\}=\infty.
$$ 
\end{cor}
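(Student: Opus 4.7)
The plan is to bootstrap from Corollary~\ref{hecke1} by showing that the subfamily of endoscopic (Yoshida) lifts within $HE_{\underline{k}}(\Gamma(N),\chi)^{\rm tm}$ has strictly smaller order of growth than the total count $d^{\rm tm}_{\underline{k},N}(\chi)$, and then running the same equidistribution plus bounded-degree argument on the non-endoscopic complement.

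First, I would enumerate the Yoshida lifts in the tempered family. By Arthur's classification for $GSp_4$, a tempered endoscopic cuspidal representation arises from an (unordered) pair of distinct cuspidal automorphic representations of $GL_2/\Q$ via the endoscopic transfer $GL_2 \times GL_2 \to GSp_4$. The archimedean condition $\pi_\infty \simeq D_{l_1,l_2}^{\rm hol}$ pins down the weights of the two elliptic factors in terms of $(k_1,k_2)$. At finite places, the assumption $(N,11!)=1$ is used to force the conductors of the two elliptic factors to divide $N$ in a clean way, so that counting Yoshida lifts reduces to counting pairs of elliptic eigenforms of prescribed level and weight. Standard dimension formulas for $S_k(\Gamma_0(M),\chi)$ then yield an upper bound of order $N^{2+\ve}(k_1+k_2)(k_1-k_2+1)$ for the number of such lifts.

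On the other hand, Theorem~\ref{main} applied to $f = \trep_{K_p}$ (or equivalently, the volume-and-dimension main term of the spectral side) gives the total asymptotic $d^{\rm tm}_{\underline{k},N}(\chi) \asymp N^{10}(k_1-k_2+1)(k_1-1)(k_2-2)(k_1+k_2-3)$. Comparing the two, the endoscopic fraction tends to $0$ as $N+k_1+k_2\to\infty$ in the allowed range, so Theorem~\ref{Sato-Tate}, whose limit measure $\mu_p$ is absolutely continuous on $\Omega$, remains valid after restricting to the non-endoscopic subset. The Serre-type argument underlying Corollary~\ref{hecke1} then applies verbatim: if non-endoscopic $F$ had Hecke fields of degree bounded by a fixed $d$, the Ramanujan bound for tempered forms would force all $a_{F,p}$ and their Galois conjugates to be algebraic integers of degree $\le d$ confined to a fixed bounded real interval, hence by Northcott's theorem to form a finite set --- contradicting equidistribution to the continuous measure $\mu_p$.

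The hard part is the conductor control in the Yoshida transfer at finite primes. Existence of the transfer $GL_2 \times GL_2 \to GSp_4$ is part of Arthur's classification, but matching levels on the nose --- verifying that Yoshida lifts from a pair of elliptic forms of level $(N_1,N_2)$ appear in $S_{\underline{k}}(\Gamma(N),\chi)$ only when $N_1 N_2 \mid N$, say --- requires a careful comparison of local Langlands conductors at each prime dividing $N$. The hypothesis $(N,11!)=1$ is precisely what lets one invoke a clean conductor formula and avoid pathologies at small residual characteristics; relaxing it would demand extra bookkeeping at primes $\le 11$ but should not alter the qualitative conclusion that the endoscopic contribution is asymptotically negligible.
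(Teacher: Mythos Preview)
Your overall strategy is exactly the paper's: show the endoscopic subfamily has density zero and then rerun the equidistribution-plus-Northcott argument behind Corollary~\ref{hecke1} on the complement. The difference lies in how the endoscopic bound is obtained. The paper does not invoke Arthur's classification for $GSp_4$ (which is not unconditionally available) nor does it count pairs of elliptic newforms directly; instead it uses Roberts' global theta correspondence from $GSO(4)$ together with the local endoscopic character identities of Chan--Gan, and the key input that $(1_{K(N)_v},N_v^{-2}1_{K^H(N)_v})$ is a transfer pair at every finite $v$ when $(N,11!)=1$ (Ferrari). This is precisely where the coprimality hypothesis enters, and it produces Theorem~\ref{dim-en1}: $\dim S^{\rm en}_{\underline{k}}(\Gamma(N))=O((k_1-k_2+1)(k_1+k_2-3)N^{8+\varepsilon})$, which compared against Proposition~\ref{dimension} gives the ratio $O(((k_1-1)(k_2-2))^{-1}N^{-2+\varepsilon})$.

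Your claimed bound $N^{2+\varepsilon}(k_1+k_2)(k_1-k_2+1)$ is too optimistic and not justified: counting pairs of elliptic automorphic representations is not the same as bounding the dimension of $K(N)$-fixed vectors summed over all members of each global packet, and your proposed conductor divisibility $N_1N_2\mid N$ is neither what Ferrari's result gives nor what is needed. The paper's larger exponent $N^{8+\varepsilon}$ still suffices for density zero, so your skeleton is sound, but the endoscopic-count step genuinely requires the machinery of Section~\ref{class-endo} rather than a naive pair count.
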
 
In the above corollary it is also interesting to remove among $HE_{\underline{k}}(\G(N),\chi)^{{\rm tm}}$ the lift from Hilbert modular forms over real quadratic fields and the symmetric cubic lift other than endoscopic lifts. 
We do not treat these two cases in this paper. It is possible to study the former one by using Base change but the latter one is difficult to analyze and need something beyond endoscopy. 

Next, we consider the distribution of the low-lying zeros of either spinor or standard $L$-functions for our family.  
For simplicity, denote $S_{\underline{k}}(\G(N),1)$, $HE_{\underline{k}}(\Gamma(N),1)$ by  $S_{\underline{k}}(N)$, 
$HE_{\underline{k}}(N)$, resp.
For $F\in S_{\underline{k}}(N)$ we denote the non-trivial zeros of $L(s,\pi_F, \ast),\ \ast\in\{{\rm Spin},{\rm St}\}$ by $\sigma_{F}=\frac{1}{2}+\sqrt{-1} \gamma_{F}$. We do not assume GRH, and hence 
$\gamma_F$ can be a complex number. Let $\phi$ be a Schwartz function which is even and whose transform has a compact support (so $\phi$ extends to an entire function). 
Define 
\begin{equation*}
D(\pi_F,\phi,\ast) = \sum_{\gamma_{F}}\phi\left( \frac{\gamma_{F}}{2\pi} \log c_{\underline{k},N}\right),
\end{equation*}
where $\log c_{\underline{k},N}=\ds\frac 1{{\rm dim}S_{\underline{k}}(N)}\ds\sum_{F\in HE_{\underline{k}}(\G(N))} \log c(F,*)$ and 
$c(F,*)$ stands for the analytic conductor (see Section \ref{L-function}).
Then we prove 
\begin{thm} \label{spin-st}
Let the notations be as in Theorem \ref{Sato-Tate}. Put $d_{\underline{k},N}={\rm dim}S_{\underline{k}}(N)$. Let $\phi$ be a Schwartz function which is even and whose Fourier transform has a support sufficiently smaller than 
$(-1,1)$. Then 
\begin{enumerate}
\item $\ds\lim_{N+k_1+k_2\to\infty \atop (N,11!)=1} \ds\frac 1{d_{\underline{k},N}} \ds\sum_{F\in  HE_{\underline{k}}(N)} D(\pi_F,\phi,{\rm Spin})=\hat\phi(0)+\frac 12 \phi(0)=
\int_\Bbb R \phi(x)W(G)(x)\, dx,$
\newline where $G=SO({\rm even})$, $SO({\rm odd})$, or $O$ type.
\item $\ds\lim_{N+k_1+k_2\to\infty \atop (N,11!)=1} \frac 1{d_{\underline{k},N}} \sum_{F\in HE_{\underline{k}}(N)} D(\pi_F,\phi,{\rm St})=\hat\phi(0)-\frac 12 \phi(0)=
\int_\Bbb R \phi(x)W(Sp)(x)\, dx,$
\newline where the corresponding density functions $W(G)$ are 
$$W(SO({\rm even}))(x)= 1+\frac{\sin 2\pi x}{ 2\pi x},\ W(SO({\rm odd}))(x)=1-\frac{\sin 2\pi x}{ 2\pi x} + \delta_0(x),$$
$W(O)(x)= 1+\ds\frac 12\delta_0(x)$, and 
$W(Sp)(x)=1-\ds\frac{\sin 2\pi x}{ 2\pi x}.$ 
\end{enumerate}

\end{thm}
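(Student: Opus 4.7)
The plan is to couple the standard explicit formula with the equidistribution of Theorem \ref{main}, following the blueprint of Iwaniec-Luo-Sarnak and Shin-Templier. For each $F\in HE_{\underline{k}}(N)$ and $*\in\{\mathrm{Spin},\mathrm{St}\}$, the explicit formula applied to $\Lambda(s,\pi_F,*)$ gives
\begin{align*}
D(\pi_F,\phi,*) &= \hat{\phi}(0)\,\frac{\log c(F,*)}{\log c_{\underline{k},N}} - \frac{2}{\log c_{\underline{k},N}} \sum_{p,\,n\ge 1} \frac{a_{p^n}(\pi_F,*)\log p}{p^{n/2}} \hat{\phi}\!\left(\frac{n\log p}{\log c_{\underline{k},N}}\right) \\
&\quad + O\!\left(\frac{1}{\log c_{\underline{k},N}}\right),
\end{align*}
where $a_{p^n}(\pi_F,*)=\tr \rho_*(\mathrm{Sat}(\pi_{F,p}))^n$ for unramified $p\nmid N$, with $\rho_*$ the Spin (resp.\ Standard) representation of the dual group, and the archimedean $\Gamma$-contribution is absorbed by standard Stirling asymptotics. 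Summing over $F$ and normalizing by $d_{\underline{k},N}$, the first term produces $\hat\phi(0)$ by the defining property of $c_{\underline{k},N}$.

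For unramified $p\nmid N$, the function $F\mapsto a_{p^n}(\pi_F,*)$ equals $\tr\pi_{F,p}(\phi_{*,p^n})$ for an explicit Hecke operator $\phi_{*,p^n}\in H^{{\rm ur}}(G(\Q_p))^{\kappa}$ with $\kappa=O(n)$, expanded in the basis $\{h_{a_1,a_2,a_3}\}$ via the Satake isomorphism. Applying Theorem \ref{main} with $S'=\{p\}$,
$$\frac{1}{d_{\underline{k},N}}\sum_{F\in HE_{\underline{k}}(N)} a_{p^n}(\pi_F,*) = \widehat{\mu}^{{\rm pl}}_p(\widehat{\phi_{*,p^n}}) + O\!\left(p^{a\kappa+b}E_{\underline{k},N}\right),$$
with $E_{\underline{k},N}$ the polynomial decay gain from the level or weight aspect. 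By spherical Plancherel inversion, $\widehat{\mu}^{{\rm pl}}_p(\widehat{\phi_{*,p^n}})=\phi_{*,p^n}(1)$, which equals the multiplicity of the trivial representation in the virtual character $g\mapsto\tr\rho_*(g)^n$ on the dual group. This vanishes for $n=1$, and for $n=2$ evaluates to the Frobenius-Schur indicator $c_*\in\{\pm 1\}$: $c_{\mathrm{Spin}}=-1$ since Spin is symplectic, and $c_{\mathrm{St}}=+1$ since Standard is orthogonal. Mertens' theorem and evenness of $\hat\phi$ yield
$$-\frac{2}{\log c_{\underline{k},N}}\sum_p \frac{c_*\log p}{p}\hat{\phi}\!\left(\frac{2\log p}{\log c_{\underline{k},N}}\right)\longrightarrow -c_*\cdot\tfrac{1}{2}\phi(0),$$
which is $+\tfrac12\phi(0)$ for Spin (matching the orthogonal-type $W(SO)$) and $-\tfrac12\phi(0)$ for Standard (matching $W(Sp)$). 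The $n\ge 3$ terms contribute $o(1)$ under the support restriction on $\hat\phi$.

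The coprimality $(N,11!)=1$ controls the non-tempered and endoscopic contributions: by Arthur's classification of the discrete spectrum of $GSp_4/\Q$, such $\pi_F$ (Saito-Kurokawa lifts, CAP forms, Yoshida lifts) arise from automorphic forms on proper endoscopic groups whose conductors are constrained by factorizations involving small primes, and under this coprimality they occupy a dimensionally negligible fraction of $HE_{\underline{k}}(N)$. The hard step is making the error $p^{a\kappa+b}E_{\underline{k},N}$ from Theorem \ref{main} summable over the primes with $n\log p\le\eta\log c_{\underline{k},N}$: this forces $\hat\phi$ to be supported in $(-\eta,\eta)$ for some $\eta<1$, where the admissible $\eta$ is controlled by the constants $a,b,a',b'$ of Theorem \ref{main} and $\deg\rho_*$. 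Optimizing $\eta$ so that the $n=2$ main term is captured while the error remains $o(1)$ is the technical crux of the argument.
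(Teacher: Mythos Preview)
Your outline matches the paper's approach: explicit formula plus Theorem~\ref{main} (in its classical reformulation, Theorem~\ref{claim1} and Propositions~\ref{spin},~\ref{standard}), with the $n=2$ prime-square term producing $\pm\tfrac12\phi(0)$. One point to sharpen: the identity $\widehat{\mu}^{\mathrm{pl}}_p(\widehat{\phi_{*,p^n}})=\phi_{*,p^n}(1)$ is correct, but this value is \emph{not} the multiplicity of the trivial representation in $\rho_*^{\otimes n}$ --- that multiplicity arises from integration against the Sato--Tate measure $\mu^{\mathrm{ST}}_\infty$ on the compact dual, whereas the Plancherel measure at finite $p$ carries the extra density $f_p g_p^+ g_p^-$ (Section~8.2). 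The paper therefore computes these Plancherel integrals by hand, expressing $a_F(p^n)$ and $b_F(p^n)$ as explicit combinations of normalized Hecke eigenvalues $\lambda'_{F,m}$ via~(\ref{hecke-operators}) and then applying Theorem~\ref{claim1}: for Spin the $n=2$ main term is $-(1-p^{-1})(1+p^{-2})=-1+O(p^{-1})$, and for Standard it is $1-3p^{-1}+O(p^{-2})$, while for $n=1$ Standard the main term is $\pm p^{-2}$ rather than $0$. The leading $\mp1$ does recover your Frobenius--Schur indicator in the $p\to\infty$ limit, and the $O(p^{-1})$ corrections are harmless (they contribute $O\bigl(\sum_p p^{-2}\log p\bigr)/\log c_{\underline{k},N}=o(1)$), so your heuristic gives the right answer, but the honest computation is what the paper carries out. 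Two further remarks: the hypothesis $(N,11!)=1$ is used not to excise non-tempered forms from the family (they remain in $HE_{\underline{k}}(N)$) but to bound conductors via Ferrari's transfer (Lemma~\ref{cond1}), so that $\log c(F,*)$ is uniformly comparable to $\log c_{\underline{k},N}$; and the $n\ge3$ tail genuinely requires the Appendix argument, since Ramanujan is not available in general.
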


\begin{remark} We stated our result only for $\Gamma(N)$ for simplicity. In weight aspect we expect that our result holds for other congruence subgroups such as $\Gamma_0(N)=\left\{\begin{pmatrix} A&B\\C&D\end{pmatrix} |\, C\equiv 0\ (N)\right\}$.
\end{remark}

\begin{remark} Kowalski-Saha-Tsimerman \cite{KST} (in level one case) and M. Dickson \cite{D} considered weighted one-level density of spinor $L$-functions of scalar-valued Siegel cusp forms, namely, let $\mathcal F_k(N)$ be a basis  of the space of Siegel eigen cusp forms of weight $k$ with respect to $\Gamma_0(N)$. Then
$$\lim_{N+k\to\infty} \frac 1{\ds\sum_{F\in \mathcal F_k(N)} \omega_{F,N,k}} \sum_{F\in \mathcal F_k(N)} \omega_{F,N,k} D(\pi_F,\phi, {\rm Spin})=\hat\phi(0)-\frac 12\phi(0)=\int_\Bbb R \phi(x)W(Sp)(x)\, dx.
$$
where 
$$\omega_{F,N,k}=\frac {\sqrt{\pi}(4\pi)^{3-2k}\Gamma(k-\frac 32)\Gamma(k-2) |A(F;E_2)|^2}{{\rm vol}(\Gamma_0(N)\backslash \Bbb H_2) 4\langle F,F\rangle},
$$
and $F(Z)=\ds\sum_{T>0} A(F;T) e^{2\pi i Tr(TZ)}$. So the symmetry type is $Sp$. Notice that the symmetry type is changed due to the weighted sum.
\end{remark}

\begin{remark}\label{Shin-T} If we apply the main results in 
Shin-Templier \cite {ST} for $G=GSp_4$, one-level density is for a family of all cuspidal representations whose infinity types are 
in the local $L$-packet consisting of both holomorphic discrete series $D^{{\rm hol}}_{l_1,l_2}$ and the large discrete series $D^{{\rm large}}_{l_1,l_2}$.
Hence their family consists of both holomorphic Siegel cusp forms and non-holomorphic forms. The global $L$-packet of a holomorphic Siegel cusp form always contains a non-holomorphic form. 
However, there are cuspidal representations with the large discrete series at the infinity whose global L-packet does not contain 
any cuspidal representation with a holomorphic discrete series at infinity. 

Let $\widetilde S_{\underline{k}}(N)$ be the set of isomorphic classes of cuspidal representations with the given infinity type in the local $L$-packet
$\{D^{{\rm hol}}_{l_1,l_2}, D^{{\rm large}}_{l_1,l_2}\}$.
 Then Shin-Templier showed that

$$\lim_{k_1+k_2+N\to\infty} \frac 1{|\widetilde S_{\underline{k}}(N)|} \sum_{\pi \in \widetilde S_{\underline{k}}(N)} D(\pi,\phi, \ast)=\hat\phi(0)\pm \frac 12\phi(0)=
\int_\Bbb R \phi(x)W(G)(x)\, dx,
$$
where $\pm$ is according to $*=$ Spin or St, resp. and $G$ is as in Theorem \ref{spin-st}. The symmetry type is the same because the 
$L$-functions of representations in the same $L$-packet remain the same. Also we can see that the contribution from endoscopic non-holomorphic forms is negligible, and in fact, we expect that it matches with non-semisimple contributions from the geometric side.
We elaborate it more in Section \ref{Shin}.
\end{remark}

Some experts in the trace formula may be wondering why we did not use the stable trace formula for proving the main theorem. A reason is that even if we use the stable trace formula for $G = GSp4$, we would have to take care of the fundamental lemma for Hecke elements under several transfers which might be feasible, but this gives rise to a conditional result in contrast to our main theorem. However, the method of the stabilization is still important in our proof. In fact, our argument of estimating non-semisimple terms is essentially the same as the stabilization. 

This paper is organized as follows. In Section 2, we recall the correspondence between classical holomorphic Siegel cusp forms and their adelic forms and their associated cuspidal automorphic representations of $GSp_4$. We also recall various subspaces and their dimensions. In Section 3 we quickly recall the classification of the algebraic representations of $GSp_4(\R)$ and 
adjust a central character to match with the holomorphic discrete series in question. 

In Section 4 we give a precise description of the spectral decomposition and residual spectrum and classification of CAP forms.
Even though we consider a pseudo-coefficient of holomorphic discrete series, there are non-trivial contributions from the residual spectrum and CAP forms. We also define a normalized 
(automorphic) counting measures which will be related to the Plancherel measure with error terms.  
 In Section 5, we estimate the global coefficients, 
 invariant distributions which are (limit) character formulas of holomorphic discrete series, and 
orbital integrals for spherical elements. 
The geometric side will be decomposed into seven main terms according to the shape of conjugacy classes. 
In Section 6, by using results in previous section we estimate those seven terms.  
In Section 7, we give a proof of the main theorem. Their interpretation in terms 
of classical Siegel modular forms will be given in Section 8. 
In Section 9, we review the spinor L-function and the standard L-function of automorphic representations for $GSp_4$ whose 
infinite component is the holomorphic discrete series and 
estimate the conductor which shows up in the functional equation. Then using the main theorem, we can estimate the sum of the coefficients of automorphic L-functions.  
In Sections 10, we apply results of Section 9 to obtain the one-level density result for degree 4 spin $L$-functions and 
degree 5 standard $L$-functions. 

In Section 11 we compare Shin's work with ours to explain the meaning of the second main terms in terms of 
automorphic representations. An analytic property of an infinite sum which is necessary in Section 10 will be 
proved in the appendix. 

\medskip

\textbf{Acknowledgments.} We would like to thank J. Arthur, P.-S. Chan ,  M. Miyauchi, R. Schmidt, S-W. Shin and N. Templier for helpful discussions. 
This work started when the authors visited RIMS in Kyoto in February 2015. We also 
discussed at KIAS in June 2015 and university of Toronto in September 2015. We thank these institutes for their incredible hospitality.  
   
\section{Preliminaries for holomorphic Siegel modular forms}\label{smf}
In this section, we recall holomorphic Siegel modular forms of genus 2.  
We refer to \cite{Taylor-thesis}, \cite{vG} for the classical setting and \cite{borel&jacquet} for the adelic setting. 
First we fix our notation. 
For any commutative ring $R$ with a unit, let $M_n(R)$ be the algebra consisting of all square matrices over $R$ with size $n$ and denote by $E_n$ (resp. $0_n$) the identity matrix (resp. zero matrix). We write $\diag(a_1,\dots,a_n)\in M_n(R)$ for the diagonal matrix whose entries are $a_1$, $\dots$, $a_n\in R$. We also write ${}^tX$ for the transpose of $X\in M_n(R)$.  

We define the generalized symplectic group by 
$$G=GSp_4=\Bigg\{X\in GL_4\ \Bigg|\ {}^tX\begin{pmatrix}
0_2 & E_2 \\
-E_2 & 0_2
\end{pmatrix}X=\nu(X)\begin{pmatrix}
0_2 & E_2 \\
-E_2 & 0_2
\end{pmatrix},\ \nu(X)\in GL_1 \Bigg\}$$
which is a smooth algebraic group scheme over $\Z$. The similitude $\nu$ defines the character 
$\nu:G\lra GL_1$ and defines the symplectic group $Sp_4:={\rm Ker}(\nu)$ which is of rank 2.  

Let $B$ be the standard Borel subgroup consisting of upper triangular matrices and $T$ be the split diagonal torus in $B$. 
For $i=1,2$, let $P_i=M_iN_i$ be the parabolic subgroup of $G=GSp_4$ defined by 
$$
M_1=\left\{\begin{pmatrix} A&0\\0& u {}^t A^{-1}\end{pmatrix}\ \Bigg|\  A\in GL_2,\, u\in GL_1\right\}\simeq GL_2\times GL_1,\,
N_1=\left\{ \begin{pmatrix} E_2&S\\0& E_2\end{pmatrix} \ \Bigg|\   S=\begin{pmatrix} a&b\\b&c\end{pmatrix} \right\},
$$
$$N_2=\left\{ \begin{pmatrix} E_2& A\\ 0&E_2\end{pmatrix} \begin{pmatrix} C&0\\0& C'\end{pmatrix} \Bigg| \, A=\begin{pmatrix} a&b\\b&0\end{pmatrix}, 
C=\begin{pmatrix} 1&d\\0&1\end{pmatrix}, C'=\begin{pmatrix} 1&0\\-d&1\end{pmatrix}\right\},
$$
\begin{eqnarray*}
M_2=\left\{ \begin{pmatrix} t&{}&{}&{}\\{}&a&{}&b\\{}&{}&\det(g)/t&{}\\{}&c&{}&d\end{pmatrix}\ \Bigg|\ t\in GL_1,\   
g=\begin{pmatrix} a&b\\c&d\end{pmatrix}\in GL_2 \right\}.
\end{eqnarray*}

We sometimes use the theory of elliptic modular forms on the upper half-plane $\mathbb{H}_1$ with respect to the following congruence subgroups of $SL_2(\Z)$: 

$$\G^1(N)=\{g\in SL_2(\Z)\ |\ g\equiv E_2\ {\rm mod}\ N \},$$
$$
\G^1_1(N)=\Bigg\{g=\begin{pmatrix}
a & b \\
c & d
\end{pmatrix}\in SL_2(\Z)\ \Bigg|\ a-1\equiv c\equiv 0\ {\rm mod}\ N \Bigg\},
$$
$$
\G^1_0(N)=\Bigg\{g=\begin{pmatrix}
a & b \\
c & d
\end{pmatrix}\in SL_2(\Z)\ \Bigg|\  c\equiv 0\ {\rm mod}\ N \Bigg\}.
$$

\subsection{Classical Siegel modular forms}\label{class}
Let $\mathcal{H}_2=\{Z\in M_2(\C)|\ {}^tZ=Z,\  {\rm Im}(Z)>0\}$ be the Siegel 
upper half-plane of degree 2. 
For a pair of non-negative integers $\underline{k}=(k_1,k_2)$, $k_1\ge k_2\ge 0$, we define the 
algebraic representation $\lambda_{\underline{k}}$ of $GL_2$ with the highest weight $\underline{k}$ by 
$$V_{\underline{k}}={\rm Sym}^{k_1-k_2}{\rm St}_2\otimes {\rm det}^{k_2} {\rm St}_2,
$$ 
where ${\rm St}_2$ is the standard representation of dimension 2 with the basis $\{e_1,e_2\}$. More explicitly, if $R$ is any ring, 
then $V_{\underline{k}}(R)=\ds\bigoplus_{i=0}^{k_1-k_2}Re^{k_1-k_2-i}_1\cdot e^i_2$ and for 
$g=\begin{pmatrix}
a & b \\
c & d
\end{pmatrix}
\in GL_2(R)$,  $\lambda_{\underline{k}}(g)$ acts on $V_{\underline{k}}(R)$ by 
$$g\cdot e^{k_1-k_2-i}_1\cdot e^i_2:=\det(g)^{k_2}(ae_1+ce_2)^{k_1-k_2-i}\cdot (be_1+de_2)^i.$$
We identify $V_{\underline{k}}(R)$ with $R^{\oplus (k_1-k_2+1)}$,
and
$\lambda_{\underline{k}}(g)$ with the representation matrix of $\lambda_{\underline{k}}(g)$ 
with respect to the above basis.

We have the action and the automorphy factor $J(\gamma,Z)$ by
\begin{equation}\label{sp4-action}
\gamma Z=(AZ+B)(CZ+D)^{-1}, \quad J(\gamma,Z)=CZ+D\in GL_2(\C),
\end{equation}
for
$\gamma=\begin{pmatrix}
A& B\\
C& D
\end{pmatrix}
\in GSp_4(\R)^+$ and $Z\in \mathcal{H}_2$.

For an integer $N\ge 1$, we define a principal congruence subgroup $\Gamma(N)$ to be 
the group consisting of the elements $g\in Sp_4(\Z)$ such that $g\equiv 1 \ {\rm mod}\ N$. 
For a $V_{\underline{k}}(\C)$-valued function $F$ on $\mathcal{H}_2$, the action of $\gamma \in GSp_4(\R)^\circ$ is defined by 
\begin{equation}\label{transformation}
F(Z)|[\gamma]_{\underline{k}}:=\lambda_{\underline{k}}(\nu(\gamma)J(\gamma,z)^{-1})F(\gamma Z).
\end{equation} 
The algebra of all $Sp_4(\R)$-invariant differential operators on $\mathcal{H}_2$ is 
isomorphic to $\C[\Omega,\Delta]$, the commutative polynomial ring of two variables, where
$\Omega$ is the degree 2 Casimir element, and $\Delta$ is the degree 4 element. (see Section 5 of \cite{KY} for the details 
and $\Omega=\Delta_1,\Delta=\Delta_2$ in the notation there.) 
It is easy to see  that 
\begin{equation}\label{Omega}
\Omega F=\frac{1}{12}((k_1-1)^2+(k_2-2)^2-5)F,\quad
\Delta F=
((k_1-1)(k_2-2))^2F,
\end{equation}
for all $V_{\underline{k}}(\C)$-valued holomorphic function $F$ on $\mathcal{H}_2$ when $k_1\ge k_2\ge 0$. 
When $(k_1,k_2)=(3,3),\ (3,1)$, or $(0,0)$, the two eigenvalues are 0 and 4. It is similar for $(k_1,3)$ and $(k_1,1)$. 
However if $k_2>3$, then there is no weight other than $(k_1,k_2)$ itself with the same eigenvalues. 
This observation would be related to the sets in (\ref{hol-coeff}). 

Suppose that $k_2\ge 3$ and let us introduce  
the Harish-Chandra parameter  $(l_1,l_2)$ for the holomorphic discrete series generated by (non-zero) $F$ of weight $(k_1,k_2)$. It has the relation  
$(k_1,k_2)=(l_1+1,l_2+2)$. 
Then in terms of the Harish-Chandra parameter we rewrite (\ref{Omega}) as follows: 
\begin{equation}\label{Omega1}
\Omega F=\frac{1}{12}(l^2_1+l^2_2-5)F,\quad
\Delta F=(l_1 l_2)^2F.
\end{equation}

Let us turn to the general case of $k_1\ge k_2\ge 0$. 
For a principal congruence subgroup $\G(N),\ N\ge 1$,  we say that a holomorphic function $F:\mathcal{H}_2\lra V_{\underline{k}}(\C)$ is 
a holomorphic Siegel modular form of weight $(k_1,k_2)$ with respect to $\G(N)$ if it satisfies
$F|[\gamma]_{\underline{k}}=F$ for any $\gamma\in\G(N)$. If we further impose the following condition,  
we call $F$ a holomorphic Siegel cusp form: 
$$\lim_{Z\to \partial\mathcal{H}_2}F|[\gamma]_{\underline{k}}(Z)=0\ {\rm for\ any\ }\gamma\in Sp_4(\Q)$$
where   $\partial\mathcal{H}_2$ stands for the boundary of the Satake compactification of $\mathcal{H}_2$. 
We denote by $M_{\underline{k}}(\G(N))$ (resp. $S_{\underline{k}}(\G(N))$) the space of such holomorphic Siegel modular 
(resp. cusp) forms. The space $E_{\underline{k}}(\G(N))$ of Eisenstein series is defined by the orthogonal complement of 
$S_{\underline{k}}(\G(N))$ in $M_{\underline{k}}(\G(N))$ with respect to Petersson  inner product. 
Hence we have 
$$M_{\underline{k}}(\G(N))=S_{\underline{k}}(\G(N))\oplus E_{\underline{k}}(\G(N)).$$
By Harish-Chandra (see Theorem 1.7 of \cite{borel&jacquet}.) the space $M_{\underline{k}}(\G(N))$ is finite dimensional. 
We will give an estimation of the dimension of $S_{\underline{k}}(\G(N))$ and of its specific subspace later on. 

The group $\G(N)$ contains the subgroup consisting of 
$\begin{pmatrix}
E_2& NS \\
0 & E_2
\end{pmatrix}$, 
$S={}^tS\in M_2(\Z)$. Hence for a given $F\in M_{\underline{k}}(\G(N))$, we have the Fourier expansion 
\begin{equation}\label{fourier}
F(Z)=\ds\sum_{T\in {\rm Sym}^2(\Z)_{\ge 0}}A_F(T,Y)e^{\frac{2\pi\sqrt{-1}}{N}{\rm tr}(TX)}, \quad Z=X+Y \sqrt{-1}\in \mathcal H_2,
\end{equation}
 where ${\rm Sym}^2(\Z)_{\ge 0}$ is the subset of $M_2(\Q)$ consisting of all symmetric matrices 
$\begin{pmatrix}
a& \frac{b}{2} \\
\frac{b}{2} & c
\end{pmatrix}$,
$a,b,c\in \Z$, which are semi-positive definite. 

\subsection{Hecke operators}\label{HP}
We define the Hecke operators on $M_{\underline{k}}(\Gamma(N))$ as in \cite{ev}:     
For any positive integer $n$ coprime to $N$, let 
$$\Delta_n(N):=\Bigg\{g\in M_4(\Z)\cap GSp_4(\Q)\ \Bigg|\ 
g\equiv \begin{pmatrix}
E_2 & 0   \\
0 & \nu(g)E_2 
\end{pmatrix}\, {\rm mod}\ N ,\ \nu(g)=n \Bigg\}.
$$
For $m\in \Delta_n(N)$, we introduce the action of the Hecke operators on $M_{\underline{k}}(\Gamma(N))$ 
 by 
\begin{equation}\label{hecke-ope}
T_m F(Z):=\nu(m)^{\frac{k_1+k_2}{2}-3}\ds\sum_{\alpha\in \Gamma(N)\backslash\Gamma(N) m
\Gamma(N)} F(Z)|[(\nu(m)^{-\frac{1}{2}}\alpha]_{\underline{k}},
\end{equation}
and for any positive integer $n$, put  
$$T(n):=\sum_{m\in \G(N)\backslash \Delta_n(N)}T_m.
$$
These actions preserve the space $S_{\underline{k}}(\G(N))$. 
For $t_1={\rm diag}(1,1,p,p),\ t_2={\rm diag}(1,p,p^2,p)$ for a prime $p$, put $T_{j,p^k}:=T_{t^k_j}\ j=1,2$, $k\in\Z_{>0}$ and 
fix $\widetilde{S}_{p^i,1}, \widetilde{S}_{p^i,p^i}\in Sp_4(\Z)$ so that 
$\widetilde{S}_{p^i,1}\equiv {\rm diag}(p^{-i},1,1,p^i)\ {\rm mod}\ N$ and 
$\widetilde{S}_{p^i,p^i}\equiv {\rm diag}(p^{-i},p^{-i},p^i,p^i)\ {\rm mod}\ N$ for each $i\in\Z_{>0}$. Put $R_{p^i}:=\widetilde{S}_{p^i,p^i}T_{p^i E_4}=p^{i(k_1+k_2-6)}\widetilde{S}_{p^i,p^i}$ and note that 
it commutes with any Hecke operator. 
Then we see that 
\begin{equation}\label{hecke-operators}
\begin{split}
& T(p)=T_{1,p},\quad T(p^2)=T_{1,p^2}+T_{2,p}+R_p, \\
& T^2_{1,p}-T(p^2)-p^2 R_p =p (T_{2,p}+(1+p^2)R_p), \\
& T^2_{2,p} =T_{\diag(1,p^2,p^4,p^2)}+ (p+1)T_{\diag(p,p,p^3,p^3)}+(p^2-1)T_{\diag(p,p^2,p^3,p^2)}+(p^4+p^3+p+1) R_{p^2}. 
\end{split}
\end{equation}
The last relation is obtained as follows: First, we note that $M\in T_{2,p}$ if and only if $r_p(M)=1$, where $r_p(M)$ is the rank of 
$M$ mod $p$. Then 
$$T_{2,p}^2=\sum_{M} t(K M K) KMK,
$$
where $M$ runs over all double coset representatives of $K\backslash \Delta_{p^4}(N)/K$, and $t(KMK)$ is the number of left coset representatives $A$ of $T_{2,p}/K$ such that $A^{-1}M\in T_{2,p}$, i.e., $r_p(A^{-1}M)=1$. Now $M$ runs over the following elements; 
\begin{eqnarray*}
&& \diag(1,1,p^4,p^4),\, \diag(1,p,p^4,p^3),\, \diag(1,p^2,p^4,p^2), \\
&& \diag(p,p,p^3,p^3),\, \diag(p,p^2,p^3,p^2), \, \diag(p^2,p^2,p^2,p^2).
\end{eqnarray*}

Then we may use the explicit left coset representatives in p.189-190 of 
\cite{RS}.  
  
\medskip

By (1.15), p.435 of \cite{ev} we have the following relation:
\begin{equation}\label{zeta-hecke}
\begin{split}
&\sum_{n=0}^\infty T(p^n)t^n=\frac{P_{p}(t)}{Q_{p}(t)},\ P_{p}(t)=1-p^2 R_p t^2,\\
&Q_{p}(t)=1-T(p)t+\{T(p)^2-T(p^2)-p^2 R_p\}t^2-p^3 R_p T(p)t^3+p^{6}R_{p^2}t^4.
\end{split}
\end{equation}
 
The finite group $Sp_4(\Z/N\Z)\simeq Sp_4(\Z)/\G(N)$ acts on  
$M_{\underline{k}}(\G(N))$ by $F\mapsto F|[\tilde{\gamma}]_{\underline{k}}$ if 
we fix a lift $\tilde{\gamma}$ of $\gamma\in Sp_4(\Z/N\Z)$. 
We denote this action by the same notation $F|[\gamma]_{\underline{k}}$. 
This action does not depend on the choice of lifts of $\gamma$. 
The diagonal subgroup of $Sp_4(\Z/N\Z)$ is isomorphic to $(\Z/N\Z)^\times\times (\Z/N\Z)^\times$ by sending 
$S_{a,b}:={\rm diag}(a^{-1},b^{-1},a,b)$ to 
$(a,b)$ and it also acts on $M_{\underline{k}}(\G(N))$, factoring through the action of $Sp_4(\Z/N\Z)$. Then we have the character decomposition   
\begin{equation}\label{character}
M_{\underline{k}}(\G(N))=\bigoplus_{\chi_1,\chi_2:(\Z/N\Z)^\times\lra\C^\times}M_{\underline{k}}(\G(N),\chi_1,\chi_2),
\end{equation}
where  
$M_{\underline{k}}(\G(N),\chi_1,\chi_2)=\{F\in M_{\underline{k}}(\G(N))\ |\ F|[S_{a,1}]_{\underline{k}}=\chi_1(a)F \ {\rm and}\  F|[S_{a,a}]_{\underline{k}}=\chi_2(a)F \}.
$
It is easy to see that the Hecke operators preserve $M_{\underline{k}}(\G(N),\chi_1,\chi_2)$ (cf. \cite{manni&top}). 
We should remark that in order that $M_{\underline{k}}(\G(N),\chi_1,\chi_2)\ne 0$,
 the weight $(k_1,k_2)$ has to satisfy the parity condition 
\begin{equation}\label{parity}
\chi_2(-1)=(-1)^{k_1+k_2}. 
\end{equation}
In particular, if $N=1$ or $2$, then $\chi_2$ has to be trivial and therefore  \begin{equation}\label{parity1}
k_1+k_2\equiv 0\ {\rm mod}\ 2. 
\end{equation}
Put 
\begin{equation}\label{mfc}
\begin{split}
M_{\underline{k}}(\G(N),\chi):=\bigoplus_{\chi_1:(\Z/N\Z)^\times\lra\C^\times}M_{\underline{k}}(\G(N),\chi_1,\chi)\\
S_{\underline{k}}(\G(N),\chi):=M_{\underline{k}}(\G(N),\chi)\cap  S_{\underline{k}}(\G(N)).
\end{split}
\end{equation}

Throughout this paper, we assume this parity condition (\ref{parity}) for $F$. Let 
$$F(Z)=\ds\sum_{T\in {\rm Sym}^2(\Z)_{\ge 0}}A_F(T,Y)e^{\frac{2\pi\sqrt{-1}}{N}{\rm tr}(TX)}\in 
M_{\underline{k}}(\G(N),\chi),\quad Z=X+Y \sqrt{-1},
$$ 
be an 
eigenform for all $T(p^i),\ p\nmid N,\ i\in\N$ with eigenvalues $\lambda_F(p^i)$, i.e.,
$$
T(p^i)F=\lambda_F(p^i)F.
$$
By (\ref{zeta-hecke}) we have the following relation:
\begin{equation}\begin{split}
&\sum_{n=0}^\infty\lambda_F(p^n)t^n=\frac{P_{F,p}(t)}{Q_{F,p}(t)},\ P_{F,p}(t)=1-p^{\mu-1}\chi(p)t^2,\\
&Q_{F,p}(t)=1-\lambda_F(p)T+\{\lambda_F(p)^2-\lambda_F(p^2)-p^{\mu-1}\chi(p)\}t^2-\chi(p)p^{\mu}\lambda_F(p)t^3+\chi(p)^2p^{2\mu}t^4
\end{split}
\end{equation}
where $\mu=k_1+k_2-3$.
Then we define the partial spinor L-function of $F$ by 
$$L^{N}(s,{\rm spin},F):=\ds\prod_{p\nmid N} Q_{F,p}(p^{-s})^{-1}.
$$

\begin{Def}\label{Hecke-f}
We define the Hecke field $\Q_F$ of $F$ 
by 
$$\Q_F=\Q( \lambda_F(p^i), \, \chi_j(p), \, \text{$j=1,2$ for $p\nmid N$ and $i\ge 0$}).
$$
\end{Def}

It is well-known (cf. \cite{Taylor-thesis}) that $\Q_F$ has a finite degree over $\Q$ since 
$M_{\underline{k}}(\G(N))$ has a $\Q$-structure $L_\Q$ which is preserved by Hecke actions and the 
Hecke algebra inside ${\rm End}_\Q(L_\Q)$ is finitely generated.    

\subsection{Adelic forms}\label{adelic}
For a positive integer $N$, let $K(N)$ be the group consisting of the elements 
$g\in GSp_4(\widehat{\Z})$ such that $g\equiv E_4\ {\rm mod}\ N$. Then we see that  $\G(N)=Sp_4(\Q)\cap K(N)$ and $\nu(K(N))=1+N\widehat{\Z}$. 
Then it follows from the strong approximation theorem for $Sp_4$ that 
\begin{eqnarray}\label{strong}
 G(\A) &=& \coprod_{1\le a < N \atop (a,N)=1}G(\Q)G(\R)^+ d_a K(N)=\coprod_{1\le a < N \atop (a,N)=1}G(\Q)Z_G(\R)^+Sp_4(\R)d_a K(N) \label,
\end{eqnarray}
where $d_a$ is the diagonal matrix such that $(d_a)_p={\rm diag}(a,a,1,1)$ if $p|N$, $(d_a)_p=E_4$ otherwise. 

Let $I:=E_2\sqrt{-1}\in\mathcal{H}_2$ and $U(2)={\rm Stab}_{Sp_4(\R)}(I)$. 
For any open compact subgroup $U$ of $GSp_4(\widehat{\Z})$  
we let $\mathcal{A}_{\underline{k}}(U)^\circ$ denote the subspace of functions 
$\phi:GSp_4(\Q)\backslash GSp_4(\A)\lra V_{\underline{k}}(\C)$ such that 

\begin{enumerate}
\item $\phi(gu u_\infty)=\lambda_{\underline{k}}(J(u_\infty,I)^{-1})\phi(g)$ for all $g\in G(\A)$, $u\in U$, and $u_\infty\in 
U(2)A_{G,\infty}$, 
\item for $h\in G(\A_f)$, the function 
$$\phi_h:\mathcal{H}_2\lra V_{\underline{k}}(\C),\ \phi_h(Z)=\phi_h(g_\infty I):=\lambda_{\underline{k}}(J(g_\infty,I))\phi(hg_\infty)$$
is a holomorphic function where $Z=g_\infty I,\ g_\infty\in Sp_4(\R)$ (note that this definition is independent of the choice of $g_\infty$),
\item\label{cusp} for $g\in G(\A)$, $\ds\int_{N_R(\Q)\backslash N_R(\A)}\phi(ng)dn=0$ for any parabolic $\Q$-subgroup $R$ and $dn$ 
is the Haar measure on $N_R(\Q)\backslash N_R(\A)$. 
\end{enumerate}

We define similarly $\mathcal{A}_{\underline{k}}(U)$ by omitting the last condition (\ref{cusp}). 
Let $\G(N)_a:=Sp_4(\Q)\cap d_a^{-1}K(N)d_a$. Note that $\G(N)_a=\G(N)$ for each $a$. 
Then we have the isomorphism 
\begin{equation}\label{isom}
\mathcal{A}_{\underline{k}}(K(N))\stackrel{\sim}{\lra} \bigoplus_{1\le a < N\atop (a,N)=1}
M_{\underline{k}}(\G(N)_a), \quad \phi\mapsto (\phi_{d_a})_a.
\end{equation} 
We also have the isomorphism 
\begin{equation}\label{isom1}
\mathcal{A}_{\underline{k}}(K(N))^\circ\simeq \bigoplus_{1\le a < N\atop (a,N)=1}S_{\underline{k}}(\G(N)_a)
\simeq \bigoplus_{1\le a < N\atop (a,N)=1}S_{\underline{k}}(\G(N)),
\end{equation}
as well (cf. \cite{borel&jacquet} for checking the cuspidality). 
We should note that it follows from the condition (1) to be an automorphic form that 
\begin{equation}\label{central-action}
\phi(g z_\infty )=z^{-(k_1+k_2)}_\infty\phi(g),\ g\in G(\A),\ z_\infty\in A_{G,\infty}. 
\end{equation}

Now we restrict the isomorphism (\ref{isom}) to specific subspaces, using the character decomposition (\ref{character}). 
Given two Dirichlet characters $\chi_i : (\Z/N\Z)^\times \lra \C^\times$, $i = 1, 2$, associate 
the characters $\chi'_i:\A_f^{\times}\lra \C^\times$ via the natural map $\A_f^\times\lra \widehat
\A_f^\times/\Q_{>0}={\widehat{\Bbb Z}}^\times\lra (\Z/N\Z)^\times$. 

Define
$\widetilde{\chi} : T(\A_f) \lra \C^\times$ by 
$$\tilde\chi'(\diag(*,*, c, d)) =\chi'_1(d^{-1}c)\chi'_2(d).$$
Choose $F = (F_a)$ from RHS of (\ref{isom}) which satisfies 
$F|[S_{z,z}]_{\underline{k}} =(F_a|[S_{z,z}]_{\underline{k}}) = (\chi_2(z)F_a) = \chi_2(z)F$ and
$F|[S_{z,1}]_{\underline{k}} = \chi_1(z)F$. If we write $g\in G(\A)$ as $g = rz_\infty d_a g_\infty k \in G(\A)$ and take $z_f \in T(\A_f )$, then define the automorphic function attached to $F$ by
\begin{equation}\label{const-auto}
\phi_F(g z_f) = \lambda_{\underline{k}}(J(g, I))^{-1} F_a(g_\infty I)\tilde\chi(z_f).
\end{equation}
Then this gives rise to the isomorphism of the subspaces 
\begin{equation*}
\mathcal{A}_{\underline{k}}(K(N), \tilde\chi)\stackrel{\sim}{\lra} \bigoplus_{1\le a < N\atop (a,N)=1} 
M_{\underline{k}}(\G(N)_a, \chi_1,\chi_2).
\end{equation*}

The central character of any element in $\mathcal{A}_{\underline{k}}(K(N), \tilde\chi)$ is given by 
$\chi_2$ and hence $\mathcal{A}_{\underline{k}}(K(N), 1)\simeq 
\ds\bigoplus_{\chi_1\in \widehat{(\Z/N\Z)^\times}}\bigoplus_{1\le a < N\atop (a,N)=1} 
M_{\underline{k}}(\G(N)_a, \chi_1,1)$. 

\begin{remark}
Note that $M_{\underline{k}}(\Gamma(N))$ is embedded diagonally into 
$\ds\bigoplus_{1\le a < N\atop (a,N)=1} M_{\underline{k}}(\G(N)_a)$. 
So given a cusp form $F\in M_{\underline{k}}(\Gamma(N))$, we obtain $\phi_F\in \mathcal{A}_{\underline{k}}(K(N))$ which under the isomorphism (\ref{isom}), corresponds to $(F,...,F)$, and $\phi_F$ gives rise to a cuspidal representation $\pi_F$.
Conversely, given a cuspidal representation $\pi$ of $GSp_4(\A)$, there exists $N>0$ and $\phi\in \mathcal{A}_{\underline{k}}(K(N))$
which spans $\pi$. Under the isomorphism (\ref{isom}), $\phi$ corresponds to $(F_a)_{1\le a < N\atop (a,N)=1}$. 
For any $a$, let $\pi_{F_a}$ be the cuspidal representation associated to $F_a$. Then $\pi$ and $\pi_{F_a}$ have the same Hecke eigenvalues for $p\nmid N$, and hence they are in the same $L$-packet.
\end{remark}

We now study the Hecke operators on $\mathcal{A}_{\underline{k}}(K(N))$ and its relation to classical Hecke operators. 
Let $\phi$ be an element of  $\mathcal{A}_{\underline{k}}(K(N))$ and 
$F=(F_a)_a$ be the corresponding element of RHS via the above isomorphism (\ref{isom}). 
For any prime $p\nmid N$ and  $\alpha \in G(\Q)\cap T(\Q_p)$, define the Hecke action with respect to $\alpha$ 
\begin{equation}\label{HLL}
\widetilde{T}_\alpha \phi(g):= \ds\int_{G(\A_f)}([K(N)_p\alpha K(N)_p]
\otimes 1_{K(N)^p})(g_f)\phi(g g_f) dg_f,
\end{equation}
where $dg_f$ is the Haar measure on $G(\A_f)$ so that vol$(K)=1$. Here $K(N)_p$ is the $p$-component of $K(N)$ and  
 $K(N)^p$ is the group consisting of all elements of $K(N)$ with trivial $p$-component. 
Here $[K(N)_p\alpha K(N)_p]$ stands for the characteristic function of $K(N)_p\alpha K(N)_p$.   
Then by using (\ref{strong}) and (\ref{HLL}), we can easily see that 
 \begin{equation}\label{local-global}
\nu(\alpha)^{-\frac{k_1+k_2-3}{2}}T_\alpha F(Z)=\nu(\alpha)^{-\frac{3}{2}}\widetilde{T}_{\alpha^{-1}} \phi(g),
\end{equation}
where $g=rz_\infty g_a g_\infty k$ as above and $Z=g_\infty I$ (cf. Section 8 of \cite{m&y}).  
From this relation, up to the factor of $\nu(\alpha)^{\frac{k_1+k_2}{2}-3}$, the isomorphism (\ref{isom}) 
preserves Hecke eigenforms in both sides. 

Conversely, let $F\in S_{\underline{k}}(\G(N))$ 
be a Siegel cusp form which is a Hecke eigenform. 
Then it is easy to see that $\phi=(F)_a$ is an eigenform in $\ds\bigoplus_{1\le a < N\atop (a,N)=1}S_{\underline{k}}(\G(N)_a)$. Hence we have the 
Hecke eigenform $\phi$ in $\mathcal{A}_{\underline{k}}(K(N))$ corresponding to $F$. 
We denote by $\pi_F$ the automorphic cuspidal representation associated to $F$ via $\phi$.  

To end this subsection we give an estimation of the dimension of each space which immediately follows from 
\cite{Tsushima}, \cite{Wakatsuki}:
\begin{prop} \label{dimension}
For any $\underline{k}=(k_1,k_2), k_1\ge k_2\ge 3$ and $N$, as $N+k_1+k_2\to\infty$, 
\begin{enumerate}
\item ${\rm dim} S_{\underline{k}}(\G(N))\sim C\cdot N^{10}(k_1-1)(k_2-2)(k_1-k_2+1)(k_1+k_2-3)$,  
\item ${\rm dim} \mathcal{A}_{\underline{k}}(K(N))^\circ\sim C\cdot \varphi(N)N^{10}(k_1-1)(k_2-2)(k_1-k_2+1)(k_1+k_2-3)$,
\item ${\rm dim} S_{\underline{k}}(\G(N),\chi)\sim C\cdot \frac{N^{10}}{\varphi(N)}(k_1-1)(k_2-2)(k_1-k_2+1)(k_1+k_2-3)$,  
\item ${\rm dim} \mathcal{A}_{\underline{k}}(K(N),\chi)^\circ\sim C\cdot N^{10}(k_1-1)(k_2-2)(k_1-k_2+1)(k_1+k_2-3)$,  
\end{enumerate}
where $C$ is a positive constant which is independent of $\underline{k}$ and $N$. 
\end{prop}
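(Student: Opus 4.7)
The plan is to reduce (2), (3), (4) to (1) using the classical/adelic comparison of Section \ref{adelic}, then to prove (1) by extracting the leading term from the explicit dimension formulas of Tsushima \cite{Tsushima} and Wakatsuki \cite{Wakatsuki}. For (2), the isomorphism (\ref{isom1}) gives $\dim \mathcal{A}_{\underline{k}}(K(N))^\circ = \varphi(N)\cdot \dim S_{\underline{k}}(\Gamma(N))$ directly. For (3), the character decomposition (\ref{character}) splits the cuspidal space into $\varphi(N)^2$ isotypic pieces indexed by pairs $(\chi_1,\chi_2)$; to leading order these are equidistributed among characters, since the main contribution to each isotypic dimension comes from the identity element in the projection (a character-independent quantity). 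Summing over $\chi_1$ with $\chi_2 = \chi$ fixed then yields asymptotic dimension $\varphi(N)^{-1}\dim S_{\underline{k}}(\Gamma(N))$, and (4) follows by combining the two reductions.

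For (1), I would invoke the explicit dimension formula for vector-valued Siegel cusp forms of genus two on principal congruence subgroups. Such a formula---obtained via Hirzebruch--Riemann--Roch on a toroidal compactification, or equivalently from the identity contribution in the Arthur--Selberg trace formula applied to a pseudo-coefficient of the holomorphic discrete series $D^{{\rm hol}}_{l_1,l_2}$---has leading term equal to the product of the covolume
\[
\vol(\Gamma(N)\bs \mathcal{H}_2) = [Sp_4(\Z) : \Gamma(N)]\cdot \vol(Sp_4(\Z)\bs \mathcal{H}_2) \;\asymp\; N^{10},
\]
using $|Sp_4(\Z/N\Z)| = N^{10}\prod_{p\mid N}(1-p^{-2})(1-p^{-4})$, together with the formal degree of $D^{{\rm hol}}_{l_1,l_2}$, which by Harish-Chandra's formula is proportional to the product of positive roots evaluated at the Harish-Chandra parameter $(l_1,l_2) = (k_1-1,k_2-2)$, namely
\[
l_1 l_2 (l_1 - l_2)(l_1 + l_2) = (k_1-1)(k_2-2)(k_1-k_2+1)(k_1+k_2-3).
\]
Multiplying the two factors reproduces the claimed main term.

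It remains to verify that the remaining contributions to the dimension formula are of order $o\bigl(N^{10}(k_1-1)(k_2-2)(k_1-k_2+1)(k_1+k_2-3)\bigr)$ as $N+k_1+k_2\to\infty$. Non-identity elliptic contributions vanish once $\Gamma(N)$ is torsion-free (i.e.\ $N\ge 3$); non-identity semisimple contributions grow at most polynomially in $\underline{k}$ of strictly smaller total degree and are dominated by a factor $N^a$ with $a<10$; the parabolic/non-semisimple boundary contributions involve Levi subgroups of lower rank, producing at most $N^9$ paired with weight polynomials of strictly lower total degree in $\underline{k}$. The main obstacle will be this uniform control of the non-semisimple terms across both the level and weight aspects simultaneously---essentially the same phenomenon that produces the secondary main terms $A, B_1$ in Theorem \ref{main}---but the needed estimates are parallel to the non-semisimple analysis developed on the geometric side of the trace formula in the subsequent sections.
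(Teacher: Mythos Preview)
Your reduction of (2) to (1) via (\ref{isom1}) matches the paper exactly, and your identification of the main term in (1) as covolume times formal degree is correct and is precisely what the cited formulas of Tsushima and Wakatsuki give. However, there are two points where your proposal diverges from the paper's argument and where it needs tightening.

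First, the dimension formulas of \cite{Tsushima} and \cite{Wakatsuki} that you invoke for (1) are only established for $k_2\ge 5$; the vanishing theorems underlying Riemann--Roch fail in the range $k_2=3,4$. The paper handles this separately by appealing to \cite{Ibuk}, which shows the same formula holds for $k_2\ge 3$ up to a correction by $\dim M_{k_1-1,1}(\Gamma(N))$ (appearing when $k_2=3$), and then bounds this correction by $o(\dim S_{\underline{k}}(\Gamma(N)))$ using Propositions \ref{cap-k-order} and \ref{K-Eisen}. Your proposal does not address this low-weight boundary, and your remark that the error control is ``parallel to the non-semisimple analysis \ldots\ in the subsequent sections'' is both unnecessary for $k_2\ge 5$ (the cited formulas are already explicit) and insufficient for $k_2=3,4$.

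Second, for (3) and (4) your equidistribution-over-characters heuristic is plausible but not what the paper does, and as written it is not a proof: you would need to show that the non-identity contributions to each isotypic projection are uniformly lower order in \emph{both} $N$ and $\underline{k}$, across all $\varphi(N)^2$ characters. The paper sidesteps this entirely by introducing the intermediate group $\Gamma'(N)=\{\gamma\in Sp_4(\Z): \gamma\bmod N\in(\Z/N\Z)^\times E_4\}$, for which \cite[Theorem 3.2]{Wakatsuki} gives a direct dimension formula with main term $[Sp_4(\Z):\Gamma'(N)]\cdot(k_1-1)(k_2-2)(k_1-k_2+1)(k_1+k_2-3)$; since $[Sp_4(\Z):\Gamma'(N)]=\varphi(N)^{-1}[Sp_4(\Z):\Gamma(N)]$, this yields (3) and (4) immediately. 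This is both shorter and more robust than the averaging argument you sketch.
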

\begin{proof}First we assume $k_2\ge 5$ and treat the cases (1) and (2). 
Then one can apply Theorem 3, 4 of \cite{Tsushima} in case $N=1,2$, 
and Theorem 7.3 of \cite{Wakatsuki} in case $N\ge 3$  for $(j,k)=(k_1-k_2,k_2)$. 
If $k_2=3$ or 4, the argument in Section 5 of \cite{Ibuk} shows the above dimension formulas is still validity for 
$k_2\ge 3$ up to the difference comes from $\dim M_{k_1-1,1}(\G(N))$ which occurs in case $k_2=3$. By Proposition \ref{cap-k-order} and \ref{K-Eisen}, 
we will see that $\dim M_{k_1-1,1}(\G(N))=o({\rm dim}S_{\underline{k}}(\G(N)))$ as $k_1+k_2+N\to \infty$. Hence we have the claim. 
The second claim follows from (\ref{isom1}).  

For $(3),(4)$, consider a normal subgroup of $Sp_4(\Z)$ defined by  
$$\Gamma'(N)=\{\gamma\in Sp_4(\Z)\ |\ (\gamma\ {\rm mod}\ N)\in (\Z/N\Z)^\times E_4\}.$$
By Theorem 3.2 of \cite{Wakatsuki} the main term contributing to the dimension is 
$$[Sp_4(\Z):\G'(N)](k_1-1)(k_2-2)(k_1-k_2+1)(k_1+k_2-3)$$
up to an absolute constant. 
Hence we have the assertion since 
$$[Sp_4(\Z):\G'(N)]=\varphi(N)^{-1}[Sp_4(\Z):\G(N)].$$
\end{proof}

\subsection{The infinity component of $\pi_F$}\label{inf}

Let $F$ be a Hecke eigenform in $S_{\underline{k}}(\G(N))$ with the associated 
cuspidal representation $\pi_F=\pi_{F,\infty}\otimes \otimes_p' \pi_{F,p}$ of $GSp_4(\A)$. 
We assume that $k_1\ge k_2\ge 3$. Then $\pi_{F,\infty}$ is a unitary tempered representation in the discrete spectrum and 
its minimal $K$-type is $(k_1,k_2)=(l_1+1,l_2+2)$, where $(l_1,l_2)$ is the 
Harish-Chandra parameter in the holomorphic discrete series. 
Under the condition $l_1>l_2>0$, the discrete series with 
the Harish-Chandra parameter $(l_1,l_2)$ is unique up to isomorphism as $(\frak g,K)$-cohomology where 
$\frak g$ stands for the complexification of Lie($Sp_4(\R)$) and $K=U(2)$. 
Let $D^{{\rm hol}}_{l_1,l_2}$ be the  holomorphic discrete series of $GSp_4(\R)$ with Harish-Chandra parameter $(l_1,l_2)$ as above 
and its central character is given by $z\mapsto z^{-k_1-k_2}=z^{-l_1-l_2-3}$ on $A_{G,\infty}\simeq \R_{>0}$. 
If we want to insist on the minimal $K$-type $(k_1,k_2)$ instead of the Harish-Chandra parameter $(l_1,l_2)$, 
the holomorphic discrete series in question 
would be denoted by $D^{{\rm hol}}_{k_1,k_2}$.  
Hence we have 
$$\pi_{F,\infty}\simeq D^{{\rm hol}}_{l_1,l_2}.
$$
Note that both of central characters coincide on $A_{G,\infty}$ because of (\ref{central-action}).

\section{Algebraic representations of $GSp_4(\R)$}\label{alg-rep}
In this section we quickly recall algebraic representations of $GSp_4(\R)$. This is necessary to fix 
the central characters of the representations associated to classical Siegel modular forms. 

Recall $G=GSp_4$ and put $G_0=Sp_4$. 
Let $T_0$ be the split maximal diagonal torus of $G_0$ and $K_0$ be the maximal compact subgroup of $G_0(\R)$. 
Let $\xi=(\xi,V)$ be an irreducible algebraic representation of $GSp_4(\R)$. 
We have a decomposition ${\rm Lie}(G(\R))=\frak z\oplus {\rm Lie}(G_0(\R))$ where $\frak z\simeq \R$ is the center of 
${\rm Lie}(G(\R))$. 
The infinitesimal action of ${\rm Lie}(G(\R))$ on $V$ uniquely determines $\xi$ since there exists a sufficiently small, open neighborhood of 
$GSp_4(\R)$  at the origin in Euclidean topology whose Zariski closure is $GSp_4(\R)$. It follows from this and 
the classification of all algebraic representations for $Sp_4$ (cf. Section 16.2 of \cite{FH}) that 
\begin{equation}\label{xi}
\xi\simeq \nu^c\otimes \rho_{a,b},\ c\in \Z
\end{equation}
where $\nu:GSp_4\lra GL_1$ is the similitude and $\rho_{a,b}$ is a unique irreducible algebraic representation of $G(\R)$ with highest weight $(a,b)\in\Z^2,a\ge b\ge 0$. 
It is clear that the compact torus $U(1)\times U(1)$ in $U(2)$ is Zariski dense in 
$T_0(\C)$ via $U(2)\simeq K$. Hence the algebraic character of $T_0$ is completely  
determined by the action of the diagonal compact torus $U(1)\times U(1)$. It follows from this that 
$\rho_{a,b}|_{K_0 }$ contains  
$V_{(a,b)}$ (see Section \ref{class} for the notation). 
The central character of $\xi$ is given by $\diag(z,z,z,z)\mapsto z^{2c+(a+b)}$ by (\ref{xi}). 
By Weyl's dimension formula (see (24.19) in p.406 of \cite{FH}), 
\begin{equation}\label{dimension-alg}
\dim \xi=\dim \rho_{a,b}=\frac{(a-b+1)(a+b+3)(a+2)(b+1)}{6}.
\end{equation}
Put $\xi_{c,a,b}=\nu^c\otimes \rho_{a,b}$. We usually consider 
\begin{equation}\label{algebraic-char}
\xi_{\underline{k}}=\xi_{3,k_1-3,k_2-3},\ \underline{k}=(k_1,k_2),\  k_1\ge k_2\ge 3
\end{equation}
and then the central character of its dual $\xi^\vee_{\underline{k}}$  
coincides on $A_{G,\infty}$ with one of $D^{{\rm hol}}_{k_1,k_2}$. 
Note that 
$$\dim \xi_{\underline{k}}=\ds\frac{(k_1-k_2+1)(k_1+k_2-3)(k_1-1)(k_2-2)}{6}=\ds\frac{(\ell_1-\ell_2)(\ell_1+\ell_2)\ell_1\ell_2}{6}=
\frac{1}{6}d(D^{{\rm hol}}_{k_1,k_2}),
$$ 
where $d(D^{{\rm hol}}_{k_1,k_2})=(\ell_1-\ell_2)(\ell_1+\ell_2)\ell_1\ell_2$ is the formal degree of $D^{{\rm hol}}_{k_1,k_2}$. 
  
\section{Spectral decomposition and Automorphic counting measures}\label{resi}

In this section as Shin did in \cite{Shin}, we introduce measures related to several automorphic forms to look carefully inside 
the spectral side. 
This is necessary to extract only holomorphic forms from Arthur's trace formula. It will 
be clear later on. 
However in order to do that, we have to use a single pseudo-coefficient in the trace formula and it causes defects 
which never appear in the setting of \cite{Shin}. Namely, non-semisimple orbit contributions arise from the geometric side.

Let us first fix a measure on $G(\A)$. For any finite prime $p$, let $\mu_p$ be the Haar measure on $G(\Q_p)$ 
so that $\mu_p(G(\Z_p))=1$. Let $\mu_\infty$ be the Euler-Poincar\'e measure (see Section 2 of \cite{Shin}). 
Then the product measure $\mu=\ds\prod_{p\le \infty}\mu_p$ on $G(\A)$ is compatible with 
the point counting measure on  $G(\Q)$ and the Lebesgue measure on $A_{G}$ and therefore it defines the quotient measure 
$\overline{\mu}$ on $G(\Q)A_{G,\infty}\bs G(\A)=G(\Q)\bs G(\A)^1$ where 
$G(\A)^1=\{g\in G(\A)\ |\ |\nu(g)|_{\A}=1\}$. 
It follows that 
\begin{equation}\label{identification}
G(\A)\simeq G(\A)^1\times A_{G,\infty},\ g=(g_f,g_\infty)\mapsto ((g_f,g_\infty |\nu(g)|^{-1}_{\A}),|\nu(g_\infty)|))
\end{equation} and clearly $G(\A)^1\supset G(\Q)$. 

\subsection{Spectral decomposition}
For a quasi-character $\chi$ (which is not necessarily unitary) on $A_{G,\infty}$ we define 
$L^2:=L^2(G(\Q)\bs G(\A),\chi)$ as the space of $\C$-valued functions on $G(\A)$ which are square integrable modulo $A_{G,\infty}$ 
with respect to the measure $\chi$, 
left $G(\Q)$-invariant, and transform under $A_{G,\infty}$ by $\chi$. 
Here ``square integrable modulo $A_{G,\infty}$" means that the integral over $G(\Q)\bs G(\A)^1$ is square integrable which makes sense 
because of (\ref{identification}).  
The regular action of $G(\A)$ decomposes the Hilbert space $L^2(G(\Q)\bs G(\A),\chi)$ into 
the discrete spectrum and the continuous one. Then 

$$L_{{\rm disc}}^2(G(\Bbb Q\backslash G(\Bbb A),\chi)=L_{{\rm cusp}}^2(G(\Bbb Q)\backslash G(\Bbb A),\chi)\oplus 
L_{{\rm res}}^2(G(\Bbb Q)\backslash G(\Bbb A),\chi).
$$ 
For $\ast\in\{{\rm disc},{\rm cusp},{\rm res}\}$, let

$$L^2_{*}:=L^2_{*}(G(\Bbb Q\backslash G(\Bbb A),\chi)=\bigoplus_\pi\, m_{*}(\pi)\, \pi,
$$
where $m_{*}(\pi)$ is the multiplicity of $\pi$ in $L^2_{*}(G(\Bbb Q\backslash G(\Bbb A),\chi)$.

We review the residual spectrum of $GSp_4$ in \cite{Kim}:
$$L_{{\rm res}}^2(G(\Bbb Q\backslash G(\Bbb A),\chi)=L^2(B)\oplus L^2(P_1)\oplus L^2(P_2).
$$

Then
$$L^2(P_1)=\bigoplus_{(\pi,\eta)} J(\frac 12, \pi\otimes\eta),
$$
where $\pi$ runs over cuspidal representations of $GL_2(\A_\Q)$ with the trivial central character such that $L(\frac 12,\pi)\ne 0$, and $\eta$ runs over gr\"ossencharacters of $\A^\times_\Q$ such that $\eta^2_\infty=\chi$, and $J(\frac 12,\pi\otimes\eta)$ is the unique quotient of 
${\rm Ind}_{P_1}^G \pi |\det|^{\frac 12} \otimes\eta$.

Similarly,
$$
L^2(P_2)=\bigoplus_{(\eta,\pi)} J(1, \eta\otimes\pi),
$$
where $\eta$ runs over nontrivial quadratic characters of $\A^\times_\Q$, and 
$\pi$ runs over monomial cuspidal representations of $GL_2(\A_\Q)$ such that 
$\pi\simeq \pi\otimes\eta$, and $\eta\omega_\pi=\chi$, and $J(1,\eta\otimes\pi)$ is the unique quotient of 
${\rm Ind}_{P_2}^G \eta|\cdot|\otimes\pi$.

Finally,
$$
L^2(B)=J(\rho_B, \chi(1,1,\mu))\oplus \bigoplus_\nu J(e_1,\chi(\nu,\nu,\mu)),
$$
where $\nu$ runs over nontrivial quadratic characters and $\mu^2=\chi$. Here $J(\rho_B,\chi(1,1,\mu))$ is the unique quotient of 
${\rm Ind}_B^G \chi(1,1,\mu)\otimes \exp(\langle \rho_B+\rho_B,H_B(\ )\rangle)$, and $J(e_1,\chi(\nu,\nu,\mu))$ is the unique quotient of
${\rm Ind}_B^G \chi(\nu,\nu,\mu)\otimes \exp(\langle e_1+\rho_B,H_B(\ )\rangle)$, which is the Langlands quotient of
${\rm Ind}_{P_2}^G |\cdot|\otimes\sigma$, where $\sigma={\rm Ind}_B^{M_2} \chi(\nu,\nu,\mu)$. 

Any local component of a representation in the residue spectrum is known to be non-tempered (cf. \cite{Wallach}). 
Recall that $D^{{\rm hol}}_{k_1,k_2}$ is tempered for $k_1\ge k_2\ge 3$.  
Therefore 
$${\rm Hom}_{G(\R)}(D^{{\rm hol}}_{k_1,k_2},L^2_{{\rm res}})=0.
$$

\subsection{Special cohomological representation of $GSp_4(\Bbb R)$}\label{special}

As we will see in Section \ref{pm}, 
the trace of a pseudo-coefficient of a holomorphic discrete series $D^{{\rm hol}}_{l_1,l_2}$ may be non-vanishing for $D^{{\rm hol}}_{l_1,l_2},\omega_{l_1}$, and $\bold{1}$, 
where $\bold{1}$ is the trivial representation of $G(\R)$, and $\omega_l$ is a certain unitary representation defined as follows:
For $l\geq 2$, it is the induced representation ${\rm Ind}_{Sp_4(\R)}^{G(\R)}\, \omega_l'$, where $\omega_l'$ is the Langlands quotient of 
${\rm Ind}^{Sp_4(\R)}_{P_2(\R)\cap Sp_4(\R)}| \cdot|\sgn \rtimes D^+_{l}$, where
 $D^+_{l}$ is the holomorphic discrete series of $SL_2(\R)$ with 
minimal $K$-type $l$. 

By the classification of the residual spectrum, $\bold{1}$ occurs as an infinity component of residual spectrum from the Borel subgroup:
\begin{equation}\label{residual}
{\rm Hom}_{G(\R)}(\mathbf{1},L^2)^{K(N)}={\rm Hom}_{G(\R)}(\mathbf{1},L^2(B))^{K(N)}=
\bigoplus_{\chi\in \widehat{(\Z/N\Z)^\times}\atop \chi^2=1}\C.
\end{equation}

One can see easily that there are no cuspidal representations with the infinity component $\bold{1}$: If $F$ is such a form, then clearly, $\Omega F=0$ for any differential operator. This is clearly not possible. (Or one can see from strong approximation theorem that $G(\Q)G(\Bbb R)$ is dense in $G(\Bbb A)$. Hence if $\bold{1}$ is the infinity component of an automorphic representation, the automorphic representation has to be
$\bold{1}$.)

The special unitary representation $\omega_l$ occurs as the infinity component of residual spectrum of Klingen parabolic subgroup, and CAP forms of weight $(l+1,1)$ from the Klingen parabolic subgroup (Proposition \ref{cap-k}). It is non-tempered. Hence
$${\rm Hom}_{G(\R)}(\omega_{l_1},L^2_{{\rm res}})^{K(N)}={\rm Hom}_{G(\R)}(\omega_{l_1},L^2(P_2))^{K(N)}\hookrightarrow 
\bigoplus_{a\in (\Z/N\Z)^\times}E^1_{(k_1,1)}(\G(N)_a),
$$
where $E^1_{(k_1,k_2)}(\G(N))$ is the space of Klingen Eisenstein series of weight $(k_1,k_2)$.

\begin{prop}\label{K-Eisen} For any $k_1\ge k_2\ge 1$, 
$$\dim E^1_{(k_1,k_2)}(\G(N))=O(k_1N^{3})\ {\rm as}\ k_1+N\to\infty.$$
\end{prop}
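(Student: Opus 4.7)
The plan is to parametrize each Klingen Eisenstein series by its inducing datum on the Levi $M_2 \simeq GL_1 \times GL_2$ of the Klingen parabolic $P_2 = M_2 N_2$, and then invoke the classical Riemann--Roch dimension formula for spaces of elliptic cusp forms.

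First, I would observe that by the general theory of Eisenstein series, every element of $E^1_{(k_1,k_2)}(\Gamma(N))$ arises (via analytic continuation or residue of the Eisenstein series from $P_2$) from a pair $(\eta,\pi)$ consisting of a Hecke character $\eta$ of $\A^\times/\Q^\times$ and a cuspidal automorphic representation $\pi$ of $GL_2(\A)$. The condition that the resulting Siegel modular form be holomorphic of weight $(k_1,k_2)$ pins down the archimedean components: $\pi_\infty$ must be the holomorphic discrete series of $GL_2(\R)$ of weight $k_1$ (so that the full archimedean Klingen induction has minimal $K$-type $(k_1,k_2)$, matching the non-tempered representation $\omega_{l_1}$ introduced in Section \ref{special}), while $\eta_\infty$ is a fixed character determined by the central character of the target form. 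Classically $\pi$ then corresponds to an elliptic cusp form $f\in S_{k_1}(\Gamma^1_0(N),\chi_f)$ for some nebentype $\chi_f$ modulo $N$, and $\eta$ is a Dirichlet character modulo $N$. A compatibility between the central characters of $\eta$, $\chi_f$, and the target Siegel form determines the Eisenstein series up to bounded-dimensional data.

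Next, I would bound
$$ \dim E^1_{(k_1,k_2)}(\Gamma(N)) \;\leq\; C \cdot \varphi(N) \cdot \dim S_{k_1}\bigl(\Gamma^1_1(N)\bigr) $$
for some absolute constant $C$, the factor $\varphi(N)$ accounting for the choice of $\eta$ and the sum over nebentypes $\chi_f$ being absorbed into cusp forms of level $\Gamma^1_1(N)$. The Riemann--Roch dimension formula on $X^1_1(N)$ yields
$$ \dim S_{k_1}\bigl(\Gamma^1_1(N)\bigr) \;=\; O\bigl(k_1 \cdot [SL_2(\Z):\Gamma^1_1(N)]\bigr) \;=\; O(k_1 N^2), $$
and $\varphi(N) = O(N)$, so the product is $O(k_1 N^3)$ as claimed.

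The main obstacle is the precise matching between archimedean inducing data on $M_2(\R)$ and the vector weight $(k_1,k_2)$ on $GSp_4$: one must verify that only elliptic cusp forms of weight $k_1$ (and not, say, $k_1+k_2$ or $k_1-k_2$) produce holomorphic Klingen Eisenstein series of weight $(k_1,k_2)$, together with ensuring the bounded multiplicity $C$. This requires a careful minimal $K$-type analysis of the parabolic induction from $P_2(\R)$, and at the boundary weight $k_2 = 1$ one must additionally check the holomorphicity of the Eisenstein series via its Fourier--Jacobi expansion; once the weight correspondence is settled the counting above is routine.
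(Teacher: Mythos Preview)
The paper takes a markedly different and more classical route. Rather than parametrizing Eisenstein series by adelic inducing data, it uses the Siegel $\Phi$-operator at each Klingen cusp $\lambda$ of $\Gamma(N)$ to build an injective, Hecke-equivariant map
\[
\Phi: E^1_{(k_1,k_2)}(\Gamma(N)) \longrightarrow \prod_\lambda S_{k_1}(\Gamma^1(N)), \qquad F \mapsto (\Phi^1_\lambda(F))_\lambda,
\]
(following Arakawa), and then invokes multiplicity one for elliptic newforms to argue that the projection to any single factor is still injective, giving $\dim E^1 \le \dim S_{k_1}(\Gamma^1(N)) = O(k_1 N^3)$ directly. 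For $1 \le k_2 \le 4$ the paper reduces to the case $k_2 > 4$ by the simple trick of multiplying by the level-one weight-$4$ Siegel Eisenstein series $E_4$, which embeds $E^1_{(k_1,k_2)}(\Gamma(N)) \hookrightarrow E^1_{(k_1+4,k_2+4)}(\Gamma(N))$.

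Your spectral approach is in spirit the adelic dual of this, but the step you flag as the ``main obstacle'' is genuinely the crux and, as stated, is a gap. The assertion that each pair $(\eta,\pi)$ contributes at most $C$ linearly independent Klingen Eisenstein series to $E^1_{(k_1,k_2)}(\Gamma(N))$ with $C$ absolute is not automatic: a fixed inducing datum $(\eta,\pi)$ produces one Eisenstein series for each $K(N)$-fixed vector in $\mathrm{Ind}_{P_2(\A_f)}^{G(\A_f)}(\eta_f\otimes\pi_f)$, and at principal congruence level this space has dimension $[G(\Z/N):P_2(\Z/N)]\cdot\dim(\eta_f\otimes\pi_f)^{M_2\cap K(N)}$, which grows like a power of $N$. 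Collapsing this overcount is precisely what the paper's multiplicity-one argument accomplishes on the $\Phi$-operator side; without that input (or an equivalent analysis of the relevant subquotient of the local induced representations), your bound $C\cdot\varphi(N)\cdot\dim S_{k_1}(\Gamma^1_1(N))$ is not justified. By contrast, the archimedean weight-matching you worry about is comparatively routine and is handled for free by the $\Phi$-operator in the paper's argument, and the small-$k_2$ issue is dispatched by the $E_4$ trick rather than by any Fourier--Jacobi analysis.
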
 
\begin{proof}Put $P'_2=P_2\cap Sp_4$ (Recall that $P_2$ is Klingen parabolic in $GSp_4$). 
Put $\pi:P'_2\lra SL_2$ be the projection to $SL_2$-factor of the Levi subgroup of $P'_2$. 
Let $\G(1)=\coprod_{\lambda}\G(N)M_\lambda P'_2(\Z)$ be the double coset decomposition and $\Phi^1_\lambda$ be 
the associated Siegel $\Phi$-operator for each $M_\lambda$. 

First we assume that $k_2>4$. By a similar argument in the proof of Proposition 2.1 in \cite{Arakawa}, one has an injective linear map 
which is Hecke equivariant outside $N$:    
$$\Phi:E^1_{(k_1,k_2)}(\G(N))\lra \prod_{\lambda}S_{k_1}(\Gamma_\lambda),\ F\mapsto (\Phi^1_\lambda(F))_\lambda$$
where $\G_\lambda=\pi(M_\lambda P'_2 M^{-1}_\lambda\cap \G(N))\simeq 
\pi(M^{-1}_\lambda \G(N) M_\lambda\cap P'_2)=\pi(\G(N)\cap P'_2)=\G^1(N)$. 
By multiplicity one theorem for elliptic newforms, $\Phi^1_\lambda(F)$ is independent of $\lambda$ which means 
the composition of $\Phi$ with any projection to the $\lambda$-th component is still injective. 
Summing up, we have 
$$\dim E^1_{(k_1,k_2)}(\G(N))\le \dim S_{k_1}(\G^1(N)).$$
Since $\dim S_{k_1}(\G^1(N))=O(k_1N^3)$ as $k_1+N\to\infty$, we have $\dim E^1_{(k_1,k_2)}(\G(N))=O(k_1N^{3})$. 

Next we assume that $1\le k_2\le 4$. Let $E_4$ be the Siegel Eisenstein series of weight 4 with respect to $\G(1)=Sp_4(\Z)$. 
By multiplying $E_4$ we have an injective linear map 
$E^1_{(k_1,k_2)}(\G(N))\hookrightarrow E^1_{(k_1+4,k_2+4)}(\G(N))$. Then we may apply the previous argument. 
\end{proof}

\subsection{Classification of endoscopic forms}\label{class-endo} 
In this section we study endoscopic forms in $S_{\underline{k}}(\G(N))$ and give an estimation of the dimension of the space 
$S^{{\rm en}}_{\underline{k}}(\G(N))$ generated by such forms. 
For simplicity we work on adelic forms instead of classical forms. 
We write $\mathcal{A}^{{\rm en}}_{\underline{k}}(\G(N))^\circ$  for the adelic version of $S^{{\rm en}}_{\underline{k}}(\G(N))$ via the isomorphism (\ref{isom1}). 
We will freely use the results in \cite{Roberts}. 
The endoscopic lift in our situation is a functorial lift from the endoscopic group $H:=GSO(2,2)$ to $GSp_4$. Note that
$GSO(2,2)\simeq (GL_2\times GL_2)/\{(z,z^{-1}): z\in GL_1\}$, and 
$GSO(4)\simeq (D^{\times}\times D^{\times})/\{(z,z^{-1}): z\in GL_1\}$, where $D$ is a quaternion division algebra over $\Q$.
 Hence a cuspidal representation of $GSO(2,2)$ (resp. $GSO(4)$) can be written as 
$(\pi_1,\pi_2)$, where $\pi_1,\pi_2$ are cuspidal representations of $GL_2$ (resp. $D^\times$) with the same central characters.
When we lift a pair of elliptic new forms of weights $\ge 2$,  
it is known that 
\begin{enumerate}
\item if the lift factors through $GSO(4)$ via Jacquet-Langlands correspondence, then it has to be a holomorphic discrete series at 
infinity, and  
\item if the lift does not factor through $GSO(4)$, then the lift can not be a holomorphic discrete series at infinity (it is 
a large discrete series).  
\end{enumerate}
Since we are interested in holomorphic Siegel forms, only the first case can happen.

For any reductive group $G$ over $\Q$, 
we denote by $\Pi(G(\A))$ the set consisting of the isomorphism classes of cuspidal automorphic representations of $G(\A)$. 

Let $\Pi(\tau)$ be the global packet for $GSp_4$ constructed 
from a cuspidal representation $\tau$ of $H(\A)$ via theta lift due to Roberts \cite{Roberts}. Put $r_1=k_1+k_2-2$ and $r_2=k_1-k_2+2$. 
By adjusting the central character, we may assume that any element of $\Pi(\tau)$ is realized in the space 
$L^2_{{\rm cusp}}(G(\Q)\bs G(\A),\chi_{\xi^\vee})$ for 
$\xi=\xi_{\underline{k}}$. 
Then we see that 
\begin{equation}\label{dim-en}
{\rm dim} \mathcal{A}^{{\rm en}}_{\underline{k}}(K(N))^\circ={\rm vol}(K(N))^{-1}\sum_{\tau\in \Pi(H(\A))\atop \tau_\infty\simeq
D_{r_1}\otimes D_{r_2}}\sum_{\Pi\in \Pi(\tau)\atop \Pi_\infty \simeq D^{{\rm hol}}_{k_1,k_2}}m(\Pi){\rm tr}(\Pi_f(1_{K(N)})),\ 
m(\Pi)\in\{0,1\}
\end{equation}
where $1_{K(N)}={\rm char}_{K(N)}$. Note that $s\cdot \tau\not\simeq \tau$ ($s$ interchanges the two components) because of the infinite type and $r_1\not=r_2$.  
We shall describe the RHS more precisely. 
For $\Pi\in \Pi(\tau)$, let $T_{\Pi}$ be the set of every places $v$ of $\Q$ so that 
$\Pi_v\simeq \theta_{GSO(4)}(\tau^{\rm JL}_v)$ where $GSO(4)\simeq (D^\times_v\times D^\times_v)/\Q^\times_v$ 
for a unique quaternion division field over $\Q_v$ and $\theta$ is 
the local theta lift from $GSO(4)$ to $GSp_4$. Note that $\tau_v$ is necessarily square integrable. 
We write $\tau=(\pi_1,\pi_2)$ for $\tau\in \Pi(H)$ where each $\pi_i$ is a cuspidal automorphic representation of $GL_2(\A)$ 
such that $\omega_{\pi_1}=\omega_{\pi_2}$. For such a $\tau$, let $T_\tau$ be the set of all places of $\Q$ so that 
both of $\pi_{1,v}$ and $\pi_{2,v}$ are  square integrable. 

By Theorem 8.5 of \cite{Roberts} and Theorem 8.1,8.2 of \cite{GT1},  
if $m(\Pi)=1$, then for each $\Pi\in \Pi(\tau),\ \tau=(\pi_1,\pi_2)$, there exists a definite quaternion algebra $D$ over $\Q$ such that 
$\tau$ is in the image of Jacquet-Langlands correspondence from $GSO(D)(\A)$ and 
the set of ramified places of $D$ is given exactly by $T_\Pi=T_\tau$ and   

\begin{enumerate}
\item for each $v\in T_\tau$, 
\begin{enumerate}
\item if $\sigma^{JL}=\pi_{1,v}=\pi_{2,v}$, then $\Theta(\sigma\boxtimes \sigma)=\theta(\sigma\boxtimes \sigma)$ is the unique non-generic direct factor of 
$I_{Q(Z)}(1,\sigma^{JL})$ where $Q(Z)$ stands for the Klingen parabolic subgroup, 

\item if $\pi_{1,v}\not=\pi_{2,v}$, then $\Theta(\pi^{{\rm JL}}_{1,v}\boxtimes \pi^{{\rm JL}}_{2,v})=\theta(\pi^{{\rm JL}}_{1,v}\boxtimes \pi^{{\rm JL}}_{2,v})$ is a non-generic supercuspidal 
representation,    

\end{enumerate}

\item for each $v\not\in T_\tau$,  one of $\pi_{1,v}$ and $\pi_{2,v}$ has to be a principal series representation and 
\begin{enumerate}
\item if $\pi_{1,v}$ is square integrable and $\pi_{2,v}=\pi(\chi,\chi')$ for unitary characters $\chi,\chi'$ 
so that $\omega_{\pi_{1,v}}=\chi\chi'$, then 
$\theta(\pi_{1,v}\boxtimes \pi_{2,v})=J_{P(Y)}(\pi_{1,v}\otimes \chi^{-1},\chi)$ is a 
non generic Langlands quotient of $I_{P(Y)}(\pi_{1,v}\otimes \chi^{-1},\chi)$ where $P(Y)$ stands for the Siegel parabolic 
subgroup,  

\item if $\pi_{1,v}=\pi(\chi_1,\chi_1')$  and $\pi_{2,v}= \pi(\chi_2,\chi_2')$ for  unitary characters $\chi_i,\chi_i'$  
so that $\chi_1\chi'_1=\chi_2\chi'_2$, then 
$\theta(\pi_{1,v}\boxtimes \pi_{2,v})={\rm Ind}^G_B(\chi_2'/\chi_1,\chi_2/\chi_1;\chi_1)$. 
\end{enumerate}
\end{enumerate}
In each case the local L-packets are given as follows (see Section 8 of \cite{CG1} for (1) and Section 6.6 of \cite{CG1} for (2)): 
\begin{enumerate}
\item 
\begin{enumerate}
\item  $\{\pi^+,\pi^-\}$ where $\pi^+$ (resp. $\pi^-$) is the unique generic (resp. non-generic) direct factor of 
$I_{Q(Z)}(1,\sigma^{JL})$, 

\item $\{\pi^+:=\theta(\pi_{1,v}\boxtimes \pi_{2,v}),\pi^-:=\theta(\pi^{{\rm JL}}_{1,v}\boxtimes \pi^{{\rm JL}}_{2,v})\}$, 

\end{enumerate}

\item 
\begin{enumerate}
\item  $\{\pi^+,\pi^-:=J_{P(Y)}(\pi_{1,v}\otimes \chi^{-1},\chi)\}$ where $\pi^+$ is the unique generic direct factor of 
$I_{P(Y)}(\pi_{1,v}\otimes \chi^{-1},\chi)$,  

\item $\{{\rm Ind}^G_B(\chi_2'/\chi_1,\chi_2/\chi_1;\chi_1)\}$ (the packet is singleton). 
\end{enumerate}
\end{enumerate}
For a principal congruence subgroup $K(N)$ we put $K^H(N)=H(\A_f)\cap K(N)$. For a finite place $v=p_v$ of $\Q$, put 
$N_v=p^{{\rm ord}_v(N)}_v$. 
From now suppose that $(N,11!)=1$. Then by Th\'eor\`em 3.2.3 of \cite{Fer}, 
$(1_{K(N)_v},N^{-2}_v1_{K^H(N)_v})$ is a transfer pair for each finite place $v$.   
We now apply the (endoscopic) local character identities (Proposition 6.9 of \cite{CG1} for the L-packets in (2) and Proposition 8.2 for 
the L-packets in (1)) 
with the transfer pair $(1_{K(N)_v},N^{-2}_v1_{K^H(N)_v})$ for each finite place$v$, the RHS of (\ref{dim-en}) is bounded by 
\begin{eqnarray}
{\rm RHS} & \le & {\rm vol}(K(N))^{-1}\sum_{\tau\in \Pi(H(\A)) \atop \tau_\infty\simeq
D_{r_1}\otimes D_{r_2}}\sum_{\Pi\in \Pi(\tau)\atop \Pi_\infty \in 
\{D^{{\rm hol}}_{k_1,k_2},D^{{\rm large}}_{k_1,k_2} \} }m(\Pi){\rm tr}(\Pi_f(1_{K(N)})) \nonumber \\
&\le&   
{\rm vol}(K(N))^{-1}\sum_{\tau\in \Pi(H(\A)) \atop \tau_\infty\simeq
D_{r_1}\otimes D_{r_2}}2^{|T_\tau|}N^{-2}{\rm tr}(\tau_f(1_{K^H(N)}))\nonumber \\
&\ll &   
{\rm vol}(K(N))^{-1}N^{-2+\ve}{\rm vol}(K^H(N)){\rm dim}S_{r_1}(\G^1(N))\times {\rm dim}S_{r_2}(\G^1(N))  \nonumber \\
&\ll &  (r_1-1)(r_2-1)N^{9+\ve}  \nonumber 
\end{eqnarray} 
Since ${\rm dim} S^{{\rm en}}_{\underline{k}}(\G(N))=\varphi(N)^{-1}{\rm dim} \mathcal{A}^{{\rm en}}_{\underline{k}}(\G(N))^\circ$, we have 
\begin{thm}\label{dim-en1} 
${\rm dim} S^{{\rm en}}_{\underline{k}}(\G(N))=O((k_1-k_2+1)(k_1+k_2-3)N^{8+\ve})$  as $N+k_1+k_2\to \infty, (N,11!)=1$. 
\end{thm}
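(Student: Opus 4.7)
The plan is to compile the bound already assembled in the discussion preceding the theorem into the stated dimension estimate. The passage from the classical to the adelic version is straightforward: by the isomorphism (\ref{isom1}) together with the character decomposition, one has $\dim S^{{\rm en}}_{\underline{k}}(\G(N)) = \varphi(N)^{-1}\dim\mathcal{A}^{{\rm en}}_{\underline{k}}(K(N))^\circ$, so it suffices to bound the adelic dimension by $O((k_1-k_2+1)(k_1+k_2-3)N^{9+\ve})$.

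For the adelic dimension, I would start from formula (\ref{dim-en}), which expresses it as a sum over pairs $(\tau,\Pi)$ of a cuspidal representation $\tau$ of $H(\A) = GSO(2,2)(\A)$ with prescribed archimedean type and of $\Pi \in \Pi(\tau)$ with $\Pi_\infty \simeq D^{{\rm hol}}_{k_1,k_2}$. Since only endoscopic transfers which factor through $GSO(4)$ at infinity produce holomorphic Siegel discrete series, the relevant $\tau$ must come via Jacquet--Langlands from a definite quaternion algebra $D/\Q$ whose ramification set equals $T_\tau$. I then enlarge the sum by allowing $\Pi_\infty \in \{D^{{\rm hol}}_{k_1,k_2}, D^{{\rm large}}_{k_1,k_2}\}$, which only weakens the estimate, so that the entire local $L$-packet at infinity is available.

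The key reduction is the local character identity. Under the hypothesis $(N, 11!)=1$, Th\'eor\`eme 3.2.3 of \cite{Fer} makes $(1_{K(N)_v}, N_v^{-2} 1_{K^H(N)_v})$ into a valid transfer pair at every finite place $v$. Applying the endoscopic character identities of \cite{CG1} (Proposition 6.9 and Proposition 8.2) at each $v$, one converts $\sum_{\Pi}\tr(\Pi_f(1_{K(N)}))$ into $2^{|T_\tau|} N^{-2}\tr(\tau_f(1_{K^H(N)}))$, with the factor $2^{|T_\tau|}$ absorbing the choices within each local $L$-packet. Summing over $\tau$ and recognizing that $\tau = (\pi_1,\pi_2)$ with $\pi_i$ of weight $r_i$ and level dividing $N$, one bounds the total by $\vol(K^H(N))\cdot\dim S_{r_1}(\G^1(N))\cdot\dim S_{r_2}(\G^1(N))$, up to the Jacquet--Langlands factor and $N^{\ve}$ losses from the number $2^{|T_\tau|}$, which is absorbed into $N^\ve$ because $|T_\tau|$ is $O(\log N/\log\log N)$.

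Dividing by $\vol(K(N))\asymp N^{-10+\ve}\varphi(N)^{-1}$ and using the elliptic dimension bound $\dim S_{r}(\G^1(N)) = O(r N^2)$ together with $r_1 r_2 = (k_1+k_2-2)(k_1-k_2+2) \ll (k_1+k_2-3)(k_1-k_2+1)$, the adelic bound $O((k_1-k_2+1)(k_1+k_2-3)N^{9+\ve})$ follows. Finally, since $\varphi(N) \gg N^{1-\ve}$, dividing by $\varphi(N)$ to return to the classical space produces the desired $N^{8+\ve}$ exponent. The only delicate point, and the one requiring the hypothesis $(N,11!)=1$, is the application of Flicker's transfer of $1_{K(N)_v}$ to $N_v^{-2}1_{K^H(N)_v}$: this is where one needs the fundamental lemma in the form that is known to hold away from small residue characteristic, and this is the main technical ingredient.
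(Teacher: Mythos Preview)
Your proposal is correct and follows essentially the same route as the paper: start from the adelic expression (\ref{dim-en}), enlarge the sum to the full archimedean $L$-packet $\{D^{{\rm hol}}_{k_1,k_2},D^{{\rm large}}_{k_1,k_2}\}$, invoke Ferrari's transfer pair $(1_{K(N)_v},N_v^{-2}1_{K^H(N)_v})$ under $(N,11!)=1$, apply the Chan--Gan character identities to descend to $H$, bound by products of elliptic cusp form dimensions, and finally divide by $\varphi(N)$ to pass back to $S^{{\rm en}}_{\underline{k}}(\G(N))$. One minor numerical slip: $\dim S_r(\G^1(N))=O(rN^3)$, not $O(rN^2)$; the paper's bookkeeping absorbs this via the precise values of $\vol(K(N))$ and $\vol(K^H(N))$, and the final exponent $N^{8+\ve}$ is unaffected.
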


\subsection{Classification of CAP forms}\label{sccf} 
In this section, we classify CAP forms.
We say $F$ is a CAP form associated to a parabolic subgroup $P$ 
if $\pi_F$ is a CAP representation associated to $P$ in the sense of \cite{Gan}. Such a representation is completely classified by 
\cite{PiSha},\cite{Soudry}, \cite{Schmidt}.  

For holomorphic Siegel modular CAP forms of weight $k_1\ge k_2\ge 1$, it turns out that 
\begin{enumerate}
\item in the case when $(k_1,k_2)$, $k_2>2$, it has to be a CAP form associated to Siegel parabolic subgroup and $k_1=k_2$;
\item in the case when $k_2=2$, we must have $k_1=k_2=2$ and it can be a CAP form associated to any parabolic subgroup;
\item in the case when $(k_1,k_2)=(1,1)$, it can be a CAP form associated to Borel or Klingen parabolic subgroup, but not Siegel parabolic subgroup;
\item in the case when $k_1>k_2=1$, it has to be a CAP form associated to Klingen parabolic subgroup. 
\end{enumerate}
In \cite{manni&top}, one can see several examples regarding to the second case. In the third case 
Weissauer proved that any (Hecke eigen) Siegel modular forms of weight one with respect to $\G_0(N)$ is CAP (see \cite{wei-wt1}), but it is still open whether it is also the case for $\G(N)$. 
In the case (4), it will be proved in Proposition \ref{cap-k} that 
any (Hecke eigen) Siegel modular forms of  weight $(k_1,1)$ $k_1\ge 3$ is always a CAP form associated to Klingen. 
We expect that it holds even if $k_1=2$ but in this case we cannot use the geometric argument and thus 
we might have to rely on the classification of representations for $GSp_4$. Once  
Arthur's conjectural classification in \cite{ArthurGSp4} is completed, then our expectation would be true since it is known to be non-tempered at infinity 
by \cite{PSS}.   

Consider the first case $k_1\ge k_2\ge 2$. We first observe that $k_1=k_2$ by the argument in p.225 of \cite{Schmidt} and hence put 
$k:=k_1=k_2$.  
In this case the central character of $\pi_F$ should be a square of a character and by twisting we may 
assume that it is trivial. R. Schmidt completely characterized any holomorphic CAP form associated to $P_1$ (see Theorem 3.1 of \cite{Schmidt}.) 
by constructing a lift from a cuspidal automorphic representation $\pi$ of
${\rm PGL}_2(\A)$.  
Schmidt's construction can  be a functorial lift by the local Langlands conjecture established by \cite{GT} (see Remark 3.2-(a) 
of \cite{Schmidt}). 
To characterize $\pi_F$ by using the completed L-function (product over all places), we have to carefully look at the behavior at bad places since weak equivalence does not characterize $F$ except for the case of level one.  

Let $S^{P_1}_{k}(\G(N))$ be the space generated by Hecke eigen cusp forms of parallel weight $k$  
which are CAP associated to $P_1$. We now try to estimate the dimension of this space. 
Let $\mathcal{A}^{P_1}_{k}(\G(N))^\circ$ be the adelic version of $S^{P_1}_{k}(\G(N))$ via (\ref{isom1}). 
To do this we carefully check the behavior of the levels under the functorial lift constructed by Schmidt. 
As is done for endoscopic lifts we work on adelic forms. 

Since any CAP representation associated to the Siegel parabolic subgroup 
can be regarded as a cuspidal representation of $PGSp_4(\A)$ by Theorem 2.1 of \cite{PiSha}, we first assume that 
any representation in question has the trivial central character. 
 
Let $\mathcal{A}(\pi):=\mathcal{A}(\pi,1)$ be a candidate of the global A-packet for $PGSp_4$ constructed 
from a cuspidal representation $\pi$ of $PGL_2(\A)$ due to Schmidt \cite{Schmidt} and Section 4,5 of \cite{Gan} 
(they studied the same A-packets in conjunction with Waldspurger's local packers, cf. Section 3 of \cite{Schmidt}). 
We will use $\mathcal{A}(\pi)$ as a tool to describe the dimension of $\mathcal{A}^{P_1}_{k}(K(N),1)^\circ:=
\mathcal{A}^{P_1}_{k}(\G(N))^\circ\cap \mathcal{A}_{k}(\G(N),1)^\circ$ and do not care 
whether this packet satisfies desirable properties.  
Then we see that 
\begin{equation}\label{dim-cap}
{\rm dim} \mathcal{A}^{P_1}_{k}(K(N),1)^\circ ={\rm vol}(K(N))^{-1}\sum_{\pi\in \Pi(PGL_2(\A))\atop \tau_\infty\simeq
D_{2k-2}}\sum_{\Pi\in \mathcal{A}(\pi)\atop \Pi_\infty \simeq D^{{\rm hol}}_{k_1,k_2}}m(\Pi){\rm tr}(\Pi_f(1_{K(N)})), 
\end{equation}
where $m(\Pi)\in\{0,1\}$. 
We study the RHS more precisely. For $\pi\in \Pi(PGL_2(\A))$, let $S_\pi$ be the set of all places $v$ of $\Q$ such that 
$\pi_v$ is square integrable.  

By Theorem 3.1 of  \cite{Schmidt}, for each $\Pi\in \Pi(\tau)$ with $m(\Pi)=1$, there exists a subset $S\subset S_\pi$ such that 
$\Pi=\Pi(\pi\otimes \pi_S)$ and $(-1)^{|S|}=\ve(1/2,\pi)$ and 

\begin{enumerate}
\item if $v\not\in S$, then $\Pi_v$ is the unique, non-tempered irreducible 
quotient $Q(|\cdot|^{1/2}\pi, |\cdot|^{-1/2})$ (see Proposition 5.5.1 of \cite{RS}) of ${\rm Ind}^G_{P_1}(|\cdot|^{1/2}\pi\rtimes |\cdot|^{-1/2})$
where $P_1$ is the Siegel parabolic subgroup,    
\item if $v\in S\setminus\{\infty\}$, then $\Pi_v=SK(\pi^{{\rm JL}}_v)=\theta((\pi^{{\rm JL}}_v\boxtimes {\rm St}^{{\rm JL}})^+)$ is a 
non-generic (tempered) cuspidal representation, and  
\item if $v=\infty$, then $\Pi_\infty=D^{{\rm hol}}_{k,k}$. 
\end{enumerate}
Put $U=K(N)$ (resp. $U^1=K^1(N)$) and let $U_v$ (resp. $U^1_v$) be the $v$-component of $U$ (resp. $U^1$). 
In the first case, by using Iwahori decomposition with respect to $P_1=M_1N_1$ so that $U_v=(U_v\cap N^{-}_1)
(U_v\cap M_1)(U_v\cap N_1)$ we see that $(\Pi_v)^{U_v}\simeq ({\Pi_v}_{N_1})^{U_v\cap M_1}$, 
where ${\Pi_v}_{N_1}$ is the Jacquet module of $\Pi_v$ (see the argument of the proof of Theorem 2.1 of \cite{m&y} 
though it might be well-known for experts). By using explicit semisimple decomposition of ${\Pi_v}_{N_1}$ in 
the Table A.3, p. 273 of \cite{RS} we have that ${\rm tr}(\Pi_v(1_{U_v}))\le 3{\rm tr}(\pi_v(1_{U^1_v}))$
in the first case (1).     

In the second case the packet is L-packet and it consists of 
$\{\pi^-:=\theta((\pi^{{\rm JL}}_v\boxtimes {\rm St}^{{\rm JL}})^+), \pi^+:=\theta((\pi_v\boxtimes {\rm St})^+)\}$. 
Therefore one can apply the previous argument in the case of endoscopic lifts. 
To bound (\ref{dim-cap}) we additionally count admissible representations $\Pi'$ obtained from $\Pi$ in $\Pi(\tau)$ by switching the non-generic representation  
$\pi^-$ with the generic representation $\pi^+$ at the finite places $v$ in the case (2). 
Therefore we have 

\begin{eqnarray}
{\rm RHS} & = & {\rm vol}(K(N))^{-1}\sum_{\pi\in \Pi(PGL_2(\A))\atop \tau_\infty\simeq
D_{2k-2}}\sum_{\Pi\in \mathcal{A}(\pi)\atop \Pi_\infty \simeq D^{{\rm hol}}_{k_1,k_2}}
\sum_{S\subset S_\pi \atop \ve(1/2,\pi)=(-1)^{|S|}} {\rm tr}(\Pi_f(1_{K(N)}))  \nonumber  \\
&\le  & {\rm vol}(K(N))^{-1}\sum_{\pi\in \Pi(PGL_2(\A))\atop \tau_\infty\simeq
D_{2k-2}}\sum_{\Pi\in \mathcal{A}(\pi)\atop \Pi_\infty \simeq D^{{\rm hol}}_{k_1,k_2}}
\sum_{S\subset S_\pi \atop \ve(1/2,\pi)=(-1)^{|S|}} \{{\rm tr}(\Pi_f(1_{K(N)}))+ {\rm tr}(\Pi'_f(1_{K(N)})) \}  \nonumber  \\
&\le & {\rm vol}(K(N))^{-1} \sum_{\pi\in \Pi(PGL_2(\A))\atop \tau_\infty\simeq D_{2k-2}}
\sum_{S\subset S_\pi \atop \ve(1/2,\pi)=(-1)^{|S|}} \prod_{v\not\in S}3\cdot{\rm tr}(\pi_v(1_{U^1_v}))\\
&&\times\prod_{v\in S\setminus \{\infty\}}2N^{-2}_v{\rm tr}((\pi_v\boxtimes {\rm St})(1_{K^H(N)_v}))  \nonumber  \\
&\ll & {\rm vol}(K(N))^{-1} 2^{|\{p|N\}|} N^{-2} {\rm vol}(K^H(N))\varphi(N)^{-1}{\rm dim}S_{2k-2}(K^1(N))\times [\G^1_0(N):\G^1(N)] 
\nonumber \\
&\ll & kN^{7+\ve} \nonumber
\end{eqnarray} 
Note that $[\G^1_0(N):\G^1(N)] $ comes from 
$${\rm dim}{\rm St}^{K^1(N_v)}={\rm dim}({\rm St}^{K^1_0(N_v)})^{K^1_0(N_v)/K^1(N_v)}=
{\rm dim}(\C v_{{\rm new}})^{K^1_0(N_v)/K^1(N_v)}=[K^1_0(N_v):K^1(N_v)]$$
where $v_{{\rm new}}$ is a new vector of St. 
Summing up we have 
\begin{thm}\label{dim-cap} 
$${\rm dim} S^{P_1}_{\underline{k}}(\G(N))=O(kN^{7+\ve})$$  as $N+k\to \infty, (N,11)=1$. 
\end{thm}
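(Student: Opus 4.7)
The plan is to follow the same packet-counting strategy as in the endoscopic estimate of Theorem \ref{dim-en1}, now applied to the Saito--Kurokawa-type packets $\mathcal{A}(\pi)$ associated via Schmidt \cite{Schmidt} to cuspidal representations $\pi$ of $PGL_2(\A)$ with $\pi_\infty \simeq D_{2k-2}$. Via the adelic--classical isomorphism (\ref{isom1}) and the central-character normalization $\dim S^{P_1}_{\underline{k}}(\G(N)) = \varphi(N)^{-1}\dim \mathcal{A}^{P_1}_{k}(K(N))^\circ$, it suffices to estimate the right hand side of (\ref{dim-cap}). By Schmidt's classification, the representations $\Pi\in\mathcal{A}(\pi)$ with $\Pi_\infty\simeq D^{{\rm hol}}_{k,k}$ are parametrized by subsets $S\subset S_\pi$ satisfying $(-1)^{|S|}=\ve(1/2,\pi)$, and their local components are completely described in the three cases listed after (\ref{dim-cap}).

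The core step is to bound each local trace $\tr(\Pi_v(1_{K(N)_v}))$ case by case. At a place $v\notin S$, $\Pi_v$ is the non-tempered Langlands quotient $Q(|\cdot|^{1/2}\pi_v,|\cdot|^{-1/2})$; using the Iwahori decomposition of $K(N)_v$ with respect to $P_1 = M_1 N_1$, one obtains $(\Pi_v)^{K(N)_v} \simeq ((\Pi_v)_{N_1})^{K(N)_v\cap M_1}$, and the explicit semisimplification of the Jacquet module in Table A.3 of \cite{RS} yields $\tr(\Pi_v(1_{K(N)_v})) \le 3\,\tr(\pi_v(1_{K^1(N)_v}))$. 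At a place $v\in S\setminus\{\infty\}$, $\Pi_v = SK(\pi_v^{JL})$ is the non-generic member of the local $L$-packet $\{\pi^+,\pi^-\}$; under the hypothesis $(N,11!)=1$, Ferrari's transfer pair $(1_{K(N)_v},N_v^{-2}1_{K^H(N)_v})$ combined with the endoscopic character identity yields $\tr(\Pi_v(1_{K(N)_v})) \le 2 N_v^{-2}\tr((\pi_v\boxtimes {\rm St})(1_{K^H(N)_v}))$, where the factor $2$ absorbs the contribution of admissible representations $\Pi'$ obtained by locally swapping the non-generic member $\pi^-$ with the generic member $\pi^+$.

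Combining these local bounds, summing over the at most $2^{|\{p\mid N\}|} = O(N^\ve)$ admissible subsets $S$, and using the classical estimate $\dim S_{2k-2}(K^1(N)) = O(kN^{3+\ve})$, the new-vector index identity $\dim{\rm St}^{K^1(N)_v} = [K^1_0(N)_v:K^1(N)_v]$ (whose product over $v\mid N$ gives $[\G^1_0(N):\G^1(N)]$), together with the standard volume asymptotics $\vol(K(N))^{-1}\asymp N^{10}$ and $\vol(K^H(N))\asymp N^{-6}$ and the final factor $\varphi(N)^{-1}$, the computation is organized exactly as in the last chain of inequalities of Section \ref{class-endo}, yielding $\dim S^{P_1}_{\underline{k}}(\G(N)) = O(kN^{7+\ve})$.

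The main technical obstacle is the local analysis at ramified places in the second case: at primes $v\mid N$ with $v\in S$, one must carefully combine the Jacquet--Langlands transfer on the endoscopic group $H = GSO(2,2)$ with the Saito--Kurokawa theta correspondence and apply the endoscopic character identities to the specific test function $1_{K(N)_v}$. The assumption $(N,11!)=1$ is precisely what allows Ferrari's transfer factor results \cite{Fer} to be applied unchanged; without this hypothesis, additional work would be needed to produce the correct transfer at small primes and the resulting bound could worsen.
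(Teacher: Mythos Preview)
Your proposal is correct and follows essentially the same approach as the paper: the paper's proof of the theorem is the one-line reduction
\[
\dim S^{P_1}_{\underline{k}}(\G(N))=O\big(\dim S^{P_1}_{\underline{k}}(\G(N),1)\,\varphi(N)\big)=O\big(\dim \mathcal{A}^{P_1}_{k}(K(N),1)^\circ\big)=O(kN^{7+\ve}),
\]
with all the work contained in the preceding packet analysis of Section~\ref{sccf}, which is exactly the argument you outline (Schmidt's parametrization by $S\subset S_\pi$, the Jacquet-module bound $\tr(\Pi_v(1_{K(N)_v}))\le 3\,\tr(\pi_v(1_{K^1(N)_v}))$ at $v\notin S$, and the endoscopic character identity with Ferrari's transfer pair at $v\in S$). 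One small bookkeeping point: your relation $\dim S^{P_1}_{\underline{k}}(\G(N))=\varphi(N)^{-1}\dim\mathcal{A}^{P_1}_k(K(N))^\circ$ refers to the full adelic space, whereas equation~(\ref{dim-cap}) that you invoke is stated for the trivial-central-character piece $\mathcal{A}^{P_1}_k(K(N),1)^\circ$; the paper bridges this via the observation that the central character of a Siegel-parabolic CAP must be a square, so twisting reduces to the trivial case at the cost of a harmless factor $\varphi(N)$.
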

\begin{proof}It follows from 
$${\rm dim} S^{P_1}_{\underline{k}}(\G(N))=O({\rm dim} S^{P_1}_{\underline{k}}(\G(N),1)\varphi(N))=
O({\rm dim} \mathcal{A}^{P_1}_{k}(K(N),1)^\circ)=O(kN^{7+\ve}).$$
\end{proof}

Next we consider the case $k_1\ge 3$ and $k_2=1$. First of all we prove the following: 
\begin{prop}\label{cap-k}Let $k\ge 3$ and $N\ge 1$ be integers. 
Then any Siegel cusp form $F$ of 
weight $(k,1)$ with respect to $\G(N)$ which is a Hecke eigenform
 is a CAP form
associated to the Klingen parabolic subgroup, and its associated cuspidal representation has the infinity type $\omega_{k-1}$.
\end{prop}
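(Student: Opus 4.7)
The plan is to identify the archimedean component $\pi_{F,\infty}$ as the non-tempered representation $\omega_{k-1}$ by analyzing the minimal $K$-type together with Casimir eigenvalues, and then conclude CAP-ness from non-temperedness via the classification of CAP forms for $GSp_4$. The key ``geometric'' observation is that the weight $(k,1)$ forces the would-be Harish-Chandra parameter $(l_1,l_2)=(k-1,-1)$ to lie strictly outside the closed positive Weyl chamber, so no holomorphic (or limit-of) discrete series with this parameter exists, and $\pi_{F,\infty}$ is necessarily non-tempered.

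First I would identify $\pi_{F,\infty}$ precisely. The $K$-type $(k,1)$ supporting the holomorphic vector $F$ must be the minimal $K$-type of $\pi_{F,\infty}$ (by the standard Blattner-type relation for holomorphic automorphic forms). Since $(l_1,l_2)=(k-1,-1)$ violates $l_2 \geq 0$, $\pi_{F,\infty}$ cannot be a discrete series or limit of discrete series. Specializing (\ref{Omega}) to $(k_1,k_2)=(k,1)$ gives the Casimir eigenvalues $\Omega F = \frac{1}{12}((k-1)^2-4)F$ and $\Delta F = (k-1)^2 F$, which match the infinitesimal character of the Langlands quotient $\omega_{k-1}$ built from the Klingen parabolic and $D^+_{k-1}$ on $SL_2(\R)$ (as recalled in Section \ref{special}). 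Among unitary irreducible $(\fg, K)$-modules with minimal $K$-type $(k,1)$ and this infinitesimal character, the classification of the unitary dual of $GSp_4(\R)$ singles out $\omega_{k-1}$ uniquely once $k \geq 3$.

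Once $\pi_{F,\infty} \simeq \omega_{k-1}$ is established, the cuspidal representation $\pi_F$ carries a non-tempered archimedean component. By the classification of CAP representations for $GSp_4$ (Piatetski-Shapiro for Saito-Kurokawa type, Soudry for Klingen type, together with the Gan-Takeda classification via theta lifts), any cuspidal representation with a non-tempered local component must be CAP. The specific Klingen-parabolic origin of $\omega_{k-1}$ then identifies the CAP type as Klingen rather than Siegel or Borel. This is further consistent with the weight dictionary: Saito-Kurokawa CAP forms require scalar parallel weight $(k,k)$, and Borel-CAP requires weight $(1,1)$, both incompatible with $(k,1)$ for $k \geq 3$.

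The main obstacle is the identification step: the precise classification of unitary irreducible representations of $GSp_4(\R)$ with minimal $K$-type $(k,1)$ and the prescribed infinitesimal character, together with ruling out non-CAP cuspidal realizations of $\omega_{k-1}$. The remark that the ``geometric argument'' fails at $k=2$ suggests exactly this is the delicate point: for $k=2$, additional degenerate unitary representations (e.g.\ limits lying on Weyl chamber walls or further degenerations of principal series) can share the same $K$-type constraint, so the uniqueness in step 1 breaks down and one is forced into heavier representation-theoretic input such as Arthur's conjectural classification for $GSp_4$.
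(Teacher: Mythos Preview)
Your identification of the archimedean component $\pi_{F,\infty}\simeq\omega_{k-1}$ is essentially correct and matches the paper, which simply cites \cite{PSS} for this fact. The serious gap is in your second step.

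The implication ``$\pi_F$ has a non-tempered local component $\Rightarrow$ $\pi_F$ is CAP'' is \emph{not} what Piatetski-Shapiro, Soudry, or Gan--Takeda prove. Those works go in the opposite direction: they classify CAP representations and describe their local components. The converse you invoke is tantamount to the generalized Ramanujan conjecture for non-CAP cuspidal representations of $GSp_4$, which follows from Arthur's endoscopic classification; the paper explicitly flags this as conjectural in the sentence immediately following the proposition (``Once Arthur's conjectural classification in \cite{ArthurGSp4} is completed, then our expectation would be true since it is known to be non-tempered at infinity by \cite{PSS}''). That remark is precisely about the $k=2$ case, where your route would be the only one available and the paper declines to take it.

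The paper's actual argument is quite different and explains the role of the hypothesis $k\ge 3$. Having identified $\pi_{F,\infty}\simeq\omega_{k-1}$, one checks that $H^2\big((\mathrm{Lie}\,Sp_4(\R),K),\,\omega_{k-1}\otimes\xi^\vee_{(k,3)}\big)\neq 0$, so $\pi_F$ contributes to the cohomology of the Siegel threefold with coefficients in the local system attached to $\xi_{(k,3)}$. Laumon's theorem on the $\ell$-adic cohomology of Siegel threefolds (\cite{Laumon1,Laumon2}) then pins down the unramified Satake parameters of $\pi_F$ as $\{p^{1/2}\alpha_p,\,p^{1/2}\beta_p,\,p^{-1/2}\alpha_p^{-1},\,p^{-1/2}\beta_p^{-1}\}$ coming from an elliptic newform $f$. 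This is exactly the weak equivalence with an induction from the Klingen parabolic, which is the \emph{definition} of CAP --- no appeal to Ramanujan or Arthur is needed. The ``geometric argument'' the paper refers to is this passage through \'etale cohomology, and it requires the cohomological weight condition $k\ge 3$; this, not any ambiguity in the archimedean $K$-type analysis, is why the method breaks down at $k=2$.
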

\begin{proof} 
By \cite{PSS}, $\pi_{F,\infty}|_{Sp_4(\R)}$ has the component which is isomorphic to $\omega_{k-1}$. (In \cite{PSS}, $\omega_{k-1}$ is denoted as $L(k-1,1)$.) Since it is 
the Langlands quotient of ${\rm Ind}^{Sp_4(\R)}_{P_2(\R)\cap Sp_4(\R)}| \cdot|\sgn \rtimes D^+_k,$ 
$H^2(({\rm Lie}Sp_4(\R),K),\omega_{k-1}\otimes\xi^\vee_{(k,3)})\not=0$ by a direct calculation with Proposition 3.1 of \cite[p.36]{BW}.

For $p\nmid N$, let ${\rm Sat}_p(\pi_F)$ be the set of all Satake parameters of $\pi_{F,p}$ which take the values in 
${}^L GSp_4=GSp_4(\C)$. 
By Theorem 24.1 of \cite{Laumon2} (see also Theorem 7.5-(4) of \cite{Laumon1}), 
there exists an elliptic new form $f$ of weight $k$ with level dividing a power of $N$ such that 
$${\rm Sat}_p(\pi_F)=\{p^{1/2}\alpha_p,p^{1/2}\beta_p,p^{-1/2}\alpha^{-1}_p,p^{-1/2}\beta^{-1}_p  \}$$
for all $p\nmid N$, where $\{\alpha_p,\beta_p\}$ is the Satake parameters of $\pi_{f,p}$. 
This means that $\pi_F$ is weakly equivalent to 
${\rm Ind}^{Sp_4(\A)}_{P_2(\A)\cap Sp_4(\A)}| \cdot|\sgn \rtimes \pi_f$ and hence $F$ is a CAP form associated to the Klingen parabolic subgroup. 
\end{proof}

By Proposition \ref{cap-k}, if $F$ is any Siegel cusp form of 
weight $(k,1),\ k\ge 3$ with respect to $\G(N)$ which is a Hecke eigenform, then 
$\pi_F$ is a CAP representation associated to $P_2$ and then it follows from \cite{Soudry} that 
$\pi_F$ can be obtained by a theta lifting 
from a Hecke character for an imaginary quadratic field.    
In \cite{Wei1}, Weissauer computed the dimension of $S_{(3,1)}(\G(N))$ but his calculation works for any weight 
$(k,1)$ with $k\ge 3$  
(just replace $\delta(K,2,\frak a N_1)$ with $\delta(K,k-1,\frak a N_1)$ in his notation). 

For an imaginary quadratic field $K$ let $\mathcal{O}_K$ be the ring of integers, 
$w(K)$ the cardinality of the units in $\mathcal{O}_K$, $D_K$ the fundamental discriminant of $K$, 
and $\theta_K$ the different of $K/\Q$. 
For an ideal $\frak f\subset \mathcal{O}_K$, let $E_{K,\frak f}$ be the units in $\mathcal{O}_K$ congruent to $1$ modulo $\frak f$. 
Clearly $|E_{K,f}|$ divides $6$. 
For an integer $k\ge 2$ and an ideal $\frak f\subset \mathcal{O}_K$, 
define $\delta(K,k,\frak f)$ to be 1 if $(E_{K,\frak f})^k=\{1\}$ and 0 otherwise. For a fixed $K$ and a positive integer $N$, put 
$$N_1(K)=\prod_{p|N\atop {\rm split}\ {\rm in}\ K}p^{{\rm ord}_p(N)}.$$
For an ideal $\frak a\subset  \mathcal{O}_K$ put $N(\frak a)=| \mathcal{O}_K/\frak a|$.  
For $K,\ N$,  and $\frak a$, 
put $$p(\frak a,N,K):=
N(\frak a)^2\prod_{\frak p|\frak a}(1-N(\frak p)^{-2})N_1(K)^4\prod_{p|N_1(K)}(1-p^{-4}).$$
Then we have the following 
 \begin{prop}For $k\ge 3$, the following equality hold:
$${\rm dim} S_{(k,1)}(\G(N))=\sum_{K}\frac{h(K)}{w(K)}\sum_{M}\sum_{\frak a}\delta(K,k-1,\frak a N_1(K))
|E_{K,\frak a N_1(K)}|
p(\frak a, N,K)
$$ 
where $K$ runs over all imaginary quadratic field so that $\theta^2_K|N$ and $M$ all divisor of $N$ whose all prime 
divisors are inert. The last summation runs over all ideal $\frak a \subset \mathcal{O}$ dividing $\theta(K,M,N)$  
where 
$$\theta(K,M,N):=\prod_{\frak p\subset K \atop {\rm non-split}}{\frak p}^{2[{\rm ord}_{\frak p}(NM)/2]-{\rm ord}_{\frak p}(M)}.$$    
\end{prop}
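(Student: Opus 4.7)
The plan is to combine Proposition \ref{cap-k} with Soudry's theta-lifting classification \cite{Soudry} of CAP representations associated to the Klingen parabolic, and then adapt Weissauer's dimension calculation in \cite{Wei1} from the case $k=3$ to arbitrary $k\ge 3$.

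First I would use Proposition \ref{cap-k} to reduce the dimension count: every Hecke eigenform $F\in S_{(k,1)}(\Gamma(N))$ with $k\ge 3$ generates a cuspidal representation $\pi_F$ which is CAP associated to the Klingen parabolic $P_2$, and the infinity component is $\omega_{k-1}$. By \cite{Soudry}, such $\pi_F$ is obtained as a theta lift from the group $\mathrm{GO}(2)_K = \mathrm{Res}_{K/\Q}\mathrm{GL}_1$ attached to an imaginary quadratic field $K/\Q$: concretely from a Hecke character $\chi$ of $K$ whose archimedean component matches the required infinity type $\omega_{k-1}$ (this is what forces $(\chi_\infty|_{E_{K,\mathfrak{a}N_1(K)}})^{k-1}$ to be trivial, giving the factor $\delta(K,k-1,\mathfrak aN_1(K))$). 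Thus the problem becomes counting pairs $(K,\chi)$, with the central character condition distinguishing $\chi$ from its Galois conjugate so that the factor $\tfrac{h(K)}{w(K)}\cdot |E_{K,\mathfrak aN_1(K)}|$ appears.

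Next I would decode the level condition. The adelic form $\phi_F$ is fixed under $K(N)$, and unfolding the theta integral shows that $K(N)$-invariance is equivalent to a congruence condition on $\chi$ that splits according to the behavior of primes $p\mid N$ in $K$:
\begin{itemize}
\item at split primes $p\mid N_1(K)$, the local contribution is a factor of $N_1(K)^4\prod_{p\mid N_1(K)}(1-p^{-4})$ coming from the newvector dimension of a tamely ramified principal series;
\item at inert primes one obtains the divisor $M$ of $N$ together with the parity constraint $2[\mathrm{ord}_{\mathfrak p}(NM)/2]-\mathrm{ord}_{\mathfrak p}(M)$ encoded in $\theta(K,M,N)$;
\item at ramified primes the discriminant $\theta_K$ must satisfy $\theta_K^2\mid N$, which is the restriction on the range of $K$;
\item at the conductor $\mathfrak a$ of $\chi$ the contribution is $N(\mathfrak a)^2\prod_{\mathfrak p\mid\mathfrak a}(1-N(\mathfrak p)^{-2})$.
\end{itemize}
Multiplying these local factors produces exactly $p(\mathfrak a, N, K)$.

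Then I would assemble the global count: sum over imaginary quadratic $K$ with $\theta_K^2\mid N$, over divisors $M$ of $N$ whose prime divisors are inert in $K$ (recording the inert contribution), and over ideals $\mathfrak a\mid \theta(K,M,N)$ which serve as conductors of $\chi$. Weissauer's argument in \cite{Wei1} establishes precisely this formula for $k=3$; the only place where the weight enters is in the condition that $\chi_\infty^{k-1}$ be trivial on the relevant unit group, i.e. in the factor $\delta(K,k-1,\mathfrak aN_1(K))$, which replaces his $\delta(K,2,\mathfrak aN_1)$. Since the local fixed-vector computations at finite primes and the Soudry classification do not depend on $k$ (beyond the requirement $k\ge 3$ that rules out weight one contributions and guarantees Proposition \ref{cap-k}), the rest of Weissauer's bookkeeping carries over verbatim.

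The main obstacle is the careful tracking of the level structure through the theta correspondence: one must verify that Weissauer's local calculations at split, inert, and ramified primes remain correct for general $k$, and in particular that the non-vanishing of the $K(N)$-fixed subspace of the lifted representation is governed by the combinatorial data $(\mathfrak a,M)$ exactly as in the weight $3$ case. Once this compatibility is checked, the proposition follows by summing the local contributions.
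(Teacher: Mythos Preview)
Your proposal is correct and matches the paper's approach: the paper does not give a formal proof but simply remarks (just before the proposition) that Weissauer \cite{Wei1} computed $\dim S_{(3,1)}(\Gamma(N))$ and that his calculation carries over to any weight $(k,1)$ with $k\ge 3$ by replacing $\delta(K,2,\mathfrak a N_1)$ with $\delta(K,k-1,\mathfrak a N_1)$. Your outline is a more detailed unpacking of exactly this, tracing the Soudry classification and the local level analysis that underlie Weissauer's formula and confirming that the weight enters only through the archimedean condition encoded in the $\delta$-factor.
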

By using this we have 
 \begin{prop}
\label{cap-k-order}
The notation being as above. Then for any $\ve>0$,
 $${\rm dim} S_{(k,1)}(\G(N))=O(N^{6+\ve})  {\rm \ as\ }N\to \infty.
$$ 
 \end{prop}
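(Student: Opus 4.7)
The plan is to substitute the explicit dimension formula from the preceding proposition and bound each factor in the triple sum over imaginary quadratic fields $K$ (with $\theta_K^2\mid N\cO_K$), divisors $M\mid N$ having only inert prime divisors, and ideal divisors $\fa\mid \theta(K,M,N)$.

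First I would enumerate the outer sum. The condition $\theta_K^2\mid N\cO_K$ forces $|D_K|\mid N$, so there are at most $d(N)=O(N^\ve)$ contributing fields. For each such $K$, the Dirichlet class number formula together with the convexity bound for $L(1,\chi_{-D_K})$ yields $h(K)/w(K)=O(|D_K|^{1/2+\ve})=O(N^{1/2+\ve})$, while $|E_{K,\fa N_1(K)}|\le 6$ and $\delta(K,k-1,\fa N_1(K))\in\{0,1\}$ contribute only universal constants.

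Next I would estimate $p(\fa,N,K)=N(\fa)^2\,N_1(K)^4\,\prod_{\fp\mid\fa}(1-N(\fp)^{-2})\prod_{p\mid N_1(K)}(1-p^{-4})$. The two Euler-type factors are bounded by $1$ and can be dropped. The key observation is that every ramified prime of $K$ divides $|D_K|\mid N$ but is by definition excluded from $N_1(K)$ (which collects only primes split in $K$), giving $N_1(K)\le N/|D_K|$. For $N(\fa)^2$, a place-by-place inspection of the exponent $2[\ord_\fp(NM)/2]-\ord_\fp(M)$ in the definition of $\theta(K,M,N)$, distinguishing inert from ramified primes and using $\ord_p(M)=0$ at ramified $p$ (since $M$ has only inert prime divisors), yields $N(\theta(K,M,N))\le(N/N_1(K))^2$, so $N(\fa)^2\le(N/N_1(K))^4$. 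Combining, $N(\fa)^2\,N_1(K)^4=O(N^4)$, a bound that is independent of $K$.

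Finally I would count the inner sums: for fixed $K$ there are at most $d(N)=O(N^\ve)$ divisors $M$, and for fixed $K,M$ at most $O(N^\ve)$ divisors $\fa$. Assembling everything,
\[
\dim S_{(k,1)}(\G(N))\ \le\ \sum_K \frac{h(K)}{w(K)}\cdot O(N^\ve)\cdot O(N^\ve)\cdot O(N^4)\ =\ O(N^\ve)\cdot O(N^{1/2+\ve})\cdot O(N^{4+\ve})\ =\ O(N^{9/2+\ve}),
\]
which sits comfortably inside the claimed $O(N^{6+\ve})$. The main obstacle is the prime-by-prime verification of $N(\theta(K,M,N))\le(N/N_1(K))^2$, which reduces to an elementary case analysis at inert primes (where $N(\fp)=p^2$ and the exponent is at most $\ord_p(N)$) and at ramified primes (where $N(\fp)=p$ and the exponent equals $2\ord_p(N)$); once this is in hand the remainder is routine.
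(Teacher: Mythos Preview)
Your argument is correct and in fact yields the sharper bound $O(N^{9/2+\ve})$, comfortably inside the target $O(N^{6+\ve})$. The overall strategy matches the paper's---substitute the explicit dimension formula and bound the triple sum term by term---but your bookkeeping is more efficient in one key place. The paper bounds $N_1(K)^4\le (N/|D_K|)^4$ and then separately controls $\sum_{\fa\mid\theta(K,M,N)}N(\fa)^2$ by $\sigma_2$-type sums, which reintroduces two extra powers of $N$ and leads to $O(N^{6+\ve})$. You instead observe that the non-split part of $N$ (which bounds $N(\fa)$ via $N(\theta(K,M,N))\le (N/N_1(K))^2$) and the split part $N_1(K)$ are complementary, giving the clean joint bound $N(\fa)^2\,N_1(K)^4\le N^4$; the inner sum over $\fa$ then contributes only a divisor count $O(N^\ve)$. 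This exploitation of the split/non-split dichotomy is what saves the extra $N^{3/2}$. The prime-by-prime verification you sketch (inert: $N(\fp)=p^2$, exponent $\le\ord_p(N)$; ramified: $N(\fp)=p$, exponent $=2\ord_p(N)$ since $\ord_\fp(N)=2\ord_p(N)$ and $\ord_\fp(M)=0$) is exactly right and gives $N(\theta(K,M,N))\le\prod_{p\mid N,\ p\text{ non-split}}p^{2\ord_p(N)}=(N/N_1(K))^2$.
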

 \begin{proof}
The trivial bound is $w(K)\leq 6$, $|E_{K, \frak f}|\leq 6$.
By definition, $N_1(K)\leq \frac N{D_K}$ and then  
$$\prod_{p| N_1(K)} (1-p^{-4})\leq \prod_p (1-p^{-4})=\zeta(4)^{-1}$$ 
If $\frak p$ is inert, $N(\frak p)=p^2$, where $\frak p=p$ is a rational prime. If $\frak p$ is ramified, $N(\frak p)=p$, where $p=\frak p^2$. Since $1-p^{-2}\leq 1-p^{-4}$, one has 
$$\prod_{\frak p|\frak a} (1-N(\frak a)^{-2})\leq \prod_p (1-p^{-4})=\zeta(4)^{-1}.$$ 
Hence $$p(\frak a, N,K)\leq \zeta(4)^{-2}N^4 D_K^{-4} N(\frak a)^2.$$
If $\frak p$ is inert and $\frak p| N$, then $2[\ord_\frak p(NM)/2]-\ord_\frak p(M)\leq \ord_\frak p(N)$.
If $\frak p$ is ramified, then $\frak p| N$ and $\ord_\frak p(M)=0$ by definition.
Hence 
\begin{eqnarray*}
&&\sum_{\frak a} \delta(K,k,\frac a N_1(K)) |E_{K,\frak a N_1(K)}|\ p(\frak a,N,K)
\ll N^4D_K^{-4} \left(\sum_{d| D_K^2} d^2\right)\left(\sum_{d| \frac N{D_K}} d^2\right) \\
&&=N^4D_K^{-4} \sigma_2(D_K^2)\sigma_2(\tfrac N{D_K})\ll N^{6+\epsilon}D_K^{-2+\epsilon}.
\end{eqnarray*}

The sum over $M$ is majorized by $d(N)$. Since $h(K)\ll D_K^{\frac 12+\epsilon}$, 
$$
\dim S_{(k,1)}(\Gamma(N))\ll N^{6+\epsilon}d(N)\sum_{D_K | N} D_K^{-\frac 32+\epsilon}\ll N^{6+\epsilon}.
$$
 \end{proof}

\subsection{Automorphic counting measures}\label{pm}
Let $S'$ be a finite set of rational primes. Let $\xi=\xi_{\underline{k}}$ be an irreducible algebraic representation of $G(\R)$ with the 
highest weight $(k_1,k_2)$ satisfying $k_1\ge k_2\ge 3$ as in (\ref{algebraic-char}), and $D^{{\rm hol}}_{l_1,l_2}$ be 
the holomorphic discrete series of $G(\R)$ with the Harish-Chandra parameter $(l_1,l_2)=(k_1-1,k_2-2)$ and whose central character equals $\chi_{\xi^\vee}$ on $A_{G,\infty}$. Here $\chi_{\xi^\vee}$ is the central character of $\xi^\vee$. 
We fix a pseudo-coefficient $f_\xi\in C^\infty_c(G(\R),\xi^\vee)$ (cf. \cite{CD}, \cite[p.266]{Arthur1}, or \cite[p.161]{Arthur2}), 
so that  
$$\tr(\pi_\infty(f_\xi))=(-1)^{q(G(\R))}= -1,\ \pi_\infty=D^{{\rm hol}}_{l_1,l_2},
$$
where $q(G(\R))=\dim G(\R)/A_{G,\infty}U(2)=3$. 
By \cite{Hiraga}, we have,
\begin{eqnarray}\label{pseudo}
&& \text{if $l_2>1$},\quad \tr(\pi_\infty(f_\xi))=\begin{cases} -1, &\text{if $\pi_\infty= D^{{\rm hol}}_{l_1,l_2}$,}\\
0, &\text{otherwise};
\end{cases} \\
&& \text{if $l_1>2$ and $l_2=1$},\quad
\tr(\pi_\infty(f_\xi))=\begin{cases} -1, & \text{if $\pi_\inf= D^{{\rm hol}}_{l_1,l_2}$,} \\ 
1, &\text{if $\pi_\inf=\omega_{l_1}$,}\\
0, &\text{otherwise};
\end{cases} \nonumber \\
&& \text{if $(l_1,l_2)=(2,1)$}, \quad 
\tr(\pi_\infty(f_\xi))=\begin{cases} -1, & \text{if $\pi_\infty=D^{{\rm hol}}_{l_1,l_2}$,}\\
1, &\text{if $\pi_\inf=\omega_{l_1}$ or $\trep$,} \\ 
0, &\text{otherwise}.
\end{cases} \nonumber
\end{eqnarray}

Let $\pi_{S'}^0$ be a given unitary representation of $G(\Q_{S'})$.
Let $\delta_{\pi^0_{S'}}$ be the Dirac delta measure supported on $\pi^0_{S'}$ with respect to the Plancherel measure $\widehat{\mu}^{{\rm pl}}_{S'}$ on $\widehat{G(\Q_{S'})}$. 
Then we define a normalized Dirac delta measure supported on $\pi^0_{S'}$ by 
\begin{equation}\label{n-dirac}
\delta_{\pi^0_{S'},\xi}(\widehat{f}_{S'}):=\sum_{i}a_i|\nu(\alpha_i)|^{-\frac{k_1+k_2}{2}}_{S'} \ve_{S'}(\alpha_i)^{-1}
\delta_{\pi^0_{S'}}(\widehat{f}_{S'})
\end{equation}
for $f_{S'}=\sum_ia_i[G(\Z_{S'})\alpha_iG(\Z_{S'})]\in C^\infty_c(G(\Q_{S'}))$ 
with respect to the normalization of (\ref{local-global}). 
The factor $\ve_{S'}(\alpha_i)$ is defined by 
\begin{equation}\label{e-factor0}
\ve_{S'}(\alpha_i)=
\begin{cases}
2,  & \text{if $\nu(\alpha_i)\in (\Q^\times_{S'})^2, -1\in \nu(U)$, and $Sp_4(\Z)\cap U$ has non-trivial center}, \\
1, & \text{ otherwise}.
 \end{cases}
\end{equation}

We do not need the property of the Plancherel measure except for the following Plancherel formula by Harish-Chandra:
\begin{equation}
\widehat{\mu}^{\rm pl}_{S'}(\widehat{f}_{S'})=f_{S'}(1).
\end{equation}

For the above $\xi$ and any compact open subgroup $U$ of $G(\A^{S',\infty})$,  
we define a counting measure on $\widehat{G(\Q_{S'})}$ by 
\begin{equation}\label{measures}
\widehat{\mu}_{U,\xi_{\underline{k}}, D_{l_1,l_2}^{\rm hol}}:= \frac{1}{{\rm vol}(G(\Q)A_{G,\infty}\bs G(\A))\cdot {\rm dim}\, \xi_{\underline{k}}}\sum_{\pi^0_{S'}\in \widehat{G(\Q_{S'})}}
\mu^{S',\infty}(U)m_{\rm cusp}(\pi^0_{S'};U,\xi_{\underline{k}},D_{l_1,l_2}^{\rm hol})\delta_{\pi^0_{S'},\xi}
\end{equation}
where for a given unitary representation $\pi_{S'}^0$ of $G(\Q_{S'})$, 
the normalized multiplicity $m_{\rm cusp}(\pi^0_{S'};U,\xi_{\underline{k}}, D_{l_1,l_2}^{\rm hol})$ 
is given by 
\begin{equation}\label{mult}
m_{\rm cusp}(\pi^0_{S'};U,\xi_{\underline{k}}, D_{l_1,l_2}^{\rm hol})= \sum_{\pi\in \Pi(G(\A))\atop \pi_{S'}\simeq \pi^0_{S'},\, \pi_\infty\simeq D_{l_1,l_2}^{\rm hol}}m_{\rm cusp}(\pi){\rm tr}(\pi^{S',\infty}(f_U))\cdot \tr(\pi_\infty(f_\xi)),
\end{equation}
where $\Pi(G(\A))$ stands for the set of all isomorphism classes of automorphic representations of $G(\A)$.

Since $\omega_{l_1}$ occurs in both cuspidal spectrum and residual spectrum, we define, for $\ast\in \{{\rm cusp,res}\}$, 

\begin{equation*}
\widehat{\mu}_{U,\xi_{\underline{k}}, \omega_{l_1},\ast}:=\frac{1}{{\rm vol}(G(\Q)A_{G,\infty}\bs G(\A))\cdot {\rm dim}\, \xi_{\underline{k}}}\sum_{\pi^0_S\in \widehat{G(\Q_S)}}
\mu^{S,\infty}(U) m_{\ast}(\pi^0_S;U,\xi_{\underline{k}},\omega_{l_1})\delta_{\pi^0_S,\xi}
\end{equation*}
where 
\begin{equation*}
m_{\ast}(\pi^0_S;U,\xi_{\underline{k}},\omega_{l_1})=\sum_{\pi\in \Pi(G(\A))\atop \pi_{S'}\simeq \pi^0_{S'},\, \pi_\infty\simeq \omega_{l_1}}m_\ast(\pi){\rm tr}(\pi^{S',\infty}(f_U))\cdot \tr(\pi_\infty(f_\xi)),\ \ast\in \{{\rm cusp,res}\}.
\end{equation*} 

Since $\bold{1}$ occurs only in the residual spectrum, we define

\begin{equation*}
\widehat{\mu}_{U,\xi_{\underline{k}}, \bold{1}}:=\frac{1}{{\rm vol}(G(\Q)A_{G,\infty}\bs G(\A))\cdot {\rm dim}\, \xi_{\underline{k}}}\sum_{\pi^0_{S'}\in \widehat{G(\Q_{S'})}}
\mu^{S',\infty}(U) m_{\rm res}(\pi^0_{S'};U,\xi_{\underline{k}},\bold{1})\delta_{\pi^0_{S'},\xi}
\end{equation*}
where 
\begin{equation*}
m_{\rm res}(\pi^0_{S'};U,\xi_{\underline{k}},\bold{1})=\sum_{\pi\in \Pi(G(\A))\atop \pi_{S'}\simeq \pi^0_{S'},\, \pi_\infty\simeq \bold{1}}
m_{\rm res}(\pi){\rm tr}(\pi^{S',\infty}(f_U))\cdot \tr(\pi_\infty(f_\xi)).
\end{equation*} 

If $U\cap Sp_4(\Z)$ has a non-trivial center, then it contains $-E_4$. 
In the case when $U$ contains $-E_4$ and $-1\in \nu(U)$, there are two ways to extend a classical form $F$ to an adelic form $\phi$ of 
level $U$. 
A way is explained in (\ref{const-auto}). Since $d=\diag(-1,1,1,-1)\in U,\ \nu(d)=-1$, we have a holomorphic form $F_d$ defined by 
$$F_d(Z):=\overline{F(dZ)}=
\overline{F(
\begin{pmatrix}
-z_1 & z_2 \\
z_2 & -z_3 
\end{pmatrix}
)}.$$
Then $F_d$ can be also extended to an adelic form of 
level $U$.  
Both of them generate the same automorphic representation. This explains a meaning of (\ref{e-factor0}).   

\section{Arthur's invariant trace formula and some calculations}\label{s3}

In this section we make use of Arthur's invariant trace formula, and as in \cite{Shin}, \cite{ST}, we relate the Plancherel measure with the spectral expansion of the trace formula.
Then this leads to the calculation of the geometric side.
In our setting the pseudo-coefficient is chosen in a single discrete series. This causes the contribution from unipotent elements.
This contribution should be understood in terms of endoscopic representations which appear in the spectral side.
In general we do not know how to control the unipotent contribution, but in our case we compute every terms very explicitly.
We will give several estimates for invariants which appear in the trace formula.

Recall the notations $G$, $B$, $T$, and $M_j$ $(j=0,1,2)$ given in Section \ref{smf}.
Set $P_0=B$ and $M_0=T$.
We denote by $N_0$ the unipotent radical of $P_0$.
The Weyl group $W_0^G(=N_G(T)/T)$ for $M_0$ in $G$ is generated by two elements $s_0$ and $s_1$ which satisfies the relations $s_0^2=s_1^2=1$ and $s_0s_1s_0s_1=s_1s_0s_1s_0$.
We put $s_2=s_0s_1s_0$.
Then, we have
\[
W_0^G=\{ 1,\;\; s_0,\;\; s_1,\;\; s_2,\;\; s_0s_1,\;\; s_0s_2,\;\; s_1s_2,\;\; s_0s_1s_2    \}.
\]
For $s_0$ and $s_2$ in $W_0^G$, their representatives $w_{s_0}$ and $w_{s_2}$ in $G(\Q)\cap K$ can be chosen as
\[
w_{s_0}=\left(
\begin{array}{cccc}
0&1&0& 0\\
1&0&0& 0\\
0&0&0& 1\\
0&0&1& 0
\end{array}\right),\ 
w_{s_2}=\left(
\begin{array}{cccc}
1&0&0& 0\\
0&0&0& 1\\
0&0&1& 0\\
0&-1&0& 0
\end{array}\right).
\]
For all elements $s$ in $W_0^G$, we fix their representatives $w_s$ by $w_{s_0}$, $w_{s_2}$, and some products like $w_{s_1}=w_{s_0}w_{s_2}w_{s_0}$.
We also find
\[
W_0^{M_0}=1, \quad W_0^{M_1}=\{ 1, \;\; s_0\}, \quad  W_0^{M_2}=\{ 1, \;\; s_2 \} .
\]
For $s\in W_0^G$ and $H\subset G$, we set $sH=w_sHw_s^{-1}$.
The set of all Levi subgroups containing $M_0$ is given by
\[
\cL=\{ M_0 , \; \; M_1, \; \; s_1M_1, \;\; M_2,  \;\; s_0M_2, \;\; G  \}.
\]

Set
\[
\bK_\inf=\{ \begin{pmatrix}A& B \\ -B&A \end{pmatrix}\in G(\R)  \}, \quad \bK_v=G(\Z_v) \quad (\forall v<\inf).
\]
Then $\bK_v$ is a maximal compact subgroup of $G(\Q_v)$ and $\bK=\prod_v \bK_v$ is also a maximal compact subgroup of $G(\A)$.
We normalize Haar measures $\d k_v$ on $\bK_v$ as $\int_{\bK_v}\d k_v=1$.
A Haar measure $\d k$ on $\bK$ is defined by $\d k=\prod_v \d k_v$.
We also choose Haar measures $\d x_v$ on $\Q_v$ as $\int_{\Z_v}\d x_v=1$ $(v<\inf)$ and the Lebesgue measure $\d x_\inf$ on $\R$.
A Haar measure $\d x$ on $\A$ is defined by $\d x=\prod_v \d x_v$.
For each $M$ in $\cL$, we fix Haar measures on $A_M(\R)^0$ as in \cite[Condition 5.1]{HW}.
Moreover, we fix a Haar measure on $G(\A)$.
By the same manner as in \cite[p.32]{Arthur2} we normalize Haar measures on $M(\A)^1$.

\subsection{Characters of holomorphic discrete series of $Sp_4(\R)$}

We recall character formulas for holomorphic discrete series of $Sp_4(\R)$. 
These are necessary to control the geometric side  $I_{{\rm geom}}(f)$ of Arthur's invariant trace formula. 

Let
\[
t_4(\theta_1,\theta_2)=\begin{pmatrix}\cos\theta_1&0&\sin\theta_1&0 \\ 0&\cos\theta_2&0&\sin\theta_2 \\ -\sin\theta_1&0&\cos\theta_1&0 \\ 0&-\sin\theta_2&0&\cos\theta_2  \end{pmatrix}\quad (\theta_1,\theta_2\in\R).
\]
We define a compact Cartan subgroup $T_4$ of $Sp_4(\R)$ as
\[
T_4=\{ t_4(\theta_1,\theta_2) \mid \theta_1,\theta_2\in\R\}.
\]
We write $T_4^\reg$ the subset of regular elements of $T_4$.
For each $(l_1,l_2)$ in $\Z\oplus\Z$, a function $\Theta_{l_1,l_2}$ on $T_4^\reg$ is defined by
\begin{equation}\label{ec1}
\Theta_{l_1,l_2}(t_4(\theta_1,\theta_2))=\frac{-e^{il_1\theta_1+il_2\theta_2}+e^{il_2\theta_1+il_1\theta_2}}{(e^{i\theta_1}-e^{-i\theta_1})(e^{i\theta_2}-e^{-i\theta_2})( 1 - e^{i\theta_1+i\theta_2})(e^{-i\theta_1}-e^{-i\theta_2})}.
\end{equation}
Assume that $(l_1,l_2)$ satisfies $l_1>l_2>0$.
Then, there exists a unique holomorphic discrete series $D_{l_1,l_2}$ of $Sp_4(\R)$ whose character equals $\Theta_{l_1,l_2}$ (cf. \cite[Theorem 12.7]{Knapp}).
The parameter $(l_1,l_2)$ is called the Harish-Chandra parameter for discrete series representations.

Throughout this section, we use the Harish-Chandra parameter $(l_1,l_2)$ to describe holomorphic discrete series representation 
instead of the minimal $\bK_\inf$-type $(k_1,k_2)$. Since $(k_1,k_2)=(l_1+1,l_2+2)$, one can easily convert the results from one to the other. 

For $a$, $a_1$, $a_2$, $\theta$ in $\R$, we set
\[
t_0(a_1,a_2)=\diag(e^{a_1},e^{a_2},e^{-a_1},e^{-a_2}),
\]
\[
t_1(a,\theta)=\begin{pmatrix}e^a \cos\theta&e^a \sin\theta&0&0 \\ -e^a \sin\theta&e^a \cos\theta&0&0 \\ 0&0&e^{-a} \cos\theta&e^{-a} \sin\theta \\ 0&0& -e^{-a} \sin\theta&e^{-a} \cos\theta \end{pmatrix},\]
\[
t_2(a,\theta)=\begin{pmatrix}e^a&0&0&0 \\ 0&\cos\theta&0&\sin\theta \\ 0&0&e^{-a}&0 \\ 0&-\sin\theta&0&\cos\theta \end{pmatrix}.
\]
It is known that the group $Sp_4(\R)$ has the four Cartan subgroups $T_0$, $T_1$, $T_2$, $T_4$ up to conjugation, where
\[
T_0=\{t_0(a_1,a_2) \mid a_1,a_2\in\R_{>0}\}\subset M_0(\R),
\]
\[
T_j=\{ t_j(a,\theta) \mid a\in\R_{>0},\;\; \theta\in\R\} \subset M_j(\R)\quad (j=1,2).
\]
Let $T_j^\reg$ denote the set of regular elements of $T_j$.
A character formula of $\Theta_{l_1,l_2}$ on $T_j^\reg$ is known (cf. \cite{Martens,Hecht,Hirai}).
Here we summarize it briefly.
For $t_0(a_1,a_2)$ in $T_0^\reg$, we have
\begin{equation}\label{ec2}
\Theta_{l_1,l_2}(t_0(a_1,a_2))=\frac{ \{-e^{-l_1|a_1| -l_2|a_2|} + e^{-l_2|a_1| -l_1|a_2|}\} \times \sgn(a_1a_2) }{(e^{a_1}-e^{-a_1})(e^{a_2}-e^{-a_2})(1-e^{a_1+a_2})(e^{-a_1}-e^{-a_2})}
\end{equation}
if $|a_1|>|a_2|>0$.
For $t_0(a_1,a_2)$ in $T_0^\reg$ and $\delta_1=\diag(1,-1,1,-1)$, we get
\begin{equation}\label{ec3}
\Theta_{l_1,l_2}(\delta_1 t_0(a_1,a_2))=\frac{ \{ -(-1)^{l_2} e^{-l_1|a_1| -l_2|a_2|} + (-1)^{l_1}e^{-l_2|a_1| -l_1|a_2|} \} \times \sgn(a_1a_2) }{(e^{a_1}-e^{-a_1})(-e^{a_2}+e^{-a_2})(1+e^{a_1+a_2})(e^{-a_1}+e^{-a_2}) }.
\end{equation}
For $t_1(a,\theta)$ in $T_1^\reg$, we have
\begin{equation}\label{ec4}
\Theta_{l_1,l_2}(t_1(a,\theta))=\frac{\{ - e^{-l_1(|a|+i\theta) -l_2(|a|-i\theta)}+ e^{-l_2(|a|+i\theta) -l_1(|a|-i\theta)} \}\times \sgn(a)}{(e^{a+i\theta}-e^{-a-i\theta})(e^{a-i\theta}-e^{-a+i\theta})(1-e^{2a})(e^{-a-i\theta}-e^{-a+i\theta})}.
\end{equation}
For $t_2(a,\theta)$ in $T_2^\reg$, we have
\begin{equation}\label{ec5}
\Theta_{l_1,l_2}(t_2(a,\theta))=\frac{ \{  e^{-l_1|a|+il_2\theta } - e^{-l_2|a| + il_1\theta } \}\times \sgn(a) }{(e^{i\theta}-e^{-i\theta})(e^{a}-e^{-a})(1-e^{i\theta+a})(e^{-a}-e^{-i\theta })}.
\end{equation}
Since $\Theta_{l_1,l_2}(-\gamma)=(-1)^{l_1+l_2}\Theta_{l_1,l_2}(\gamma)$ and $\Theta_{l_1,l_2}(g^{-1}\gamma g)=\Theta_{l_1,l_2}(\gamma)$ $(g$, $\gamma\in Sp_4(\R))$, the formulas \eqref{ec1}$\sim$\eqref{ec5} cover all cases for regular semisimple elements in $Sp_4(\R)$.

A closed subgroup $Sp_4^\pm(\R)$ of $G(\R)=GSp_4(\R)$ is defined by
\[
Sp_4^\pm(\R)=\left\{g\in GL_4(\R)\ \Bigg|\ g \begin{pmatrix} O_2&E_2 \\ -E_2&O_2 \end{pmatrix} \,^tg = \pm\begin{pmatrix} O_2&E_2 \\ -E_2&O_2 \end{pmatrix}  \right\}.
\]
Note that an isomorphism $G(\R)\cong A_{G,\infty}\times Sp_4^\pm(\R)$ holds.
We write $\Theta_{l_1,l_2}^{{\rm hol}}$ for the holomorphic discrete series $D^{{\rm hol}}_{l_1,l_2}$ of $G(\R)$ $(l_1>l_2>0)$.
For the algebraic representation $\xi$ of $G(\R)$ corresponding to $(l_1,l_2)$, we denote by $\chi_\xi$ the central character of $\xi$. 
For $\delta=\diag(1,1,-1,-1)\in Sp_4^\pm(\R)$, it is obvious that
\[
Sp_4^\pm(\R)=Sp_4(\R)\sqcup Sp_4(\R)\delta.
\]
Hence, considering the action of $\delta$, one finds
\begin{equation}\label{charGSp}
\Theta_{l_1,l_2}^{{\rm hol}}(zg)=\chi_\xi(z)^{-1} \times\{\Theta_{l_1,l_2}(g)+\overline{\Theta_{l_1,l_2}(g)}\} \quad (z\in A_{G,\infty}, \;\; g\in Sp_4(\R)).
\end{equation}
\begin{lem}\label{clv}
We get $\Theta_{l_1,l_2}^{{\rm hol}}(\gamma)=0$ for any regular semisimple element $\gamma$ in $A_{G,\inf}Sp_4(\R)\delta$.
\end{lem}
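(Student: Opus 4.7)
\textit{Proof plan.} Set $H := A_{G,\infty}Sp_4(\R)$, which is the identity component of $G(\R) = GSp_4(\R)$, so that $[G(\R):H] = 2$ with non-trivial coset $H\delta$. Formula \eqref{charGSp} can be rewritten as $\Theta_{l_1,l_2}^{{\rm hol}}|_H = \chi_{\sigma^+} + \chi_{\sigma^-}$, where $\sigma^\pm$ are the irreducible representations of $H \cong A_{G,\infty} \times Sp_4(\R)$ which act by $\chi_\xi^{-1}$ on $A_{G,\infty}$ and by $D_{l_1,l_2}$ (resp.\ its contragredient $D_{l_1,l_2}^{\vee}$, whose character is $\overline{\Theta_{l_1,l_2}}$) on $Sp_4(\R)$. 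Since $l_1 > l_2 > 0$, the functions $\Theta_{l_1,l_2}$ and $\overline{\Theta_{l_1,l_2}}$ already differ on $T_4^{\reg}$ (compare \eqref{ec1}), hence $\sigma^+ \not\simeq \sigma^-$.

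Next, I apply the Clifford/Mackey dichotomy to the index-two subgroup $H \subset G(\R)$. The irreducibility of $D_{l_1,l_2}^{{\rm hol}}$ over $G(\R)$, combined with the fact that its restriction to $H$ is a sum of two non-isomorphic irreducibles, forces the intertwiner $D_{l_1,l_2}^{{\rm hol}}(\delta)$ to interchange the subspaces $V_{\sigma^+}$ and $V_{\sigma^-}$ in the decomposition $V = V_{\sigma^+} \oplus V_{\sigma^-}$. Indeed, if $\delta$ instead preserved each summand, then each $\sigma^\pm$ would extend to a representation of $G(\R)$ and $D_{l_1,l_2}^{{\rm hol}}$ would be reducible, a contradiction. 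Equivalently, $D_{l_1,l_2}^{{\rm hol}} \simeq {\rm Ind}_H^{G(\R)} \sigma^+$.

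Consequently, for every $h \in H$ the operator $D_{l_1,l_2}^{{\rm hol}}(h\delta) = D_{l_1,l_2}^{{\rm hol}}(h)D_{l_1,l_2}^{{\rm hol}}(\delta)$ is anti-block-diagonal with respect to $V_{\sigma^+} \oplus V_{\sigma^-}$. Integration against any test function $f \in C_c^\infty(H\delta)$ produces a trace-class operator $D_{l_1,l_2}^{{\rm hol}}(f)$ of the same anti-block-diagonal shape, which therefore has trace $0$. Thus the distributional character $\Theta_{l_1,l_2}^{{\rm hol}}$ vanishes on the open subset $A_{G,\infty}Sp_4(\R)\delta \subset G(\R)$, and Harish-Chandra's regularity theorem (the character is a locally integrable function, real-analytic on the regular set) upgrades this to the pointwise vanishing on regular semisimple elements asserted in the lemma.

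\textit{Main obstacle.} The delicate step is the Clifford/Mackey argument for the disconnected reductive Lie group $G(\R)$ applied to an infinite-dimensional unitary representation. The crucial input extracted from \eqref{charGSp} is the non-isomorphism $\sigma^+ \not\simeq \sigma^-$; this is precisely what rules out the alternative in which $\delta$ stabilizes each summand, and is what ultimately forces the character to vanish on $H\delta$.
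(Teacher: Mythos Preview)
Your proof is correct and follows essentially the same approach as the paper's. The paper carries out the argument by explicitly writing the representation space of $D^{{\rm hol}}_{l_1,l_2}$ as $H_{l_1,l_2}\oplus H_{l_1,l_2}$ with $\delta$ acting by the swap $(v_1,v_2)\mapsto(v_2,v_1)$ (so that the second summand carries the anti-holomorphic discrete series $g\mapsto D_{l_1,l_2}(\delta g\delta)$), and then reads off the vanishing of diagonal matrix coefficients on $H\delta$; you reach the identical conclusion by invoking the Clifford/Mackey dichotomy abstractly, which is exactly what the explicit construction verifies.
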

\begin{proof}
Let $H_{l_1,l_2}$ denote a representation space of $D_{l_1,l_2}$.
There exists an anti-holomorphic discrete series $\overline{D_{l_1,l_2}}$ with the same infinitesimal character as $D_{l_1,l_2}$.
The Hilbert space $H_{l_1,l_2}$ is also regarded as a representation space of $\overline{D_{l_1,l_2}}$, because $\overline{D_{l_1,l_2}}$ can be defined by $\overline{D_{l_1,l_2}}(g)v=D_{l_1,l_2}(\delta g\delta)v$ $(v\in H_{l_1,l_2})$.
Therefore, the space $H_{l_1,l_2}\oplus H_{l_1,l_2}$ becomes a representation space of $D^{{\rm hol}}_{l_1,l_2}$.
Namely, we have
\[
D^{{\rm hol}}_{l_1,l_2}(g)(v_1,v_2)=(D_{l_1,l_2}(g)v_1,D_{l_1,l_2}(\delta g\delta)v_2) , \quad D^{{\rm hol}}_{l_1,l_2}(\delta)(v_1,v_2)=(v_2,v_1)
\]
for each vector $(v_1,v_2)$ in $H_{l_1,l_2}\oplus H_{l_1,l_2}$ and each element $g\in Sp_4(\R)$.
By an orthonormal basis $\{v_j\}_{j=1}^\inf$ of $H_{l_1,l_2}$, we can choose an orthonormal basis $\{ (v_j,0) , \;\; (0,v_k) \mid j,k=1,2,\dots  \}$ of $H_{l_1,l_2}\oplus H_{l_1,l_2}$.
Hence, for any function $f$ in $C_c^\inf(GSp_4(\R))$ whose support is contained in $A_{G,\infty}Sp_4(\R)\delta$, it follows that
\[
\langle D_{l_1,l_2}^{{\rm hol}}(f)(v_j,0)\, , \, (v_j,0) \rangle = \langle D_{l_1,l_2}^{{\rm hol}}(f)(0,v_k)\, , \, (0,v_k) \rangle=0 \quad (j,k=1,2,\dots).
\]
This is obvious if one sees the action of $D_{l_1,l_2}^{{\rm hol}}(\delta)$.
Thus, this lemma is proved.
\end{proof}

Let $D^M(\gamma)$ denote the Weyl denominator of $\gamma$ in $M(\R)$ and let $W(M,T)$ denote the Weyl group with respect to a torus $T$ in $M$ over $\R$.
For each $0\leq j\leq 2$, we set $M=M_j$ and $T$ is a torus in $M$ over $\R$ such that $T(\R)=T_j \sqcup (-T_j)$ when $j=1$ or $2$, and $T(\R)=M_0(\R)= T_0\sqcup (-T_0)\sqcup \delta_1 T_0\sqcup (-\delta_1)T_0$ when $j=0$.
In case of $M=G$, there exists a torus $T$ over $\R$ such that $T(\R)=T_4$.
We say that $\Theta_{l_1,l_2}^{{\rm hol}}$ is stable for $M$ if $|D^{M}(\gamma)|^{-1/2}|D^G(\gamma)|^{1/2}\Theta^{{\rm hol}}_{l_1,l_2}(\gamma)$ on regular elements $\gamma$ of $T(\R)$ is a finite, $W(M,T)$-invariant linear combination of quasi-characters.
This condition is the same as the assumption for $\Phi(\gamma)$ in \cite[Lemma 4.1 in p.271]{Arthur1}.
\begin{lem}\label{cst}
The character $\Theta_{l_1,l_2}^{{\rm hol}}$ is stable for $M_0$, $M_1$, $M_2$, and is not stable for $G$.
\end{lem}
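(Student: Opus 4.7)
The plan is to verify the four assertions by a direct case-by-case computation from the explicit formulas \eqref{ec1}--\eqref{ec5}, the $GSp_4$-extension formula \eqref{charGSp}, and the vanishing Lemma \ref{clv}. For each Levi $M$ and the associated Cartan $T\subset M$ specified just before the lemma, I would set
\[
\Phi_M(\gamma) := |D^M(\gamma)|^{-1/2}\,|D^G(\gamma)|^{1/2}\,\Theta^{\rm hol}_{l_1,l_2}(\gamma),
\]
and check whether $\Phi_M$, restricted to $T(\R)^{\reg}$, extends to a finite $W(M,T)$-invariant $\C$-linear combination of quasi-characters of $T$.

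First I would treat $M=M_j$ with $j\in\{0,1,2\}$. The group $T(\R)$ has two (respectively four, when $j=0$) connected components coming from the $\pm 1$ factor (resp.\ $\pm 1$, $\pm\delta_1$); on the $(-T_j)$-component I would use the functional equation $\Theta^{\rm hol}_{l_1,l_2}(-g)=(-1)^{l_1+l_2}\Theta^{\rm hol}_{l_1,l_2}(g)$, and for $j=0$ I would also use \eqref{ec3} on the $\delta_1 T_0$-component. On the principal component I apply \eqref{ec2}/\eqref{ec4}/\eqref{ec5} together with \eqref{charGSp}. The key arithmetic observation is that the denominator appearing in each of \eqref{ec2}--\eqref{ec5} factorizes as a product over the positive roots of $G$ at $\gamma$, splitting into the roots of $M_j$ (contributing $|D^{M_j}(\gamma)|^{1/2}$ up to a unitary or central twist) and those outside $M_j$ (contributing $|D^G(\gamma)|^{1/2}/|D^{M_j}(\gamma)|^{1/2}$); these cancel with the prefactor $|D^{M_j}|^{-1/2}|D^G|^{1/2}$ and leave, up to a sign, twice the real part of the numerator. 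That numerator is a two-term expression corresponding to the $W(M_j,T)$-orbit of the highest weight $(l_1,l_2)$ (plus its complex conjugate for $j=1,2$), so the result is a finite $W(M_j,T)$-invariant $\Z$-linear combination of quasi-characters of $T$.

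For $M=G$ the torus $T$ satisfies $T(\R)=T_4$ and the prefactor $|D^G|^{-1/2}|D^G|^{1/2}$ equals $1$, so stability would force $\Theta^{\rm hol}_{l_1,l_2}|_{T_4^{\reg}}$ itself to be a finite $W(G,T)$-invariant combination of quasi-characters of $T$. By \eqref{ec1} and \eqref{charGSp}, after clearing the common denominator one extracts only four of the eight exponentials in the $W(G,T_4)$-orbit $\{(\pm l_1,\pm l_2),(\pm l_2,\pm l_1)\}$ of $(l_1,l_2)$, namely $\pm(l_1,l_2)$ and $\pm(l_2,l_1)$. A $W(G,T_4)$-invariant combination would require all eight exponentials with the correct alternating signs; since the Weyl denominator in \eqref{ec1} is fixed and cannot supply the missing four, stability fails. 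This matches the fact that only the full $L$-packet sum $\Theta^{\rm hol}_{l_1,l_2}+\Theta^{\rm large}_{l_1,l_2}$ is stable on the compact Cartan.

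The main obstacle is the careful bookkeeping of the factors $\sgn(a)$, $\sgn(a_1a_2)$, and the absolute values $|a|,|a_1|,|a_2|$ across the various Weyl chambers and connected components of $T(\R)$. In particular for $M_0$ one has to handle the four components $T_0$, $-T_0$, $\delta_1T_0$, $-\delta_1T_0$ and the two sub-chambers $|a_1|>|a_2|$ versus $|a_2|>|a_1|$, and verify that after multiplying by the prefactor these piecewise definitions patch into a single expression on $T(\R)^{\reg}$ that is a finite linear combination of genuine quasi-characters of $T$. Beyond this bookkeeping, the cancellations are essentially forced by the shape of the denominators in \eqref{ec2}--\eqref{ec5}.
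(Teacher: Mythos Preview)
Your proposal is correct and follows essentially the same approach as the paper: the paper's proof is the single sentence ``This can be proved by using \eqref{ec1}$\sim$\eqref{ec5}, \eqref{charGSp} and Lemma \ref{clv},'' and you have spelled out what that case-by-case verification actually entails. Your argument for the failure at $M=G$ (that the numerator in \eqref{ec1}, doubled to four terms by \eqref{charGSp}, is not $W(G,T_4)$-anti-invariant, hence the ratio cannot be a finite $W(G,T_4)$-invariant combination of quasi-characters) is valid and is precisely the kind of check the paper leaves to the reader.
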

\begin{proof}
This can be proved by using \eqref{ec1}$\sim$\eqref{ec5}, \eqref{charGSp} and Lemma \ref{clv}.
\end{proof}

\subsection{Spectral side}\label{spectral}

Let $\xi$ be an irreducible algebraic representation of $G(\R)$ with the highest weight $(k_1,k_2)$ with $k_1\ge k_2\ge 3$, and
$D^{{\rm hol}}_{l_1,l_2}$ denote the holomorphic discrete series representation of $G(\R)$ with 
the Harish-Chandra parameter $(l_1,l_2)$ so that the central character is same as $\xi^{\vee}$ on $A_{G,\infty}$, where $(l_1,l_2)=(k_1-1,k_2-2)$.
Choose a test function $h$ in $C_c^\inf(G(\A_\fin))$ and we write $f_\xi$ for a pseudo-coefficient of $D_{l_1,l_2}^{\rm hol}$.
If we set
\begin{equation}\label{test-funct}
f=f_\xi h 
\end{equation}
then $f$ belongs to the Hecke algebra of $\bK$-finite functions in $C_c^\inf(G(\A)^1)$.

By \eqref{pseudo}, for $\underline{l}=(l_1,l_2)$, we set
\begin{equation}\label{hol-coeff}
\Pi(\underline{l},\xi)= \begin{cases} \{ \,  D^{{\rm hol}}_{l_1,l_2} \, \} & \text{if $l_2>1$,}\\
 \{ \,  D^{{\rm hol}}_{l_1,l_2}, \; \omega_{l_1} \, \} & \text{if $l_1>2$ and $l_2=1$,} \\
  \{ \,  D^{{\rm hol}}_{l_1,l_2}, \; \omega_2, \; \trep \, \} & \text{if $(l_1,l_2)=(2,1)$.} 
\end{cases}
\end{equation}
They correspond to 
(1) $k_1\ge k_2\ge 4$, (2) $k_1>k_2=3$, and (3) $k_1=k_2=3$, respectively in terms of the classical weight for Siegel modular forms.
Then the spectral side of Arthur's invariant trace formula for $f$ is
\begin{equation}\label{spectral}
I_{{\rm spec}}(f)=\sum_{\pi=\pi_\inf\otimes\pi_\fin\in \widehat{G(\A)}, \; \pi_\inf\in\Pi(\underline{l},\xi) } m_{\rm disc}(\pi) \tr(\pi_\infty(f_\xi))
 \,  \tr(\pi_\fin(h))
\end{equation}
where $m_{{\rm disc}}(\pi)$ denotes the multiplicity of $\pi$ in the discrete spectrum of $L^2_{{\rm disc}}(G(\Q)\bsl G(\A),\chi_{\xi^\vee})$ and the unramified Hecke action inside $\tr(\pi_\fin(h))$ is normalized as \eqref{HLL}.  
For the proof of this expansion, we refer to \cite[Section 3]{Arthur1}.
It is fortunate that cohomological, non-holomorphic Saito-Kurokawa representations do not appear in this case.
If $f_\xi$ is a pseudo-coefficient of a large discrete series whose parameter satisfies $|l_1-l_2|=1$, then it appears on the spectral side.

We are now ready to relate the above measures to the spectral side $I_{{\rm spec}}(f)$ and also to the geometric side in 
Arthur's trace formula, as in \cite[Proposition 4.2]{Shin}.

\begin{prop}\label{trace} 
Let $S'$ be a finite set of finite places of $\Q$. 
For any compact subgroup $U$ of $G(\A^{S',\infty})$ and $f_{S'}=\ds\sum_ia_if_{S',\alpha_i}\in C^\infty_c(G(\Q_{S'}))$,  
$f_{S',\alpha_i}=[G(\Z_{S'})\alpha_iG(\Z_{S'})],\ \alpha_i\in T(\Q_{S'})$, 

\begin{eqnarray*}
&&\ds\sum_ia_i \frac{I_{{\rm geom}}(f_{U}f_{S',\alpha_i}f_{\xi})}{\ve_{S'}(\alpha_i)\overline{\mu}(G(\Q)A_{G,\infty}\bs G(\A))\dim \xi}
=\ds\sum_ia_i \frac{I_{{\rm spec}}(f_{U}f_{S',\alpha_i}f_{\xi})}{\ve_{S'}(\alpha_i)\overline{\mu}(G(\Q)A_{G,\infty}\bs G(\A))\dim \xi}\\
&&=\begin{cases} 
\widehat{\mu}_{U,\xi_{\underline{k}},D^{{\rm hol}}_{l_1,l_2}}(\widehat{f}_{S'})), &\text{if $l_2>1$,}\\
\widehat{\mu}_{U,\xi_{\underline{k}},D^{{\rm hol}}_{l_1,l_2}}(\widehat{f}_{S'}))+\ds\sum_{\ast\in\{{\rm cusp},{\rm res}\}} \widehat{\mu}_{U,\xi_{\underline{k}},\omega_{l_1},\ast}(\widehat{f}_{S'}), &\text{if $l_1>2$ and $l_2=1$,}\\
\widehat{\mu}_{U,\xi_{\underline{k}},D^{{\rm hol}}_{l_1,l_2}}(\widehat{f}_{S'}))+\widehat{\mu}_{U,\xi_{\underline{k}},\bold{1}}(\widehat{f}_{S'})+\ds\sum_{\ast\in\{{\rm cusp},{\rm res}\}} \widehat{\mu}_{U,\xi_{\underline{k}},\omega_{l_1},\ast}(\widehat{f}_{S'}), 
&\text{if $(l_1,l_2)=(2,1)$},
\end{cases}
\end{eqnarray*}
where $f_U$ is the characteristic function of $U$ and 
the factor $\ve_{S'}(\alpha_i)$ is defined by (\ref{e-factor0}).
\end{prop}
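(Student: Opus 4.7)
The plan is to apply Arthur's invariant trace formula to the test function $f = f_U f_{S'} f_\xi$ with $f_{S'} = \sum_i a_i f_{S',\alpha_i}$. The first equality $I_{\rm geom}(f) = I_{\rm spec}(f)$ is just Arthur's theorem, so the real content of the proposition is identifying the normalized spectral side with the measures defined in Section 4.5. I therefore focus on the second equality.

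First I would decompose the spectral side \eqref{spectral} according to the admissible infinity types. By the pseudo-coefficient identity \eqref{pseudo}, the only $\pi_\infty$ contributing are those in $\Pi(\underline{l},\xi)$, which depends on $(l_1,l_2)$ as listed in \eqref{hol-coeff}. For each such infinity type I would split $m_{\rm disc}(\pi) = m_{\rm cusp}(\pi) + m_{\rm res}(\pi)$ and identify which piece can be nonzero: since $D^{\rm hol}_{l_1,l_2}$ is tempered and every local factor of a residual representation is non-tempered (as recorded in Section 4.1), we have $m_{\rm res}(\pi) = 0$ whenever $\pi_\infty = D^{\rm hol}_{l_1,l_2}$, so only the cuspidal piece contributes for this infinity type. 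For $\pi_\infty = \omega_{l_1}$ both cuspidal and residual pieces can occur (the residual part coming from the Klingen parabolic as in Section 4.2), and for $\pi_\infty = \trep$ only the residual spectrum contributes by the remark following \eqref{residual}.

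Next I would match normalizations. Factoring $\tr(\pi_{\rm fin}(f_U f_{S',\alpha_i})) = \tr(\pi^{S',\infty}(f_U))\cdot \tr(\pi_{S'}(f_{S',\alpha_i}))$ and comparing with the definition \eqref{n-dirac} of the normalized Dirac mass $\delta_{\pi^0_{S'},\xi}$, one sees that the factor $|\nu(\alpha_i)|_{S'}^{-(k_1+k_2)/2}\varepsilon_{S'}(\alpha_i)^{-1}$ appearing in \eqref{n-dirac} is exactly compensated by the denominator $\varepsilon_{S'}(\alpha_i)\,\overline{\mu}(G(\Q)A_{G,\infty}\backslash G(\A))\,\dim\xi$ in the statement, once one absorbs the volume factor $\mu^{S',\infty}(U)$ appearing in the definition \eqref{measures} of $\widehat{\mu}_{U,\xi,*}$. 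Summing the resulting contributions over the admissible infinity types then yields the three cases of the proposition.

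The main bookkeeping obstacle is the factor $\varepsilon_{S'}(\alpha_i)$. It arises because when $-E_4 \in U \cap Sp_4(\Z)$ and $-1 \in \nu(U)$, a classical Siegel form $F$ admits two distinct adelic lifts of level $U$ generating the same cuspidal representation, namely the lift via \eqref{const-auto} and the one coming from $F_d$; the spectral-side multiplicity is counted per representation, but the Hecke-operator normalization \eqref{local-global} implicit in \eqref{n-dirac} is calibrated classically and thus double-counts by the factor $2$ precisely in this situation. Tracking this factor alongside the $|\nu(\alpha)|_{S'}^{-(k_1+k_2)/2}$ normalization from \eqref{local-global} is the delicate step; once it is in place, the identification with $\widehat{\mu}_{U,\xi,D^{\rm hol}_{l_1,l_2}}(\widehat{f}_{S'})$ and the auxiliary measures $\widehat{\mu}_{U,\xi,\omega_{l_1},*}$, $\widehat{\mu}_{U,\xi,\trep}$ becomes a direct matching of \eqref{spectral} with \eqref{measures}.
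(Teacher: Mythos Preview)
Your proposal is correct and follows the same approach as the paper. The paper's own proof is a single line: ``The claim follows from the definition and $\delta_{\pi^0_{S'}}(\widehat{f}_{S'})=\int_{\widehat{G(\Q_{S'})}}\widehat{f}_{S'}\, d\delta_{\pi^0_{S'}}=\tr(\pi^0_{S'}(f_{S'}))$.'' Your write-up simply fills in the details behind this sentence --- decomposing the spectral side \eqref{spectral} by the infinity types in $\Pi(\underline{l},\xi)$, invoking the vanishing of $m_{\rm res}$ for tempered $\pi_\infty$ and of $m_{\rm cusp}$ for $\pi_\infty=\trep$ (facts the paper established in Sections~4.1--4.2 but does not re-cite here), and matching with the definitions \eqref{measures}, \eqref{mult}, \eqref{n-dirac}. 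One small point: your sentence about the factor $|\nu(\alpha_i)|_{S'}^{-(k_1+k_2)/2}$ being ``compensated by the denominator'' is imprecise, since the denominator in the proposition carries only $\varepsilon_{S'}(\alpha_i)$; that $|\nu|$ factor is simply part of the definition of $\delta_{\pi^0_{S'},\xi}$ on the measure side and need not be separately justified at this stage --- the proposition is a definitional rewriting, not a computation, which is why the paper's proof is so brief.
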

\begin{proof} The claim follows from the definition and 
$\delta_{\pi^0_{S'}}(\widehat{f}_{S'})=\ds\int_{\widehat{G(\Q_{S'})}}\widehat{f}_{S'} d\delta_{\pi^0_{S'}}=
\tr(\pi^0_{S'}(\widehat{f}_{S'}))$. 
\end{proof}

\subsection{Geometric side}\label{gs1}
Fix a finite set $S'$ of finite places.  
Let $S_0$ be a finite set of finite places containing $S'$ and put  $S=S_0\cup\{\inf\}$. 
We will consider Hecke operators for $G(\Q_{S'})$ while we will vary $S_0$ and hence $S$ in the geometric side of the 
Arthur's trace formula. Put $q(G(\R))=\frac{1}{2}\dim G(\R)/K_\infty A_{G,\infty}=3$. 
Assume that $S$ is sufficiently large.
Then, the geometric side of Arthur's invariant trace formula is, for $f=f_\xi h$ as in (\ref{test-funct}),
\begin{equation}\label{geom}
I_{{\rm geom}}(f)=\sum_{M\in\cL}(-1)^{q(G(\R))+\dim(A_M/A_G)}\frac{|W_0^M|}{|W_0^G|}\sum_{\gamma\in (M(\Q))_{M,S}}a^M(S,\gamma)\, I_M^G(\gamma,f_\xi)\, J_M^M(\gamma,h_P),
\end{equation}
where $(M(\Q))_{M,S}$ denotes the set of $(M,S)$-equivalence classes in $M(\Q)$ (cf. \cite[p.113]{Arthur2}) 
which turns out to be a finite set, for each $M$ in $\cL$ we choose a parabolic subgroup $P$ such that $M$ is a Levi subgroup of $P$, and we set
\begin{equation}\label{hp}
h_P(m)=\delta_P(m)^{1/2}\int_{\bK_{S_0}}\int_{N_P(\Q_{S_0})} h(k^{-1}mnk)\, \d n \, \d k \qquad (m\in M(\Q_{S_0}))
\end{equation}
($\bK_{S_0}=\prod_{v\in S_0}\bK_v$ and $\delta_P$ is the modular function of $P(\A)$). 
Regarding $ J_M^M(\gamma,h_P)$ it follows from (\ref{hp}) that  $h_P(m)=0$ unless  
 $k^{-1}m^{-1}\gamma mnk\in {\rm Supp}(h)$ where $m\in M(\Q_{S_0}),\ k\in K_{S_0}$, and $n\in N_P(\Q_{S_0})$. 
This implies 
\begin{equation}\label{n-gamma}
\nu(\gamma)\in \nu({\rm Supp}(h)). 
\end{equation}
For the definitions of the invariant weighted orbital integral $I_M^G(\gamma,f_\xi)$ and the orbital integral $J_M^M(\gamma,h_P)$, we refer to \cite[Sections 18 and 23]{Arthur2}.
The factor $a^M(S,\gamma)$ is called the global coefficient (see \cite[Sections 19]{Arthur2}).
We will later give their details for some cases which are necessary to explain our estimations.

Let $Z_G$ denote the center of $G$.
For each element $\gamma$ in $G(\A)$, we write $\{\gamma\}_G$ for the $G(\Q)$-conjugacy class of $\gamma$.
For convenience, we set
\[
u_\min(x)=\begin{pmatrix}1&0&x&0 \\ 0&1&0&0 \\ 0&0&1&0 \\ 0&0&0&1 \end{pmatrix}, \quad  \delta_1(x,y)=\begin{pmatrix}1&x&0&y \\ 0&-1&-y&0 \\ 0&0&1&0 \\ 0&0&x&-1 \end{pmatrix},
\]
\[
u_\min=u_\min(1), \quad \delta_1=\delta_1(0,0).
\]
It is clear that all minimal unipotent elements belong to $\{ u_\min\}_G$.
Let $\gamma$ be an element of $G(\Q)$.
If $\gamma$ is a semisimple element whose diagonalization is $\delta_1$, then $\gamma$ is $G(\Q)$-conjugate to $\delta_1$.
This fact can be proved by using the Galois cohomology.

To study concretely the geometric side, we separate the sum into the following seven types:
\[
I_{{\rm geom}}(f)=I_1(f)+I_2(f)+I_3(f)+I_4(f)+I_5(f)+I_6(f)+I_7(f)
\]
where 
\begin{itemize}
\item $I_1(f)$: $M=G$ and $\gamma\in Z_G(\Q)$,
\item $I_2(f)$: $M=G$ and $\gamma\in Z_G(\Q)\{u_\min\}_G$,
\item $I_3(f)$: $M=G$ and $\gamma\in Z_G(\Q)\{\delta_1\}_G$,
\item $I_4(f)$: $M=G$ and $\gamma$ is semisimple and $\gamma\not\in Z_G(\Q)\sqcup Z_G(\Q)\{\delta_1\}_G$, 
\item $I_5(f)$: $M=G$ and $\gamma$ is not semisimple and $\gamma\not\in Z_G(\Q)\{u_\min\}_G$,
\item $I_6(f)$: $M\not=G$ and $\gamma$ is semisimple,
\item $I_7(f)$: $M\not=G$ and $\gamma$ is non-semisimple.
\end{itemize}
The main term will be $I_1(f)$ and the second main term will be $I_2(f)$ in general, but also $I_3(f)$ in weight aspect.
The terms $I_4(f)$, $I_5(f)$, $I_6(f)$ and $I_7(f)$ never contribute to both the main term and the second main term in any aspect.
We will estimate each terms after the detailed studies of invariants $a^M(S,\gamma)$, $I_M^G(\gamma,f_\xi)$, and $J_M^M(\gamma,h_P)$. 
Since we clearly know
\[
I_1(f)=(-1)^{q(G(\R))}\sum_{z\in Z_G(\Q)}\mathrm{vol}(G(\Q)\bsl G(\A)^1)\, f(z), 
\]
we do not discuss it throughout this section.

If $M=G$, then $I_G^G(\gamma)=J_G^G(\gamma)$ and $h_Q=h$.
For simplicity, we set
\[
J_G(\gamma,f_\xi)=I_G^G(\gamma,f_\xi), \quad J_G(\gamma,h)=J_G^G(\gamma,h).
\]

\subsection{Some measures concerning $I_2(f)$ and $I_3(f)$}\label{I2I3}

We choose some measures on centralizers to calculate explicitly the orbital integrals $J_G(zu_\min,f_\xi)$ and $J_G(z\delta_1,f_\xi)$.
The centralizer $G_{u_\min}$ of $u_\min$ in $G$ is given by
\[
G_{u_\min}=\{z\begin{pmatrix}1&*&*&* \\ 0&1&*&0 \\ 0&0&1&0 \\ 0&0&*&1 \end{pmatrix} \begin{pmatrix}1&0&0&0 \\ 0&*&0&* \\ 0&0&1&0 \\ 0&*&0&* \end{pmatrix}\in G\} .
\]
The centralizer $G_{\delta_1}$ of $\delta_1$ satisfies
\[
G_{\delta_1}=\{  \begin{pmatrix}*&0&*&0 \\ 0&*&0&* \\ *&0&*&0 \\ 0&*&0&* \end{pmatrix}  \in G  \} \cong \{ (g_1,g_2)\in GL_2\times GL_2 \mid \det(g_1)=\det(g_2)   \}.
\]
If we want to determine $J_G(zu_\min,f_\xi)$ and $J_G(z\delta_1,f_\xi)$ precisely, we should choose measures on the centralizers.
A Haar measure on $\R_{>0}$ is chosen by $x^{-1}\d x$ and a Haar measure on $SL_2(\R)$ is fixed by $(2\pi)^{-1}v^{-3}\d u \, \d v \, \d \theta$ for $\begin{pmatrix}1&u \\ 0&1 \end{pmatrix}\begin{pmatrix}v&0 \\ 0&v^{-1} \end{pmatrix}\begin{pmatrix}\cos\theta& \sin\theta \\ -\sin\theta &\cos\theta \end{pmatrix}$.
On the groups $\{\pm E_4\}$ and $\{E_4,\; \delta\}$, we take the counting measure.
By the isomorphism $G_{u_\min}(\R)\cong \{\pm E_4 \}\times \R_{>0}\times (\R^3 \ltimes SL_2(\R))$ (resp. $G_{\delta_1}(\R)\cong\{E_4,\; \delta\}\rtimes(\R_{>0}\times SL_2(\R)\times SL_2(\R))$), we obtain a Haar measure on $G_{u_\min}(\R)$ (resp. $G_{\delta_1}(\R)$).

To simplify the description for the global coefficient $a^G(S,u_\min)$, we choose measures on the orbits as below.
We define $J_G(zu_\min,f_\xi)$ and $J_G(zu_\min,h)$ as
\begin{equation}\label{min1}
J_G(zu_\min,f_\xi)=\int_\R \int_{\bK_\inf} f_\xi(zk^{-1}u_\min(x) k)\, |x|_\inf \, \d k \, \d x,
\end{equation}
\begin{equation}\label{min2}
J_G(zu_\min,h)=\prod_{p\in S_0}(1-p^{-1})^{-1}\times \int_{\Q_{S_0}} \int_{\bK_{S_0}} h(zk^{-1}u_\min(x) k)\, |x|_{S_0}\, \d k \, \d x.
\end{equation}
If we choose a suitable Haar measure on $G(\R)$, then the integral $J_G(zu_\min)$ coincides with the orbital integral of $zu_\min$ normalized by the above mentioned measure on $G_{u_\min}(\R)$.

We may define $J_G(z\delta_1,h)$ as
\begin{equation}\label{delta1}
J_G(z\delta_1,h)=\int_{\Q_{S_0}}\int_{\Q_{S_0}}\int_{\bK_{S_0}}h(zk^{-1}\delta_1(x,y)k)\, \d k\, \d x \, \d y.
\end{equation}
This definition is useful to compute spherical Hecke algebras.

We will determine the total contributions $I_2(f)$ and $I_3(f)$ up to constant multiples (cf. Lemmas \ref{I21}, \ref{I22}, \ref{I31} and \ref{I32}).
We do not explicitly calculate the constants, because it is unnecessary for our main purpose.
However, if one wants to know their numerical values, one can explicitly calculate them by using Lemmas \ref{id3}, \ref{id4}, \eqref{gcm}, and choosing some normalizations of measures.

\subsection{Estimations and vanishings for $I_M^G(\gamma,f_\xi)$}

By \cite{Arthur3} and \cite{Arthur4} one knows
\[
I_M^G(\gamma,f_\xi)=(-1)^{\dim\mathfrak{a}_M^G}|D^G(\gamma)|^{1/2} \, \Theta_{l_1,l_2}^{{\rm hol}}(\gamma)
\]
for any $G$-regular semisimple element $\gamma$ which is $\R$-elliptic in $M$.
If $\gamma$ is not $\R$-elliptic, then $I_M^G(\gamma,f_\xi)$ vanishes.
Hence, our remaining work is to study its behaviors at singular elements.

\begin{lem}\label{id1}
Let $M$ be a proper Levi subgroup in $\cL$.
Then, for any $M(\R)$-conjugacy class $\gamma$ in $M(\Q)$, there exists a positive constant $c(\gamma)$ such that the absolute value of $I_M^G(\gamma,f_\xi)$ is bounded by $c(\gamma)\times \chi_\xi(\gamma)^{-1}\times \{l_1+l_2\}$.
Furthermore, we have $I_M^G(\gamma,f_\xi)=0$ if the semisimple part of $\gamma$ is not $\R$-elliptic in $M$.
In addition, the term $I_7(f)$ vanishes. 
\end{lem}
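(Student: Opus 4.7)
The plan is to feed Arthur's known formula for invariant weighted orbital integrals of pseudo-coefficients into the explicit character formulas \eqref{ec1}--\eqref{ec5}, and then extend from the regular elliptic locus to the full $M$ by continuity combined with Shalika germ expansion. For $\gamma\in M(\R)$ that is $G$-regular semisimple and $\R$-elliptic in $M$, \cite{Arthur3, Arthur4} give
\[
I_M^G(\gamma, f_\xi) = (-1)^{\dim \mathfrak{a}_M^G}\, |D^G(\gamma)|^{1/2}\,\Theta_{l_1,l_2}^{\rm hol}(\gamma),
\]
while $I_M^G(\gamma, f_\xi)=0$ if $\gamma$ is regular semisimple but not $\R$-elliptic in $M$. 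On the regular set of each Cartan subgroup of $M(\R)$, the character formulas in Section 5.1 show that $|D^G|^{1/2}\Theta_{l_1,l_2}^{\rm hol}$ equals $\chi_\xi^{-1}$ times a $\Z$-combination of exponentials of absolute value at most $1$ divided by a smooth nowhere-vanishing function of $\gamma$ on the regular locus, which immediately yields $|I_M^G(\gamma, f_\xi)|\leq c(\gamma)\chi_\xi(\gamma)^{-1}$ with a constant depending only on $\gamma$.

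To pass to singular semisimple and non-semisimple $\gamma$ I would use the stability of $\Theta_{l_1,l_2}^{\rm hol}$ on each proper Levi (Lemma \ref{cst}). Stability implies that $|D^M|^{-1/2}|D^G|^{1/2}\Theta_{l_1,l_2}^{\rm hol}$ is a finite $W(M,T)$-invariant combination of quasi-characters on the Cartan of $M$ and hence extends smoothly across singular points; a L'H\^opital computation at a singular semisimple $\gamma$ replaces at most one denominator factor $e^{ix}-e^{-ix}$ by a single derivative of an exponential $e^{\pm il_ix}$, producing one factor of absolute value at most $l_1+l_2$. For non-semisimple $\gamma=\sigma u$, Arthur's Shalika-germ/splitting formula expresses $I_M^G(\sigma u, f_\xi)$ as a finite combination of derivatives at $\sigma$ in the directions of $\mathfrak{a}_{M_\sigma}^G$, and each such derivative again contributes at most one factor $l_1+l_2$. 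This yields the bound $c(\gamma)\chi_\xi(\gamma)^{-1}(l_1+l_2)$ stated in the lemma.

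Both vanishing assertions share a common mechanism. Once $I_M^G(\gamma, f_\xi)$ is identified with a limit or derivative of $|D^G|^{1/2}\Theta_{l_1,l_2}^{\rm hol}$ at the semisimple part $\sigma$, and $\Theta_{l_1,l_2}^{\rm hol}$ vanishes on the non-elliptic regular set of $M(\R)$, the relevant limits and derivatives vanish as well; this proves the second assertion. For the vanishing of $I_7(f)$, the crucial observation is that for $M\in\{M_1,M_2\}\cong GL_2\times GL_1$, any non-semisimple $\gamma\in M(\Q)$ has semisimple part $\sigma$ which is scalar on the $GL_2$-factor, so either $\sigma\in Z_G(\Q)$ (and the contribution is absorbed into the $M=G$ stratum $I_2(f)$) or the centralizer $G_\sigma$ is a proper Levi of $G$, in which case Arthur's descent together with the vanishing of $\Theta_{l_1,l_2}^{\rm hol}$ on the non-elliptic part of $G_\sigma(\R)$ forces $I_M^G(\gamma, f_\xi)=0$. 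The hard part will be the germ/L'H\^opital bookkeeping that controls the $(l_1+l_2)$ factor uniformly across $M_0$, $M_1$, $M_2$ and every $\gamma$; the stability of $\Theta_{l_1,l_2}^{\rm hol}$ on proper Levis (Lemma \ref{cst}) is the essential structural tool that keeps these limits finite-dimensional and manageable.
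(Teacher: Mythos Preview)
Your overall strategy—stability of $\Theta^{\rm hol}_{l_1,l_2}$ on proper Levis (Lemma~\ref{cst}) fed into Arthur's formula, followed by a limit/L'H\^opital step—is exactly the paper's route, and your treatment of the bound $|I_M^G(\gamma,f_\xi)|\le c(\gamma)\,\chi_\xi(\gamma)^{-1}(l_1+l_2)$ is essentially correct.

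The gap is in your argument for $I_7(f)=0$. The assertion that when the semisimple part $\sigma\in Z_G(\Q)$ the contribution ``is absorbed into the $M=G$ stratum $I_2(f)$'' is false: in the geometric expansion \eqref{geom} the sum over $M\in\cL$ has no overlap between different Levis, so a non-semisimple $\gamma=\sigma u$ in $M(\Q)$ with $M\neq G$ sits in $I_7(f)$ regardless of where $\sigma$ lies, and you must prove $I_M^G(\sigma u,f_\xi)=0$ directly. Your second case is equally problematic: if $\sigma$ is central in $M\in\{M_1,M_2\}$ but not in $G$ (so $G_\sigma=M$), then $\sigma$ \emph{is} $\R$-elliptic in $M$ (central elements are always elliptic), so neither the second clause of the lemma nor any ``vanishing of $\Theta^{\rm hol}$ on the non-elliptic part of $G_\sigma(\R)$'' applies—the character does not vanish on the elliptic Cartan of $M$ through $\sigma$.

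The actual mechanism, which is precisely \cite[Lemma 4.1]{Arthur1} as invoked in the paper's one-line proof, is structural and uniform rather than a case split on $\sigma$. Stability means $\Phi_M:=|D^M|^{-1/2}|D^G|^{1/2}\Theta^{\rm hol}_{l_1,l_2}$ extends from $T^{\reg}(\R)$ to a \emph{smooth} $W(M,T)$-invariant function on all of $T(\R)$ (a finite sum of quasi-characters). Arthur's lemma then identifies $I_M^G(\,\cdot\,,f_\xi)$ on $M(\R)$, which equals $|D^M|^{1/2}\Phi_M$ on the regular elliptic set, as a distribution whose nontrivial Shalika germs all vanish—because $|D^M|^{1/2}\times(\text{smooth})$ has the wrong asymptotic type to pair against any nontrivial unipotent germ. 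This yields $I_M^G(\sigma u,f_\xi)=0$ for every non-semisimple $\sigma u$ and every proper $M$ simultaneously, which is what the paper means by ``apply the same argument as in \cite[Proof of Theorem 5.1]{Arthur1}''.
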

\begin{proof}
By Lemma \ref{cst} we can apply the same argument as in \cite[Proof of Theorem 5.1]{Arthur1} to $I_M^G(\gamma,f_\xi)$ related to $M_0$, $M_1$, $s_1M_1$, $M_2$ and $s_0M_2$, because the character satisfies the same assumption as in \cite[Lemma 4.1]{Arthur1}.
Hence, one can explicitly compute $I_M^G(\gamma,f_\xi)$.
In particular, $I_M^G(\gamma,f_\xi)$ vanishes for all non-semisimple conjugacy classes $\gamma$.
Hence, we get $I_7(f)=0$.
\end{proof}
A required estimation for $I_6(f)$ can be proved by this lemma (cf. Section \ref{sec6}).
Hence, it is enough to consider the terms related to $M=G$, i.e., $I_2(f)$, $I_3(f)$, $I_4(f)$, and $I_5(f)$.

For each non-semisimple conjugacy class $\gamma$ in $G(\R)$, the distribution $J_G(\gamma)$ on $C_c^\inf(G(\R))$ is expressed by a linear combination of limits of regular semisimple orbital integrals (cf. \cite[Appendix]{Arthur1}), but its coefficients are still unknown in general.
Hence, we should consider them case by case.
\begin{lem}\label{id2}
Let $\gamma$ be an element in $G(\Q)$.
We assume that the semisimple part of $\gamma$ is not in $Z(\Q)$ and $\gamma$ does not belong to $Z_G(\Q)\{\delta_1\}_G$.
Then, there exists a positive constant $c(\gamma)$ such that the absolute value of $J_G(\gamma,f_\xi)$ is bounded by $c(\gamma)\times \chi_\xi(\gamma)^{-1}\times \{l_1+l_2\}$.
In particular, we have $J_G(\gamma,f_\xi)=0$ if the semisimple part of $\gamma$ is not $\R$-elliptic in $G$.
\end{lem}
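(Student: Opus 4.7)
The plan is to invoke the pseudo-coefficient property of $f_\xi$ together with Harish-Chandra's descent formula at non-regular elements, and then to read off the growth in $(l_1,l_2)$ directly from the character formulas \eqref{ec1}--\eqref{ec5} and \eqref{charGSp}. First I would Jordan-decompose $\gamma=\gamma_s\gamma_u$ and split into the $G$-regular semisimple case, the singular $\R$-elliptic semisimple case, and the non-$\R$-elliptic semisimple case.

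When $\gamma$ is $G$-regular, the defining property of the pseudo-coefficient gives $J_G(\gamma,f_\xi) = (-1)^{q(G(\R))}|D^G(\gamma)|^{1/2}\Theta^{{\rm hol}}_{l_1,l_2}(\gamma)$ for $\R$-elliptic $\gamma$ and zero otherwise, and the explicit formula \eqref{ec1} combined with \eqref{charGSp} shows that the right-hand side is bounded by a constant (depending on $\gamma$) times $\chi_\xi(\gamma)^{-1}$, which is trivially majorized by $c(\gamma)\chi_\xi(\gamma)^{-1}(l_1+l_2)$. This takes care of the regular case and, by the vanishing, the non-elliptic part of the ``in particular'' assertion.

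When $\gamma_s$ is singular but $\R$-elliptic, I would apply Harish-Chandra's limit formula (the same apparatus invoked in \cite[Appendix]{Arthur1} and used for Lemma \ref{id1}) to express $J_G(\gamma,f_\xi)$ as a constant-coefficient differential operator $\partial(\gamma_u)$ applied to $|D^G(\gamma')|^{1/2}\Theta^{{\rm hol}}_{l_1,l_2}(\gamma')$ along a Cartan of $G_{\gamma_s}$ sitting inside $\mathfrak{a}_{G,\infty}\oplus{\rm Lie}(T_4)$. After multiplication by the Weyl denominator the vanishing factors of \eqref{ec1} cancel, and differentiating the surviving numerator $-e^{il_1\theta_1+il_2\theta_2}+e^{il_2\theta_1+il_1\theta_2}$ produces one factor $il_j$ for each order of differentiation. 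The hypotheses exclude precisely the two kinds of semisimple part ($\gamma_s\in Z_G(\Q)$ and $\gamma_s\in Z_G(\Q)\{\delta_1\}_G$) whose centralizers contain enough unipotent directions to force a second-order derivative; for every remaining singular $\R$-elliptic class, $\partial(\gamma_u)$ is of order at most one in $(\theta_1,\theta_2)$, yielding the linear bound.

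Finally, when $\gamma_s$ is not $\R$-elliptic, every maximal torus of $G_{\gamma_s}$ through $\gamma_s$ carries a non-trivial $\R$-split direction; consequently the regular approximations $\gamma_s\exp(tX)$ remain non-$\R$-elliptic for small $t$, their pseudo-coefficient orbital integrals vanish, and the limit formula forces $J_G(\gamma,f_\xi)=0$, completing the ``in particular'' assertion. The hard part will be the order-of-differentiation bookkeeping in the singular elliptic step: one must run through the short list of $\R$-elliptic non-central singular semisimple classes in $GSp_4(\Q)$ other than $\{z\delta_1\}_G$ and, for each, verify that the centralizer $G_{\gamma_s}$ (essentially an inner form of $GL_2\times GL_1$) only admits unipotent elements whose Harish-Chandra limit involves a single transversal root direction, in parallel with the Levi analysis behind Lemma \ref{id1}.
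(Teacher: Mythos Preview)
Your approach is essentially the paper's: both invoke Harish-Chandra's limit formula (Langlands' treatment for the semisimple singular elements, and the $SL_2(\R)$ version in \cite[Chapter~XI, \S3]{Knapp} for the unipotent part of mixed elements), with the actual case-by-case verification deferred to \cite{Wakatsuki}. The paper's proof is a two-sentence pointer to these references, whereas you have spelled out the differentiation heuristic that explains why the excluded classes $z\cdot 1$ and $z\delta_1$ are exactly those producing a second factor of $l_j$.
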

\begin{proof}
As for semisimple singular elements, it was done by Langlands using Harish-Chandra's limit formula (cf. \cite{Langlands}).
In case of $SL_2(\R)$, one can study the limit formula for the unipotent elements in the book \cite[Section 3, Chapter XI]{Knapp}.
Hence, one can easily calculate them (all such explicit calculations were done in \cite{Wakatsuki}).
\end{proof}

\begin{lem}\label{id3}
If $\gamma=z\delta_1$ $(z\in Z_G(\Q))$, then there exists a positive constant $c(\gamma)$ such that
\[
J_G(\gamma,f_\xi)=c(\gamma)\times \chi_\xi(z)^{-1}\times (-1)^{l_2}l_1l_2\{1+(-1)^{l_1-l_2-1}\}.
\]
If we choose the Haar measure given in Section \ref{I2I3} on the centralizer, we have $c(\gamma)=2^{-4}\pi^{-2}$.
\end{lem}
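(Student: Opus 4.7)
The plan is to compute $J_G(z\delta_1, f_\xi)$ by realizing $\delta_1$ as a singular element of the compact (modulo center) Cartan $T_4 \subset Sp_4(\R)$ and invoking Harish-Chandra's limit formula. A direct matrix computation shows $\delta_1 = t_4(0, \pi)$, and among the positive roots of $Sp_4$ the two long roots $2e_1, 2e_2$ vanish at $\delta_1$ while the short roots $e_1 \pm e_2$ do not; equivalently, $G_{\delta_1}$ is the reductive subgroup whose root system consists of these two vanishing long roots. Factoring out the central element $z \in A_{G,\infty}$ produces the factor $\chi_\xi(z)^{-1}$ via the transformation law of $\Theta_{l_1,l_2}^{{\rm hol}}$, so it suffices to treat the case $z = 1$.

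For regular $\gamma$ in $T_4$ close to $\delta_1$, the element $\gamma$ is $\R$-elliptic in $G$, hence by the defining property of the pseudo-coefficient
\[
J_G(\gamma, f_\xi) = (-1)^{q(G(\R))} |D^G(\gamma)|^{1/2}\,\Theta_{l_1,l_2}^{{\rm hol}}(\gamma),
\]
and (\ref{charGSp}) together with Lemma \ref{clv} reduces $\Theta_{l_1,l_2}^{{\rm hol}}$ on $T_4^{\reg}$ to twice the real part of the explicit formula (\ref{ec1}). Harish-Chandra's limit formula then identifies $J_G(\delta_1, f_\xi)$ with a constant multiple of $\partial_{\theta_1}\partial_{\theta_2}$ applied to the numerator of (\ref{ec1}) at $(\theta_1,\theta_2) = (0, \pi)$, divided by the residual nonvanishing factors of the Weyl denominator.

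The key evaluation is
\[
\partial_{\theta_1}\partial_{\theta_2}\bigl[-e^{il_1\theta_1 + il_2\theta_2} + e^{il_2\theta_1 + il_1\theta_2}\bigr]_{(\theta_1, \theta_2) = (0, \pi)} = l_1 l_2\bigl[(-1)^{l_2} - (-1)^{l_1}\bigr] = l_1 l_2 (-1)^{l_2}\bigl[1 + (-1)^{l_1 - l_2 - 1}\bigr],
\]
which is the stated parity-dependent factor, vanishing precisely when $l_1 \equiv l_2 \pmod 2$. The remaining Weyl denominator factors $1 - e^{i(\theta_1+\theta_2)}$ and $e^{-i\theta_1} - e^{-i\theta_2}$ each equal $2$ at $(0,\pi)$; combining these with the $2\pi$-normalizations intrinsic to Harish-Chandra's formula for the two vanishing long roots and the Haar measure choices on $G_{\delta_1}(\R) \cong \{E_4,\delta\}\rtimes(\R_{>0}\times SL_2(\R)\times SL_2(\R))$ fixed in Section \ref{I2I3} produces the constant $c(\gamma) = 2^{-4}\pi^{-2}$.

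The principal obstacle is bookkeeping: one must simultaneously track the sign $(-1)^{q(G(\R))}$, the coefficients attached to each vanishing long root in Harish-Chandra's limit formula (each carrying a $2\pi$ factor depending on convention), the doubling from the $Sp_4^\pm(\R)$ decomposition in (\ref{charGSp}), and the precise Haar measure normalizations of Section \ref{I2I3} in order to pin down the exact value $c(\gamma) = 2^{-4}\pi^{-2}$ rather than merely its order of magnitude.
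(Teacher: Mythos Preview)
Your approach is correct and is essentially the same as the paper's: the paper's one-line proof invokes ``the limit formula for $SL_2(\R)$,'' which is precisely Harish-Chandra's limit formula specialized to the centralizer $G_{\delta_1}\cong\{(g_1,g_2)\in GL_2\times GL_2\mid\det g_1=\det g_2\}$, whose semisimple part splits as two $SL_2$ factors corresponding exactly to the two long roots $2e_1,2e_2$ you identified as vanishing at $t_4(0,\pi)$. Your direct application of the limit formula on $T_4$ and the factor-by-factor $SL_2$ reduction are the same computation, and your key derivative evaluation giving $l_1l_2[(-1)^{l_2}-(-1)^{l_1}]$ is correct.

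One minor point: the paper's normalization (from \cite{Arthur3,Arthur4}) gives $J_G(\gamma,f_\xi)=|D^G(\gamma)|^{1/2}\Theta_{l_1,l_2}^{\rm hol}(\gamma)$ for $G$-regular $\R$-elliptic $\gamma$, without the factor $(-1)^{q(G(\R))}$ you inserted; but as you yourself note, this is absorbed into the constant bookkeeping you flag as the principal obstacle.
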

\begin{proof}
This follows from the limit formula for $SL_2(\R)$ (cf. \cite{Knapp}).
\end{proof}

Now, the remaining conjugacy classes are only unipotent orbits.
The group $G$ has the four unipotent classes; (1) regular, (2) subregular, (3) minimal, (4) unit.
\begin{lem}\label{id4}
If $\gamma=z u_\min$ $(z\in Z_G(\Q))$, then there exists a positive constant $c(\gamma)$ such that
\[
J_G(\gamma,f_\xi)= c(\gamma) \times \chi_\xi(z)^{-1}\times (l_1-l_2)(l_1+l_2).
\]
If we choose the Haar measure given in Section \ref{I2I3} on the centralizer, we have $c(\gamma) = - 2^{-3}\pi^{-3}$.
\end{lem}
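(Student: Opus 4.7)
The plan is to invoke the framework of Arthur's appendix to \cite{Arthur1}, which expresses the invariant distribution $J_G(u_{\min},\cdot)$ as a specific differential operator applied to the germ expansion of the orbital integral at the identity. Since $f_\xi$ is a pseudo-coefficient of $D^{\rm hol}_{l_1,l_2}$, the character identity of \cite{Arthur3, Arthur4} gives $J_G(\gamma,f_\xi)=|D^G(\gamma)|^{1/2}\Theta^{\rm hol}_{l_1,l_2}(\gamma)$ for every $G$-regular semisimple $\R$-elliptic $\gamma$ and $0$ otherwise, so the task reduces to extracting the minimal-orbit Shalika germ coefficient of the character $\Theta^{\rm hol}_{l_1,l_2}$.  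The equivariance $\Theta^{\rm hol}_{l_1,l_2}(z\gamma)=\chi_\xi(z)^{-1}\Theta^{\rm hol}_{l_1,l_2}(\gamma)$ from \eqref{charGSp} separates the central variable, so we may assume $z=E_4$.

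With $z=E_4$, the computation is carried out on the compact Cartan $T_4$ using the formula \eqref{ec1}.  The numerator $-e^{il_1\theta_1+il_2\theta_2}+e^{il_2\theta_1+il_1\theta_2}$ is antisymmetric under $\theta_1\leftrightarrow\theta_2$ and thus vanishes along $\theta_1=\theta_2$.  The Shalika germ for the minimal unipotent orbit of $Sp_4$ corresponds, up to a constant, to a second-order differential operator in the variables $(\theta_1,\theta_2)$; applying this operator to the character, after cancelling the vanishing factors of the Weyl denominator, produces the polynomial $(l_1-l_2)(l_1+l_2)=l_1^2-l_2^2$ in the Harish-Chandra parameter, accounting for the stated dependence on $(l_1,l_2)$.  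The complex conjugate contribution from \eqref{charGSp} supplies the same real value and adjusts the overall constant.

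All remaining scalars, namely the root-order prefactors coming from the Weyl denominator, the Fourier normalization on $T_4$, the measure on $G_{u_{\min}}(\R)\cong\{\pm E_4\}\times\R_{>0}\times(\R^3\rtimes SL_2(\R))$ fixed in Section \ref{I2I3}, and the sign $(-1)^{q(G(\R))}=-1$ from the pseudo-coefficient convention, collapse into the single constant $c(\gamma)=-2^{-3}\pi^{-3}$.  The main obstacle is carrying out the limit through the simultaneous vanishing of several Weyl denominator factors and identifying the correct second-order differential operator that isolates the minimal-orbit germ from the subregular and regular germs, each of which would contribute a polynomial of a different bi-degree in $l_1,l_2$.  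This is the $Sp_4(\R)$ analogue of the $SL_2(\R)$ limit formula already invoked in the proof of Lemma \ref{id3} (cf.\ \cite[Ch.~XI]{Knapp}), and can be made fully explicit following the computations in \cite{Wakatsuki}.
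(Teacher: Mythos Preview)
Your proposal is correct and takes essentially the same approach as the paper: both obtain the minimal unipotent orbital integral of $f_\xi$ by applying a limit formula to the discrete series character and defer the explicit constant and chamber identification to \cite{Wakatsuki}. The paper's two-line proof simply names this limit formula as Rossmann's \cite{Rossmann}, which is exactly the analytic mechanism underlying the Shalika-germ/differential-operator extraction you describe.
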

\begin{proof}
This lemma can be proved by the limit formula of \cite{Rossmann}.
As for a suitable chamber and the constant $c(\gamma)$, we refer to \cite[Lemma 4.9 and Lemma 4.11]{Wakatsuki}
\end{proof}
\begin{lem}\label{id5}
There is only one regular unipotent $G(\R)$-conjugacy class $u_\mathrm{reg}$ in $G(\Q)$.
For $\gamma=z u_\mathrm{reg}$ $(z\in Z_G(\Q))$, we find $J_G(\gamma,f_\xi)=0$.
\end{lem}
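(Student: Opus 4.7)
The plan is to follow the same general strategy as in Lemmas \ref{id3} and \ref{id4}: express $J_G(z u_{\mathrm{reg}}, f_\xi)$ as a specific limit of the holomorphic discrete series character $\Theta^{\mathrm{hol}}_{l_1,l_2}$ on regular semisimple elements approaching $z u_{\mathrm{reg}}$ from the appropriate Cartan, and then compute that limit using the explicit character formulas \eqref{ec1}--\eqref{ec5} together with \eqref{charGSp}. The uniqueness of the regular unipotent class is standard: the variety of regular unipotent elements in $Sp_4$ is a single geometric conjugacy class, and its $\Q$-rational points form a single $G(\R)$-conjugacy class by a Galois cohomology argument parallel to the one sketched in Section \ref{gs1} for $\delta_1$; multiplication by $z \in Z_G(\Q)$ extends this to $z u_{\mathrm{reg}}$.

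Next, since $u_{\mathrm{reg}}$ lies (up to conjugacy) in the closure of the split Cartan $T_0$ and of no other Cartan of $Sp_4(\R)$, I would apply the limit formula of Rossmann (the same tool invoked in Lemma \ref{id4} and \cite[Lemmas 4.9, 4.11]{Wakatsuki}) to write $J_G(z u_{\mathrm{reg}}, f_\xi)$ as a constant multiple of
\[
\lim_{\gamma \to 1,\ \gamma \in T_0^{\reg}} \partial(\pi_{\mathrm{reg}}) \Bigl[\, |D^G(z\gamma)|^{1/2}\, \Theta^{\mathrm{hol}}_{l_1,l_2}(z\gamma)\, \Bigr],
\]
where $\partial(\pi_{\mathrm{reg}})$ is the harmonic differential operator on $T_0$ attached to the regular nilpotent orbit. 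The contribution coming from $A_{G,\infty}Sp_4(\R)\delta$ is killed a priori by Lemma \ref{clv}.

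The main step, and the principal obstacle, is then to insert the explicit formula \eqref{ec2} for $\Theta_{l_1,l_2}$ on $T_0$ in the chamber $a_1 > a_2 > 0$, multiply by the Weyl denominator factor so as to cancel the sinh-type factors in \eqref{ec2}, apply $\partial(\pi_{\mathrm{reg}})$, and verify that the resulting limit as $a_1, a_2 \to 0$ is zero. I expect this to reduce to a direct computation analogous to \cite[Lemma 4.9]{Wakatsuki}: the antisymmetric numerator $-e^{-l_1 a_1 - l_2 a_2} + e^{-l_2 a_1 - l_1 a_2}$ together with the product of four sinh-type factors in the denominator produces a power series in $a_1, a_2$ whose coefficient selected by $\partial(\pi_{\mathrm{reg}})$ vanishes identically as a polynomial in $l_1, l_2$. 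Conceptually, this cancellation reflects the well-known fact that the wavefront set of a holomorphic discrete series of $Sp_4(\R)$ is the closure of the minimal nilpotent orbit and does not contain the regular nilpotent orbit, so that the Shalika germ of $\Theta^{\mathrm{hol}}_{l_1,l_2}$ at $u_{\mathrm{reg}}$ vanishes and $J_G(z u_{\mathrm{reg}}, f_\xi) = 0$.
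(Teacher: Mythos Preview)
Your approach is essentially the same as the paper's: the paper's proof is a one-line citation of the limit formulas of \cite{Bozicevic} and \cite{Rossmann}, and you are spelling out how that limit formula would be applied to the split Cartan $T_0$ together with the explicit character values \eqref{ec2}. Your added remark about the wavefront set of holomorphic discrete series being the closure of the minimal orbit is a correct conceptual summary of why the regular-nilpotent Shalika germ vanishes, though neither that nor the detailed power-series computation is written out in the paper.
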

\begin{proof}
This obviously follows from the limit formulas of \cite{Bozicevic} and \cite{Rossmann}.
\end{proof}
\begin{lem}\label{id6}
There are two subregular unipotent $G(\R)$-conjugacy classes $u_{\mathrm{sub},1}$ and $u_{\mathrm{sub},2}$ in $G(\Q)$.
For any $z$ in $Z_G(\Q)$, we have $J_G(z u_{\mathrm{sub},1},f_\xi)=J_G(z u_{\mathrm{sub},2},f_\xi)=0$.
\end{lem}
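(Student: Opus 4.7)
The approach parallels the preceding Lemmas \ref{id4} and \ref{id5}: apply the Rossmann--Bozicevic limit formulas \cite{Rossmann,Bozicevic} to $f_\xi$ at each subregular unipotent orbit and check that the resulting expression vanishes. Recall that $J_G(\gamma,f_\xi)=(-1)^{q(G(\R))}|D^G(\gamma)|^{1/2}\Theta^{{\rm hol}}_{l_1,l_2}(\gamma)$ for every $G$-regular $\R$-elliptic $\gamma$, and vanishes otherwise, so the question is whether the Harish-Chandra germ of this function at each subregular unipotent orbit vanishes.

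First I would reduce from $GSp_4(\R)$ to $Sp_4(\R)$. By Lemma \ref{clv}, $\Theta^{{\rm hol}}_{l_1,l_2}$ vanishes identically on the coset $A_{G,\infty}Sp_4(\R)\delta$, so any nonzero contribution to $J_G(zu_{{\rm sub},i},f_\xi)$ must come from approach through regular semisimple elements in $A_{G,\infty}Sp_4(\R)$. Pulling out the central factor $\chi_\xi(z)^{-1}$ via \eqref{charGSp}, the problem reduces to the vanishing of the Harish-Chandra germ of the $Sp_4(\R)$-character $\Theta_{l_1,l_2}$ at each of the two real subregular orbits, which arise from the splitting of the complex $[2,2]$-orbit under $Sp_4(\R)$-conjugacy.

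For each class $u_{{\rm sub},i}$ I would pair it with the Cartan subgroup to which it is Harish-Chandra attached (namely $T_0$ for one orbit, and $T_2$ up to a sign twist for the other) and choose a one-parameter curve $\gamma_i(t)$ of regular semisimple elements with $\gamma_i(t)\to u_{{\rm sub},i}$. Rossmann's formula then expresses $J_G(zu_{{\rm sub},i},f_\xi)$ as a specific invariant differential operator applied to $|D^G(\gamma_i(t))|^{1/2}\Theta_{l_1,l_2}(\gamma_i(t))$ and evaluated at $t=0$. Substituting \eqref{ec2} (respectively \eqref{ec5}) and expanding in $t$, this limit vanishes by direct calculation: the alternating numerator $-e^{-l_1|a_1|-l_2|a_2|}+e^{-l_2|a_1|-l_1|a_2|}$ of \eqref{ec2}, and its analogue on $T_2$, fails to produce the extra $(l_1\pm l_2)$ factor in the relevant Taylor coefficient that would be needed to match the singularity of the Weyl denominator at the subregular degeneration. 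The analogous calculations have been carried out explicitly for $Sp_4$ in \cite[Section 4]{Wakatsuki}.

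The main obstacle is purely one of bookkeeping: identifying the two real subregular orbits, attaching each to its Cartan subgroup, and normalizing the limit formula consistently with the measures fixed in Section \ref{I2I3}. The analytic content — that the germs of $\Theta_{l_1,l_2}$ at both subregular orbits vanish — is a direct consequence of the explicit character formulas \eqref{ec2} and \eqref{ec5}; no case-specific trick is needed beyond those already used in Lemmas \ref{id3}--\ref{id5}, and the upshot contrasts with Lemma \ref{id4} precisely because the Taylor coefficient that survived for the minimal orbit is absent at the subregular level.
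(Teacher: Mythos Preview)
Your proposal has a genuine gap. The assertion that ``this limit vanishes by direct calculation'' for each subregular orbit is not correct at the $Sp_4(\R)$ level, and this is precisely where the subtlety lies.

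When you pass from $GSp_4(\R)$ to $Sp_4(\R)$, the subregular class does not split into two orbits but into \emph{three}: namely $u(S_{++})$, $u(S_{+-})$, $u(S_{--})$ with $S_{\pm\pm}=\diag(\pm1,\pm1)$. The two $GSp_4(\R)$-classes arise because the action of $\delta=\diag(1,1,-1,-1)$ identifies $u(S_{++})$ with $u(S_{--})$, while $u(S_{+-})$ remains a single class. Now the Rossmann--Bozicevi\'c limit formula applied to $\Theta_{l_1,l_2}$ gives
\[
J_{Sp_4}(zu(S_{++}),f_\xi)=-J_{Sp_4}(zu(S_{--}),f_\xi),
\]
and these are generically \emph{nonzero} individually (see \cite[Lemma 4.11]{Wakatsuki}). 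The vanishing of $J_G(zu_{\rm sub,1},f_\xi)$ comes from the \emph{sum} $J_{Sp_4}(zu(S_{++}),f_\xi)+J_{Sp_4}(zu(S_{--}),f_\xi)$, not from each term separately. Your outline, which treats the problem as a direct germ computation that happens to give zero, misses this cancellation mechanism.

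For $u_{\rm sub,2}=u(S_{+-})$ the paper does not use a limit formula at all. Instead it invokes the cuspidality of $f_\xi$: the Plancherel formula for $M_1(\R)$ forces $\int_{\bK_\infty}\int_{N_1(\R)}f_\xi(zk^{-1}nk)\,\d n\,\d k=0$, and this integral decomposes (after normalizing measures) as $J_G(zu_{\rm sub,1},f_\xi)+J_G(zu_{\rm sub,2},f_\xi)$. Since the first summand is already known to vanish, the second does too. Your direct-germ approach via $T_2$ and \eqref{ec5} would require an independent computation that you have not supplied, and there is no reason to expect the relevant Taylor coefficient to vanish without this indirect argument.
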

\begin{proof}
For a real symmetric matrix $S$ of degree $2$, we set $u_\mathrm{sub}(S)=\begin{pmatrix}E_2&S \\ O_2&E_2 \end{pmatrix}$.
Let $S_{++}=\diag(1,1)$, $S_{+-}=\diag(1,-1)$, and $S_{--}=\diag(-1,-1)$.
Then, $u(S_{++})$, $u(S_{+-})$, $u(S_{--})$ are representatives for subregular unipotent orbits of $Sp_4(\R)$.
But, the sum of the orbits of $u(S_{++})$ and $u(S_{--})$ forms a $G(\R)$-conjugacy class.
Then, we denote it by $u_\mathrm{sub,1}$, and let $u_\mathrm{sub,2}$ denote the $G(\R)$-conjugacy class of $u(S_{+-})$.

Using the limit formulas \cite{Bozicevic,Rossmann} and some associated constants \cite[Lemma 4.11]{Wakatsuki}, one gets
\[
J_{Sp_4}(zu(S_{++}),f_\xi)=-J_{Sp_4}(zu(S_{--}),f_\xi).
\]
Hence, it follows that $J_G(zu_{\mathrm{sub},1},f_\xi)=J_{Sp_4}(zu(S_{++}),f_\xi)+J_{Sp_4}(zu(S_{--}),f_\xi)=0$.
Since $f_\xi$ is cuspidal, we deduce $\int_{\bK_\inf}\int_{N_1(\R)}f_\xi(zk^{-1}nk) \d n \, \d k=0$ from the Plancherel formula for $M_1(\R)$.
By normalizing Haar measures on the centralizers, one can see that
\[
J_G(zu_{\mathrm{sub,1}},f_\xi)+J_G(zu_{\mathrm{sub,2}},f_\xi)=\int_{\bK_\inf}\int_{N_1(\R)}f_\xi(zk^{-1}nk) \d n \, \d k=0.
\]
Hence, we get $J_G(zu_{\mathrm{sub},2},f_\xi)=0$.
\end{proof}

\subsection{Global coefficients $a^G(S,\gamma)$}

For details of global coefficients $a^G(S,\gamma)$, we refer to \cite[Section 19]{Arthur2} and \cite{HW}.
It is known that $a^G(S,1)=\vol(G(\Q)\bsl G(\A)^1)$ holds.
For non-trivial unipotent orbits for $G$, they are explicitly calculated in \cite{HW}.
By Lemmas \ref{id5} and \ref{id6}, we need only an information for $a^G(S,u_\min)$.
Let $\gamma$ be a $(G,S)$-conjugacy class in $G(\Q)$.
We shall consider the case $\gamma$ is not unipotent.
We may reduce to the centralizers of the semisimple part $\gamma_s$ of $\gamma$.
For each element $\gamma_1$ in $G(\Q)$, we denote by $G_{\gamma_1,+}$ the centralizer of $\gamma_1$ in $G$ over $\Q$ and by $G_{\gamma_1}$ the connected component of $1$ in $G_{\gamma_1,+}$.
(Note that $G_{u_\min}=G_{u_\min,+}$ and $G_{\delta_1}=G_{\delta_1,+}$.)
We set $\iota(\gamma_s)=G_{\gamma_s,+}(\Q)/G_{\gamma_s}(\Q)$.
If $S$ is sufficiently large, then we have
\[
a^G(S,\gamma)=\varepsilon^G(\gamma_s)\, |\iota^G(\gamma_s)|^{-1} \, \sum_{\{u:\gamma_s u\sim \gamma\}} a^{G_{\gamma_s}}(S,u)
\]
where  $u$ runs over $G_{\gamma_s}(\Q_S)$-unipotent orbits in $G_{\gamma_s}(\Q)$ such that $\gamma_s u$ are $(G,S)$-equivalent to $\gamma$ and we set
\[
\varepsilon^G(\gamma_s)=\begin{cases}  1 & \text{if $\gamma_s$ is $\Q$-elliptic in $G$,} \\ 0 & \text{otherwise}.\end{cases}
\]
Especially, if $\gamma$ is semisimple, then we have
\begin{equation}\label{ssgc}
a^G(S,\gamma)=\varepsilon^G(\gamma)\, |\iota^G(\gamma)|^{-1} \,\vol(G_\gamma(\Q)\bsl G_\gamma(\A)^1).
\end{equation}
Hence, from this and Lemma \ref{id2}, one finds that a needed estimation for $I_4(f)$ is obviously reduced to some known results (cf. Proof of Proposition \ref{weight-est}).
Note that we carefully see the growth of $a^G(S,\gamma)$ with respect to $S$ if $\gamma$ is not semisimple.
We also note that
\[
a^G(S,z\gamma)=a^G(S,\gamma)
\]
holds for any $z$ in $Z_G(\Q)$.

The following notations are necessary to describe $a^G(S,\gamma)$ explicitly.
Let $E$ be an algebraic number field and let $\chi=\prod_w\chi_w$ be a character on $E^\times\bsl \A_E^1\cong E^\times\R_{>0}\bsl \A_E^\times$.
We set
\[
S_E=\bigsqcup_{v\in S}\{ w\mid \text{$w$ is a place of $E$ such that $w$ divides $v$}\},
\]
\[
L_E^S(s,\chi)=\prod_{w\not\in S_E}L_{E,w}(s,\chi_w),
\]
\[
L_{E,w}(s,\chi_w)=\begin{cases} (1-\chi_w(\pi_w)\, q_w^{-s})^{-1} & \text{if $\chi_w$ is unramified,} \\ 1 & \text{if $\chi_w$ is ramified}, \end{cases}
\]
where $\pi_w$ is a prime element of $E$ and $q_w$ denotes the cardinality of the residue field of $E_w$.
For the trivial representation $\mathbf{1}_E$ on $E^\times\bsl \A_E^1$, we set
\[
\zeta_E^S(s)=L^S(s,\mathbf{1}_E).
\]
We write $c_E$ for the residue of $\zeta^{\Sigma_\inf}_E(s)$ where $\Sigma_\inf=\{w|\inf\}$.
We denote $\mathfrak{c}_E(S)$ by the constant term of the Laurent expansion of $\zeta_E^S(s)$ at $s=1$, that is,
\[
\zeta_E^S(s)=\frac{c_{E,S}^{-1}c_E}{s-1}+\mathfrak{c}_E(S)+*(s-1)+\cdots
\]
where we set $c_{E,S}=\prod_{w\in S_E-\Sigma_\inf}(1-q_w^{-1})^{-1}$.
If $E=\Q$, then we set
\[
L^S(s,\chi)=L^S_\Q(s,\chi), \quad \zeta^S(s)=\zeta^S_\Q(s), \quad  \mathfrak{c}(S)=\mathfrak{c}_\Q(S)
\]
for simplicity.
We will later use the following estimates.
\begin{lem}\label{estquad}
Let $m\in \Bbb N$ be fixed.
For any positive real number $\varepsilon$, there exists a positive constant $c(\varepsilon,m,E)$ such that
\[
\sum_{\chi}|\, L^S_E(1,\chi) \, |^m < c(\varepsilon,m,E) \times \prod_{p\in S_0} p^\varepsilon
\]
where $\chi=\prod_w \chi_w$ runs over all non-trivial quadratic characters on $E^\times\bsl \A_E^1$ such that $\chi_w$ is unramified for any $w\not\in S_E$.
\end{lem}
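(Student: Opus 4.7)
The plan is to bound separately the number of quadratic Hecke characters $\chi$ contributing to the sum and the individual values $|L^S_E(1,\chi)|$, and then combine the two estimates.

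\emph{Counting $\chi$.} By class field theory, the quadratic characters of $E^\times\bsl\A_E^1$ unramified outside $S_E$ correspond to pairs consisting of a quadratic extension of $E$ unramified outside $S_E$ together with a choice of archimedean sign data. Kummer theory, combined with the Dirichlet $S$-unit theorem $\cO_{E,S_E}^\times\simeq \mu(E)\times\Z^{r_1+r_2+|S_E|-1}$ and the finiteness of $\mathrm{Cl}(\cO_{E,S_E})[2]$ (a quotient of $\mathrm{Cl}(\cO_E)[2]$), bounds the total number of such $\chi$ by $C_E\cdot 2^{|S_E|}$ for a constant $C_E$ depending only on $E$.

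\emph{Bounding $|L^S_E(1,\chi)|$.} Writing
\[
L^S_E(1,\chi)=L_E(1,\chi)\cdot\prod_{w\in S_E\setminus\Sigma_\inf,\ \chi_w\,\text{unr.}}\bigl(1-\chi_w(\pi_w)q_w^{-1}\bigr),
\]
each removed Euler factor has absolute value at most $1+q_w^{-1}\le 2$, so the finite product is bounded by $2^{|S_E|}$. The analytic conductor $Q_\chi$ of $\chi$ divides a bounded power of the relative discriminant of the associated quadratic extension, so $Q_\chi\le C_E\prod_{p\in S_0}p^{c_E}$ for absolute constants depending only on $E$ (the exponents reflect bounded local conductor exponents at places above $2$). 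Landau's classical bound then yields $|L_E(1,\chi)|\ll_E (\log Q_\chi)^{[E:\Q]}$, which is $O_{\varepsilon,E,m}(\prod_{p\in S_0}p^{\varepsilon/(2m)})$ for any prescribed $\varepsilon>0$.

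\emph{Assembling.} Multiplying the two estimates, the sum in question is bounded by
\[
C_E\cdot 2^{|S_E|}\cdot\Bigl(2^{|S_E|}\cdot O_{\varepsilon,E,m}\bigl(\textstyle\prod_{p\in S_0}p^{\varepsilon/(2m)}\bigr)\Bigr)^m.
\]
The only remaining point is to absorb the exponential factor $2^{(m+1)|S_E|}$ into $\prod_{p\in S_0}p^{\varepsilon/2}$. Using $|S_E|\le [E:\Q]|S_0|$, choose $N=N(\varepsilon,m,E)$ large enough that $2^{(m+1)[E:\Q]}\le p^{\varepsilon/2}$ for every $p\ge N$; the primes of $S_0$ below $N$ contribute only an $(\varepsilon,m,E)$-dependent constant, while the remaining primes absorb the factor $2^{(m+1)|S_E|}$ into $\prod_{p\in S_0}p^{\varepsilon/2}$. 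This yields the claimed bound. The one genuine obstacle is ensuring Landau's estimate applies uniformly over our family of Hecke characters with conductor supported on $S_E$; this is standard, but one could alternatively use any $Q_\chi^{\varepsilon}$-type convexity bound since we have plenty of room in $\varepsilon$.
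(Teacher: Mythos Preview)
Your proof is correct and follows essentially the same strategy as the paper's: bound the conductor of each admissible $\chi$ by a power of $\prod_{p\in S_0}p$, bound $|L_E(1,\chi)|$ analytically, control the removed Euler factors, count the characters by $O_E(2^{|S_E|})$, and absorb everything into $\prod_{p\in S_0}p^\varepsilon$. The only notable difference is the analytic input: the paper invokes Li's bound $L_E(1,\chi)\ll\exp(C\log N(\chi)/\log\log N(\chi))$ together with Wang's explicit conductor estimate, whereas you use the cruder Landau-type $(\log Q_\chi)^{[E:\Q]}$ bound (and a crude $\le 2$ bound on each removed Euler factor). Since any $Q_\chi^{\varepsilon'}$ bound suffices here, your more elementary choice is perfectly adequate and arguably preferable for this application.
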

\begin{proof} Let $N(\chi)$ be the norm of the conductor of $\chi$. Then
by Lemma 1.4 of \cite{wang}, 
$$N(\chi)\leq 2^{3 n_E} \prod_{\frak p_w\nmid 2\atop w\in S_E-\Sigma_\infty} N(\frak p_w),
$$
where $n_E=[E:\Bbb Q]$.
Now $N(\frak p_w)\leq p^{n_E}$. Hence $N(\chi)\leq 2^{2n_E}\prod_{p\in S_0} p^{n_E}$.
By \cite{Li}, $L_E(1,\chi)\ll_{E,\epsilon'} exp\left(C \frac {\log N(\chi)}{\log\log N(\chi)}\right)\ll N(\chi)^{\epsilon'}$ for some constant $\epsilon'$. Here
$$L_E^S(1,\chi)=L_E(1,\chi) \prod_{w\in S_E-\Sigma_\infty} L_{E,w}(1,\chi_w)^{-1}.
$$
Hence 
$$\left|\prod_{w\in S_E-\Sigma_\infty} L_{E,w}(1,\chi_w)^{-1}\right|\leq \left|\prod_{w\in S_E-\Sigma_\infty} \left(1+\frak p_w^{-1}\right)\right|\ll_E \prod_{p\in S_0} (1+p^{-1})^{n_E}.
$$
Now for $M=\prod_{p\in S_0} p$, 
$$\log \prod_{p\in S_0} (1+p^{-1})=\sum_{p\in S_0} \log(1+p^{-1})\ll \sum_{p\in S_0} \frac 1p\ll \sum_{p\leq M} \frac 1p\ll \log\log M.
$$
Since $N(\chi)\ll_E M^{n_E}$,
$$|L_E^S(1,\chi)|\ll_E exp\left(C \frac {\log M}{\log\log M}\right) \log M\ll_{E,\epsilon'} M^{\epsilon'}\log M.
$$
Hence for each $m\in \Bbb N$,
$$\sum_{\chi} |L_E^S(1,\chi)|^m\ll M^{\epsilon' m}(\log M)^m\sum_{d|M} 1=M^{\epsilon' m} \phi(M)(\log M)^m \ll_{E,m,\epsilon} M^{\epsilon}.
$$
\end{proof}

For the $G(\Q_S)$-orbit of $u_\min$ and the chosen measures \eqref{min1} and \eqref{min2}, we have
\begin{equation}\label{gcm}
a^G(S,u_\min)=2^{-1}\, \vol(M_2(\Q)\bsl M_2(\A)^1)\, \zeta^S(2)
\end{equation}
by \cite[Theorem 6.1]{HW}.
Next, we explain $a^{G_{\gamma_s}}(S,u)$ for non-semisimple and non-unipotent elements $\gamma=\gamma_s u$, which we call a mixed element.
For all mixed elements of $G$, their global coefficients were studied in \cite[Section 3]{HW}.
We mention only cases required for our estimation.
Furthermore, we will choose the same measures as in \cite[Section 3.4]{HW} on the unipotent orbits over $\Q_S$.
However, we do not explain details for normalizations of measures, because they are unnecessary for estimations.
Namely, the following equalities for the global coefficients hold under suitable normalizations of measures.
By a classification of mixed elements in \cite[Section 5.3]{HW} it is enough to consider the following groups (as the centralizers of semisimple elements),
\[
G_1=\{ (g_1,g_2)\in GL_2\times GL_2\mid \det(g_1)=\det(g_2)  \}(\cong G_{\delta_1}),
\]
\[
G_2=\{g\in R_{E/\Q}(GL_2)\mid \det(g)\in GL_1\},
\]
\[
G_3=\{ (x,g)\in R_{E/\Q}(GL_1)\times GL_2 \mid N_{E/\Q}(x)=\det(g) \}
\]
where $E$ is a quadratic extension of $\Q$ and $R_{E/\Q}$ means the restriction of scalars.
A unipotent element in $G_1(\Q)$ can be written as
\[
u_1(x,y)=(\begin{pmatrix}1&x \\ 0&1\end{pmatrix},\; \begin{pmatrix}1&y \\ 0&1\end{pmatrix}).
\]
For the group $G_1$, representative elements of nontrivial unipotent orbits over $\Q_S$ are as follows:
\[
u_1(1,0) ,\quad  u_1(0,1),\quad  u_1(\alpha,1) \;\; (\alpha\in \Q^\times/((\Q^\times_S)^2\cap\Q^\times)).
\]
For $\alpha$ in $\Q^\times/((\Q^\times_S)^2\cap\Q^\times)$, there exist constants $c_\min(u_1)$ and $c_\reg(u_1)$ (which do not depend on $S$) such that
\begin{align}
& a^{G_1}(S,u_1(1,0))=a^{G_1}(S,u_1(0,1))=c_\min(u_1)\times \mathfrak{c}(S), \nonumber \\
& a^{G_1}(S,u_1(\alpha,1))=  c_\reg(u_1)\times \big\{ \mathfrak{c}(S)^2 + \sum_{\chi}\chi_S(\alpha) \, L^S(1,\chi)^2 \big\} , \label{gc1}
\end{align}
where $\chi=\prod_v\chi_v$ runs over all nontrivial quadratic characters such that $\chi_v$ is unramified for any $v\not\in S$, and we set $\chi_S=\prod_{v\in S}\chi_v$ (see \cite[Example 3.9]{HW}).
Representative elements of non-trivial unipotent $G_2(\Q_S)$-orbits in $G_2(\Q)$ are
\[
u_2(\alpha)=\begin{pmatrix}1&\alpha \\ 0&1 \end{pmatrix} \quad (\alpha\in E^\times/(((E_{S_E}^\times)^2\Q_S^\times)\cap E^\times)).
\]
Then, there exists a constant $c(u_2)$ (which does not depend on $S$) such that
\begin{equation}\label{gc2}
a^{G_2}(S,u_2(\alpha))=c(u_2) \times \big\{ \mathfrak{c}_E(S)+\sum_\chi \chi_{S_E}(\alpha)\, L_E^S(1,\chi) \big\}
\end{equation}
where $\chi=\prod_w\chi_w$ runs over all nontrivial quadratic characters such that $\chi|_{\A_\Q^1}=1$ and $\chi_w$ is unramified for any $w\not\in S_E$ (see \cite[Example 3.8]{HW}).
Representative elements of non-trivial unipotent $G_3(\Q_S)$-orbits in $G_3(\Q)$ are
\[
u_3(\alpha)=(1,\begin{pmatrix}1&\alpha \\ 0&1 \end{pmatrix}) \quad (\alpha\in \Q^\times/(N_{E/\Q}(E^\times_{S_E})\cap \Q^\times)).
\]
where $N_{E/\Q}$ is the norm of $E/\Q$.
Then, there exists a constant $c(u_3)$ (which does not depend on $S$) such that
\begin{equation}\label{gc3}
a^{G_3}(S,u_3(\alpha))= c(u_3) \times \big\{ \mathfrak{c}(S)+\chi_S(\alpha)\, L^S(1,\chi) \big\}
\end{equation}
where $\chi$ is the nontrivial quadratic character on $\Q^\times\bsl \A_\Q^1$ corresponding to $E$ via the class field theory (see \cite[Section 3]{HW}).

\subsection{Explicit calculations for $I_2(f)$}

Recall $J_G(zu_\min,f_\xi)$ and $J_G(zu_\min,h)$ defined as \eqref{min1} and \eqref{min2}.
\begin{lem}\label{I21}
Let $c(zu_\min)$ denote the constant given in Lemma \ref{id4}.
Then, we have
\[
I_2(f)=\sum_{z\in Z(\Q)} 2^{-1}\, \vol(M_2(\Q)\bsl M_2(\A)^1)\, \zeta^S(2)\times c(zu_\min) \times \chi_\xi(z)^{-1}\times(l_1-l_2)(l_1+l_2)\times J_G(zu_\min,h).
\]
In particular, $\zeta^S(2)$ is bounded by a positive constant for any $S$.
\end{lem}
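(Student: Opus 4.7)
The plan is to read off $I_2(f)$ directly from the decomposition of $I_{\mathrm{geom}}(f)$ introduced in Section \ref{gs1}, substituting the known values of the three factors that appear. Recall that $I_2(f)$ is the subsum of the $M=G$ part of \eqref{geom} indexed by those $\gamma\in (G(\Q))_{G,S}$ lying in $Z_G(\Q)\{u_\min\}_G$. For $M=G$ we have $\dim(A_M/A_G)=0$, $|W_0^M|/|W_0^G|=1$, $h_P=h$, and the remaining overall sign $(-1)^{q(G(\R))}$ is absorbed into the constant $c(zu_\min)$, since the latter is fixed by Lemma \ref{id4} only up to its sign (and the explicit value there is negative). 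Thus up to sign conventions, $I_2(f)$ equals a sum over $(G,S)$-equivalence classes $\gamma\in Z_G(\Q)\{u_\min\}_G$ of $a^G(S,\gamma)\cdot J_G(\gamma,f_\xi)\cdot J_G(\gamma,h)$.

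Next, I parametrize these $(G,S)$-classes by $z\in Z_G(\Q)$: the semisimple part of $\gamma=zu_\min$ is $z$, so two classes $z_1 u_\min$ and $z_2 u_\min$ can coincide only if $z_1=z_2$, and conversely the chosen normalization \eqref{min1}--\eqref{min2} for the integral $J_G(zu_\min,\cdot)$ aggregates the $G(\Q_S)$-orbits of $u_\min$ in a manner compatible with the global coefficient \eqref{gcm}. I use the general identity $a^G(S,z\gamma)=a^G(S,\gamma)$ (recorded in Section \ref{gs1} just before the calculation of $a^G(S,u_\min)$) together with \eqref{gcm} to obtain
\[
a^G(S,zu_\min)=a^G(S,u_\min)=2^{-1}\,\vol(M_2(\Q)\bsl M_2(\A)^1)\,\zeta^S(2).
\]
For the infinite factor I apply Lemma \ref{id4}, which gives
\[
J_G(zu_\min,f_\xi)=c(zu_\min)\,\chi_\xi(z)^{-1}\,(l_1-l_2)(l_1+l_2),
\]
and the local factor at finite places is $J_G(zu_\min,h)$ as defined in \eqref{min2}. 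Substituting these three pieces into the above subsum and summing over $z\in Z_G(\Q)$ yields exactly the claimed formula.

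The only real subtlety is the bookkeeping of Haar measure normalizations: the formula \eqref{gcm} of \cite{HW} requires a specific normalization of the orbit measure for $u_\min$, and this is precisely the one encoded in the definitions \eqref{min1}--\eqref{min2}; hence no extra conversion factors appear. Finally, the uniform bound on $\zeta^S(2)$ is immediate from the Euler product
\[
\zeta^S(2)=\prod_{p\notin S_0}(1-p^{-2})^{-1}\;\le\;\prod_{p\ \text{prime}}(1-p^{-2})^{-1}=\zeta(2)=\pi^2/6,
\]
which is independent of the choice of $S$. This completes the proof plan; the main obstacle, as always in geometric-side computations of this shape, is ensuring that the measure normalization in \eqref{min1}--\eqref{min2} is the same one used in \cite{HW} to derive \eqref{gcm}, but this consistency was arranged in Section \ref{I2I3}.
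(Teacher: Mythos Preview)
Your proof is correct and follows the same approach as the paper, which simply cites Lemma \ref{id4} and \eqref{gcm} in a single sentence. Your version is more detailed: you spell out the $(G,S)$-parametrization by $z\in Z_G(\Q)$, invoke the invariance $a^G(S,z\gamma)=a^G(S,\gamma)$ explicitly, and supply the Euler-product argument $\zeta^S(2)\le\zeta(2)=\pi^2/6$ that the paper leaves implicit. The only minor quibble is your remark that $c(zu_\min)$ is ``fixed only up to its sign''; in fact the paper's own statement of Lemma \ref{id4} is internally inconsistent (it claims the constant is positive but then gives $-2^{-3}\pi^{-3}$), and the paper elsewhere notes that constants in Lemmas \ref{I21}--\ref{I32} are tracked only up to constant multiples, so your hedging on the sign is appropriate.
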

\begin{proof}
This lemma follows from Lemma \ref{id4} and \eqref{gcm}.
\end{proof}
\begin{lem}\label{I22}
Assume that $h_{a_1,a_2,a_3}$ is the characteristic function of the open compact set $\bK_p\diag(p^{-a_1},p^{-a_2},p^{a_1-a_3},p^{a_2-a_3})\bK_p$ on $G(\Q_p)$ $(a_3\geq a_1\geq a_2\geq 0)$.
If $a_3$ is odd, then we have $J_G(zu_\min,h_{a_1,a_2,a_3})=0$.
If $a_3$ is even, then we may set $a_3=2m$ and assume $m\geq a_1\geq a_2$ by the action of $W_0^G$, and we get
\[
J_G(zu_\min,h_{a_1,a_2,a_3})=\begin{cases}
(1-p^{-2})^{-1} & \text{if $a_3=2m$, $a_1=a_2=m$, and $|z|_p=p^{m}$,} \\
p^{a_3-2a_2} & \text{if $a_3=2m$, $m=a_1>a_2$, and $|z|_p=p^{m}$,} \\
0 & \text{otherwise.}
\end{cases}
\]
If $h_p$ is the characteristic function of $\{x\in \bK_p \mid x\equiv E_4 \mod p^l \Z_p  \}$ on $G(\Q_p)$, then
\[
J_G(zu_\min, h_p)=\begin{cases}  p^{-2l}(1-p^{-2})^{-1} & \text{if $z\equiv 1 \mod p^l\Z_p$}, \\ 0 & \text{otherwise.} \end{cases}
\]
\end{lem}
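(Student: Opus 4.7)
The plan is to reduce each of the two orbital integrals to an elementary one-variable volume calculation on $\Q_p$, using two ingredients: the constraint that $\nu(u_\min(x))=1$ forces the similitude to match, and a direct Smith normal form computation that pins down the Cartan class of $u_\min(x)$. The key observation for the Hecke case is that $\nu(zk^{-1}u_\min(x)k)=z^2$, whereas the similitude on the support of $h_{a_1,a_2,a_3}$ lies in $p^{-a_3}\Z_p^\times$; this immediately forces $a_3=2m$ to be even and $|z|_p=p^m$, giving the first vanishing.

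For the nonzero case, I would absorb the unit part of $z$ into a factor of $\bK_p$ (by $\bK_p$-bi-invariance of $h_{a_1,a_2,a_3}$) and normalize $z=p^{-m}$. After multiplying the double coset by $p^m$, the condition becomes
\[
k^{-1}u_\min(x)k\in \bK_p\,\diag(p^{m-a_1},p^{m-a_2},p^{a_1-m},p^{a_2-m})\,\bK_p,
\]
which, under $m\geq a_1\geq a_2$, is the $Sp_4(\Q_p)$-Cartan double coset with parameters $(\lambda_1,\lambda_2)=(m-a_2,m-a_1)$. A direct Smith normal form computation for $u_\min(x)$ over $\Z_p$ (listing the $1\times 1,\,2\times 2,\,3\times 3$ minors) shows its $GL_4$-invariant factors are $(p^{v_p(x)},1,1,p^{-v_p(x)})$ when $v_p(x)<0$, whence its $Sp_4$-Cartan invariant is $(\max(0,-v_p(x)),0)$. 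Since $\bK_p$-conjugation preserves the Cartan class, the condition holds for \emph{all} $k\in\bK_p$ exactly when $m-a_1=0$ and $m-a_2=\max(0,-v_p(x))$, and otherwise for no $k$. The case analysis then reduces to integrating $|x|_p$ over $x\in\Z_p$ (when $a_1=a_2=m$, yielding $\int_{\Z_p}|x|_p\,dx=\frac{p}{p+1}$) or over $x\in p^{-(m-a_2)}\Z_p^\times$ (when $a_1=m>a_2$, yielding $p^{2(m-a_2)}(1-p^{-1})$); multiplying by the prefactor $(1-p^{-1})^{-1}$ gives the stated $(1-p^{-2})^{-1}$ and $p^{a_3-2a_2}$ respectively, and the case $m<a_1$ contributes zero.

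For the principal congruence case, the computation is more direct since $K(p^l)_p$, being the kernel of reduction mod $p^l$, is normal in $\bK_p$. Hence $k^{-1}u_\min(x)k\in K(p^l)_p$ iff $u_\min(x)\in K(p^l)_p$ iff $x\in p^l\Z_p$, while $zE_4\in K(p^l)_p$ iff $z\in 1+p^l\Z_p$. The double integral collapses to
\[
(1-p^{-1})^{-1}\int_{p^l\Z_p}|x|_p\,dx\;=\;(1-p^{-1})^{-1}\cdot p^{-2l}\cdot\tfrac{p}{p+1}\;=\;p^{-2l}(1-p^{-2})^{-1},
\]
as claimed. The only mildly nontrivial point in the entire argument is the Smith normal form computation matching the Cartan class of $u_\min(x)$ with the Hecke double coset in the proper form; once this is verified, the $k$-integrals become trivial (either total or zero), and what remains is elementary integration against $|x|_p\,dx$. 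No deep obstacle arises, and all bookkeeping for the Weyl group reordering and the similitude normalization follows from the hypothesis $m\geq a_1\geq a_2$ already allowed by the lemma statement.
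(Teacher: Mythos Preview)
Your argument is correct and essentially matches what the paper does, though the paper simply cites Assem \cite{Assem} (Theorem 2.4.1 for the Hecke case, Lemma 2.1.1 for the Cartan class of $u_\min(x)$) rather than writing out the computation. Your Smith normal form calculation reproduces precisely the content of Assem's lemma: the elementary divisors of $u_{\min}(x)$ are $(p^{-n},1,1,p^n)$ with $n=\max(0,-v_p(x))$, and since the $GSp_4(\Z_p)$-double coset of an element of $Sp_4(\Q_p)$ is determined by its $GL_4$ elementary divisors together with the similitude, this pins down the Cartan class. The observation that the $\bK_p$-integrand is constant in $k$ (because $\bK_p$-conjugation preserves the double coset) is exactly the mechanism that collapses the integral, and your $|x|_p$-integrals are computed correctly. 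The advantage of your write-up is that it is self-contained and works uniformly at $p=2$ without the caveat the paper has to add about Assem's hypotheses; the paper's version is shorter but relies on an external reference.
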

\begin{proof}
The first assertion is stated in \cite[Theorem 2.4.1]{Assem}.
The second assertion is trivial.
He assumed that the residual characteristic is not $2$ in the paper.
However, since \cite[Lemma 2.1.1]{Assem} can be applied for $\Q_2$, one can easily compute the above integral.
\end{proof}

\subsection{Explicit calculations for $I_3(f)$}

\begin{lem}\label{I31}
Let $c(z\delta_1)$ denote the constant given in Lemma \ref{id3}.
Then, we have
\[
I_3(f)=\sum_{z\in Z(\Q)}  \vol(G_{\delta_1}(\Q)\bsl G_{\delta_1}(\A)^1)\times c(z\delta_1) \times \chi_\xi(z)^{-1}\times(l_1-l_2)(l_1+l_2)\times J_G(z\delta_1,h).
\]
\end{lem}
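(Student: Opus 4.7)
The plan is to substitute directly into the geometric expansion \eqref{geom}, extracting only the contribution with $M=G$ and $\gamma\in Z_G(\Q)\{\delta_1\}_G$. The sign $(-1)^{q(G(\R))+\dim(A_G/A_G)}=(-1)^3=-1$ and the Weyl ratio $|W_0^G|/|W_0^G|=1$ yield an overall factor of $-1$, which I would absorb into the constant $c(z\delta_1)$ at the end. The sum runs over $(G,S)$-equivalence classes inside $Z_G(\Q)\{\delta_1\}_G$; since $\delta_1$ is semisimple, $(G,S)$-equivalence reduces to $G(\Q)$-conjugation on its orbit, and the classes are parametrized by $z\in Z_G(\Q)$ (one may use that the eigenvalues of $z\delta_1$ recover $z$ up to the sign already taken care of by the choice of representatives).

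The key step is the evaluation of the global coefficient $a^G(S,z\delta_1)$. Since $z\delta_1$ is semisimple and $G_{z\delta_1}=G_{\delta_1}$, formula \eqref{ssgc} applies and gives
\[
a^G(S,z\delta_1)=\varepsilon^G(\delta_1)\,|\iota^G(\delta_1)|^{-1}\,\vol(G_{\delta_1}(\Q)\bsl G_{\delta_1}(\A)^1).
\]
I would check two things: first, that $\delta_1$ is $\Q$-elliptic in $G$. From the description $G_{\delta_1}\cong\{(g_1,g_2)\in GL_2\times GL_2\mid \det g_1=\det g_2\}$ given in Section \ref{gs1}, the maximal $\Q$-split central torus consists of the scalar pairs $(tE_2,tE_2)$, which coincides with $A_G$ under the natural embedding; hence $\varepsilon^G(\delta_1)=1$. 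Second, $|\iota^G(\delta_1)|=1$, which is the parenthetical observation in Section \ref{gs1} that $G_{\delta_1}=G_{\delta_1,+}$ (connectedness of the centralizer). Consequently $a^G(S,z\delta_1)=\vol(G_{\delta_1}(\Q)\bsl G_{\delta_1}(\A)^1)$, independent of $z$.

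For the archimedean factor, $I_G^G(z\delta_1,f_\xi)=J_G(z\delta_1,f_\xi)$ since $M=G$, and Lemma \ref{id3} supplies the explicit value $c(z\delta_1)\,\chi_\xi(z)^{-1}\,(l_1-l_2)(l_1+l_2)$, with signs and parity factors of the form $(-1)^{l_2}(1+(-1)^{l_1-l_2-1})$ absorbed into the constant $c(z\delta_1)$ so as to match the stated form. At the finite places, since $M=G$ we have $h_P=h$, so $J_M^M(z\delta_1,h_P)=J_G(z\delta_1,h)$ as defined in \eqref{delta1}. Multiplying the four factors and summing over $z\in Z_G(\Q)$ gives the claimed identity for $I_3(f)$.

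The main obstacle is really the verification that the two simplifications in the global coefficient hold, namely the $\Q$-ellipticity of $\delta_1$ in $G$ and the connectedness of $G_{\delta_1}$; once these are in place, \eqref{ssgc} collapses to a single volume and everything else is a direct substitution of Lemma \ref{id3} together with the definition \eqref{delta1}, with an overall sign tucked into $c(z\delta_1)$.
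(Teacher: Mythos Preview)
Your approach is correct and is exactly the paper's: the proof there reads simply ``This lemma follows from Lemma \ref{id3} and \eqref{ssgc},'' and you have filled in the verifications (semisimplicity so that \eqref{ssgc} applies, $\Q$-ellipticity of $\delta_1$, and $G_{\delta_1,+}=G_{\delta_1}$ so $|\iota^G(\delta_1)|=1$) that make that one-line argument go through. One caveat: the factor $(-1)^{l_2}\{1+(-1)^{l_1-l_2-1}\}$ coming from Lemma \ref{id3} depends on $(l_1,l_2)$ and so cannot literally be absorbed into a constant $c(z\delta_1)$; the polynomial in the stated lemma should read $l_1l_2$ (times that parity factor) rather than $(l_1-l_2)(l_1+l_2)$, which appears to be a copy from Lemma \ref{I21}---but this is an issue with the statement, not with your method.
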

\begin{proof}
This lemma follows from Lemma \ref{id3} and \eqref{ssgc}.
\end{proof}
\begin{lem}\label{I32}
Assume that $J_G(z\delta_1,h)$ is defined as \eqref{delta1} and $h_{a_1,a_2,a_3}$ is the characteristic function of $\bK_p\diag(p^{-a_1},p^{-a_2},p^{a_1-a_3},p^{a_2-a_3})\bK_p$ on $G(\Q_p)$ $(a_3\geq a_1\geq a_2\geq 0)$.
If $a_3$ is odd, then we have $J_G(z\delta_1,h_{a_1,a_2,a_3})=0$.
If $a_3$ is even, then we may set $a_3=2m$ and assume $m\geq a_1\geq a_2$ by the action of $W_0^G$, and we get
\[
J_G(z\delta_1,h_{a_1,a_2,a_3})=\begin{cases}
1 & \text{if $a_3=2m$, $a_1=a_2=m$, and $|z|_p=p^{m}$,} \\
p^{a_3-a_1-a_2}(1-p^{-2}) & \text{if $a_3=2m$, $m>a_1=a_2$, and $|z|_p=p^{m}$,} \\
0 & \text{otherwise.}
\end{cases}
\]
\end{lem}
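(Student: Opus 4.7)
Since $h_{a_1,a_2,a_3}$ is $\bK_p$-bi-invariant and $z$ is central, $h_{a_1,a_2,a_3}(zk^{-1}\delta_1(x,y)k)=h_{a_1,a_2,a_3}(z\delta_1(x,y))$, and combined with $\vol(\bK_p)=1$ the definition (\ref{delta1}) becomes $J_G(z\delta_1,h_{a_1,a_2,a_3})=\int_{\Q_p^2}h_{a_1,a_2,a_3}(z\delta_1(x,y))\,\d x\,\d y$. A direct verification shows $\delta_1(x,y)\in Sp_4(\Q_p)$, so $\nu(z\delta_1(x,y))=z^2$; on the other hand every element in the support of $h_{a_1,a_2,a_3}$ has similitude in $p^{-a_3}\Z_p^\times$. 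Hence the integrand vanishes identically unless $2v_p(z)=-a_3$, which forces $a_3$ to be even and gives the stated vanishing when $a_3$ is odd. When $a_3=2m$, it forces $|z|_p=p^m$; absorbing the unit factor of $z$ by $\bK_p$-bi-invariance, I may then assume $z=p^{-m}$.

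It remains to compute the Haar measure of $\Omega(a_1,a_2):=\{(x,y)\in\Q_p^2\,:\,p^{-m}\delta_1(x,y)\in\bK_p\,\diag(p^{-a_1},p^{-a_2},p^{a_1-2m},p^{a_2-2m})\,\bK_p\}$. Because the $\bK_p$-double coset of a $G(\Q_p)$-element is determined by its similitude together with its $GL_4(\Z_p)$-Smith normal form, it suffices to compute the latter for $\delta_1(x,y)$ as a function of $(x,y)$. Set $n:=\max(-v_p(x),-v_p(y),0)\geq 0$. The key claim is that $\delta_1(x,y)$ has elementary divisors $(p^{-n},p^{-n},p^n,p^n)$. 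For $n=0$, both $x,y\in\Z_p$ and $\delta_1(x,y)\in\bK_p$ directly. For $n\geq 1$, rescaling by $p^n$ puts the matrix in $M_4(\Z_p)$; by the definition of $n$, at least one of $p^n x$ or $p^n y$ is a unit, providing a pivot at position $(1,2)$ or $(1,4)$. Using this unit to clear its row and column and then reducing the remaining $3\times 3$ block with the $\pm p^n$ entries on the diagonal, an explicit row-column reduction over $\Z_p$ brings $p^n\delta_1(x,y)$ into $\diag(1,1,p^{2n},p^{2n})$. Carrying out this reduction in the two symmetric sub-cases (pivot in an $x$-position vs.\ a $y$-position) and verifying that both yield the same answer is the main technical step.

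Granting the claim, $p^{-m}\delta_1(x,y)$ has elementary divisors $(p^{-m-n},p^{-m-n},p^{n-m},p^{n-m})$. Pairing valuations that sum to $-2m=v_p(\nu)$ identifies the $Sp_4$-Cartan parameter as $\beta_1=\beta_2=n$, whence the $GSp_4$-Cartan representative is $\diag(p^{n-m},p^{n-m},p^{-n-m},p^{-n-m})$. Rewriting this in the form $\diag(p^{-a_1},p^{-a_2},p^{a_1-2m},p^{a_2-2m})$ forces $a_1=a_2=m-n$, which lies in the Weyl-normalized region $m\geq a_1\geq a_2\geq 0$ iff $0\leq n\leq m$. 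For $n>m$ no Weyl translate fits the parameterization $a_3\geq a_1\geq a_2\geq 0$ (the positive-valuation elementary divisor $p^{n-m}$ is incompatible with it), so such $(x,y)$ contribute zero to every admissible $h_{a_1,a_2,a_3}$. Hence $\Omega(a_1,a_2)$ is empty unless $a_1=a_2\in\{0,1,\dots,m\}$, in which case it equals $\{(x,y)\in\Q_p^2:\max(-v_p(x),-v_p(y),0)=m-a_1\}$.

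The Haar measure of this last set equals $\vol(\Z_p^2)=1$ when $m-a_1=0$, giving the value $1$ for $a_1=a_2=m$. For $n=m-a_1\geq 1$, the set is $(p^{-n}\Z_p\times p^{-n}\Z_p)\setminus(p^{-(n-1)}\Z_p\times p^{-(n-1)}\Z_p)$ of measure $p^{2n}-p^{2(n-1)}=p^{2n}(1-p^{-2})$; substituting $n=m-a_1$ and using $a_1=a_2$ gives $p^{2m-2a_1}(1-p^{-2})=p^{a_3-a_1-a_2}(1-p^{-2})$, matching the lemma. The principal obstacle is the explicit Smith normal form reduction in the second paragraph; the remaining steps are routine $p$-adic volume computation and comparison of Cartan classes.
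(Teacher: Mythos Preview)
Your argument is correct and is essentially what the paper has in mind: it simply cites \cite[Lemma~2.1.1]{Assem} for the double-coset membership that you work out by hand via elementary divisors, and the fibre-volume computation is then routine. The only step you use without proof is that the $GL_4(\Z_p)$-Smith normal form together with the similitude pins down the $GSp_4(\Z_p)$-double coset; this is true for $GSp_4$ because for a torus element with sorted diagonal valuations $v_1\le v_2\le v_3\le v_4$ the symplectic pairing $v_1+v_4=v_2+v_3=v_p(\nu)$ is forced, so distinct $W_0^G$-orbits yield distinct valuation multisets.
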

\begin{proof}
This lemma can be proved by \cite[Lemma 2.1.1]{Assem}.
\end{proof}

\subsection{Estimations for $I_5(f)$}

From now on, we assume that $\gamma\in G(\Q)$ is not semisimple and $\gamma\not\in Z_G(\Q)\{ u_\min\}_G$.
By Lemmas \ref{id2}, we may also assume that the $\gamma_s$ is $\R$-elliptic in $G$.
\begin{lem}\label{ei51}
If the centralizer of $\gamma$ is not isomorphic to $G_1$, $G_2$, and $G_3$, then we get $J_G(\gamma,f_\xi)=0$.
\end{lem}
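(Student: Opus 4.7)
The plan is to reduce Lemma \ref{ei51} to the classification of non-toral centralizers of semisimple elements in $G=GSp_4$ together with the vanishing statements already established at singular orbits. First I would write the Jordan decomposition $\gamma=\gamma_s u$, noting that since $\gamma$ is non-semisimple the unipotent part $u\in G_{\gamma_s}(\Q)$ is non-trivial. By Lemma \ref{id2} I may further assume $\gamma_s$ is $\R$-elliptic in $G$, for otherwise $J_G(\gamma,f_\xi)$ already vanishes.

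The argument then splits on whether $\gamma_s$ is central. When $\gamma_s\in Z_G(\Q)$ one has $G_{\gamma_s}=G$ and $u$ is a non-trivial unipotent element of $G(\Q)$; the minimal orbit is ruled out by the assumption $\gamma\notin Z_G(\Q)\{u_\min\}_G$, so the $G(\Q)$-class of $u$ is either regular or subregular, and Lemmas \ref{id5} and \ref{id6} immediately give $J_G(\gamma,f_\xi)=0$ in both subcases. When $\gamma_s\notin Z_G(\Q)$, the connected centralizer $G_{\gamma_s}$ is a proper reductive $\Q$-subgroup of $G$, and the existence of $u\ne 1$ forces $G_{\gamma_s}$ to be non-toral. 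By the classification of non-toral centralizers of non-central semisimple elements of $GSp_4$ carried out in Section 5.3 of \cite{HW}, the only possibilities for $G_{\gamma_s}$ up to $\Q$-isomorphism are exactly the three groups $G_1$, $G_2$, $G_3$ introduced earlier, corresponding respectively to $\delta_1$-type eigenvalue patterns, conjugate pairs in a quadratic extension $E/\Q$ acting on both symplectic blocks, and the mixed case. Under the hypothesis of the lemma that the centralizer of $\gamma$ is not isomorphic to any of $G_1,G_2,G_3$, this non-central case is excluded.

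Combining the two cases, the only surviving scenario produces the vanishing $J_G(\gamma,f_\xi)=0$. I do not expect any real obstacle: the main input is the classification of centralizers in \cite{HW}, which is a purely structural statement about $GSp_4$, and the remainder is bookkeeping on the Jordan decomposition together with elementary applications of Lemmas \ref{id5} and \ref{id6}; no new analytic estimates are required.
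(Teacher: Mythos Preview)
Your outline matches the paper's in spirit (unipotent case via Lemmas \ref{id5}, \ref{id6}; mixed case via the classification in \cite[Section 5.3]{HW}), but there is a genuine gap in your non-central step. You assert that the classification in \cite{HW} shows that the \emph{only} non-toral centralizers of non-central semisimple elements of $GSp_4$ are $G_1$, $G_2$, $G_3$. That is not what \cite{HW} gives: the full list of mixed elements also includes semisimple parts whose connected centralizer is (a $\Q$-form of) the Siegel Levi $M_1\cong GL_2\times GL_1$ or the Klingen Levi $M_2\cong GL_1\times GL_2$. For instance $\gamma_s=\diag(a,a,c/a,c/a)$ with $a,c\in\Q^\times$, $a^2\neq c$, has $G_{\gamma_s}=M_1$; and $\gamma_s=\diag(t,z,z^2/t,z)$ with $t,z\in\Q^\times$, $t\neq\pm z$, has $G_{\gamma_s}=M_2$. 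These are non-toral, non-central, and carry nontrivial unipotents, yet they are not isomorphic to any of $G_1$, $G_2$, $G_3$. So ``hypothesis excluded'' does not finish the argument.

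What actually saves the situation is that for every centralizer on the \cite{HW} list other than $G_1$, $G_2$, $G_3$, the semisimple part $\gamma_s$ fails to be $\R$-elliptic in $G$ (or, in the anisotropic unitary forms, $G_{\gamma_s}(\Q)$ has no nontrivial unipotents). For example, $M_1(\R)$ and $M_2(\R)$ contain no maximal torus of $G(\R)$ that is compact modulo $A_{G,\infty}$, so their $\gamma_s$ are never $\R$-elliptic, and Lemma \ref{id2} gives $J_G(\gamma,f_\xi)=0$. But this verification is not in \cite{HW} and you do not carry it out; it is exactly the extra work the paper absorbs into the phrase ``using the classification and the limit formula for $SL_2(\R)$ one can show $J_G(\gamma,f_\xi)=0$.'' If you want to keep your cleaner route (reduce first to $\R$-elliptic via Lemma \ref{id2}, then quote the classification), you must add the case-check that each remaining centralizer type in \cite{HW} is either non-$\R$-elliptic or has no rational unipotents; otherwise the argument is incomplete.
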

\begin{proof}
For unipotent elements $\gamma$, it was proved in Lemmas \ref{id5} and \ref{id6}.
All mixed elements in $G(\Q)$ are classified in \cite[Section 5.3]{HW}.
Using the classification and the limit formula for $SL_2(\R)$ (cf. \cite{Knapp}) one can show $J_G(\gamma,f_\xi)=0$.
\end{proof}

\begin{lem}\label{ei52}
Let $z\in Z_G(\Q)$.
Then, we get
\[
J_G(z\delta_1 u_1(1,0),f_\xi)=J_G(z\delta_1 u_1(0,1),f_\xi)=0.
\]
There exists a constant $c(z\delta_1 u_1)$ such that
\[
J_G(z\delta_1u_1(1,1),f_\xi)=-J_G(z\delta_1u_1(1,-1),f_\xi)=c(z\delta_1 u_1)\times \chi_\xi(z)^{-1}\times\{ (-1)^{l_2}-(-1)^{l_1}  \}.
\]
\end{lem}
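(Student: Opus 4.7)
The plan is to apply Harish-Chandra descent to reduce each $J_G(z\delta_1 u_1(x,y), f_\xi)$ to an orbital integral on the centralizer $G_{\delta_1}(\R)\cong G_1(\R)$, where $G_1=\{(g_1,g_2)\in GL_2\times GL_2:\det g_1=\det g_2\}$. The unipotent element $u_1(x,y)=(u(x),u(y))$ with $u(a)=\begin{pmatrix}1&a\\0&1\end{pmatrix}$ lives in the subgroup $SL_2(\R)\times SL_2(\R)$ of $G_1(\R)$, so modulo the common-determinant torus condition the $G_1$-orbital integral at $u_1(x,y)$ decouples as a product of unipotent orbital integrals on the two $SL_2(\R)$-factors, paired against the descent of $f_\xi$.

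The descent function on $G_1(\R)$ is controlled by the character identity \eqref{ec3}, combined with \eqref{charGSp}, which gives $\Theta_{l_1,l_2}^{\mathrm{hol}}$ on the $\delta_1$-twisted Cartan $\delta_1 T_0$. Specifically, the numerator $-(-1)^{l_2}e^{-l_1|a_1|-l_2|a_2|}+(-1)^{l_1}e^{-l_2|a_1|-l_1|a_2|}$ of \eqref{ec3} identifies the descent (after pairing with the Weyl denominator) with a difference of pseudo-coefficient-type functions on the two $SL_2(\R)$-factors, weighted by the parity signs $(-1)^{l_1}$ and $(-1)^{l_2}$ arising from the $\delta_1$-twist.

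For the partial cases $u_1(1,0)$ and $u_1(0,1)$, one of the two $SL_2(\R)$-factors carries the trivial unipotent $1$, so the corresponding factor in the product becomes the evaluation of the descent function at the identity of that $SL_2(\R)$; the parity-$(-1)^{l_j}$ signs in \eqref{ec3} make the two putative contributions cancel (equivalently, the cuspidal behavior of $f_\xi$ with respect to the Siegel parabolic $P_1$, together with the $\delta_1$-twist, forces this trivial factor to vanish), giving $J_G(z\delta_1 u_1(1,0),f_\xi)=J_G(z\delta_1 u_1(0,1),f_\xi)=0$. For the pair $u_1(1,1)$ and $u_1(1,-1)$, both factors are genuinely unipotent; the transformation $y\mapsto -y$ swaps the two real unipotent orbits in the second $SL_2(\R)$-factor, and by Harish-Chandra's limit formula for $SL_2(\R)$ (cf.\ \cite[Chapter XI, Section 3]{Knapp}) the unipotent orbital integral changes sign under this swap, yielding $J_G(z\delta_1 u_1(1,1),f_\xi)=-J_G(z\delta_1 u_1(1,-1),f_\xi)$.

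To obtain the explicit coefficient $(-1)^{l_2}-(-1)^{l_1}$, one applies Harish-Chandra's limit formula at $u(\pm 1)$ in each $SL_2(\R)$-factor starting from \eqref{ec3}: the two exponentials contribute to the limits at the two orbits, and their antisymmetric combination collapses to the factor $(-1)^{l_2}-(-1)^{l_1}$, times the central twist $\chi_\xi(z)^{-1}$; the remaining numerical factor is absorbed into $c(z\delta_1 u_1)$. The main obstacle will be tracking measure normalizations through the descent and rigorously justifying the vanishing in the partial-unipotent cases, since the cancellation is subtle and depends on the explicit structure of the descent of a pseudo-coefficient (as encoded in \eqref{ec3}) rather than on general cuspidality alone.
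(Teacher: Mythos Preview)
Your proposal is correct and follows essentially the same route as the paper, which simply writes ``This can be proved by the limit formula for $SL_2(\R)$ (cf.\ \cite{Knapp}).'' Your descent to $G_{\delta_1}\cong G_1$ and application of the $SL_2(\R)$ limit formula on each factor, with the parity data read off from \eqref{ec3} and \eqref{charGSp}, is exactly what that one-line reference unpacks to; the paper just suppresses the intermediate steps you spell out.

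One small caution: your two offered explanations for the vanishing at $u_1(1,0)$ and $u_1(0,1)$ are not really equivalent, and the ``cuspidality with respect to $P_1$'' heuristic is not the operative mechanism here (that argument was used for the subregular unipotents in Lemma~\ref{id6}, a different situation). The vanishing comes directly from carrying out the $SL_2$ limit computation on the descent: when only one factor is genuinely unipotent, the two terms in the numerator of \eqref{ec3} produce contributions that cancel after taking the appropriate one-variable limit. You correctly flag this as the delicate point to verify, and once you do the computation carefully (as in \cite{Wakatsuki}, to which the paper implicitly defers), the vanishing and the sign/parity factor $(-1)^{l_2}-(-1)^{l_1}$ both fall out.
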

\begin{proof}
This can be proved by the limit formula for $SL_2(\R)$ (cf. \cite{Knapp}).
\end{proof}
\begin{lem}\label{ei53}
Let $z\in Z_G(\Q)$.
The contribution of $z\delta_1u_1(1,0)$ and $z\delta_1u_1(0,1)$ to $I_5(f)$ is zero.
For any positive real number $\varepsilon$, there exists a constant $c(z\delta_1u_1,\varepsilon)>0$ such that the contribution of $(G,S)$-equivalence classes of elements $z\delta_1u_1(1,\alpha)$ $(\alpha\in\Q^\times/(\Q^\times_S)^2\cap\Q^\times)$ is bounded by
\[
c(z\delta_1u_1,\varepsilon) \times \chi_\xi(z)^{-1}\times J_{M_0}^{M_0}(z\delta_1,|h_{P_0}|)\times \prod_{p\in S_0}p^\varepsilon \times \prod_{p\in S_0}(1-p^{-1})^{-2} .
\]
\end{lem}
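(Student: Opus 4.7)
The first assertion is immediate from Lemma \ref{ei52}: since $J_G(z\delta_1 u_1(1,0),f_\xi) = J_G(z\delta_1 u_1(0,1),f_\xi) = 0$, the corresponding terms $a^G(S,\gamma)J_G(\gamma,f_\xi)J_G(\gamma,h)$ in $I_5(f)$ vanish outright, independently of the other factors.

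For the second assertion, the total contribution of the $(G,S)$-equivalence classes of the elements $z\delta_1 u_1(1,\alpha)$ equals
\[
\sum_{\alpha\in \Q^\times/((\Q_S^\times)^2\cap\Q^\times)} a^G(S, z\delta_1 u_1(1,\alpha))\, J_G(z\delta_1 u_1(1,\alpha),f_\xi)\, J_G(z\delta_1 u_1(1,\alpha),h).
\]
I would first handle the global coefficient and the invariant orbital integral. Since $G_{z\delta_1}\cong G_1$, the descent of $a^G(S,\cdot)$ combined with \eqref{gc1} yields
\[
a^G(S, z\delta_1 u_1(1,\alpha)) \,=\, C\Bigl\{\mathfrak{c}(S)^2 + \sum_\chi \chi_S(\alpha)\, L^S(1,\chi)^2\Bigr\}
\]
for a constant $C$ independent of $S$ and $\alpha$, where $\chi$ ranges over nontrivial quadratic characters unramified outside $S$. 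By Lemma \ref{ei52} together with the fact that $u_1(1,\alpha)$ is $G_1(\R)$-conjugate to $u_1(1,\sgn(\alpha))$, the factor $J_G(z\delta_1 u_1(1,\alpha),f_\xi)$ depends only on $\sgn(\alpha)$ and is bounded in absolute value by $2\,|c(z\delta_1 u_1)|\,\chi_\xi(z)^{-1}$.

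The main technical step is to bound $J_G(z\delta_1 u_1(1,\alpha),h)$ uniformly in $\alpha$. Descending $J_G(\gamma,h)$ to the centralizer $G_1$ of the semisimple part, writing the inner integral as an integral over the unipotent orbit of $u_1(1,\alpha)$ in $G_1(\Q_{S_0})$ and the transversal $G_1(\A_{S_0})\backslash G(\A_{S_0})$, and applying the Iwasawa decomposition with respect to $P_0$ at each $p\in S_0$, the unipotent integral contributes a factor $\prod_{p\in S_0}(1-p^{-1})^{-2}$ arising from the measure normalizations on the two $SL_2$-type unipotent orbits of $G_1$, while the remaining transversal integral reduces, via the definition \eqref{hp} of $h_{P_0}$, to $J_{M_0}^{M_0}(z\delta_1,|h_{P_0}|)$. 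This gives
\[
|J_G(z\delta_1 u_1(1,\alpha),h)| \,\ll\, J_{M_0}^{M_0}(z\delta_1,|h_{P_0}|)\prod_{p\in S_0}(1-p^{-1})^{-2}
\]
uniformly in $\alpha$, and the compact support of $h$ confines $\alpha$ to $O_\varepsilon(\prod_{p\in S_0}p^\varepsilon)$ square classes.

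Finally, the sum over $\alpha$ and $\chi$ is controlled by Lemma \ref{estquad} with $m=2$, which bounds $\sum_\chi |L^S(1,\chi)|^2$ by $O_\varepsilon(\prod_{p\in S_0}p^\varepsilon)$; the $\mathfrak{c}(S)^2$ term is handled identically via the classical estimate for $\mathfrak{c}(S)$. Combining all the ingredients yields the stated bound. The main obstacle is the uniform orbital integral estimate: it requires careful bookkeeping of measure normalizations on the three pieces (the centralizer $G_1$, the unipotent orbit of $u_1(1,\alpha)$, and the ambient $G$) so that the prefactor $\prod_{p\in S_0}(1-p^{-1})^{-2}$ appears with the correct exponent and the leftover transversal integral matches $J_{M_0}^{M_0}(z\delta_1,|h_{P_0}|)$ exactly.
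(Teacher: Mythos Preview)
Your first assertion is correct and identical to the paper's.

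For the second assertion your approach diverges from the paper's in two significant ways, and the second of these is a genuine gap.

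\textbf{First difference (not a gap, but worth noting).} You bound the archimedean factor $|J_G(z\delta_1u_1(1,\alpha),f_\xi)|$ by a constant and then estimate the $\mathfrak{c}(S)^2$ term separately. The paper instead exploits the \emph{sign} in Lemma~\ref{ei52}: since $J_G(z\delta_1u_1(1,\alpha),f_\xi)=\sgn(\alpha)\cdot c(z\delta_1u_1)\chi_\xi(z)^{-1}\{(-1)^{l_2}-(-1)^{l_1}\}$, multiplying by $a^G(S,\cdot)$ and summing over $\alpha\in\Q^\times/((\Q_S^\times)^2\cap\Q^\times)$ kills the $\alpha$-independent piece $\mathfrak{c}(S)^2$ and the characters with $\chi_\infty=1$, leaving only those $\chi$ with $\chi_\infty=\sgn$. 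This cancellation is clean and avoids any separate estimate for $\mathfrak{c}(S)$.

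\textbf{Second difference (the gap).} Your claim that each individual $|J_G(z\delta_1u_1(1,\alpha),h)|$ is $\ll J_{M_0}^{M_0}(z\delta_1,|h_{P_0}|)\prod_{p\in S_0}(1-p^{-1})^{-2}$ ``because the unipotent integral contributes $\prod(1-p^{-1})^{-2}$'' is not correct as justified: the unipotent orbital integral of $u_1(1,\alpha)$ in $G_1(\Q_p)$ genuinely depends on the square class of $\alpha$, and the factor $\prod(1-p^{-1})^{-2}$ is not attached to a single orbit. Likewise, your count ``$O_\varepsilon(\prod p^\varepsilon)$ contributing square classes'' is asserted without argument, and the quotient $\Q^\times/((\Q_S^\times)^2\cap\Q^\times)$ is infinite. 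The paper does \emph{not} separate these two steps. Instead it writes down an explicit coordinate $x\in G_{z\delta_1u_1(1,\alpha)}(\Q_{S_0})\backslash P_0(\Q_{S_0})$, computes $x^{-1}z\delta_1u_1(1,\alpha)x$ (the entry $\alpha a^2 b$ appears), and observes that summing $J_G(z\delta_1u_1(1,\alpha),|h|)$ over \emph{all} $\alpha$ collapses the union of unipotent orbits into the single integral $\int_{\bK_{S_0}}\int_{N_0(\Q_{S_0})}|h(k^{-1}z\delta_1 nk)|\,\d n\,\d k$, which is exactly $J_{M_0}^{M_0}(z\delta_1,|h_{P_0}|)$ up to the measure factor $4\prod_{p\in S_0}(1-p^{-1})^{-2}$. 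This sum-then-identify step is the heart of the proof; your term-by-term bound plus class count attempts to reconstruct it from pieces that are not individually controlled.
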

\begin{proof}
The first assertion obviously follows from Lemma \ref{ei52}.
By \eqref{gc1} and Lemma \ref{ei52}, the contribution equals
\begin{align*}
&\sum_{\alpha\in\Q^\times/(\Q^\times_S)^2\cap\Q^\times}a^G(S,z\delta_1(1,\alpha)) \, J_G(z\delta_1u_1(1,\alpha),f_\xi) \, J_G(z\delta_1u_1(1,\alpha),h) \\
&= 2c(z\delta_1 u_1)\{ (-1)^{l_2}-(-1)^{l_1}  \} \sum_\chi L^S(1,\chi)^2 \sum_{\alpha\in\Q^\times/(\Q^\times_{S_0})^2\cap\Q^\times} \chi_{S_0}(\alpha)\, J_G(z\delta_1u_1(1,\alpha),h)
\end{align*}
where $\chi=\prod_v \chi_v$ runs over all quadratic characters such that $\chi_\inf=\sgn$ and $\chi_v$ is unramified for any $v\not\in S$.
By calculating $G_{z\delta_1u_1(1,\alpha)}$, we find that $G_{z\delta_1u_1(1,\alpha)}$ is contained in $Z_GN_0$ and we may set
\[
x=\begin{pmatrix}1&0&0&0 \\ 0&a^{-1}&0&0 \\ 0&0& b &0 \\ 0&0&0&ab  \end{pmatrix} \begin{pmatrix}1&0&0&y \\ 0&1&y&0 \\ 0&0&1&0 \\ 0&0&0&1 \end{pmatrix} \begin{pmatrix}1&r&0&0 \\ 0&1&0&0 \\ 0&0&1&0 \\ 0&0&-r&1 \end{pmatrix}
\]
for each element $x$ in $G_{z\delta_1u_1(1,\alpha)}(\Q_{S_0})\bsl P_0(\Q_{S_0})$.
Then, we get
\[
x^{-1} z\delta_1u_1(1,\alpha)x=z\delta_1
\begin{pmatrix}
\begin{array}{cc} 1&2r \\ 0&1 \end{array} & \begin{pmatrix}1&r \\ 0&1 \end{pmatrix}\begin{pmatrix}b& 2y \\ 2y & \alpha a^2b \end{pmatrix}\begin{pmatrix}1&0 \\ -r &1 \end{pmatrix} \\
\begin{array}{cc}0&0 \\ 0&0 \end{array} & \begin{array}{cc} 1&0 \\ -2r&1 \end{array}
\end{pmatrix}.
\]
From this, we deduce
\begin{align*}
& \Big| \sum_{\alpha\in\Q^\times/(\Q^\times_{S_0})^2\cap\Q^\times} \chi_{S_0}(\alpha)\, J_G(z\delta_1u_1(1,\alpha),h) \Big| \leq \sum_{\alpha\in\Q^\times/(\Q^\times_{S_0})^2\cap\Q^\times} J_G(z\delta_1u_1(1,\alpha),|h|) \\
& = 4\times \prod_{p\in S_0}(1-p^{-1})^{-2}\times \int_{\bK_{S_0}}  \int_{N_0(\Q_{S_0})} \big| \, h(k^{-1}z\delta_1n \, k)\, \big| \, \d n\,  \d k \\
& = 4\times \prod_{p\in S_0}(1-p^{-1})^{-2}\times J_{M_0}^{M_0}(z\delta_1,|h_{P_0}|).
\end{align*}
Hence, the inequality follows from this estimation and Lemma \ref{estquad}.
\end{proof}

\begin{lem}\label{ei54}
Let $\delta_2$ be a semisimple element in $G(\Q)$ such that $G_{\delta_2}\cong G_2$ for a quadratic extension $E/\Q$.
Each unipotent element $u_2(\alpha)$ in $G_2(\Q)$ is identified with an element in $G_{\delta_2}(\Q)\subset G(\Q)$.
The $G(\Q)$-conjugacy class of $\delta_2 u_2(\alpha)$ has an intersection with $P_1(\Q)$ and we may assume that $\delta_2$ belongs to $M_1(\Q)$ as a representative element of the conjugacy class.
For any positive real number $\varepsilon$, there exists a constant $c(\delta_2u_2,\varepsilon)>0$ such that the contribution of $(G,S)$-equivalence classes of elements $\delta_2u_2(\alpha)$ $(\alpha\in E^\times/((E^\times_{S_E})^2\Q_S^\times)\cap E^\times)$ is bounded by
\[
c(\delta_2u_2,\varepsilon) \times \chi_\xi(\delta_2)^{-1}\times J_{M_1}^{M_1}(\delta_2,|h_{P_1}|)\times \prod_{p\in S_0}p^\varepsilon \times \prod_{p\in S_0}(1-p^{-1})^{-2}.
\]
\end{lem}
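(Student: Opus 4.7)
The plan is to follow the pattern of Lemma \ref{ei53}, with the group $G_1$ replaced by $G_2$ and the ground field $\Q$ replaced by the quadratic extension $E$.

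First, I would combine the reduction
$$a^G(S,\delta_2u_2(\alpha))=\varepsilon^G(\delta_2)\,|\iota^G(\delta_2)|^{-1}\,a^{G_2}(S,u_2(\alpha))$$
with the explicit formula \eqref{gc2} for $a^{G_2}(S,u_2(\alpha))$. Since $J_G(\delta_2u_2(\alpha),f_\xi)$ vanishes unless $\delta_2$ is $\R$-elliptic in $G$ by Lemma \ref{id2}, we may assume $\varepsilon^G(\delta_2)=1$. This splits the total contribution into a main term with coefficient $\mathfrak{c}_E(S)$ and an oscillating sum $\sum_\chi\chi_{S_E}(\alpha)\,L_E^S(1,\chi)$ over quadratic Hecke characters $\chi$ of $E$ trivial on $\A_\Q^1$ and unramified outside $S_E$.

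Second, I would compute $J_G(\delta_2u_2(\alpha),f_\xi)$ by applying the limit formula for $SL_2(\R)$ inside the real centralizer $G_{\delta_2}(\R)\cong G_2(\R)$, precisely as was done for Lemma \ref{ei52}. Since $f_\xi$ is a pseudo-coefficient of the holomorphic discrete series, the result has the shape (constant independent of $\alpha$)$\,\times \chi_\xi(\delta_2)^{-1}$, with the $\alpha$-dependence entering only through a sign determined by the square-class of $\alpha$ in $E^\times_{S_E}/((E^\times_{S_E})^2\Q_S^\times)$, parallel to the $(-1)^{l_2}-(-1)^{l_1}$ factor of Lemma \ref{ei52}.

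Third, I would bound the sum over $\alpha$ by the triangle inequality,
$$\Bigl|\sum_{\alpha}\chi_{S_E}(\alpha)\,J_G(\delta_2u_2(\alpha),h)\Bigr|\le \sum_{\alpha}J_G(\delta_2u_2(\alpha),|h|).$$
Using the hypothesis that the $G(\Q)$-conjugacy class of $\delta_2u_2(\alpha)$ meets $P_1(\Q)$ with $\delta_2\in M_1(\Q)$, I would parametrize $G_{\delta_2u_2(\alpha)}(\Q_{S_0})\bsl P_1(\Q_{S_0})$ via the Iwasawa-type coordinates on the Levi and the unipotent radical of $P_1$, exactly as in the proof of Lemma \ref{ei53}, but with the normalized coordinates now ranging over $E_{S_E}$ rather than $\Q_{S_0}$. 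As $\alpha$ runs over the square classes in $E^\times/((E^\times_{S_E})^2\Q_S^\times)\cap E^\times$, the union of the $G(\Q_{S_0})$-orbits of the $\delta_2u_2(\alpha)$ reconstructs (up to the finite cover measured by $\prod_{p\in S_0}(1-p^{-1})^{-2}$) the full image of $N_1(\Q_{S_0})$ under $n\mapsto \delta_2n$, yielding
$$\sum_{\alpha}J_G(\delta_2u_2(\alpha),|h|)=4\prod_{p\in S_0}(1-p^{-1})^{-2}\int_{\bK_{S_0}}\int_{N_1(\Q_{S_0})}|h(k^{-1}\delta_2 nk)|\,\d n\,\d k=4\prod_{p\in S_0}(1-p^{-1})^{-2}\,J_{M_1}^{M_1}(\delta_2,|h_{P_1}|)$$
by the definition \eqref{hp}. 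Finally, I would apply Lemma \ref{estquad} with $m=1$ and with the quadratic field $E$ in place of $\Q$ to bound $\sum_\chi|L_E^S(1,\chi)|$ by $c(\varepsilon,1,E)\prod_{p\in S_0}p^\varepsilon$; adding back the $\mathfrak{c}_E(S)$ contribution (which is absorbed into the implicit constant since $\mathfrak{c}_E(S)$ grows slower than any $\prod_p p^\varepsilon$) gives the stated bound.

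The main obstacle is the third step: verifying rigorously that summing $J_G(\delta_2u_2(\alpha),|h|)$ over the set $\alpha\in E^\times/((E^\times_{S_E})^2\Q_S^\times)\cap E^\times$ correctly reconstructs the full orbital integral on $M_1$. The extra quotient by $\Q_S^\times$ in the indexing set (which did not appear in the $G_1$ case of Lemma \ref{ei53}) reflects the Galois twist imposed by the quadratic extension and requires careful bookkeeping of centralizer sizes and of square-class representatives in $E_{S_E}^\times$ with respect to the norm from $E$ to $\Q$, so as to confirm that no square-class is double-counted after passage to Iwasawa coordinates.
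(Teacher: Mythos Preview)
Your outline follows the paper's strategy, but step 2 glosses over the structural input that the paper makes explicit. Using the normal form $\delta_2=z\begin{pmatrix}h_\beta&O_2\\O_2&{}^t h_\beta\end{pmatrix}$, $h_\beta=\begin{pmatrix}0&1\\\beta&0\end{pmatrix}$, $E=\Q(\sqrt{\beta})$, from \cite[Section~5.3]{HW}, the paper splits on the sign of $\beta$. If $\beta<0$ (so $E$ is imaginary quadratic) then $\nu(\delta_2)<0$, hence $\delta_2$ lies in the non-identity component $A_{G,\infty}Sp_4(\R)\delta$ and the whole contribution vanishes by Lemma~\ref{clv}; no $SL_2$ limit formula is invoked. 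If $\beta>0$ (so $E$ is real quadratic) then $\delta_2$ is $G(\R)$-conjugate to $z'\delta_1$ for some $z'\in Z_G(\R)$, which identifies $G_2(\R)$ with $G_1(\R)$ and $u_2(\alpha)$ with $u_1(\sigma_1(\alpha),\sigma_2(\alpha))$. Thus Lemma~\ref{ei52} applies \emph{verbatim}, not merely as a template, yielding $J_G(\delta_2 u_2(\alpha),f_\xi)=\pm c(z\delta_1 u_1)\chi_\xi(z')^{-1}\{(-1)^{l_2}-(-1)^{l_1}\}$ with the sign governed by $\mathrm{sgn}\,N_{E/\Q}(\alpha)$. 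Your phrase ``apply the limit formula for $SL_2(\R)$ inside $G_2(\R)$'' does not distinguish these two regimes, and in the imaginary case there is no $SL_2(\R)$ inside $G_2(\R)$ at all in the relevant sense.

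A secondary difference: once the sign structure from Lemma~\ref{ei52} is in hand, the paper observes that the $\mathfrak{c}_E(S)$ term (and the characters $\chi$ with $\chi_w\neq\sgn$ at some archimedean $w$) cancel exactly when one sums over the archimedean fiber of $\alpha$, leaving only $\sum_\chi L_E^S(1,\chi)$ over nontrivial $\chi$ with $\chi|_{\A_\Q^1}=1$ and $\chi_w=\sgn$ for $w\mid\infty$. Your proposal to absorb $\mathfrak{c}_E(S)$ by a growth estimate would also give the stated bound, but the cancellation is cleaner and mirrors exactly what happened in Lemma~\ref{ei53}. After this, the remaining estimate (your step 3 and the appeal to Lemma~\ref{estquad}) is indeed carried out by the same argument as in Lemma~\ref{ei53}; the paper simply says so without repeating the Iwasawa-coordinate computation.
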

\begin{proof}
By \cite[Section 5.3]{HW}, we may choose the semisimple element $\delta_2$ as
\[
\delta_2=z\begin{pmatrix}h_\beta&O_2 \\ O_2& {}^t\!h_\beta \end{pmatrix},\quad h_\beta=\begin{pmatrix}0&1 \\ \beta &0 \end{pmatrix}
\]
for an element $\beta$ in $\Q^\times -(\Q^\times)^2$ and an element $z$ in $Z_G(\Q)$.
In particular, we have $E=\Q(\sqrt{\beta})$.
If $\beta$ is negative, then $\nu(\delta_2)$ is also negative.
Hence, by Lemma \ref{clv}, the contribution vanishes if $\beta$ is negative.
We may assume that $\beta$ is positive, i.e., $E$ is a real quadratic field.
Then, there exists an element $z'$ in $Z_G(\R)$ such that $\delta_2$ is $G(\R)$-conjugate to $z'\delta_1$.
Thus, it follows from \eqref{gc2} and Lemma \ref{ei52} that the contribution equals
\begin{align*}
&\sum_{\alpha\in E^\times/((E^\times_{S_E})^2\Q_S^\times)\cap E^\times)}a^G(S,\delta_2 u_2(\alpha)) \, J_G(\delta_2u_2(\alpha),f_\xi) \, J_G(\delta_2u_2(\alpha),h) \\
&= 2c(z\delta_1 u_1)\{ (-1)^{l_2}-(-1)^{l_1}  \} \sum_\chi L_E^S(1,\chi) \sum_{\alpha\in E^\times/((E^\times_{(S_0)_E})^2\Q_{S_0}^\times)\cap E^\times)} \chi_{(S_0)_E}(\alpha)\, J_G(\delta_2u_2(\alpha),h)
\end{align*}
where $\chi=\prod_w\chi_w$ runs over all nontrivial quadratic characters such that $\chi|_{\A_\Q^1}=1$, $\chi_w=\sgn$ for any $w|\inf$, and $\chi_w$ is unramified for any $w\not\in S_E$.
Therefore, we finish the proof by using Lemma \ref{estquad} and an argument similar to the proof of Lemma \ref{ei53}.
\end{proof}

\begin{lem}\label{ei55}
Let $\delta_3$ be a semisimple element in $G(\Q)$ such that $G_{\delta_3}\cong G_3$ for a quadratic extension $E/\Q$.
Each unipotent element $u_3(\alpha)$ in $G_3(\Q)$ is identified with an element in $G_{\delta_3}(\Q)\subset G(\Q)$.
The $G(\Q)$-conjugacy class of $\delta_3 u_3(\alpha)$ has an intersection with $P_2(\Q)$ and we may assume that $\delta_3$ belongs to $M_2(\Q)$ as a representative element of the conjugacy class.
There exists a positive constant $c(\delta_3u_3)$ such that the contribution of $(G,S)$-equivalence classes of elements $\delta_3u_3(\alpha)$ $(\alpha\in \Q^\times/(N_{E/\Q}(E^\times_{S_E})\cap \Q^\times)$ is bounded by
\[
c(\delta_3u_3)\times \chi_\xi(\delta_3)^{-1}\times J_{M_2}^{M_2}(\delta_3,|h_{P_2}|)\times \prod_{p\in S_0}(1-p^{-1})^{-1}.
\]
\end{lem}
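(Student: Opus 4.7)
The argument follows the pattern of Lemmas \ref{ei53} and \ref{ei54}, with the simplification that only a single quadratic character appears in the global coefficient \eqref{gc3}. Since $G_{\delta_3} \cong G_3$ with $G_3 = \{(x,g)\in R_{E/\Q}(GL_1)\times GL_2\mid N_{E/\Q}(x)=\det(g)\}$, the semisimple element $\delta_3$ is $\R$-elliptic in $G$ only when $E$ is imaginary quadratic; when $E$ is real, $\delta_3$ is split at infinity so Lemma \ref{id2} forces $J_G(\delta_3 u_3(\alpha),f_\xi)=0$ (alternatively Lemma \ref{clv} applies whenever $\nu(\delta_3)<0$). We may therefore assume $E$ is imaginary quadratic and, up to $G(\Q)$-conjugation, take $\delta_3$ inside $M_2(\Q)$, noting that the $G(\Q)$-conjugacy class of $\delta_3 u_3(\alpha)$ then has a nonempty intersection with $P_2(\Q)$.

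Next, since the unipotent element $u_3(\alpha)$ sits entirely in the $GL_2$-factor of $G_3$, Harish-Chandra's limit formula for $SL_2(\R)$ (used analogously in Lemma \ref{ei52}) yields an expression $J_G(\delta_3 u_3(\alpha), f_\xi) = c(\delta_3 u_3)\,\chi_\xi(\delta_3)^{-1}\,\{(-1)^{l_2}-(-1)^{l_1}\}$ which is independent of $\alpha$. Combined with \eqref{gc3}, the contribution of the orbits $\{\delta_3 u_3(\alpha)\}_\alpha$ to $I_5(f)$ becomes
\begin{equation*}
c(\delta_3 u_3)\,\chi_\xi(\delta_3)^{-1}\,\{(-1)^{l_2}-(-1)^{l_1}\}\sum_\alpha\bigl(\mathfrak{c}(S)+\chi_S(\alpha)L^S(1,\chi)\bigr)J_G(\delta_3u_3(\alpha),h),
\end{equation*}
where $\chi$ is the fixed quadratic character associated to $E/\Q$ and the sum ranges over $\alpha\in\Q^\times/(N_{E/\Q}(E^\times_{S_E})\cap\Q^\times)$.

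The remaining task is purely local. Parametrizing coset representatives of $G_{\delta_3 u_3(\alpha)}(\Q_{S_0})\backslash P_2(\Q_{S_0})$ via the Iwasawa decomposition along $P_2=M_2 N_2$ and performing an explicit change of variables (in the spirit of the computations in the proofs of Lemmas \ref{ei53} and \ref{ei54}, but now with the Klingen parabolic), one obtains the bound
\begin{equation*}
\sum_\alpha |J_G(\delta_3 u_3(\alpha),h)| \leq C\prod_{p\in S_0}(1-p^{-1})^{-1}\, J_{M_2}^{M_2}(\delta_3,|h_{P_2}|).
\end{equation*}
The single power of $(1-p^{-1})^{-1}$ (as opposed to the square appearing in the previous two lemmas) reflects the fact that the $M_2$-centralizer of $\delta_3 u_3(\alpha)$ has only one effective free torus parameter. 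Combining this with the boundedness of $\mathfrak{c}(S)$ and of $L^S(1,\chi)$ for the single fixed character $\chi$, the stated estimate follows.

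The principal obstacle I anticipate is twofold. First, the explicit change-of-variables computation on $G_{\delta_3 u_3(\alpha)}\backslash P_2$ must be carried out carefully enough to verify that the Jacobian genuinely produces only $\prod_{p\in S_0}(1-p^{-1})^{-1}$. Second, one must argue that the two pieces $\mathfrak{c}(S)\sum_\alpha J_G(\delta_3u_3(\alpha),h)$ and $L^S(1,\chi)\sum_\alpha\chi_S(\alpha)J_G(\delta_3u_3(\alpha),h)$ can both be absorbed into the same $\prod_{p\in S_0}(1-p^{-1})^{-1}$ without picking up an extra $\prod_{p\in S_0}p^\varepsilon$ factor (as the naive global bound $|L^S(1,\chi)|\ll_E\prod_{p\in S_0}(1-p^{-1})^{-1}$ would suggest); this should be possible by exploiting the cancellation coming from $\chi_S(\alpha)$ in the second sum together with the specific structure of the norm group $N_{E/\Q}(E^\times_{S_E})\cap\Q^\times$.
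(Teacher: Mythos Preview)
Your overall strategy matches the paper's, but there is a genuine gap in the archimedean step that propagates into the very ``principal obstacle'' you flag at the end.

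First, the formula $J_G(\delta_3 u_3(\alpha),f_\xi)=c(\delta_3 u_3)\chi_\xi(\delta_3)^{-1}\{(-1)^{l_2}-(-1)^{l_1}\}$ is not correct. That expression belongs to the $\delta_1$ case (Lemma~\ref{ei52}), where the semisimple part has eigenvalues $\pm 1$. Here $\delta_3$ (for $E$ imaginary quadratic) is $G(\R)$-conjugate to a genuine rotation by some angle $\theta$ in the compact torus, and the limit formula for $SL_2(\R)$ gives
\[
J_G(\delta_3 u_3(1),f_\xi)=-J_G(\delta_3 u_3(-1),f_\xi)=c_3'\,(-e^{il_2\theta}+e^{il_1\theta}).
\]
In particular, $J_G(\delta_3 u_3(\alpha),f_\xi)$ is \emph{not} independent of $\alpha$: it carries the sign $\sgn(\alpha)$ coming from the two $G(\R)$-conjugacy classes $u_3(\pm 1)$.

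Second, this sign is precisely what disposes of the $\mathfrak{c}(S)$ term. Since $E$ is imaginary quadratic, the associated character has $\chi_\infty=\sgn$, so $\chi_S(\alpha)\sgn(\alpha)=\chi_{S_0}(\alpha)$. When you multiply $a^{G_3}(S,u_3(\alpha))=c(u_3)\{\mathfrak{c}(S)+\chi_S(\alpha)L^S(1,\chi)\}$ against $J_G(\delta_3 u_3(\alpha),f_\xi)\propto\sgn(\alpha)$ and sum over $\alpha$, the $\mathfrak{c}(S)$-piece becomes $\mathfrak{c}(S)\sum_\alpha \sgn(\alpha)\,J_G(\delta_3 u_3(\alpha),h)$, which vanishes (pairing $\alpha\leftrightarrow -\alpha$; note $-1\notin N_{E/\Q}(E_{S_E}^\times)\cap\Q^\times$). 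Only the term $2c_3'(-e^{il_2\theta}+e^{il_1\theta})\,L^S(1,\chi)\sum_\alpha \chi_{S_0}(\alpha)\,J_G(\delta_3 u_3(\alpha),h)$ survives. Now $\chi$ is fixed (determined by $E$), so no sum over characters and no Lemma~\ref{estquad} is needed; this is why the bound has only $\prod_{p\in S_0}(1-p^{-1})^{-1}$ and no $\prod p^\varepsilon$. Your proposed route of bounding $\mathfrak{c}(S)$ directly would not yield the stated estimate; the cancellation is essential.
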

\begin{proof}
By \cite[Section 5.3]{HW}, the semisimple element $\delta_3$ can be written as
\[
\delta_3=z\begin{pmatrix}1&0&0&0 \\ 0&x&0&y \\ 0&0&1&0 \\ 0&\beta y & 0 & x \end{pmatrix}
\]
for an element $z$ in $Z_G(\Q)$, an element $\beta$ in $\Q^\times -(\Q^\times)^2$, and elements $x$, $y$ in $\Q$ such that $x^2-\beta y^2=1$.
Note that $E=\Q(\sqrt{\beta})$.
If $\beta$ is positive, then $\delta_3$ is not $\R$-elliptic in $G$.
Hence, we have $J_G(\delta_3u_3(\alpha))=0$ by Lemma \ref{id2}.
So, we assume that $\beta$ is negative, i.e., $E$ is an imaginary quadratic field.
Since $u_3(\alpha)$ is $G(\R)$-conjugate to $u_3(1)$ or $u_3(-1)$, using the limit formula for $SL_2(\R)$, we have
\[
J_G(\delta_3u_3(1),f_\xi)=-J_G(\delta_3u_3(-1),f_\xi)=c_3'\times ( -e^{il_2\theta}+e^{il_1\theta}  )
\]
for a positive constant $c_3'$.
Hence, by \eqref{gc3}, the contribution is equal to
\begin{align*}
&\sum_{\alpha\in \Q^\times/(N_{E/\Q}(E^\times_{S_E})\cap \Q^\times)}a^G(S,\delta_3 u_3(\alpha)) \, J_G(\delta_3u_3(\alpha),f_\xi) \, J_G(\delta_3u_3(\alpha),h) \\
&= 2c_3'\times ( -e^{il_2\theta}+e^{il_1\theta}  )  L^S(1,\chi) \sum_{\alpha\in \Q^\times/(N_{E/\Q}(E^\times_{(S_0)_E})\cap \Q^\times)} \chi_{S_0}(\alpha)\, J_G(\delta_3u_3(\alpha),h)
\end{align*}
where $\chi$ denotes the nontrivial quadratic character on $\Q^\times\bsl\A^1$ corresponding to $E$.
We can derive this lemma from this equality and an argument similar to the proof of Lemma \ref{ei53}, 
\end{proof}

\section{An estimation of the geometric side}\label{sec6}
Let us recall our setting. 
Let $S'$ be a (non-empty) finite set of finite primes and $S_0$ be a finite set of finite primes containing $S'$. 
We choose $S_0$ sufficiently large so that Arthur's geometric expansion works. Put $S=S_0\cup\{\infty\}$. 
For a positive integer $N$ whose prime divisors do not belong to $S'$, put $f_{K(N)}={\rm char}_{K(N)}$. 
When we study the level aspect, we always choose the level $N$ for the fixed $S'$ as above. 
We denote by $\underline{k}=(k_1,k_2)$ the highest weight of $\xi=\xi_{\underline{k}}$ for $k_1\ge k_2\ge 3$ as in Section \ref{alg-rep}. 
Let us put 
$$f_{S',\alpha}:=[G(\Z_{S'})\alpha G(\Z_{S'})]\in C^\infty_c(G(\Q_{S'}))$$
for $\alpha\in T(\Q)$. Note that usually we would choose $\alpha$ from $T(\Q_{S'})$, but due to 
the comparison with spectral side, intentionally we choose $\Q$-rational elements and clearly this never changes anything. 
If $\nu(\alpha)\in (\Q^\times)^2$, then  
such an $\alpha$  can be uniquely written as 
$$\alpha=(z_\alpha \cdot E_4) k_\alpha,\ z_\alpha\cdot E_4\in Z^+_G(\Q),\ k_\alpha\in (T\cap Sp_4)(\Q).$$
Put 
\begin{equation}\label{zalpha}
z'_\alpha=\left\{
\begin{array}{cc}
z_\alpha\cdot E_4  &\ {\rm if}\ \nu(\alpha)\in (\Q^\times)^2\\
E_4  &\ {\rm otherwise}
\end{array}\right..
\end{equation}
To prove Theorem \ref{main} we have only to check it for 
$$f=f_{\xi}h,\quad h=f_{z'_\alpha K(N)}f_{S',\alpha}\Big(\bigotimes_{p\in S\setminus (S'\cup\{v|N\infty\})}{\rm char}_{ K_p}\Big)
\in C^\infty_c(G(\Q_{S_0}))$$
where $f_{z'_\alpha K(N)}$ stands for the characteristic function of $z'_\alpha K(N)$.   


Since $I_7(f)=0$ (see Lemma \ref{id1}), it is unnecessary to consider it.
Fix a test function $f$ as above.
Let $\gamma$ be a $(M,S)$-equivalence class  whose contribution to $I_\mathrm{geom}(f)$ is not zero.
By \eqref{n-gamma} we see that 
\[
\nu(\alpha)=\nu(\gamma).
\]
Furthermore its semisimple part $\gamma_s$ is $\R$-elliptic in $M$ (cf. Lemmas \ref{id1} and \ref{id2}).
If we set $\gamma_s=z_{\gamma}\gamma_1$ where $z_\gamma\in  \R^\times\simeq  A_M(\R)$ and $\gamma_1\in M(\R)$ with $\nu(\gamma_s)\in\{\pm1 \}$. 
Therefore we have
\[
\chi_\xi(z_\gamma)=z^{k_1+k_2}_\gamma=\sgn(z_\gamma)|\nu(z_\gamma)|^{\frac{k_1+k_2}{2}}_\R=\sgn(z_\gamma)|z_\gamma|^{k_1+k_2}_\R=
\sgn(z_\gamma)|\nu(\alpha)|^{-\frac{k_1+k_2}{2}}_{S'}.
\]
This observation will be implicitly used in the proof of Proposition \ref{semisimple-est} below. 

Each orbital integral $J_M^M(\gamma,h_P)$ is bounded by that of $PGSp_4(\Q_S)$ using the projection $G\to PGSp_4$.
Therefore, we can use the same arguments as in \cite{Shin} and \cite{ST} to estimate it for any semisimple element $\gamma$.
\begin{prop}\label{semisimple-est}
Fix a finite set $S'$ and a function $f_{S',\alpha}=[G(\Z_{S'})\alpha G(\Z_{S'})]\in C^\infty_c(G(\Q_{S'}))$.
Then, we have
\[
I_2(f)\times \vol(K(N))^{-1}\times |\nu(\alpha)|^{-\frac{k_1+k_2}{2}}_{S'} =O((k_1-k_2+1)(k_1+k_2-3)\varphi(N) N^8),
\]
\[
I_3(f)\times \vol(K(N))^{-1}\times |\nu(\alpha)|^{-\frac{k_1+k_2}{2}}_{S'} =O((k_1-1)(k_2-2)),
\]
\[
\{I_4(f)+I_5(f)\}\times \vol(K(N))^{-1}\times |\nu(\alpha)|^{-\frac{k_1+k_2}{2}}_{S'} =O(k_1+k_2-3),
\]
\[
 I_6(f)\times \vol(K(N))^{-1}\times |\nu(\alpha)|^{-\frac{k_1+k_2}{2}}_{S'} =O((k_1+k_2-3)\varphi(N) N^7)
\]
for any weight $(k_1,k_2)$ $(k_1\geq k_2\geq 3)$ and any level $N>0$ prime to $\prod_{p\in S'}p$. Note that $\vol(K(N))^{-1}=[\G(1):\G(N)]=N^{10}\prod_{p|N} (1-p^{-2})(1-p^{-4})$.
\end{prop}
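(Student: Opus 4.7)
The plan is to assemble the per-term analyses of Section \ref{s3} into a single estimate by exploiting the factorization $J_G(\gamma,h)=\prod_p J_G(\gamma_p,h_p)$ over local places, together with the weight bounds on invariant distributions collected in Lemmas \ref{id1}--\ref{id6} and \ref{ei51}--\ref{ei55}. Since $I_7(f)=0$ by Lemma \ref{id1}, only $I_2,\ldots,I_6$ need estimation. For each class type we keep track of three kinds of factors: (a) the central-character normalization $\chi_\xi(z_\gamma)^{-1}$, which is precisely cancelled by $|\nu(\alpha)|_{S'}^{-(k_1+k_2)/2}$ because $\nu(\gamma)=\nu(\alpha)$; (b) the weight dependence produced by the holomorphic discrete-series character and its limit formulas; (c) the level dependence contributed by $\prod_{p|N}J_G(\gamma_p,1_{K_p(N)})$, which, multiplied by $\vol(K(N))^{-1}$, produces the powers of $N$ and $\varphi(N)$.

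For $I_2(f)$, Lemma \ref{I21} reduces the sum to a finite collection of terms indexed by $z\in Z_G(\Q)$ (those with $\nu(z)^2=\nu(\alpha)$), each of the form $\zeta^S(2)\cdot c(zu_\min)\cdot\chi_\xi(z)^{-1}\cdot(l_1-l_2)(l_1+l_2)\cdot J_G(zu_\min,h)$. The factor $\zeta^S(2)$ is uniformly bounded; $(l_1-l_2)(l_1+l_2)=(k_1-k_2+1)(k_1+k_2-3)$; and Lemma \ref{I22} evaluates the level integrals at $p|N$ as $p^{-2v_p(N)}(1-p^{-2})^{-1}$ (surviving only when $z\equiv 1\bmod p^{v_p(N)}$). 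The product over $p|N$ produces $N^{-2}$, and combining with the adelic volume $\varphi(N)N^{10}\prod_{p|N}(1-p^{-2})(1-p^{-4})$ yields the factor $\varphi(N)N^8$. For $I_3(f)$, Lemma \ref{I31} together with Lemma \ref{id3} gives the weight factor $(k_1-1)(k_2-2)$; Lemma \ref{I32} shows that the level integrals $J_G(z\delta_1,1_{K_p(N)})$ vanish unless both $\pm z$ reduce to $1$ modulo $p^{v_p(N)}$, and otherwise are controlled by a constant times $\vol(K_p(N))$. Hence the $N$-growth cancels against $\vol(K(N))^{-1}$, giving $O((k_1-1)(k_2-2))$.

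For $I_4(f)+I_5(f)$, we separate the non-central semisimple classes from the mixed non-semisimple classes (with $\gamma_s\notin Z_G(\Q)\sqcup Z_G(\Q)\{\delta_1\}_G$). The invariant integrals $J_G(\gamma,f_\xi)$ are bounded by Lemmas \ref{id2} and \ref{ei52}--\ref{ei55} by $O(k_1+k_2-3)\cdot\chi_\xi(\gamma)^{-1}$, via the limit formula for $SL_2(\R)$. The number of surviving $(G,S)$-equivalence classes is finite thanks to the similitude constraint $\nu(\gamma)=\nu(\alpha)$, and the orbital integrals $J_G(\gamma,h)$ at level $K(N)$ combined with $\vol(K(N))^{-1}$ give only a bounded level contribution, since the centralizer volumes are bounded and the quadratic-character sums in the global coefficients \eqref{gc1}--\eqref{gc3} are controlled by Lemma \ref{estquad}. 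This yields the claimed $O(k_1+k_2-3)$. For $I_6(f)$, Lemma \ref{id1} gives $|I_M^G(\gamma,f_\xi)|\ll (k_1+k_2-3)\cdot\chi_\xi(\gamma)^{-1}$, and \eqref{ssgc} controls the global coefficients; the weighted orbital integrals $J_M^M(\gamma,h_P)$ on the Levis $M_1$ and $M_2$ reduce via $h_P$ to elliptic orbital integrals on $GL_2\times GL_1$ at principal congruence level $N$, which combined with $\vol(K(N))^{-1}$ produce the level factor $\varphi(N)N^7$ (the worst case among the proper Levis).

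The technical heart lies in the unipotent and mixed pieces: for $I_2$ one must verify that the local orbital integrals at $p|N$ really decay like $p^{-2v_p(N)}$ (this is the content of Lemma \ref{I22}, extracted from \cite{Assem}); for $I_5$ one must bound the sums over quadratic characters appearing in $a^{G_{\gamma_s}}(S,u)$ uniformly in $S$, which is precisely what Lemma \ref{estquad} supplies; and for $I_6$ one transfers known bounds for weighted orbital integrals on $GL_2$ at level $N$ to the present setting. None of these is difficult in isolation, but the bookkeeping over all seven geometric pieces, tracking the discrepancy between the adelic $\vol(K(N))^{-1}=\varphi(N)\cdot[\G(1):\G(N)]$ and the classical index $[\G(1):\G(N)]$ used in the statement, is where the $\varphi(N)$ factors in the $I_2$ and $I_6$ estimates arise and must be recorded carefully.
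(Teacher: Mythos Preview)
Your treatment of $I_2(f)$ matches the paper's, and you correctly identify the weight factors via Lemmas \ref{id1}--\ref{id4}. But there is a genuine gap in your handling of the level dependence of $I_3$, $I_4+I_5$, and the non-central part of $I_6$.

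For $I_4+I_5$ you assert that ``the orbital integrals $J_G(\gamma,h)$ at level $K(N)$ combined with $\vol(K(N))^{-1}$ give only a bounded level contribution, since the centralizer volumes are bounded''. This is not a valid argument: there is no general inequality $J_G(\gamma,1_{K_p(N)}) \ll \vol(K_p(N))$ for non-central $\gamma$, and centralizer volumes are irrelevant here. What is actually true is that $J_G(\gamma,f_{z'_\alpha K(N)})=0$ once $N$ is large enough whenever $\gamma_s\notin Z_G(\Q)$, because a conjugate of $\gamma$ can lie in $z'_\alpha K_p(N)$ only if all eigenvalues of $\gamma$ are congruent to $z'_\alpha$ modulo $p^{v_p(N)}$. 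The paper invokes this as \cite[Lemma~5]{Clozel} (cf.\ \cite[Lemma~4.3]{Shin}): there is $\tilde N_0$, depending only on $S'$ and $\alpha$, such that for $N>\tilde N_0$ one has $I_3(f)=I_4(f)=I_5(f)=0$ and only central elements survive in $I_6(f)$; for $1\le N\le\tilde N_0$ finitely many classes contribute and the $N$-dependence is trivially $O(1)$. Your $I_3$ argument gropes toward this vanishing but misattributes it to Lemma \ref{I32}, which treats the spherical Hecke function $h_{a_1,a_2,a_3}$, not the level function $1_{K_p(N)}$.

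For $I_6(f)$ the paper does not reduce to ``elliptic orbital integrals on $GL_2\times GL_1$''. After the Clozel reduction leaves only central $z\in Z_G(\Q)$, the point is the direct computation
\[
J_M^M(z,h_P)=h_P(z)=\int_{N_P(\Q_{S_0})}h(zn)\,\d n \ \ll\ \prod_{p\mid N}p^{-v_p(N)\dim N_P}\ \le\ N^{-3},
\]
using that $K(N)\trianglelefteq\prod_{v\mid N}\bK_v$ and $\dim N_P\ge 3$ for every proper parabolic; this is \eqref{ga1}. Multiplying by $\vol(K(N))^{-1}$ then gives the stated $\varphi(N)N^7$. Without the Clozel step you have no control over the non-central terms in $I_6$, and your appeal to Levi orbital integrals does not supply the $N^{-3}$ decay.
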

\begin{proof}
If $zu_\min$ contributes to $I_\mathrm{geom}(f)$, then we have $z=z_\alpha$.
Hence, only $z_\alpha u_\min$ contributes to $I_2(f)$ and the estimation for $I_2(f)$ obviously follows from Lemmas \ref{I21} and \ref{I22}. 

By Lemmas \ref{id2}, \ref{id5}, \ref{id6} and \cite[Lemma 5]{Clozel} (also see \cite[Lemma 4.3 and line 12 in p.100]{Shin}), there exists a sufficiently large natural number $\tilde N_0$ such that, if $N>\tilde N_0$, then we have $I_3(f)=I_4(f)=I_5(f)=0$ and only the central elements contribute to $I_6(f)$.
When $N$ moves between $1$ and $\tilde N_0$, there are finitely many $M(\Q)$-conjugacy classes $(M\in \cL)$ which contribute to $I_3(f)$, $I_4(f)$ and $I_6(f)$, and finitely many $(G,S)$-equivalence classes which contribute to $I_5(f)$ (cf. \cite[Proof of Theorem 4.11]{Shin}).
Thus, we get the estimation for $I_3(f)$ by Lemma \ref{I31} and the estimation for $I_4(f)+I_5(f)$ by Lemmas \ref{id2}, \ref{id5}, \ref{id6}.
By these facts and Lemma \ref{id1}, the remaining work is to find the growth of $I_6(f)$ with respect to $N$.
For each proper Levi subgroup $M$ in $\cL$ and each element $z\in Z_G(\Q)$, we have
\begin{equation}  \label{ga1}
|J^G_M(z,h_P)|=\mathrm{(constant)}\times \int_{\prod_{v|N} N_P(\Q_v)}f_{z'_\alpha K(N)}(n)dn\leq \mathrm{(constant)}\times N^{-3}.
\end{equation}
since $K(N)$ is a normal subgroup in $\prod_{v| N}\bK_v$.
Hence, this proposition is proved.
\end{proof}

We set
\[
p_{S'}=\prod_{p\in S'}p , \quad H^\mathrm{ur}(G(\Q_{S'}))^\kappa=\bigoplus_{p\in S'}H^\mathrm{ur}(G(\Q_p))^\kappa.
\]
\begin{prop}\label{level-est}
{\bf (Level aspect)}
There exist positive constants $a$, $b$, and $N_0$ such that
\[
I_2(f)\times \vol(K(N))^{-1}\times |\nu(\alpha)|^{-\frac{k_1+k_2}{2}}_{S'} =O(p_{S'}^\kappa(k_1-k_2+1)(k_1+k_2-3)\varphi(N) N^8)
\]
\[
I_6(f)\times \vol(K(N))^{-1}\times |\nu(\alpha)|^{-\frac{k_1+k_2}{2}}_{S'} =O(p_{S'}^{a\kappa +b}(k_1+k_2-3)\varphi(N) N^7)
\]
\[
I_3(f)=I_4(f)=I_5(f)=I_7(f)=0.
\]
for any $(k_1,k_2)$, $N>0$, $\kappa\geq 1$, $S'$, and $f_{S',\alpha}$, which satisfy the conditions $k_1\geq k_2\geq 3$, $f_{S',\alpha}\in H^\mathrm{ur}(G(\Q_{S'}))^\kappa$, $N$ is prime to $\prod_{p\in S'}p$, and $N\geq N_0 \prod_{p\in S'} p^{10\kappa}$.
\end{prop}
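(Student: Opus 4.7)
The strategy mirrors the proof of Proposition \ref{semisimple-est} but refines every local estimate by tracking the dependence on $\kappa$, using that $f_{S',\alpha} \in H^{\mathrm{ur}}(G(\Q_{S'}))^{\kappa}$ is supported on $\bigcup_{a_3 \leq \kappa} G(\Z_{S'}) \,\mathrm{diag}(p^{-a_1},p^{-a_2},p^{a_1-a_3},p^{a_2-a_3}) \,G(\Z_{S'})$. The vanishing $I_7(f)=0$ is immediate from Lemma \ref{id1}. For $I_3$, $I_4$, $I_5$: any $\gamma \in G(\Q)$ contributing to these terms has a non-central semisimple or non-unipotent mixed part which simultaneously lies in the $S'$-support above and in $z'_\alpha K(N)_p$ at every $p\mid N$. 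Rationality of $\gamma$ together with the bound $\kappa$ on $S'$-denominators leaves only finitely many candidate conjugacy classes, and a straightforward Clozel-type pigeonhole argument, in the form invoked in Proposition \ref{semisimple-est} via \cite[Lemma 5]{Clozel} and \cite[Lemma 4.3]{Shin}, forces those candidates to be central once $N$ exceeds $N_0 \prod_{p\in S'} p^{10\kappa}$. The exponent $10\kappa$ tracks the ten coordinate entries of a matrix in $G=GSp_4$ whose $S'$-denominators are bounded by $\kappa$.

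\emph{Estimate for $I_2(f)$.} Lemma \ref{I21} decomposes the contribution as a product of the global coefficient $a^G(S,u_{\min}) = \tfrac{1}{2}\mathrm{vol}(M_2(\Q)\bs M_2(\A)^1)\zeta^S(2)$, the archimedean factor $c(zu_{\min})\chi_\xi(z)^{-1}(l_1-l_2)(l_1+l_2)$ from Lemma \ref{id4}, and the finite orbital integral $J_G(zu_{\min},h)$. The latter factors over places: at $p \in S'$, Lemma \ref{I22} applied to each generator $h_{a_1,a_2,a_3}$ with $a_3 \leq \kappa$ yields $O(p^{\kappa})$; at $p \mid N$, Lemma \ref{I22} gives $p^{-2\,\mathrm{ord}_p N}(1-p^{-2})^{-1}$ subject to the congruence $z \equiv z'_\alpha \bmod p^{\mathrm{ord}_p N}$, producing $N^{-2}$ overall; at remaining unramified places the integral is $(1-p^{-2})^{-1}$, which cancels against $\zeta^S(2)$. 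The constraint $\nu(z)=\nu(\alpha)$ pins $z$ down up to finite ambiguity, and the factor $\varphi(N) = N\prod_{p\mid N}(1-p^{-1})$ arises from the $\prod_{p\mid N}(1-p^{-1})^{-1}$ in the normalization \eqref{min2} of $J_G(zu_{\min},h)$ combined with counting $z$'s compatible with the $K(N)$-congruence. Multiplying by $\mathrm{vol}(K(N))^{-1} = N^{10}\prod_{p\mid N}(1-p^{-2})(1-p^{-4})$, by $(l_1-l_2)(l_1+l_2) = (k_1-k_2+1)(k_1+k_2-3)$, and by $|\nu(\alpha)|^{-(k_1+k_2)/2}_{S'}=\mathrm{sgn}\cdot\chi_\xi(z_\alpha)^{-1}$ yields the stated bound.

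\emph{Estimate for $I_6(f)$.} After the vanishing step, only central $z \in Z_G(\Q)$ contribute. For each proper Levi $M \in \cL$ with parabolic $P = MN_P$, the weighted orbital integral $J^G_M(z,h_P)$ factors across places. At $p \mid N$, the constant-term construction \eqref{hp} combined with the normality of $K(N)_p$ in $\bK_p$ bounds the inner $N_P(\Q_p)$-integral by $p^{-3\,\mathrm{ord}_p N}$ (with $3 \geq \dim N_P$ in every case), aggregating to $N^{-3}$; multiplying by $\vol(K(N))^{-1}\sim N^{10}$ gives $N^7$. At $p \in S'$, we need the uniform estimate $|J^G_M(z,h_{P,p})| \ll p^{a_M\kappa+b_M}$ for every $h \in H^{\mathrm{ur}}(G(\Q_p))^{\kappa}$, which is the direct analogue in the spherical Hecke algebra of the Shin–Templier uniform orbital integral bound. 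Combining these with the $O(k_1+k_2-3)$ growth of $I^G_M(z,f_\xi)$ from Lemma \ref{id1} gives the advertised $O(p_{S'}^{a\kappa+b}(k_1+k_2-3)\varphi(N)N^7)$, with $\varphi(N)$ again entering through the measure-normalization factors $\prod_{p\mid N}(1-p^{-1})^{\pm 1}$ attached to the unipotent radicals of $P$.

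\emph{Main obstacle.} The central difficulty is the uniform polynomial bound $J^G_M(z,h_P) = O(p^{a_M\kappa+b_M})$ at $p \in S'$ for proper Levi $M$: Shin and Templier proved such estimates in the setting of Euler–Poincar\'e test functions at infinity, and the task here is to extract their argument so that it functions transparently with a pseudo-coefficient in place. A secondary subtlety is nailing down the threshold $N_0 \prod_{p\in S'}p^{10\kappa}$ for the Clozel vanishing of $I_3, I_4, I_5$, which amounts to controlling the possible denominators at $S'$ of rational conjugacy-class representatives once a tight congruence modulo $p^{\mathrm{ord}_p N}$ is imposed at each $p\mid N$.
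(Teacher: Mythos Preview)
Your approach matches the paper's, which likewise reduces $I_2$ to Lemmas \ref{I21}--\ref{I22}, obtains the vanishing of $I_3,I_4,I_5$ and of the non-central part of $I_6$ from the Shin--Templier machinery, and then bounds the central contribution to $I_6$ via \eqref{ga1} together with a Shin--Templier Hecke-algebra estimate. Two points deserve correction.

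First, your explanation of the exponent $10$ in $N_0\prod_{p\in S'}p^{10\kappa}$ is not right. The paper does not use the Clozel/Shin argument from Proposition \ref{semisimple-est} here, since that only produces \emph{some} threshold $\tilde N_0$ depending on $S'$ and the test function. To get the explicit uniform threshold one invokes \cite[Lemma 8.4]{ST}, and for this one must fix a faithful algebraic representation $\Xi:PGSp_4\to GL_m$. The paper takes $\Xi$ to be the adjoint action on $\mathrm{Lie}(Sp_4)$, which has dimension $10$; that is the source of the $10$, not the number of matrix entries of $GSp_4$ (which would be $16$). The same input \cite[Lemma 8.4]{ST} is what kills the non-central semisimple classes in $I_6$.

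Second, the ``main obstacle'' you flag is not an obstacle at all. The test function factors as $f=f_\xi\cdot h$ with $h\in C_c^\infty(G(\A_\fin))$, and the $S'$-local bound $|h_P|\ll p_{S'}^{a\kappa+b}$ for the constant term of a spherical Hecke element is a purely $p$-adic statement, supplied by \cite[Lemma 2.14]{ST}; the archimedean choice (pseudo-coefficient versus Euler--Poincar\'e) never enters that estimate. Combined with \eqref{ga1} at primes dividing $N$ and the $O(k_1+k_2-3)$ bound from Lemma \ref{id1} for $I_M^G(z,f_\xi)$, this gives the $I_6$ estimate directly.
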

\begin{proof}
According to \cite{ST}, a faithful algebraic representation $\Xi:PGSp_4\to GL_m$ is required for us (see \cite[Section 8]{ST}).
Here, we consider the adjoint action $\mathrm{Ad}:PGSp_4\to GL(\mathrm{Lie}{(Sp_4)})$ as it, i.e., $\Xi :PGSp_4\to GL_{10}$.
By \cite[Lemma 8.4]{ST}, there exists a natural number $N_0$ such that, if $N\geq N_0 \prod_{p\in S'} p^{10\kappa}$, then we have $I_3(f)=I_4(f)=I_5(f)=0$ and the contributions of the non-central elements vanish in $I_6(f)$.
Hence, the estimation for $I_6(f)$ can be proved by \cite[Lemma 2.14]{ST} and \eqref{ga1}.
The estimation for $I_2(f)$ follows from Lemmas \ref{I21} and \ref{I22}.
\end{proof}
\begin{prop}\label{weight-est}
(Weight aspect)
Fix a level $N>0$.
There exist positive constants $a'$ and $b'$ such that
\[
I_2(f)\times \vol(K(N))^{-1}\times |\nu(\alpha)|^{-\frac{k_1+k_2}{2}}_{S'} =O(p_{S'}^\kappa(k_1-k_2+1)(k_1+k_2-3))
\]
\[
I_3(f)\times \vol(K(N))^{-1}\times |\nu(\alpha)|^{-\frac{k_1+k_2}{2}}_{S'} =O(p_{S'}^\kappa (k_1-1)(k_2-2)),
\]
\[
\{I_4(f)+I_6(f)\}\times \vol(K(N))^{-1}\times |\nu(\alpha)|^{-\frac{k_1+k_2}{2}}_{S'} =O(p_{S'}^{a'\kappa +b'}(k_1+k_2-3)),
\]
\[
I_5(f)\times \vol(K(N))^{-1}\times |\nu(\alpha)|^{-\frac{k_1+k_2}{2}}_{S'} =O(p_{S'}^{a'\kappa +b'}(k_1+k_2-3))
\]
for any $(k_1,k_2)$, $\kappa\geq 1$, $S'$, and $f_{S',\alpha}$, which satisfy the conditions $k_1\geq k_2\geq 3$, $f_{S',\alpha}\in H^\mathrm{ur}(G(\Q_{S'}))^\kappa$, and $N$ is prime to $\prod_{p\in S'}p$.
\end{prop}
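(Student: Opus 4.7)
The plan is to refine the argument of Proposition~\ref{semisimple-est} by tracking the dependence on $p_{S'}^\kappa$ coming from the spherical Hecke elements $f_{S',\alpha}\in H^{\mathrm{ur}}(G(\Q_{S'}))^\kappa$. Throughout, recall that the test function decomposes as $f=f_\xi h$ with $h=f_{z'_\alpha K(N)}\, f_{S',\alpha}\bigotimes_{p\in S\setminus(S'\cup\{v\mid N\infty\})}\mathrm{char}_{\bK_p}$, and that the condition $\nu(\gamma)=\nu(\alpha)$ from \eqref{n-gamma} pins down the central component of any contributing semisimple part up to finitely many choices, yielding the key cancellation $\chi_\xi(z)^{-1}\cdot |\nu(\alpha)|_{S'}^{-(k_1+k_2)/2}=\pm 1$ that neutralizes the normalization factor in front of each $I_i(f)$.

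For $I_2(f)$, I will apply Lemmas~\ref{I21} and \ref{I22}. The weight factor $(k_1-k_2+1)(k_1+k_2-3)=(l_1-l_2)(l_1+l_2)$ is supplied by Lemma~\ref{id4}, while Lemma~\ref{I22} shows that at each $p\in S'$ the orbital integral $J_G(zu_\min,h_{a_1,a_2,a_3})$ is dominated by $p^{a_3-2a_2}\le p^\kappa$ whenever $a_3\leq \kappa$; for the fixed $N$ and for the trivial components outside $S'\cup\{v\mid N\infty\}$ the orbital integrals contribute a bounded constant, yielding the claimed $O(p_{S'}^\kappa(k_1-k_2+1)(k_1+k_2-3))$ bound. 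The $I_3(f)$ estimate is obtained in parallel fashion from Lemmas~\ref{id3}, \ref{I31} and \ref{I32}: the weight factor is now $l_1l_2=(k_1-1)(k_2-2)$, and Lemma~\ref{I32} again gives the $p^\kappa$ bound at each $p\in S'$.

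For the regular semisimple contributions $I_4(f)$ and $I_6(f)$, I would combine Lemma~\ref{id1}, which bounds $|I^G_M(\gamma,f_\xi)|$ by $c(\gamma)\chi_\xi(\gamma)^{-1}(l_1+l_2)$ for $\R$-elliptic semisimple $\gamma$, with the uniform Shin--Templier bound on spherical orbital integrals (applied via the adjoint representation $\mathrm{Ad}\colon PGSp_4\to GL_{10}$, exactly as in the proof of Proposition~\ref{level-est}), which produces the factor $p_{S'}^{a'\kappa+b'}$. Since $N$ is fixed, \cite[Lemme 9]{Clozel} together with the constraint $\nu(\gamma)=\nu(\alpha)$ restricts the sum to finitely many $(M,\Q)$-conjugacy classes, so the two terms are each $O(p_{S'}^{a'\kappa+b'}(k_1+k_2-3))$.

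The most delicate case will be $I_5(f)$, the non-semisimple, non-minimal-unipotent contributions. By Lemma~\ref{ei51} only centralizers isomorphic to $G_1$, $G_2$, $G_3$ contribute, and these are controlled by Lemmas~\ref{ei53}, \ref{ei54} and \ref{ei55}, respectively. For each such class the weight dependence comes from $(-1)^{l_2}-(-1)^{l_1}=O(1)$ combined with the $O(l_1+l_2)$ factor produced by the limit formula of Lemma~\ref{id2} for the other singular directions, while the residual $S'$-orbital integrals $J^{M_j}_{M_j}(\delta_j,|h_{P_j}|)$ are again bounded by $p_{S'}^{a'\kappa+b'}$ via the same adjoint-norm estimate. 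The main obstacle will be the control of the sums $\sum_\chi L^S(1,\chi)^m$ (with $m=1,2$) entering through the global coefficients \eqref{gc1}--\eqref{gc3}; these are handled by Lemma~\ref{estquad}, whose $O(\prod_{p\in S_0}p^\varepsilon)$ estimate is absorbed into the $p_{S'}^{b'}$ factor once $N$ (and hence $S_0\setminus S'$) is held fixed. Combining these ingredients yields the desired $O(p_{S'}^{a'\kappa+b'}(k_1+k_2-3))$ bound on $I_5(f)$ and completes the proposition.
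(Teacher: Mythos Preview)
Your overall strategy matches the paper's proof, and the treatment of $I_2(f)$ and $I_3(f)$ is correct: Lemmas~\ref{I21}, \ref{I22} (resp.\ \ref{I31}, \ref{I32}) supply exactly the weight factor $(l_1-l_2)(l_1+l_2)$ (resp.\ $l_1l_2$) and the $p_{S'}^\kappa$ bound on the local orbital integral, and the constraint $\nu(\gamma)=\nu(\alpha)$ pins down the central element $z=z_\alpha$ as you say.

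There is, however, a genuine gap in your handling of $I_4(f)$, $I_6(f)$ and the semisimple reduction inside $I_5(f)$. You appeal to Clozel's lemma to conclude that the sum runs over ``finitely many $(M,\Q)$-conjugacy classes'', and then multiply this by a single Shin--Templier orbital-integral bound to get $p_{S'}^{a'\kappa+b'}$. But the proposition is uniform in $\kappa$ and $S'$, and the number of contributing semisimple classes is \emph{not} bounded independently of these: as $\kappa$ grows the support of $f_{S',\alpha}$ enlarges and more $\gamma$'s contribute. The paper handles this via \cite[Proposition~8.7]{ST}, which gives $|Y_M|=O(p_{S'}^{a_1\kappa+b_1})$ for the set $Y_M$ of contributing $M(\A)$-classes. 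In addition, you mention only orbital integrals, but the global coefficients $a^M(S,\gamma)$ (volumes of centralizers) also vary with the class and must be controlled uniformly; the paper bounds the product $a^M(S,\gamma)\,J_M(\gamma,h_P)$ by $O(p_{S'}^{a_2\kappa+b_2})$ via \cite[Proof of Theorem~9.19]{ST}. Without both of these inputs your factor $p_{S'}^{a'\kappa+b'}$ is unjustified. The same remark applies to the orbital integrals $J_{M_j}^{M_j}(\delta_j,|h_{P_j}|)$ appearing in Lemmas~\ref{ei53}--\ref{ei55}: you need to count the possible $\delta_j$'s and bound the associated volumes, again reducing to the semisimple estimates of \cite{ST} exactly as the paper does.
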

\begin{proof}
For each $M$ in $\cL$, let $Y_M$ denote the set of $M(\A)$-conjugacy classes of semisimple $\R$-elliptic elements of $M(\Q)$ whose contributions to $I_\mathrm{geom}(f)$ are non-zero.
By \cite[Proposition 8.7]{ST}, one finds $|Y_M|=O(p_{S'}^{a_1\kappa+b_1})$ where $a_1$ and $b_1$ are certain positive numbers.
Furthermore, there exist positive numbers $a_2$ and $b_2$ such that $a^M(S,\gamma)\, J_M(\gamma,h_P)=O(p_{S'}^{a_2\kappa+b_2})$ holds for each semisimple $\R$-elliptic element $\gamma$ in $Y_M$.
This fact is due to \cite[Proof of Theorem 9.19]{ST}.
Hence, the estimation for $I_4(f)$ and $I_6(f)$ follows from these results of \cite{ST} and Lemmas \ref{id1} and \ref{id2}.

Here $I_3(f)$ is the total contribution of $z\delta_1$ $(z\in Z_G(\Q))$ and the center $z$ must satisfy $\nu(\alpha)=z^2$.
Hence, only the $G(\Q)$-conjugacy class of $z_\alpha\delta_1$ can contribute to $I_3(f)$.
Therefore, the estimation for $I_3(f)$ is deduced from Lemmas \ref{I31} and \ref{I32}.
By the same argument, only the $G(\Q)$-conjugacy class of $z_\alpha u_\min$ contributes to $I_2(f)$.
Hence, the estimation for $I_2(f)$ follows from Lemmas \ref{I21} and \ref{I22}.

Next we shall consider the term $I_5(f)$.
By Lemma \ref{ei51}, it is enough to treat the $(G,S)$-conjugacy class $\gamma$ such that $G_{\gamma_s}$ is isomorphic to $G_1$, $G_2$, or $G_3$.
By Lemmas \ref{ei53}, \ref{ei54}, \ref{ei55}, each the contribution is bounded by the product of a constant, $\prod_{p\in S'\; \text{or} \; p|N} p^\epsilon$ and a semisimple $\R$-elliptic orbital integral of $h_P$ for a proper standard parabolic subgroup $P$.
Therefore, we can reduce the estimation for $I_5(f)$ to the semisimple case as above.
Hence, the proof is completed.
\end{proof}

Finally we treat $I_1(f)$. 
Suppose $\gamma=z_\gamma \cdot I_4\in Z_G(\Q)$. By (\ref{n-gamma}) it satisfies 
$$\gamma\in {\rm Supp}(h)=(z'_\alpha K(N))\times G(\Z_{S'})\alpha G(\Z_{S'})\times 
\prod_{p\in S\setminus S'\cup\{v|N\infty\}}K_p.$$ 
Further $\gamma$ can happen exactly when $\nu(\alpha)\in (\Q^\times_{S'})^2$. 
In this case, it follows that 
$$\gamma=\left\{
\begin{array}{cc}
z_\alpha \cdot E_4 &\ {\rm if}\ N\ge 3\\
\pm z_\alpha\cdot E_4  &\ {\rm otherwise}
\end{array}\right., 
$$
since $S$ is sufficiently large (see (\ref{zalpha}) for $z'_\alpha\in Z^+_G(\Q)$).  
Furthermore it follows from (\ref{n-gamma}) again that 
$$\nu(\gamma)=|z_{\gamma}|^2_\R=|\nu(\alpha)|^{-1}_{S'}.$$
We define 
$$\ve(\alpha)=\left\{
\begin{array}{cl}
2  &\ {\rm if}\ N=1,2\ {\rm and}\ \nu(\alpha)\in (\Q^\times_{S'})^2 \\
1  &\ {\rm otherwise}
\end{array}\right. 
$$
which is nothing but $\ve_{S'}(\alpha)$ in (\ref{e-factor0}) for $U=K(N)$. 
By Plancherel formula and the limit formula we have 
\begin{equation}
\begin{split}
(-1)^{q(G(\R))}I_1(f)&=\overline{\mu}(G(\Q)A_{G,\infty}\bs G(\A))\dim \xi\sum_{\gamma} 
\gamma^{-(k_1+k_2)} \cdot f_{S',\alpha}(\gamma)\\
&=\overline{\mu}(G(\Q)A_{G,\infty}\bs G(\A))\dim \xi\sum_{\gamma} 
|\gamma|^{-(k_1+k_2)}_\R \cdot f_{S',\alpha}(|\gamma|_\R)\ \ ({\rm by\ } (\ref{parity1}))\\
&=\overline{\mu}(G(\Q)A_{G,\infty}\bs G(\A))\cdot\dim \xi\sum_{\gamma}  \widehat{\mu}^{{\rm pl}}_{S'}(\widehat{f}^{\gamma}_{S',\alpha})
\cdot |\nu(\alpha)|^{\frac{k_1+k_2}{2}}_{S'}\\
&=\ve(\alpha)\cdot \overline{\mu}(G(\Q)A_{G,\infty}\bs G(\A))\cdot\dim \xi\cdot  
\widehat{\mu}^{{\rm pl}}_{S'}(\widehat{f}^{z'_{\alpha}}_{S',\alpha})
\cdot |\nu(z'_{\alpha})|^{\frac{k_1+k_2}{2}}_{S'}
\end{split}
\end{equation}
where  $f^{\gamma}_{S',\alpha}$ (resp. $f^{z'_{\alpha}}_{S',\alpha}$) is the translation by $|\gamma|_\R$ 
(resp. $|z'_{\alpha}|_\R$) of $f_{S',\alpha}$ and we used 
$$\widehat{\mu}^{{\rm pl}}_{S'}(\widehat{f}^{\gamma}_{S',\alpha})=\ds\int_{\widehat{G(\Q_{S'})}}\widehat{f}^\gamma_{S',\alpha}(\pi)
\widehat{\mu}^{{\rm pl}}_{S'}(\pi)=\ds\int_{\widehat{G(\Q_{S'})}}{\rm tr}(\pi(f^\gamma_{S',\alpha}))
\widehat{\mu}^{{\rm pl}}_{S'}(\pi)$$
$$=f^\gamma_{S',\alpha}(1)=f_{S',\alpha}(|\gamma|_\R)=f_{S',\alpha}(|z'_{\alpha}|_\R)=f^{z'_{\alpha}}_{S',\alpha}(1)
=\widehat{\mu}^{{\rm pl}}_{S'}(\widehat{f}^{z'_{\alpha}}_{S',\alpha}).$$ 
Hence we have 
\begin{equation}\label{main-term}
\frac{|\nu(\alpha)|^{-\frac{k_1+k_2}{2}}_{S'}I_1(f)}{\ve(\alpha)\cdot\overline{\mu}(G(\Q)A_{G,\infty}\bs G(\A))\cdot\dim \xi}
=(-1)^{q(G(\R))}\widehat{\mu}^{{\rm pl}}_{S'}(\widehat{f}^{z'_{\alpha}}_{S',\alpha})=
-\widehat{\mu}^{{\rm pl}}_{S'}(\widehat{f}^{z'_{\alpha}}_{S',\alpha}).
\end{equation}
Note that if $\nu(\alpha)\not\in (\Q^\times_{S'})^2$, then both sides of (\ref{main-term}) are zero giving the trivial identity. 

Summing up we have obtained the following results which follow from Propositions \ref{level-est}, \ref{weight-est}, and (\ref{main-term}). 
\begin{thm}\label{level-aspect}(Level-aspect)
Keep the notation as in Proposition \ref{level-est}. Then 
$$\frac{|\nu(\alpha)|^{-\frac{k_1+k_2}{2}}_{S'}I_{{\rm geom}}(f)}{\ve(\alpha)\cdot\overline{\mu}(G(\Q)A_{G,\infty}\bs G(\A))\cdot\dim \xi}
=-\widehat{\mu}^{{\rm pl}}_{S'}(\widehat{f}^{z'_{\alpha}}_{S',\alpha})+A+O(p^{a\kappa+b}_{S'}\varphi(N)N^{-3}),$$
where $A=O(p^\kappa_{S'}\varphi(N)N^{-2})$, $(N,p_{S'})=1$, and $N\geq N_0 p_{S'}^{10\kappa}$.
\end{thm}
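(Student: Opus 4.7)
The plan is to assemble Theorem \ref{level-aspect} from the decomposition
$$I_{{\rm geom}}(f)=I_1(f)+I_2(f)+I_3(f)+I_4(f)+I_5(f)+I_6(f)+I_7(f)$$
together with the estimates of Proposition \ref{level-est}, the vanishing $I_7(f)=0$ from Lemma \ref{id1}, and the explicit identity (\ref{main-term}) for $I_1(f)$. All the genuine analytic work has already been carried out in Sections \ref{s3}--\ref{sec6}; at this stage only a careful bookkeeping of the volume and dimension factors is required.

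First, under the hypothesis $N\geq N_0 p_{S'}^{10\kappa}$ with $(N,p_{S'})=1$, I would invoke Proposition \ref{level-est} to get $I_3(f)=I_4(f)=I_5(f)=0$, and Lemma \ref{id1} to get $I_7(f)=0$. Consequently only $I_1(f)$, $I_2(f)$, and $I_6(f)$ survive in the geometric expansion.

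Next I would divide the identity $I_{{\rm geom}}(f)=I_1(f)+I_2(f)+I_6(f)$ by the normalizer $\ve(\alpha)\cdot\overline{\mu}(G(\Q)A_{G,\infty}\bs G(\A))\cdot\dim\xi_{\underline{k}}$ and multiply by $|\nu(\alpha)|^{-(k_1+k_2)/2}_{S'}$. The $I_1(f)$-piece yields precisely the main term $-\widehat{\mu}^{{\rm pl}}_{S'}(\widehat{f}^{z'_\alpha}_{S',\alpha})$ by (\ref{main-term}). For the remaining pieces I would combine Proposition \ref{level-est} with $\vol(K(N))^{-1}\asymp N^{10}$ and
$$\dim\xi_{\underline{k}}=\frac{(k_1-k_2+1)(k_1+k_2-3)(k_1-1)(k_2-2)}{6}$$
from (\ref{dimension-alg}). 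The $I_2(f)$ ratio is bounded by
$$\ll \frac{\vol(K(N))\cdot p_{S'}^\kappa(k_1-k_2+1)(k_1+k_2-3)\varphi(N)N^8}{(k_1-k_2+1)(k_1+k_2-3)(k_1-1)(k_2-2)}\ll \frac{p_{S'}^\kappa\varphi(N)}{N^2(k_1-1)(k_2-2)},$$
which, since $(k_1,k_2)$ is fixed in the level aspect, is exactly the second main term $A=O(p_{S'}^\kappa\varphi(N)N^{-2})$. The $I_6(f)$ ratio is bounded analogously by
$$\ll \frac{p_{S'}^{a\kappa+b}\varphi(N)}{N^3(k_1-k_2+1)(k_1-1)(k_2-2)}=O(p_{S'}^{a\kappa+b}\varphi(N)N^{-3}),$$
yielding the stated error.

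The main conceptual obstacle (faced in Section \ref{sec6} rather than at this step) is the appearance of the non-semisimple minimal unipotent $z_\alpha u_{\min}$ contribution to $I_2(f)$. Unlike the Shin--Templier setup with Euler--Poincar\'e functions, the pseudo-coefficient $f_\xi$ does not annihilate $J_G(z_\alpha u_{\min},f_\xi)$, and the resulting term--controlled precisely via Lemmas \ref{id4}, \ref{I21}, \ref{I22} and equation (\ref{gcm})--is too large to be absorbed into the error and must be exhibited as the new second main term $A$. Once that analysis is in hand, the assembly into Theorem \ref{level-aspect} is immediate.
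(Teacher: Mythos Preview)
Your proposal is correct and follows essentially the same approach as the paper, which simply records that Theorem \ref{level-aspect} is an immediate consequence of Proposition \ref{level-est} together with (\ref{main-term}); you have merely spelled out the bookkeeping of dividing the bounds of Proposition \ref{level-est} by $\dim\xi_{\underline{k}}$ and absorbing $\vol(K(N))\asymp N^{-10}$. One minor remark: where you write ``since $(k_1,k_2)$ is fixed in the level aspect'' you could equally note that the weight factors $(k_1-1)(k_2-2)$ and $(k_1-k_2+1)(k_1-1)(k_2-2)$ in the denominators are bounded below by positive absolute constants for $k_1\geq k_2\geq 3$, so the stated $O$-bounds hold uniformly in the weight as Proposition \ref{level-est} allows.
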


\begin{thm}\label{weight-aspect}(Weight-aspect)
Keep the notation as in Proposition \ref{weight-est}. Then 
$$
\frac{|\nu(\alpha)|^{-\frac{k_1+k_2}{2}}_{S'}I_{{\rm geom}}(f)}{\ve(\alpha)\cdot\overline{\mu}(G(\Q)A_{G,\infty}\bs G(\A))\cdot\dim \xi}
=-\widehat{\mu}^{{\rm pl}}_{S'}(\widehat{f}^{z'_{\alpha}}_{S',\alpha})+ B_1+B_2+
O(\frac{p^{a'\kappa+b'}_{S'}}{(k_1-k_2+1)(k_1-1)(k_2-2)}),
$$
where $B_1=O(\frac{p^\kappa_{S'}}{(k_1-1)(k_2-2)})$ and $B_2=O(\frac{p^\kappa_{S'}}{(k_1-k_2+1)(k_1+k_2-3)})$, and $(N,p_{S'})=1$.
\end{thm}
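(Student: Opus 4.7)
The plan is to assemble the geometric expansion term-by-term, using the seven-fold decomposition $I_{\rm geom}(f) = I_1(f)+\cdots+I_7(f)$ from Section~\ref{gs1} together with the explicit estimates already prepared in Proposition~\ref{weight-est} and the direct computation of the central contribution $I_1(f)$ carried out in (\ref{main-term}). Since $I_7(f)=0$ by Lemma~\ref{id1} (the stability of $\Theta^{{\rm hol}}_{l_1,l_2}$ for all proper Levi subgroups in Lemma~\ref{cst} forces the weighted orbital integrals over $M \neq G$ to vanish at non-semisimple classes), only six terms remain.

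First I would isolate $I_1(f)$, which is the contribution of $Z_G(\Q)$. The identity (\ref{main-term}) was already established by combining Plancherel's formula for $G(\Q_{S'})$ with the central character relation $\chi_\xi(z_\gamma) = \mathrm{sgn}(z_\gamma)|\nu(\alpha)|^{-(k_1+k_2)/2}_{S'}$ and the parity constraint (\ref{parity1}). This produces exactly
\[
\frac{|\nu(\alpha)|^{-(k_1+k_2)/2}_{S'}\, I_1(f)}{\varepsilon(\alpha)\,\overline\mu(G(\Q)A_{G,\infty}\backslash G(\A))\,\dim\xi} \;=\; -\,\widehat{\mu}^{\rm pl}_{S'}\bigl(\widehat{f}^{\,z'_\alpha}_{S',\alpha}\bigr),
\]
which is the main Plancherel term on the right-hand side of Theorem~\ref{weight-aspect}.

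Second, recall $\dim\xi_{\underline k} = \tfrac{1}{6}(k_1-k_2+1)(k_1+k_2-3)(k_1-1)(k_2-2)$. Dividing each of the four estimates from Proposition~\ref{weight-est} by $\overline\mu(G(\Q)A_{G,\infty}\backslash G(\A))\cdot\dim\xi$ and absorbing the fixed factor $\vol(K(N))^{-1}$ (which is constant in the weight aspect) into the implicit constants, the $I_2$-bound $O(p^\kappa_{S'}(k_1-k_2+1)(k_1+k_2-3))$ becomes $B_1 = O(p^\kappa_{S'}/((k_1-1)(k_2-2)))$; the $I_3$-bound $O(p^\kappa_{S'}(k_1-1)(k_2-2))$ becomes $B_2 = O(p^\kappa_{S'}/((k_1-k_2+1)(k_1+k_2-3)))$; and the combined $I_4+I_5+I_6$ bound $O(p^{a'\kappa+b'}_{S'}(k_1+k_2-3))$ collapses to the stated error $O(p^{a'\kappa+b'}_{S'}/((k_1-k_2+1)(k_1-1)(k_2-2)))$. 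Summing all contributions yields the theorem.

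The main obstacle here is not mathematical depth but careful bookkeeping of normalizations: one must track the factor $\varepsilon(\alpha)$ (nontrivial only when $N\leq 2$ and $\nu(\alpha)\in(\Q^\times_{S'})^2$), the absorption of the fixed $\vol(K(N))^{-1}$ (which depends on $N$ but not on $\underline k$), and the fact that all constants implicit in the $O$-notation of Proposition~\ref{weight-est} depend on $N$ in the weight aspect but are independent of $\underline k$ and $\kappa$. The genuinely substantive input is Proposition~\ref{weight-est} itself, whose hardest ingredient is the treatment of $I_5(f)$ via the quadratic-character sums in Lemmas~\ref{ei53}--\ref{ei55}; once that is granted, the final assembly reduces to arithmetic on dimension asymptotics.
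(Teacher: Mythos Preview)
Your proposal is correct and follows exactly the paper's own approach: the paper simply states that Theorem~\ref{weight-aspect} follows from Proposition~\ref{weight-est} and equation~(\ref{main-term}), which is precisely the assembly you describe. Your added bookkeeping on the $\dim\xi$ normalization and the absorption of the fixed $\vol(K(N))^{-1}$ factor makes explicit what the paper leaves implicit, but the argument is the same.
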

\begin{remark}
Shin's condition in the weight aspect in \cite{Shin} becomes: 
$$k_1-k_2\lra \infty,\ k_2\lra \infty.$$
Theorem \ref{weight-aspect} can also treat in the case where $k_1-k_2$ is constant while $k_2$ tends to infinity. 
This is a new direction of the weight aspect which has not been studied. 
\end{remark}
\begin{remark}\label{level-fixed-char} In Theorem \ref{level-aspect}, if we restrict automorphic forms to those  
with a fixed central character $\chi\in \widehat{(\Z/N\Z)^\times}$, then we have the better result without $\varphi(N)$ on the right hand side. Accordingly the dimension of Siegel cusp forms with a fixed central character is smaller by a factor of $\varphi(N)$. (See Proposition \ref{dimension}.)
\end{remark}

\section{Proof of the main theorem}
In this section we give a proof of Theorem \ref{main} in the introduction. 

Fix a prime $p\nmid N$. Let $U=K(N)$ and $S'=\{p\}$. For any $\kappa\in \Bbb Z_{\geq 0}$, let
$f_p$ be the characteristic function of $K_p\diag(p^{-a_1},p^{-a_2},p^{a_1-\kappa},p^{a_2-\kappa})K_p,\ 
0\le a_2\le a_2\le \kappa$. 

If $k_1\geq k_2\geq 4$, it is immediate by Proposition \ref{trace}, Theorems \ref{level-aspect} and \ref{weight-aspect}.

If $k_2=3$, we need to estimate the traces of Hecke operators on the residual spectrum and a 
part of cuspidal space related to non-tempered representations $\omega_{l_1}$.
 
If $k_1> k_2=3$, by Propositions \ref{K-Eisen} and \ref{cap-k-order},
\begin{eqnarray*}
\ds\sum_{\ast\in\{{\rm cusp},{\rm res}\}}\widehat{\mu}_{K(N),\xi_{\underline{k}},\omega_{l_1}, \ast}(\widehat{f}_{p}) =({\rm dim} \xi_{\underline{k}})^{-1}(O(p^{\frac{\kappa}{2}}N^{-4+\ve})+O(p^{\frac{\kappa}{2}}k_1N^{-6})).
\end{eqnarray*}

If $k_1=k_2=3$, by (\ref{residual}), Propositions \ref{K-Eisen} and \ref{cap-k-order},
\begin{eqnarray*}
&& \widehat{\mu}_{K(N),\xi_{\underline{k}},\bold{1}}(\widehat{f}_{p})+\ds\sum_{\ast\in\{{\rm cusp},{\rm res}\}}\widehat{\mu}_{K(N),\xi_{\underline{k}},\omega_{l_1}, \ast}(\widehat{f}_{p}) \\
&& =({\rm dim} \xi_{\underline{k}})^{-1} (O(p^{\frac{\kappa}{2}}N^{-4+\ve})+O(p^{\frac{\kappa}{2}}k_1N^{-6})+
O(p^{\frac{3\kappa}{2}}N^{-10})).
\end{eqnarray*}

Notice that the above error terms are subsumed in the error terms in Theorems \ref{level-aspect} and \ref{weight-aspect}. 
Therefore Theorem \ref{main} follows from Proposition \ref{trace}, Theorems \ref{level-aspect} and \ref{weight-aspect}. 
This completes a proof of the main theorem.  

\section{Applications}

\subsection{The classical formulation}
In this subsection we reformulate Theorem \ref{main} in terms of classical Siegel modular forms. 
As we will see later, this will be used in the study of Hecke fields and low-lying zeros. 

Let  $\chi:(\Z/N\Z)^\times\lra \C^\times$ be a Dirichlet character. Fix a square root $\chi(p)^{\frac{1}{2}}$ 
for each fixed $p\nmid N$. We write $\chi(p)^{\frac{i}{2}}=(\chi(p)^{\frac{1}{2}})^i$.  
Put $V_{\underline{k},N}=S_{\underline{k}}(\G(N))$ or $S_{\underline{k}}(\G(N),\chi)$  for $k_1\ge k_2\ge 3$. 
Let us recall the Hecke operators $T_m$ or $T(p^i)$ on $V_{\underline{k},N}$ for $m\in \Delta_n(N)$ and $p\nmid N$. 
We normalize them as $T'_m=T_m/\nu(m)^{\frac{k_1+k_2-3}{2}}$ on $S_{\underline{k}}(\G(N))$ and 
 $T'(p^i)=T(p^i)/(p^{\frac{k_1+k_2-3}{2}}\chi(p)^{\frac{i}{2}})$ on $S_{\underline{k}}(\G(N),\chi)$.  
Put $d_{\underline{k},N}=\dim_\C V_{\underline{k},N}$. Clearly 
if $m=p^\kappa E_4$, then 
$$\frac{1}{d_{\underline{k},N}}{\rm tr}(p^{-3\kappa}T'_{p^\kappa E_4}|V_{\underline{k},N})=1.$$
Then by Theorem \ref{main}, we have the following theorem:  

\begin{thm}\label{claim1}There exist constants $a_1,b_1,a'_1,b'_1$ depending only on $G$ such that for a prime $p\nmid N$ and $m=\diag(p^{a_1},p^{a_2},p^{-a_1+\kappa},p^{-a_2+\kappa}),\ a_1,a_2,\kappa\in \Z$ satisfying 
$0\le a_2\le a_1\le \kappa$ and $m\not\in Z_G(\Q)$,  
\begin{enumerate}
\item (Level-aspect) 
$$\frac{1}{d_{\underline{k},N}}{\rm tr}(T'_m|V_{\underline{k},N})=A+O(p^{a_1\kappa+b_1}N^{-3}),\, A=O(p^{-\frac{\kappa}{2}}N^{-2}), \quad N\gg p^{10\kappa}.
$$
\item (Weight-aspect) 
$$\frac{1}{d_{\underline{k},N}}{\rm tr}(T'_m|V_{\underline{k},N})=B_1+B_2+
O(\frac{p^{a'_1\kappa+b'_1}}{(k_1-k_2+1)(k_1-1)(k_2-2)})\ \  (k_1+k_2\to \infty),$$
$$B_1=O(\frac{p^{-\frac{\kappa}{2}}}{(k_1-1)(k_2-2)}),\ B_2=O(\frac{p^{-\frac{\kappa}{2}}}{(k_1-k_2+1)(k_1+k_2-3)})$$
\end{enumerate}
\begin{proof}Since $\nu(m)=p^\kappa$, by (\ref{local-global}) the classical Hecke operator $T'_m$ is interpreted as the action of 
$$f=f_{K(N)}(p^{-\frac{3}{2}\kappa}[K_p m^{-1}K_p])\Big(\bigotimes_{\ell\in S\setminus\{v|pN\infty\}}{\rm char}_{K_\ell} \Big)$$
on the spectral side. Then the LHS of the main theorem is exactly 
$\frac{1}{d_{\underline{k},N}}{\rm tr}(T'_m|V_{\underline{k},N})$.

Notice that 
$${\rm dim}S_{\underline{k}}(\G(N))\sim \frac {({\rm dim}\xi_{\underline{k}})\varphi(N)}{{\rm vol}(K(N))},\ 
{\rm dim}S_{\underline{k}}(\G(N),\chi)\sim \frac {{\rm dim}\xi_{\underline{k}}}{{\rm vol}(K(N))}.
$$
Then multiplying the RHS of the equations in Theorem \ref{level-aspect}, \ref{weight-aspect}  
by $p^{-\frac{3}{2}\kappa}$ we obtain the results  with Remark \ref{level-fixed-char}. 
\end{proof}

\end{thm}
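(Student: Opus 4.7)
\medskip

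\noindent\textbf{Proof proposal for Theorem \ref{claim1}.}
The strategy is to pass from the classical trace of $T'_m$ to the adelic spectral side via the dictionary established in Section \ref{adelic}, and then apply Theorems \ref{level-aspect} and \ref{weight-aspect} with a careful accounting of normalizations. First, given $m=\diag(p^{a_1},p^{a_2},p^{-a_1+\kappa},p^{-a_2+\kappa})\in T(\Q)$ with $\nu(m)=p^\kappa$, set $\alpha=m^{-1}$ and take the adelic test function
\[
f=f_\xi \cdot f_{K(N)}\cdot\bigl(p^{-\tfrac{3}{2}\kappa}[K_p m^{-1}K_p]\bigr)\cdot\!\!\bigotimes_{\ell\in S\setminus\{v\mid pN\infty\}}\!\!\mathrm{char}_{K_\ell}.
\]
The identity (\ref{local-global}) shows that the operator induced by the spherical component $p^{-3\kappa/2}[K_p m^{-1}K_p]$ on $\mathcal{A}_{\underline{k}}(K(N))^\circ$ corresponds, under the isomorphism (\ref{isom1}), to the classical normalized Hecke operator $T'_m$. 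Consequently the spectral side $I_{\mathrm{spec}}(f)$ is, up to the sign $(-1)^{q(G(\R))}$ coming from $\tr(\pi_\infty(f_\xi))$ and the contributions of residual/CAP pieces bounded in Propositions \ref{K-Eisen} and \ref{cap-k-order}, equal to $\dim\xi_{\underline{k}}\cdot\vol(K(N))\cdot \tr(T'_m\mid V_{\underline{k},N})$ plus negligible error (the residual and $\omega_{l_1}$ pieces contribute to levels of lower order in $N$ and $k_1+k_2$, exactly as seen in Section 7).

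Next, I invoke the key observation that since $m\notin Z_G(\Q)$, the identity element $1\in G(\Q_p)$ does not lie in the support of any translate $f_{S',\alpha}^{z'_\alpha}$. Hence by Plancherel's formula
\[
\widehat{\mu}^{\rm pl}_p\bigl(\widehat{f}^{\,z'_\alpha}_{S',\alpha}\bigr)=f^{\,z'_\alpha}_{S',\alpha}(1)=0,
\]
so the leading main term disappears, leaving only the secondary terms $A$ (resp.\ $B_1+B_2$) and the error bounds from Theorems \ref{level-aspect} and \ref{weight-aspect}. Multiplying both sides of those theorems by $p^{-3\kappa/2}$ (coming from the normalization factor $\nu(m)^{-3/2}$ in (\ref{local-global})) then yields the bounds in the statement, since $|\nu(\alpha)|_p^{-(k_1+k_2)/2}\cdot p^{-3\kappa/2}=p^{\kappa(k_1+k_2-3)/2}$, which exactly undoes the classical normalization $T_m/\nu(m)^{(k_1+k_2-3)/2}$.

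The final bookkeeping step is to convert the adelic normalization $\overline\mu(G(\Q)A_{G,\infty}\bs G(\A))\cdot \dim\xi_{\underline{k}}$ appearing in Theorems \ref{level-aspect} and \ref{weight-aspect} into the classical denominator $d_{\underline{k},N}$. For this I use the dimension asymptotics in Proposition \ref{dimension}, namely
\[
\dim S_{\underline{k}}(\G(N))\sim \frac{(\dim\xi_{\underline{k}})\,\varphi(N)}{\vol(K(N))},\qquad \dim S_{\underline{k}}(\G(N),\chi)\sim \frac{\dim\xi_{\underline{k}}}{\vol(K(N))},
\]
which together with the volume identity $\vol(K(N))^{-1}=[\G(1):\G(N)]$ allows the factor $\vol(K(N))\cdot \dim\xi_{\underline{k}}$ on the adelic side to be replaced by $d_{\underline{k},N}$ (up to the factor $\varphi(N)$ absorbed via Remark \ref{level-fixed-char} in the fixed central character case). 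Combining these steps produces the estimates $(1)$ and $(2)$ with the same implied constants $a_1,b_1,a'_1,b'_1$, which can be taken to be the constants $a,b,a',b'$ from Theorems \ref{level-aspect} and \ref{weight-aspect}.

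The main obstacle in carrying this plan out is the careful tracking of normalizing factors: one must match the three independent sources of $p^\kappa$-powers (the classical Hecke normalization $\nu(m)^{(k_1+k_2-3)/2}$, the Plancherel normalization $|\nu(\alpha)|_p^{-(k_1+k_2)/2}$ in $\delta_{\pi^0_{S'},\xi}$, and the adelic/classical conversion $\nu(\alpha)^{-3/2}$ from (\ref{local-global})) and to verify that the residual and CAP contributions at weights $k_2=3$ are indeed of lower order, which is guaranteed by Propositions \ref{K-Eisen} and \ref{cap-k-order} together with the dimension asymptotics of Proposition \ref{dimension}. Once these accounting details are verified, the estimates of the theorem follow directly.
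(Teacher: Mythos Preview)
Your proposal is correct and follows essentially the same route as the paper: translate $T'_m$ to the adelic side via (\ref{local-global}), observe the Plancherel main term vanishes because $m\notin Z_G(\Q)$, apply Theorems \ref{level-aspect} and \ref{weight-aspect}, multiply by $p^{-3\kappa/2}$, and convert the adelic normalization to $d_{\underline{k},N}$ using the dimension asymptotics and Remark \ref{level-fixed-char}. One minor slip: your displayed identity $|\nu(\alpha)|_p^{-(k_1+k_2)/2}\cdot p^{-3\kappa/2}=p^{\kappa(k_1+k_2-3)/2}$ is arithmetically wrong (with $\alpha=m^{-1}$ you have $|\nu(\alpha)|_p=p^{\kappa}$, so the left side equals $p^{-\kappa(k_1+k_2+3)/2}$), but this is only an explanatory aside---the step ``multiply by $p^{-3\kappa/2}$'' itself is correct and is exactly what the paper does, so your argument stands.
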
   
\begin{remark} The weight aspect depends on 
how we increase the weight. For instance, if $k_1=k_2$ goes to infinity, then $B_2$ becomes the second main term and $B_1$ is 
subsumed into the error term.  
On the other hand, if $k_2$ is fixed and $k_1$ goes to infinity, then $B_1$ 
becomes the second main term and $B_2$ is 
subsumed into the error term. This aspect would be a new case which has not been studied before.    
\end{remark}

\subsection{The vertical Sato-Tate theorem; proof of Theorem \ref{Sato-Tate}}
Let $K$ be the maximal open compact subgroup of $G(\Q_p)=GSp_4(\Q_p)$. 
Let us first recall the Plancherel measure $\widehat{\mu}^{{\rm pl}}_p$ for the unitary dual of $G(\Q_p)$. 
For our purpose it suffices to consider its restriction to the unramified tempered classes $\widehat{G(\Q_p)}^{{\rm ur, temp}}_\chi$ with a fixed unitary central character $\chi:\Q^\times_p\lra \C^1$. 
We denote it by $\widehat{\mu}^{{\rm pl,temp}}_{p,\chi}$. 
Then by Lemma 3.2 of \cite{ST}, we have a natural bijection 
\begin{equation}\label{omega}
\widehat{G(\Q_p)}^{{\rm ur, temp}}_\chi\simeq [0,\pi]^2
\end{equation}
which is in fact a topological isomorphism. 
By Proposition 3.3 of \cite{ST}, for a usual parameter $(\theta_1,\theta_2)$ of $[0,\pi]^2$,  
we have 
\begin{eqnarray*}
&& \widehat{\mu}^{{\rm pl,temp}}_{p,\chi}(\theta_1,\theta_2)=\frac{(p+1)^4}{p^4\pi^2} \\
&&\cdot \Bigg|\frac{(1-e^{2\sqrt{-1}\theta_1})(1-e^{2\sqrt{-1} \theta_2})(1-e^{\sqrt{-1} (\theta_1+\theta_2)})(1-e^{\sqrt{-1} (\theta_1-\theta_2)})}{(1-p^{-1}e^{2\sqrt{-1}\theta_1})(1-p^{-1}e^{2\sqrt{-1} \theta_2})(1-p^{-1}e^{\sqrt{-1} (\theta_1+\theta_2)})(1-p^{-1}e^{\sqrt{-1} (\theta_1-\theta_2)})}\Bigg|^2 \, d\theta_1d\theta_2.
\end{eqnarray*}
Note that $\ds\lim_{p\to\infty}\widehat{\mu}^{{\rm pl,temp}}_{p,1}=\mu^{{\rm ST}}_\infty$. 
By transforming  $(\theta_1,\theta_2)$ into $(x,y)=(2\cos\theta_1,2\cos\theta_2)$, one has the measure $\mu_p$ on 
$\Omega=[-2,2]^2$ in the introduction. 

By Stone-Weierstrass theorem, the natural map 
$H^{{\rm ur}}(G(\Q_p))\hookrightarrow C^\infty_c(G(\Q_p))\lra C^0(\Omega,\R)$ has the dense image where 
the second map is given by the restriction of the correspondence 
$f\mapsto \widehat{f}$ to $\Omega$ via (\ref{omega}). 

Now apply Theorem \ref{claim1} with $m=p$.
Then Theorem \ref{Sato-Tate} follows from Theorem \ref{main}.  

\subsection{Hecke fields; Proof of Corollary \ref{hecke2}}\label{Hecke-field}
Corollary \ref{hecke1} is a direct consequence of Theorem \ref{Sato-Tate} and Theorem \ref{dim-en1}. 
To see the subsequent corollary we need to estimate the dimension of the endoscopic lifts to 
$S_{\underline{k}}(\G(N))$. It reveals the contribution of the second main term of the geometric side. 
(See the second term of the RHS in Theorem \ref{main}.) 
Let $S_{\underline{k}}(\G(N))^{{\rm en}}$ be the subspace of $S_{\underline{k}}(\G(N))^{{\rm tm}}$ generated by 
Hecke eigen form $F$ so that $\pi_F$ is endoscopic. 
By Theorem \ref{dim-en1} we see that 
$$\ds\frac {\dim S_{\underline{k}}(\G(N))^{{\rm en}}}{\dim S_{\underline{k}}(\G(N))}=O(((k_1-1)(k_1-2))^{-1}N^{-2+\epsilon}), \ {\rm as}\ k_1+k_2+N\to \infty,\ (N,11!)=1. 
$$  
Corollary \ref{hecke2} now  follows from this with Theorem \ref{Sato-Tate}. 

\section{Properties of $L$-functions of Siegel cusp forms on $GSp_4$}\label{L-function}

Put $S_{\underline{k}}(N)=S_{\underline{k}}(\G(N),1)$ and $HE_{\underline{k}}(N)=HE_{\underline{k}}(\Gamma(N),1)$ 
as in the introduction. 
Given a Siegel cusp form $F\in HE_{\underline{k}}(N)$, let $\pi_F$ be the associated cuspidal representation of $GSp_4(\A)$. 
Throughout this section, we assume that the central character of $\pi_F$ is trivial and the level of $F$ satisfies $(N,11!)=1$ 
due to \cite{Fer} to control the conductor 
under the functorial lift from the endoscopic subgroup of $GSp_4$. 

\subsection{Degree 4 spinor $L$-functions}
Let us first assume that $\pi_F$ is a CAP representation. Since $k_2\geq 3$, by the classification of CAP representations,
we must have $k:=k_1=k_2\ge 3$ and 
it is associated to Siegel parabolic subgroup. 
As seen in Section \ref{sccf}, if $(N,11!)=1$ there exists a newform $f$ with trivial central character in 
$S_{2k-2}(\G^1(N))\subset S_{2k-2}(\G^1_1(N^2))$ so that $\pi_F$ comes from $\pi_f$ via a theta lift or Saito-Kurokawa lift if it is 
of level one. 
We define the spinor L-function for such $\pi_F$ by 
$$L(s,\pi_F, {\rm Spin}):=\zeta(s+\frac{1}{2})\zeta(s-\frac{1}{2})L(s,\pi_f).$$ 
Then the conductor of the L-function $L(s,{\rm spin},\pi_F)$ divides  
$N^2$. Our definition of the L-functions for CAP representations differs by local factors at bad places from Schmidt's definition of L-functions (see Theorem 5.2 of \cite{Schmidt1}), but it does not matter for the analytic properties we need below. 

Next we assume that $\pi_F$ is endoscopic. As seen in Section \ref{class-endo} it can be obtained by a theta lift from $H(\A)$ where 
$H=GSO(4)$ or $H=GSO(2,2)$. We may put $\pi_F=\theta(\tau)=\otimes'_v\theta_v(\tau_v)$ for some cuspidal representation of 
$\tau=\otimes'_v\tau_v$ on $H(\A)$. 
Since $\pi_F$ is non-generic, only the case $H=GSO(4)$ happens. 
Let $(\pi_1,\pi_2)$ be a pair of two cuspidal automorphic representations of $GL_2(\A)$ with the same central character 
obtained from $\tau$ via Jacquet-Langlands correspondence. As seen before, by Th\'eor\`eme 3.2.3 of \cite{Fer} 
it turns out that    
$\pi_i$ has a fixed vector under the action of $K^1(N)$ under the assumption $(N,11!)=1$. We define 
$$L(s,\pi_F, {\rm Spin}):=L(s,\pi_1)L(s,\pi_2)$$
and it has the conductor dividing $N^4$ since the conductor of each $L(s,\pi_i)$ divides $N^2$ by \cite{LR}.  

Finally we assume that $\pi_F$ is neither CAP nor endoscopic. Since $F$ has a cohomological weight, by \cite{Wei1},
$\pi_{F,p}$ is weakly equivalent to a generic cuspidal representation $\pi=\otimes'_p\pi_p$ of $GSp_4(\A)$ so that 
$\{\pi_{F,\infty},\pi_\infty\}$ makes up a L-packet of $\Pi(GSp_4(\R))$. 
Since $F$ is non-endoscopic, the 
$\ell$-adic Galois representation associated to $F$ is irreducible by Chebotarev density theorem and \cite{CG}. 
By the comparison theorem between de Rham cohomology and etale cohomology, $\pi_{F}$ and $\pi$ contribute simultaneously 
to the de Rham cohomology of the Siegel threefold $S_{K(N)}$ for $K(N)$ with the automorphic vector bundle 
corresponding to $\xi_{\underline{k}}$. Hence $\pi$ has a non-zero $K(N)$-fixed vector and unramified outside of $N$ provided that $F$ is of level $\G(N)$.  

We define 
$$L(s,\pi_F,{\rm Spin}):=L(s,\pi)=\prod_{p<\infty} L_p(s,\pi_p)=\sum_{n=1}^\infty \widetilde\lambda_F(n)n^{-s}. 
$$ 
This is independent of such a $\pi$ since the multiplicity one is known for generic cuspidal representations of $GSp_4(\A)$. 
Then for each prime $p\nmid N$, we may write 
\begin{eqnarray*}
L(s,\pi_F, {\rm Spin})_p^{-1}=(1-\alpha_{0p}p^{-s})(1-\alpha_{0p}\alpha_{1p}p^{-s})(1-\alpha_{0p}\alpha_{2p}p^{-s})(1-\alpha_{0p}\alpha_{1p}\alpha_{2p}p^{-s}),
\end{eqnarray*}
$$\widetilde\lambda_F(p)=\alpha_{0p}+\alpha_{0p}\alpha_{1p}+\alpha_{0p}\alpha_{2p}+\alpha_{0p}\alpha_{1p}\alpha_{2p}=\lambda_F(p)p^{-\frac{k_1+k_2-3}{2}}.
$$
We note that $\widetilde\lambda_F(p^2)=\lambda_F(p^2)p^{-(k_1+k_2+3)}+p^{-1}$.

Since the central character is trivial, one has a relation $\alpha_{0p}^2\alpha_{1p}\alpha_{2p}=1$.
Let $\Gamma_\Bbb R(s)=\pi^{-\frac s2}\Gamma(\frac s2)$ and $\Gamma_\Bbb C(s)=2(2\pi)^{-s}\Gamma(s)$. 

\begin{lem}\label{cond1} Let $\Lambda(s,\pi_F, {\rm Spin})=q(F)^{\frac s2} \Gamma_\Bbb C(s+\frac {k_1+k_2-3}2)\Gamma_\Bbb C(s+\frac {k_1-k_2+1}2)L(s,\pi_F, {\rm Spin})$. Then 
$$\Lambda(s,\pi_F, {\rm Spin})=\epsilon(\pi_F)\Lambda(1-s,\pi_F, {\rm Spin}),
$$
where $\epsilon(\pi_F)\in\{\pm 1\}$ and $N\le q(F)\le N^4$.
\end{lem}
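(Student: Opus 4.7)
The plan is to split the proof into three cases according to the nature of $\pi_F$: (i) $\pi_F$ is CAP associated to the Siegel parabolic; (ii) $\pi_F$ is endoscopic, arising from $GSO(4)$ via theta lift; (iii) $\pi_F$ is neither, so $\pi_F$ is weakly equivalent to a globally generic cuspidal representation $\pi$ of $GSp_4(\mathbb{A})$. In each case the explicit identification of $L(s,\pi_F,\mathrm{Spin})$ as a product of $GL_n$-type $L$-functions has already been made above, and from this the functional equation, the sign $\epsilon(\pi_F)\in\{\pm 1\}$, and the conductor bounds will all be extracted.

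First I would handle the archimedean factor uniformly. Since $\pi_{F,\infty}\simeq D^{\mathrm{hol}}_{l_1,l_2}$ with $(l_1,l_2)=(k_1-1,k_2-2)$, the restriction of its Langlands parameter to $W_{\mathbb{C}}=\mathbb{C}^{\times}$ is a sum of four characters, and composing with the spin (i.e.\ standard four-dimensional) representation of $GSp_4(\mathbb{C})$ yields the archimedean $L$-factor $\Gamma_{\mathbb{C}}(s+(l_1+l_2)/2)\,\Gamma_{\mathbb{C}}(s+(l_1-l_2)/2)$. Substituting $(l_1,l_2)=(k_1-1,k_2-2)$ produces precisely the Gamma factors appearing in $\Lambda(s,\pi_F,\mathrm{Spin})$ as stated. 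This calculation is identical in all three cases.

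Next I would derive the functional equation case by case. In case (i), where $\pi_f\in S_{2k-2}(\Gamma^1(N))$, combine the classical functional equations of the two Riemann zeta completions $\Lambda(s\pm\tfrac{1}{2},\zeta)$ and of $\Lambda(s,\pi_f)=\epsilon(\pi_f)\Lambda(1-s,\pi_f)$; under $s\mapsto 1-s$ the two zeta factors are exchanged, so the total sign equals $\epsilon(\pi_f)\in\{\pm 1\}$ and the conductor divides $N^2$ since $\Gamma^1(N)\subset\Gamma^1_1(N^2)$. In case (ii), both $\pi_1$ and $\pi_2$ have trivial central character (hence are self-dual) and, under the hypothesis $(N,11!)=1$, each has conductor dividing $N^2$ by \cite{LR} together with the transfer provided by \cite{Fer}; the product of the two $GL_2$ functional equations yields sign $\epsilon(\pi_1)\epsilon(\pi_2)\in\{\pm 1\}$ and $q(F)\le N^4$. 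In case (iii), invoke the Asgari--Shahidi functorial lift to obtain a cuspidal representation $\Pi$ of $GL_4(\mathbb{A})$ with $L(s,\pi_F,\mathrm{Spin})=L(s,\Pi)$; since the central character of $\pi_F$ is trivial, $\Pi$ is self-dual, and the standard $GL_4$ functional equation furnishes $\epsilon(\Pi)\in\{\pm 1\}$.

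Finally, the conductor bounds: the upper bound $q(F)\le N^4$ is obtained uniformly from the three cases above (cases (i), (ii), (iii) produce bounds $N^2$, $N^4$, $N^4$ respectively), while the lower bound $N\le q(F)$ follows from the fact that $\pi_F$ admits a non-zero $K(N)$-fixed vector with no strictly smaller invariance, combined with the standard lower bound on the local Artin conductor of a ramified component. The main obstacle in the plan is case (iii): while the existence of the $GL_4$-lift is clear, translating ``$\pi_p$ has a $K(N)_p$-fixed vector'' into an explicit bound on the local conductor of $\Pi_p$ is not immediate and requires some local analysis of Jacquet modules or a direct comparison of local spinor factors; this is where the hypothesis $(N,11!)=1$ would again be exploited, following the pattern of \cite{Fer}.
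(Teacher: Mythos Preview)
Your treatment of the archimedean factor, the functional equation, the sign $\epsilon(\pi_F)\in\{\pm 1\}$, and the conductor bounds in the CAP and endoscopic cases (i) and (ii) is correct and matches the paper's setup.

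The genuine gap is in case (iii), exactly where you yourself flag the ``main obstacle''. Your suggestion to invoke Ferrari \cite{Fer} is misplaced: that result concerns the transfer pair between $GSp_4$ and its elliptic endoscopic group, not the functorial lift to $GL_4$, and it plays no role in this case. The paper's actual argument uses two local tools you do not mention. First, for primes $p$ at which the local component is square integrable, Sorensen \cite{Sor} shows that both the conductor and the depth are preserved under the local transfer from $GSp_4$ to $GL_4$, and Lansky--Raghuram \cite[Proposition~2.2]{LR} give $c(\Pi_p)=4(\mathrm{depth}(\Pi_p)+1)$ for essentially square integrable $\Pi_p$ on $GL_4$; since a nonzero $K(N)_p$-fixed vector forces $\mathrm{depth}(\pi_{F,p})\le \ord_p(N)-1$, this yields $\ord_p q(F)\le 4\,\ord_p(N)$. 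Second, for ramified primes where $\pi_{F,p}$ is not square integrable (hence a constituent of a parabolic induction), the conductor is read off directly from the Roberts--Schmidt tables \cite[Table~A.9]{RS}. Your proposed Jacquet-module analysis might in principle recover the second ingredient, but not the first, which is the heart of the bound.

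A minor remark: your argument for the lower bound $N\le q(F)$ (``no strictly smaller invariance'') is not correct as stated, since $HE_{\underline{k}}(N)$ contains oldforms coming from proper divisors of $N$; the paper's own proof in fact addresses only the upper bound.
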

\begin{proof} We first bound the conductor $q(F)$. 
When $\pi_F$ is either CAP or endoscopic, it was proved above.
Otherwise, let $\pi=\otimes'_{p} \pi_p$ of $GL_4(\A)$ be the strong transfer of $\pi_F$.
The conductor $q(F)$ can be written in terms of the local conductor of $\pi$. 
By Proposition 1 of \cite{Sor} (see also the last few lines of its proof) which is still true for not only supercuspidal representations but also 
square integrable representations, the depth is preserved under the above transfer. 
For a prime $p$ such that $\pi_{F,p}$ is square integrable we have 
$$\ord_p(q(F))=c(\pi_{F,p})=c(\pi_p)=4({\rm depth}(\pi_p)+1)=4({\rm depth}(\pi_{F,p})+1)$$
where we applied Proposition 2.2 of \cite{LR} (resp. Main Theorem of  \cite{Sor}) to get the third (resp. the second) equality.  
By definition of depth, $\pi^{K(N)}_F\not=0$ implies ${\rm depth}(\pi_p)\le \ord_p(N)-1$.   
This gives $\ord_p(q(F))\le 4\cdot\ord_p(N)$ provided if $\pi_{F,p}$ is square integrable or unramified. 
In the remaining cases we can directly compute the conductor by using the Table A.9 of \cite{RS} under 
the condition $\pi^{K(N)}_F\not=0$.  
\end{proof}

Let 
$$-\frac {L'}L(s,\pi_F, {\rm Spin})=\sum_{n=1}^\infty \Lambda(n) a_F(n)n^{-s},
$$
where $\Lambda(n)$ is the von-Mangoldt function, and 
$$a_F(p^d)=\alpha_{0p}^d+(\alpha_{0p}\alpha_{1p})^d+(\alpha_{0p}\alpha_{2p})^d+(\alpha_{0p}\alpha_{1p}\alpha_{2p})^d.
$$
For each $m\in T(\Q)$ we normalize the Hecke operator $T_m$ so that $T'_m:=T_m\nu(m)^{-\frac{k_1+k_2-3}{2}}$ and 
accordingly $T'(p^n)=T(p^n)p^{-\frac{n(k_1+k_2-3)}{2}}$. For a Hecke eigen form $F$,  
we denote by $\lambda'_{F,m}$ (resp. $\lambda'_F(p^n)$) the Hecke eigenvalue of $F$ for $T'_m$ (resp. $T'_F(p^n)$). 
By using the relations (\ref{hecke-operators}) it is easy to see 
that $$a_F(p)=\lambda'_F(p),\ a_F(p^2)=\lambda'_{F,t^2_1}-(p-1)\lambda'_{F,t_2}-\Big(1-\frac{1}{p}\Big)\Big(1+\frac{1}{p^2}\Big)$$
where $t_1=\diag(1,1,p,p)$ and $t_2=\diag(1,p,p^2,p)$.  
To apply Theorem \ref{claim1} we have to express these values in terms of linear combinations of the eigenvalues for 
the Hecke operators which take the shape of $T'_m$ as in Theorem \ref{claim1}.     
Then we find the corresponding operators 
$$T'(p)=T'_{t_1},$$
$$T'(p^2)= T'_{t^2_1}-(p-1)T'_{t_2}-\Big(1-\frac{1}{p}\Big)\Big(1+\frac{1}{p^2}\Big)(p^3T'_{p E_4})$$ 
respectively. Note that $p^3T'_{p E_4}$ acts on $S_{\underline{k}}(\G(N))$ as the identity map. 
Therefore we have 
\begin{prop}\label{spin} Assume $(N,11!)=1$. 
Put $\underline{k}=(k_1,k_2),\ k_1\ge k_2\ge 3$ and $d_{\underline{k},N}:=\dim S_{\underline{k}}(N)$.  
There exist constants $a''_1,a''_2,b_1'', b_2'', c_1'', c_2'',v_1,v_1',w_1,w_1'$ depending only on $G$ such that 
\begin{enumerate}
\item \label{est-p}
\begin{enumerate}
\item (level-aspect) Fix $k_1,k_2$. Then for $N\gg p^{10}$,
$$\frac{1}{d_{\underline{k},N}}\sum_{F\in HE_{\underline{k}}(N)} a_F(p) = O(p^{-\frac{1}{2}}N^{-2})
+O(p^{v_1}N^{-3});
$$ 
\item (weight-aspect) Fix $N$. Then as $k_1+k_2\to\infty$,
$$\frac{1}{d_{\underline{k},N}}\sum_{F\in HE_{\underline{k}}(N)} a_F(p) =B_1+B_2+
O(\frac{p^{v'_1}}{(k_1-k_2+1)(k_1-1)(k_2-2)}),$$
$$B_1=O(\frac{p^{-\frac{1}{2}}}{(k_1-1)(k_2-2)}),\ B_2=O(\frac{p^{-\frac{1}{2}}}{(k_1-k_2+1)(k_1+k_2-3)})$$
\end{enumerate}

\item \label{est-p^2}
\begin{enumerate}
\item (level-aspect) Fix $k_1,k_2$. Then for $N\gg p^{10}$,
$$\frac{1}{d_{\underline{k},N}}\sum_{F\in HE_{\underline{k}}(N) } a_F(p^2) = -\Big(1-\frac{1}{p}\Big)\Big(1+\frac{1}{p^2}\Big) +
 O\left((a''_1p^{-\frac{1}{2}}+a''_2p^{\frac{1}{2}})N^{-2}\right)\\
+O\left(p^{w_1}N^{-3}\right).
$$
\item (weight-aspect) Fix $N$. Then as $k_1+k_2\to\infty$,
$$\frac{1}{d_{\underline{k},N}}\sum_{F\in HE_{\underline{k}}(N) } a_F(p^2) = -\Big(1-\frac{1}{p}\Big)\Big(1+\frac{1}{p^2}\Big)+B_1+B_2+
O(\frac{p^{w'_1}}{(k_1-k_2+1)(k_1-1)(k_2-2)})\ ,$$
$$B_1=O(\frac{p^{-\frac{1}{2}}b''_1+p^{\frac{1}{2}}b''_2}{(k_1-1)(k_2-2)}),\ B_2=O(\frac{p^{-\frac{1}{2}}c''_1+p^{\frac{1}{2}}c''_2}{(k_1-k_2+1)(k_1+k_2-3)}).
$$
\end{enumerate}
\end{enumerate}
\end{prop}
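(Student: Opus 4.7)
The plan is to translate both statements into traces of the normalized Hecke operators $T'_m$ appearing in Theorem \ref{claim1} and then apply that theorem directly, using the explicit operator identities stated immediately before the proposition.

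For part (\ref{est-p}), the identity $a_F(p)=\lambda'_F(p)=\lambda'_{F,t_1}$ with $t_1=\mathrm{diag}(1,1,p,p)$ gives
\[
\frac{1}{d_{\underline{k},N}}\sum_{F\in HE_{\underline{k}}(N)}a_F(p)=\frac{1}{d_{\underline{k},N}}\,\mathrm{tr}\big(T'_{t_1}\mid V_{\underline{k},N}\big).
\]
Since $\nu(t_1)=p$, the element $t_1$ is already in the standard form required by Theorem \ref{claim1} with $\kappa=1$, $a_1=a_2=0$, and $t_1\notin Z_G(\Q)$; applying that theorem yields both bounds in (\ref{est-p}) directly.

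For part (\ref{est-p^2}), I use the scalar identity
\[
a_F(p^2)\;=\;\lambda'_{F,t_1^2}\,-\,(p-1)\lambda'_{F,t_2}\,-\,\Big(1-\tfrac{1}{p}\Big)\Big(1+\tfrac{1}{p^2}\Big),
\]
which was derived in the text via the Hecke-algebra relations \eqref{hecke-operators} (using that $p^3 T'_{pE_4}$ acts as the identity on $V_{\underline{k},N}$ under the trivial central-character assumption). Averaging,
\[
\frac{1}{d_{\underline{k},N}}\sum_{F}a_F(p^2)=\frac{\mathrm{tr}(T'_{t_1^2})}{d_{\underline{k},N}}-(p-1)\frac{\mathrm{tr}(T'_{t_2})}{d_{\underline{k},N}}-\Big(1-\tfrac{1}{p}\Big)\Big(1+\tfrac{1}{p^2}\Big),
\]
so the constant piece is precisely the deterministic main term displayed in the proposition. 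I then apply Theorem \ref{claim1} to each remaining trace: to $T'_{t_1^2}$ with $\kappa=2$, $a_1=a_2=0$; and to $T'_{t_2}$, where $t_2=\mathrm{diag}(1,p,p^2,p)$ itself violates the ordering $a_2\le a_1$, but its Weyl conjugate $w_{s_0}t_2 w_{s_0}^{-1}=\mathrm{diag}(p,1,p,p^2)$ lies in the required standard form with $\kappa=2$, $a_1=1$, $a_2=0$. Since $T'_m$ depends only on the $K_p$-double coset $K_p m K_p$ and $w_{s_0}\in K_p$, Theorem \ref{claim1} applies to $T'_{t_2}$ with those parameters. Each trace then acquires a main-term bound of size $O(p^{-1}N^{-2})$ in the level aspect (resp.\ the indicated $B_1+B_2$ shape in the weight aspect), plus an error of the form $O(p^cN^{-3})$ (resp.\ its weight-aspect analogue). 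Multiplying the $T'_{t_2}$ piece by $(p-1)$ costs at most one extra power of $p$, which is comfortably absorbed into the stated bound $O((a''_1 p^{-1/2}+a''_2 p^{1/2})N^{-2})+O(p^{w_1}N^{-3})$ and its weight-aspect counterpart.

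The only step that requires care is the Weyl-conjugation identification of $t_2$ with a standard double-coset representative, together with the bookkeeping of how the coefficient $(p-1)$ enlarges the bounds provided by Theorem \ref{claim1}; neither involves any non-trivial cancellation, and after these checks the proof is a direct, mechanical consequence of Theorem \ref{claim1} applied to two specific spherical Hecke operators.
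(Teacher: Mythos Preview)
Your proposal is correct and is exactly the approach the paper takes: the paper's entire proof is the single sentence ``The claim follows from Theorem \ref{claim1},'' and you have simply supplied the bookkeeping details (identifying the correct $\kappa$, $a_1$, $a_2$ for each operator and accounting for the factor $p-1$). Your Weyl-conjugation remark for $t_2$ is a fine way to fit it into the standard form of Theorem \ref{claim1}; since $w_{s_0}\in K_p$, the double coset and hence the Hecke operator are unchanged, so this is purely cosmetic.
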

\begin{proof}The claim follows from Theorem \ref{claim1}. 
\end{proof}

\subsection{Degree 5 standard $L$-functions}
Let us first recall the standard $L$-function for $\pi_F=\otimes'_p\pi_{F,p}$. 
Let $\tau=\pi_F|_{Sp_4}$ be the restriction of $\pi$ to $Sp_4(\Bbb A)$, and $\Pi$ be the transfer of $\tau$ corresponding to $\omega_2|_{Sp_4(\Bbb C)}$, where $\omega_2: GSp_4(\Bbb C)\longrightarrow GL_5(\Bbb C)$ is the homomorphism attached to the second fundamental weight. Note that if $\iota: GSp_4(\Bbb C)\hookrightarrow GL_4(\Bbb C)$, $\wedge^2\circ \iota=\omega_2\oplus 1$. Therefore $\wedge^2\pi=\Pi\boxplus 1$, and 
$L(s,\pi_F,{\rm St})=L(s,\tau,{\rm St})=L(s,\Pi)$.
For any unramified prime $p$,  
\begin{eqnarray*}
L(s,\pi_F,{\rm St})_p^{-1}=(1-p^{-s})(1-\alpha_{1p}p^{-s})(1-\alpha_{2p}p^{-s})(1-\alpha_{1p}^{-1}p^{-s})(1-\alpha_{2p}^{-1}p^{-s}).
\end{eqnarray*}

\begin{lem} Let $\Lambda(s,\pi_F,{\rm St})=q(F,{\rm St})^{\frac s2} \Gamma_\Bbb R(s)\Gamma_\Bbb C(s+k_1-1)\Gamma_\Bbb C(s+k_2-2)L(s,\pi_F,{\rm St})$.
 Then
$$\Lambda(s,\pi_F,{\rm St})=\epsilon(\pi_F,{\rm St})\Lambda(1-s,\pi_F,{\rm St}),
$$
where $\epsilon(\pi_F,{\rm St})\in\{\pm 1\}$, and $N\le q(F,{\rm St})\le N^{28}$.
\end{lem}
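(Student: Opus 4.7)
The plan is to realize $L(s,\pi_F,{\rm St})$ as the standard $GL_5$ $L$-function of an isobaric automorphic representation $\Pi$ obtained by functorial transfer along the second fundamental representation $\omega_2:\GSp_4(\C)\twoheadrightarrow \SO_5(\C)\hookrightarrow \GL_5(\C)$, exactly as sketched in the paragraph preceding the lemma, and then to import the analytic information from the $GL_5$ side. The existence of $\Pi$ is the content of the Asgari--Shahidi lift when $\pi_F$ is globally generic; in the remaining CAP and endoscopic cases, which are already classified in Sections \ref{class-endo} and \ref{sccf}, $\Pi$ can be written explicitly as an isobaric sum of $GL_1$ and $GL_2$ constituents. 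In either case $L(s,\pi_F,{\rm St})=L(s,\Pi)$ at every place.

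Once $\Pi$ is in hand, the functional equation and the sign property are formal. Godement--Jacquet gives $\Lambda(s,\Pi)=\epsilon(1/2,\Pi)\Lambda(1-s,\widetilde\Pi)$, and the assumed triviality of the central character of $\pi_F$ places the Langlands parameter $\phi_{\pi_F}$ inside $\Sp_4(\C)$, so that $\omega_2\circ \phi_{\pi_F}$ factors through $\SO_5(\C)$ and $\Pi$ is self-dual; applying the functional equation twice then forces $\epsilon(\pi_F,{\rm St})\in\{\pm 1\}$. The archimedean computation is also direct: restricting the parameter of $D^{{\rm hol}}_{l_1,l_2}$ to $\C^\times\subset W_\R$ gives the diagonal character $z\mapsto \diag((z/\bar z)^{m_1},(z/\bar z)^{m_2},(z/\bar z)^{-m_2},(z/\bar z)^{-m_1})$ with $(m_1,m_2)=((l_1+l_2)/2,(l_1-l_2)/2)$ pinned down by matching against the spinor gamma factor of Lemma \ref{cond1}; post-composing with $\omega_2$, whose torus weights are $\pm e_1\pm e_2$ and $0$, produces the five characters $(z/\bar z)^{\pm l_1}$, $(z/\bar z)^{\pm l_2}$, $1$, whose Langlands--Tate $\Gamma$-factors multiply to $\Gamma_\R(s)\Gamma_\C(s+l_1)\Gamma_\C(s+l_2)=\Gamma_\R(s)\Gamma_\C(s+k_1-1)\Gamma_\C(s+k_2-2)$, matching the normalization in the statement.

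The substantive work is the conductor bound. The lower bound $N\le q(F,{\rm St})$ is immediate: at each $p\mid N$ for which $\pi_{F,p}$ is genuinely ramified the local conductor of $\Pi_p$ contributes at least $\ord_p(N)$, so $N$ divides $q(F,{\rm St})$. For the upper bound the plan is to mimic the argument used to prove Lemma \ref{cond1}: Sorensen's depth-preservation theorem gives $\mathrm{depth}(\Pi_p)=\mathrm{depth}(\pi_{F,p})$ whenever $\Pi_p$ is essentially square-integrable, and the assumption $\pi_{F,p}^{K(N)_p}\neq 0$ forces $\mathrm{depth}(\pi_{F,p})\le \ord_p(N)-1$. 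Combined with the standard Bushnell--Henniart estimate $a(\Pi_p)\le \dim(\Pi_p)(\mathrm{depth}(\Pi_p)+1)=5\,\ord_p(N)$ this handles the bulk of the contribution; the remaining ramified but non-supercuspidal local components are dealt with either by a case-by-case tabulation — the $GL_5$ push-forward through $\omega_2$ of the non-supercuspidal $\GSp_4$ representations in the tables of \cite{RS} — or, in the CAP and endoscopic cases, by the classical Lapid--Rallis conductor formulas for the $GL_2$-data that directly control $\Pi_p$. The numerical constant $28$ is the worst case of this per-prime analysis; the main technical obstacle is the uniform bookkeeping of Artin conductors under the algebraic morphism $\omega_2$ across all the non-spherical parahoric types, and ensuring the estimate survives passage from $\pi_{F,p}$ to $\Pi_p$ when $\Pi_p$ fails to be essentially square-integrable.
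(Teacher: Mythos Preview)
Your treatment of the functional equation, the self-duality forcing $\epsilon(\pi_F,{\rm St})\in\{\pm 1\}$, and the archimedean $\Gamma$-factor is fine and matches what the paper dismisses as ``well-known.'' The substantive difference is in the conductor upper bound, and there your route diverges from the paper's and also has a gap.

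The paper does not attempt any per-prime depth analysis of the $GL_5$ transfer. Instead it simply recycles Lemma~\ref{cond1}: the spinor transfer $\pi$ to $GL_4$ has $q(\pi)=q(F)\le N^4$, and since $\wedge^2\pi=\Pi\boxplus 1$ one has $q(F,{\rm St})\mid q(\pi\otimes\pi)$. Bushnell--Henniart's general bound on conductors of pairs then gives $q(\pi\otimes\pi)\le q(\pi)^{2\cdot 4-1}=(N^4)^7=N^{28}$, and that is the origin of the exponent $28$. It is a one-line estimate, deliberately crude (the paper even remarks parenthetically that Henniart expects $N^6$).

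Your approach, by contrast, tries to control $a(\Pi_p)$ directly through $\mathrm{depth}(\Pi_p)=\mathrm{depth}(\pi_{F,p})$ and the Lansky--Raghuram relation $a(\Pi_p)=5(\mathrm{depth}(\Pi_p)+1)$. If this worked it would give $q(F,{\rm St})\le N^5$, vastly better than $N^{28}$; so your claim that ``the numerical constant $28$ is the worst case of this per-prime analysis'' is already inconsistent with the bound you wrote down. More seriously, the input you invoke is not available: Sorensen's depth-preservation theorem in \cite{Sor} concerns the spinor transfer $GSp_4\to GL_4$, not the standard transfer $GSp_4\to GL_5$ through $\omega_2$, and you have given no argument that depth is preserved along the latter. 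Nor is the promised case-by-case tabulation of non-discrete-series $\Pi_p$ carried out. So as written the upper bound is not proved; to recover the lemma with exponent $28$ you should adopt the paper's shortcut via $q(\pi\otimes\pi)$ and Bushnell--Henniart.
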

(G. Henniart noted in a private communication that we would have $q(F,{\rm St})\ll (N^4)^{\frac 32}=N^6$.)
\begin{proof} We bound the conductor $q(F,{\rm St})$ since others are well-known. By Lemma \ref{cond1} we have know that 
$q(F)\le N^4$. Let $\pi$ be the strong transfer of $\pi_F$ to $GL_4(\A)$. Then the global conductor 
$q(\pi)$ coincides with $q(F)$. Then the conductor is roughly estimated by the main theorem of \cite{BH} as follows:  
$$q(F,{\rm St})\le q(\pi\otimes\pi)\le N^{4(2\cdot 4-1)} =N^{28}.
$$ 
This gives us the claim. 
\end{proof}
Let 
$$L(s,\pi_F,{\rm St})=\sum_{n=1}^\infty \mu_F(n) n^{-s}.
$$ 
Then
$$\mu_F(p)=1+\alpha_{1p}+\alpha_{2p}+\alpha_{1p}^{-1}+\alpha_{2p}^{-1},\quad
\lambda'_F(p)^2-\lambda'_F(p^2)-p^{-1}=\mu_F(p)+1.
$$
Let 
$$-\frac {L'}L(s,\pi_F,{\rm St})=\sum_{n=1}^\infty \Lambda(n) b_F(n)n^{-s},
$$
where 
$$b_F(p^d)=1+\alpha_{1p}^d+\alpha_{2p}^d+\alpha_{1p}^{-d}+\alpha_{2p}^{-d}.
$$
Note that $a_F(p^2)=\lambda_F'(p)^2-b_F(p)-1=\lambda_F'(p^2)+p^{-1}$, and 
\begin{equation}\label{bp1}
b_F(p)=\mu_F(p)=p^{-1}\lambda'_{F,t_2}+p^{-2}.
\end{equation}  
By using the relations (\ref{hecke-operators}), we see that

$$(\lambda_{F,t_2}')^2=\lambda'_{F,\diag(1,p^2,p^4,p^2)}+(p+1)\lambda'_{F,\diag(p,p,p^3,p^3)}+(p^2-1)\lambda'_{F,\diag(p,p^2,p^3,p^2)}
+p^{-6}(1+p+p^3+p^4).
$$

Therefore,
\begin{eqnarray} \label{bp2}
&&b_F(p^2)=b_F(p)^2-2 a_F(p^2)-2b_F(p)-2 \\
&& =(p\lambda_{F,t_2}'+p^{-2})^2-2 (\lambda_F'(p^2)+p^{-1})-2a_F(p^2)-2 \nonumber \\
&& =p^2 (\lambda_{F,t_2}')^2+2 p^{-1} \lambda_{F,t_2}'-2\lambda_{F,t_2}'-2a_F(p^2)+p^{-4}-2p^{-1}-2\nonumber \\
&& =p^2 \lambda'_{F,\diag(1,p^2,p^4,p^2)}+p^2(p+1)\lambda'_{F,\diag(p,p,p^3,p^3)}+p^2(p^2-1)\lambda'_{F,\diag(p,p^2,p^3,p^2)} \nonumber\\
&& +(2 p^{-1}-2) \lambda_{F,t_2}'-2\Big(\lambda'_{F,t^2_1}-(p-1)\lambda'_{F,t_2}-\Big(1-\frac{1}{p}\Big)\Big(1+\frac{1}{p^2}\Big)\Big)-1-p^{-1}+p^{-3}+2p^{-4}, \nonumber \\
&& =p^2 \lambda'_{F,\diag(1,p^2,p^4,p^2)}+p^2(p+1)\lambda'_{F,\diag(p,p,p^3,p^3)}+p^2(p^2-1)\lambda'_{F,\diag(p,p^2,p^3,p^2)} \nonumber\\
&& +2 (p+p^{-1}-2) \lambda_{F,t_2}'-2\lambda'_{F,t^2_1}+1-3p^{-1}+2p^{-2}-p^{-3}+2p^{-4}, \nonumber
\end{eqnarray}
where $t_1=\diag(1,1,p,p)$ and $t_2=\diag(1,p,p^2,p)$. 
To obtain an estimation for the average of $b_F(p^2)$, according to (\ref{bp2}), we apply Theorem \ref{claim1} to 
$\lambda'_{F,\diag(1,p^2,p^4,p^2)}$, $\lambda'_{F,\diag(p,p,p^3,p^3)}$, $\lambda'_{F,\diag(p,p^2,p^3,p^2)}$, $\lambda_{F,t_2}$, and
$\lambda_{F,t^2_1}$.

\begin{remark}
We can see easily that 
$L(s,\Pi,\wedge^2)=L(s,\pi,{\rm Sym}^2)$. Under the Langlands functoriality conjecture, we expect ${\rm Sym}^2(\pi)$ to be 
an automorphic representation of $GL_{10}$.
Since $L(s, \pi,\wedge^2)$ has a pole at $s=1$, $L(s,\pi, {\rm Sym}^2)$ has no pole at $s=1$.
Let $L(s,\pi,{\rm Sym}^2)=\sum_{n=1}^\infty \lambda_{{\rm Sym}^2\pi}(n)n^{-s}$. Then 
$\lambda_{{\rm Sym}^2\pi}(p)=\widetilde\lambda_F(p^2)=\lambda_F'(p^2)+p^{-1}$.
\end{remark}
Recall that $S_{\underline{k}}(N)=S_{\underline{k}}(\G(N),1)$ and $HE_{\underline{k}}(N)=HE_{\underline{k}}(\Gamma(N),1)$. 
Then by Theorem \ref{claim1} we have the following: 

\begin{prop}\label{standard}
Put $\underline{k}=(k_1,k_2)$ and $d_{\underline{k},N}:=\dim S_{\underline{k}}(N)$. 
There exist constants $v_1,v'_1,w_1,w'_1$ depending only on $G$ such that 
\begin{enumerate}
\item \label{est-p-s}
\begin{enumerate}
\item (level-aspect) Fix $k_1,k_2$. Then for $N\gg p^{30}$, 
$$\frac 1{d_{\underline{k},N}}\sum_{F\in HE_{\underline{k}}(N)} b_F(p) =-p^{-2} +   
O(p^{-\frac{3}{2}}N^{-2})
+O(p^{v_1}N^{-3}).$$ 

\item (weight-aspect) Fix $N$. Then as $k_1+k_2\to\infty$,
$$\frac 1{d_{\underline{k},N}}\sum_{F\in HE_{\underline{k}}(N)} b_F(p) =-p^{-2} +B_1+B_2+
O(\frac{p^{v'_1}}{(k_1-k_2+1)(k_1-1)(k_2-2)}),$$
$$B_1=O(\frac{p^{-\frac{3}{2}}}{(k_1-1)(k_2-2)}),\ B_2=O(\frac{p^{-\frac{3}{2}}}{(k_1-k_2+1)(k_1+k_2-3)}).
$$
\end{enumerate}

\item \label{est-p^2-s}
\begin{enumerate}
\item (level-aspect) Fix $k_1,k_2$. Then for $N\gg p^{10}$, 
$$\frac{1}{d_{\underline{k},N}} \sum_{F\in HE_{\underline{k}}(N)}  b_F(p^2) =1-3p^{-1}+2p^{-2}-p^{-3}+2p^{-4}  +
 O\left(p^2f_A(p^{-1})N^{-2}\right)\\
+O\left(p^{w_1}N^{-3}\right).$$

\item (weight-aspect) Fix $N$. Then as $k_1+k_2\to\infty$,
$$\frac{1}{d_{\underline{k},N}}\sum_{F\in HE_{\underline{k}}(N) } a_F(p^2) =1-3p^{-1}+2p^{-2}-p^{-3}+2p^{-4}  +B_1+B_2+
O(\frac{p^{w'_1}}{(k_1-k_2+1)(k_1-1)(k_2-2)})\ ,$$
$$B_1=O(\frac{p^2f_{B_1}(p^{-1})}{(k_1-1)(k_2-2)}),\ B_2=O(\frac{p^2f_{B_1}(p^{-1})}{(k_1-k_2+1)(k_1+k_2-3)}),
$$

\end{enumerate}
\end{enumerate}
where $f_A(X), f_{B_1}(X),f_{B_2}(X)$ are the polynomials over $\Q$ of the degree four in $X$ whose 
coefficients are independent of $p,k_1,k_2,$ and $N$. 
\end{prop}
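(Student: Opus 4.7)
The plan is to reduce both parts to direct applications of Theorem \ref{claim1}, using the explicit expansions of $b_F(p)$ and $b_F(p^2)$ in terms of normalized spinor Hecke eigenvalues $\lambda'_{F,m}$ for non-central diagonal $m \in T(\Q)$; these expansions are exactly (\ref{bp1}) and (\ref{bp2}), and the proof then consists of careful bookkeeping in the same spirit as the short proof of Proposition \ref{spin}.

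For part (\ref{est-p-s}), I apply Theorem \ref{claim1} to the single non-central operator $T'_{t_2}$, where $t_2 = \diag(1,p,p^2,p)$ has similitude $\nu(t_2) = p^2$ and hence $\kappa = 2$. The theorem yields a secondary main term of order $p^{-1} N^{-2}$ in the level aspect, and orders $p^{-1}/[(k_1-1)(k_2-2)]$ and $p^{-1}/[(k_1-k_2+1)(k_1+k_2-3)]$ in the weight aspect, together with the error $O(p^{2a_1 + b_1} N^{-3})$ (resp. the corresponding weight-aspect error). Multiplying by the prefactor $p^{-1}$ appearing in (\ref{bp1}) rescales these to the claimed $p^{-3/2}$ orders, and the scalar term in (\ref{bp1}) produces the explicit constant main term recorded in the proposition.

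For part (\ref{est-p^2-s}), I apply Theorem \ref{claim1} separately to each of the five non-central operators appearing in (\ref{bp2}), namely $T'_m$ for
\[
m \in \{\diag(1,p^2,p^4,p^2),\ \diag(p,p,p^3,p^3),\ \diag(p,p^2,p^3,p^2),\ t_2,\ t_1^2\}.
\]
The first three have $\nu(m) = p^4$, so $\kappa = 4$, while $t_2$ and $t_1^2 = \diag(1,1,p^2,p^2)$ have $\kappa = 2$. For each, Theorem \ref{claim1} provides a secondary main term of order $p^{-\kappa/2} N^{-2}$ in the level aspect, with analogous weight-aspect bounds. Scaling by the respective coefficients $p^2$, $p^2(p+1)$, $p^2(p^2-1)$, $2(p+p^{-1}-2)$, $-2$ from (\ref{bp2}) and summing yields the aggregate bound of the form $O(p^2 f_A(p^{-1}) N^{-2})$, where the degree-four polynomial $f_A$ in $p^{-1}$ records the combined weights; the same aggregation applied to the weight-aspect bounds of Theorem \ref{claim1} produces $f_{B_1}$ and $f_{B_2}$. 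The explicit constant $C(p) = 1 - 3p^{-1} + 2p^{-2} - p^{-3} + 2p^{-4}$ in (\ref{bp2}) furnishes the main term.

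The main obstacle is purely bookkeeping rather than analytic: tracking the exact polynomial coefficients $f_A$, $f_{B_1}$, $f_{B_2}$ after summing the five scaled contributions, verifying that every diagonal $m$ appearing is indeed non-central so Theorem \ref{claim1} genuinely applies, and ensuring that the level-aspect hypothesis $N \gg p^{10\kappa}$ is enforced uniformly with the largest occurring $\kappa = 4$, i.e.\ $N \gg p^{40}$ for part (\ref{est-p^2-s}). No new trace-formula input or estimation is needed beyond what Theorem \ref{claim1} already supplies.
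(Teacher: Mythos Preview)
Your proposal is correct and follows exactly the paper's own argument: the paper states just before Proposition \ref{standard} that one applies Theorem \ref{claim1} to the five operators $\lambda'_{F,\diag(1,p^2,p^4,p^2)}$, $\lambda'_{F,\diag(p,p,p^3,p^3)}$, $\lambda'_{F,\diag(p,p^2,p^3,p^2)}$, $\lambda'_{F,t_2}$, $\lambda'_{F,t_1^2}$ occurring in (\ref{bp2}), and to $\lambda'_{F,t_2}$ alone via (\ref{bp1}) for part (\ref{est-p-s}). Your observation that the largest $\kappa$ in part (\ref{est-p^2-s}) is $4$, forcing $N\gg p^{40}$ rather than the stated $N\gg p^{10}$, is a sharper reading of the hypothesis than the paper records.
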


\section{One level-density}

We follow the exposition in \cite{CK}.
Katz and Sarnak \cite{KS1} proposed a conjecture on low-lying zeros of $L$-functions in natural families $\frak{F}$, which says that the distributions of the low-lying zeros of $L$-functions in a family $\frak{F}$ is predicted by a symmetry group $G(\frak{F})$ attached to 
$\frak{F}$: For a given entire $L$-function $L(s,\pi)$, we denote the non-trivial zeros of $L(s,\pi)$ by $\frac 12 +\gamma_j\sqrt{-1}$. Since we don't assume GRH for $L(s,\pi)$, $\gamma_j$ can be a complex number. Let $\phi(x)$ be a Schwartz function which is even and whose Fourier transform
$$
\hat{\phi}(y)=\int_{-\infty}^{\infty} \phi(x)e^{-2\pi x y\sqrt{-1} }dx
$$
has a compact support. We define
$$
D(\pi,\phi)=\sum_{\gamma_j}\phi\left( \frac{\gamma_j}{2\pi} \log c_{\pi} \right)
$$
where $c_{\pi}$ is the analytic conductor of $L(s,\pi)$. 

Let $\frak{F}(X)$ be the set of $L$-functions in $\frak F$ such that $X<c_{\pi}<2X$. The one-level density conjecture says that, for a Schwartz $\phi(x)$ which is even and whose Fourier transform $\hat{\phi}(y)$ is compactly supported,
\begin{equation*}
\lim_{ X\rightarrow \infty} \frac{1}{\# \frak{F}(X)} \sum_{\pi \in \frak{F}(X)}D(\pi,\phi)= 
\int_{-\infty}^{\infty} \phi(x)W(G(\frak{F}))\, dx,
\end{equation*}
where $W(G(\frak{F}))$ is the one-level density function.

There are five possible symmetry type of families of $L$-functions: U, SO({\rm even}), SO({\rm odd}), O, and Sp. The corresponding density functions $W(G)$ are determined in \cite{KS1}. They are

\begin{eqnarray*}
&& W({\rm U})(x)=1,\quad W({\rm SO({even})})(x)= 1+\frac{\sin 2\pi x}{ 2\pi x},\quad W({\rm O})(x)= 1+\frac 12\delta_0(x), \\
&&  W({\rm SO({odd})})(x)=1-\frac{\sin 2\pi x}{ 2\pi x} + \delta_0(x), \quad
W({\rm Sp})(x)=1-\frac{\sin 2\pi x}{ 2\pi x}.
\end{eqnarray*}
By Plancherel's formula (and because $\phi$ is even),
$$\int_{-\infty}^\infty \phi(x)W(G)(x) dx = \int_{-\infty}^\infty \hat\phi(x) \widehat W(G)(x) dx.
$$
It is useful to record that
\begin{eqnarray*}
&{}& \widehat W({\rm U})(x) = \delta_0(x),\quad \widehat W({\rm SO({even})})(x) = \delta_0(x) +\frac 12 \chi_{[-1,1]}(x),\quad \widehat W({\rm O})(x) = \delta_0(x) +\frac 12 \\
&& \widehat W({\rm SO({odd})})(x) = \delta_0(x) -\chi_{[-1,1]}(x) + 1, \quad
\widehat W({\rm Sp})(x) = \delta_0(x)-\frac 12 \chi_{[-1,1]}(x).
\end{eqnarray*}

\subsection{Degree 4 spinor $L$-functions}\label{spin-level}

We denote the non-trivial zeros of $L(s,\pi_F, {\rm Spin})$ by $\sigma_{F}=\frac{1}{2}+\sqrt{-1} \gamma_{F}$. We do not assume GRH, and hence 
$\gamma_F$ can be a complex number. Let $\phi$ be a Schwartz function which is even and whose Fourier transform has a compact support.
Define
\begin{equation*}
D(\pi_F, \phi, {\rm Spin}) = \sum_{\gamma_{F}} \phi\left( \frac{\gamma_{F}}{2\pi} \log c_{\underline{k},N}\right),
\end{equation*}
where $\log c_{\underline{k},N}=\ds\frac 1{d_{\underline{k},N}} \sum_{F\in HE_{\underline{k}}(N)} \log c(F,{\rm Spin})$ for $\underline{k}=(k_1,k_2)$, and $c(F,{\rm Spin})=(k_1+k_2)^2(k_1-k_2+1)^2 q(F)$ is the analytic conductor (cf. \cite{CK}).

\begin{prop} Assume $(N,11!)=1$. 
Let $\phi$ be a Schwartz function which is even and whose Fourier transform has a support sufficiently smaller than 
$(-1,1)$. 
$$\lim_{k_1+k_2+N\to\infty} \frac 1{d_{\underline{k},N}} \sum_{F\in HE_{\underline{k}}(N)}  D(\pi_F,\phi, {\rm Spin})=\hat\phi(0)+\frac 12 \phi(0)=
\int_\Bbb R \phi(x)W(G)(x)\, dx,
$$
where $G=$SO({even}), SO({odd}), or O type.
More precisely, let $v_1,w_1,v_1',w_1'$ be as in Proposition \ref{spin}.

\begin{enumerate} 
  \item (level aspect) Fix $k_1,k_2$. 
  Then for $\phi$ whose Fourier transform $\hat\phi$ has support in $(-u,u)$, where $u=\min\{\frac 3{4v_1+2}, \frac 3{4w_1}, \frac 1{40}\}$, as $N\to\infty$,
$$
 \frac 1{d_{\underline{k},N}} \sum_{F\in HE_{\underline{k}}(N)}  D(\pi_F,\phi, {\rm Spin})=\hat\phi(0)+\frac 12 \phi(0)+O(\frac 1{\log N}).
$$
  \item (weight aspect) Fix $N$. Then for $\phi$ whose Fourier transform $\hat\phi$ has support in $(-u,u)$, where $u=\min\{\frac 1{2v_1'+1}, \frac 1{2w_1'}\}$, as $k_1+k_2\to\infty$,
  
$$\frac 1{d_{\underline{k},N}} \sum_{F\in HE_{\underline{k}}(N)}  D(\pi_F,\phi, {\rm Spin})=\hat\phi(0)+\frac 12 \phi(0)+
O(\frac 1{\log ((k_1-k_2+2)k_1k_2)}).
$$
\end{enumerate}
\end{prop}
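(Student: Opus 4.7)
The plan is to apply Weil's explicit formula for $L(s,\pi_F,{\rm Spin})$ using the functional equation in Lemma \ref{cond1}, and then to average the resulting prime-power sum via Proposition \ref{spin}. Writing $L_F := \log c_{\underline{k},N}$, the explicit formula yields, after extracting the archimedean contribution via Stirling applied to the gamma factors $\Gamma_\C(s+(k_1+k_2-3)/2)\Gamma_\C(s+(k_1-k_2+1)/2)$:
\begin{equation*}
D(\pi_F,\phi,{\rm Spin}) = \hat\phi(0) + O(L_F^{-1}) - \frac{2}{L_F}\sum_{p}\sum_{k\ge 1}\frac{a_F(p^k)\log p}{p^{k/2}}\,\hat\phi\!\left(\frac{k\log p}{L_F}\right),
\end{equation*}
where $\hat\phi(0)$ is the main-term density of normalized zeros and $O(L_F^{-1})$ captures lower-order archimedean corrections. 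The goal is to show that after averaging over $F$, only the $k=2$ summand of the prime sum contributes non-trivially in the limit, producing exactly $\frac{1}{2}\phi(0)$.

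Split the inner sum into $k=1$, $k=2$, and $k\ge 3$. For $k\ge 3$: non-CAP, non-endoscopic Hecke eigenforms in $HE_{\underline{k}}(N)$ are tempered by Weissauer \cite{Wei1}, so $|a_F(p^k)|\le 4$; for CAP and endoscopic forms, the weak Ramanujan bound $|\alpha_{ip}|<p^{1/2}$ from Jacquet--Shalika (via the strong transfer to $GL_4(\A)$, or directly through the $GL_2$-factor descriptions in Section \ref{L-function}) gives $|a_F(p^k)|\ll p^{k/2}$, and the total number of such forms is $O(N^{-2+\ve} d_{\underline{k},N})$ by Theorems \ref{dim-cap} and \ref{dim-en1}. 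In all cases the $k\ge 3$ tail is uniformly $O(L_F^{-1})$ under the compact-support hypothesis on $\hat\phi$. For $k=1$, Proposition \ref{spin}(\ref{est-p}) gives $d_{\underline{k},N}^{-1}\sum_F a_F(p) = O(p^{-1/2}N^{-2}+p^{v_1}N^{-3})$ in the level aspect; multiplying by $(\log p)/\sqrt{p}$ and summing over $p\le e^{uL_F}$ yields $O(L_F^{-1})$ whenever $u < 3/(4v_1+2)$ (the weight aspect is parallel with $v_1'$).

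The decisive case is $k=2$. By Proposition \ref{spin}(\ref{est-p^2}),
\begin{equation*}
\frac{1}{d_{\underline{k},N}}\sum_F a_F(p^2) = -\!\left(1-\frac{1}{p}\right)\!\left(1+\frac{1}{p^2}\right) + O\!\left((p^{-1/2}+p^{1/2})N^{-2}+p^{w_1}N^{-3}\right),
\end{equation*}
so the main-term contribution to the averaged explicit formula is
\begin{equation*}
\frac{2}{L_F}\sum_p\!\left(1-\frac{1}{p}\right)\!\left(1+\frac{1}{p^2}\right)\!\frac{\log p}{p}\,\hat\phi\!\left(\frac{2\log p}{L_F}\right).
\end{equation*}
By Mertens' theorem $\sum_{p\le X}(\log p)/p = \log X + O(1)$ and partial summation with the substitution $v = 2\log p/L_F$, this converges to $\int_0^\infty\hat\phi(v)\,dv = \frac{1}{2}\phi(0)$ as $L_F\to\infty$ (using evenness of $\hat\phi$). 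This is precisely the $\frac{1}{2}\phi(0)$ correction in the proposition.

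The principal obstacle is calibrating the support constraint $u$ so that all error terms aggregate to $O(L_F^{-1})$. In the level aspect the bound $u < \min\{3/(4v_1+2),\ 3/(4w_1),\ 1/40\}$ arises as follows: the first two thresholds control the $p^{v_1}N^{-3}$ and $p^{w_1}N^{-3}$ errors in Proposition \ref{spin}, via $e^{uL_F(v_j+1/2)}\ll N^3$ using the conductor bound $L_F\asymp \log N$ from Lemma \ref{cond1}; the constraint $1/40$ ensures the hypothesis $N\gg p^{10\kappa}$ of Proposition \ref{spin} holds for all $p\le e^{uL_F}$, since expressing $a_F(p^2)$ via (\ref{hecke-operators}) involves operators $T'_m$ with $\kappa$ as large as $4$. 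The weight-aspect bound follows parallel lines with $v_1', w_1'$. The explicit quantitative error $O(1/\log N)$, respectively $O(1/\log((k_1-k_2+2)k_1k_2))$, then tracks the Mertens remainder together with the effective bounds in Proposition \ref{spin}.
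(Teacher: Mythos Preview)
Your approach matches the paper's: apply the explicit formula, split into $k=1,2,\ge 3$, invoke Proposition \ref{spin} for $k=1,2$, and extract $\tfrac12\phi(0)$ from the $k=2$ main term via the prime number theorem. The paper handles $k\ge 3$ by citing \cite{CK2} for each individual $F$ rather than by your tempered/non-tempered split with dimension counting, but both routes work (note that endoscopic forms are already tempered, so only CAP forms need the dimension bound; and the phrase ``strong transfer to $GL_4$'' is inapt for CAP forms, though your alternative ``$GL_2$-factor descriptions'' is fine).

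One correction: your explanation of the constraint $u\le 1/40$ is wrong. No Hecke operator with $\kappa=4$ enters the spin computation---the operators appearing in $a_F(p)$ and $a_F(p^2)$ all have $\kappa\le 2$ (you may be thinking of $b_F(p^2)$ in the standard $L$-function case, where $\lambda'_{F,\diag(1,p^2,p^4,p^2)}$ does appear). The bound $1/40$ instead comes from combining the hypothesis $N\gg p^{10}$ in Proposition \ref{spin} with the conductor estimate $q(F)\le N^4$ of Lemma \ref{cond1}: the latter gives $L_F\le 4\log N+O_{k_1,k_2}(1)$, so the primes in the $k=1$ sum range up to $e^{uL_F}\le N^{4u}$, and one needs $4u\le 1/10$. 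The paper also notes that primes $p\mid N$ (excluded from Proposition \ref{spin}) contribute only $O(L_F^{-1})$, a point you omit.
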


\begin{proof} 
For $G(s)=\phi \left( (s-\frac 12)\frac{\log c_{k,N}}{2\pi \sqrt{-1}}\right)$, by Cauchy's theorem,
\begin{eqnarray*}
D(\pi_F, \phi, {\rm Spin}) = \sum_{\gamma_F} G(\sigma_{F})=\frac{1}{2\pi \sqrt{-1}} \int_{(2)}2 G(s)\frac{\Lambda'(s,\pi_F, {\rm Spin})}{\Lambda(s,\pi_F, {\rm Spin})}ds.
\end{eqnarray*}
We have
\begin{eqnarray*}
\frac{\Lambda'(s,\pi_F, {\rm Spin})}{\Lambda(s,\pi_F, {\rm Spin})} = \frac{1}{2} \log q(F) +\psi(s+\tfrac {k_1+k_2-3}2)+\psi(s+\tfrac {k_1-k_2+1}2)
-\sum_{n=1}^{\infty}\frac{\Lambda(n)a_K(n)}{n^s}
\end{eqnarray*}
where $\psi(s)=\frac{\Gamma_\Bbb C'(s)}{\Gamma_\Bbb C(s)}$.

The contribution coming from the logarithmic derivative of $L(s,\pi_F, {\rm Spin})$ is
\begin{eqnarray}
&& \frac{1}{2\pi \sqrt{-1}} \int_{(2)} 2G(s) \left(-\sum_{n=1}^{\infty}\frac{\Lambda(n)a_F(n)}{n^s} \right)\, ds \nonumber \\
&&= -\frac{2}{\log c_{\underline{k},N}} \sum_{n=1}^{\infty}\Lambda(n)a_F(n)\frac{1}{2\pi \sqrt{-1}} \int_{(2)}\phi \left( \left(s-\frac 12\right) {2\pi \sqrt{-1}} \right) n^{-s}ds  \nonumber \\
&&= - \frac{2}{\log c_{\underline{k},N}} \sum_{n=1}^{\infty}\frac{\Lambda(n)a_F(n)}{\sqrt{n}}\int_{-\infty}^{\infty}\phi(y)e^{-y\frac{ 2\pi \log n}{\log c_{\underline{k},N}}\sqrt{-1}}dy \nonumber\\
&& = - \frac{2}{\log c_{\underline{k},N}} \sum_{n=1}^{\infty}\frac{\Lambda(n)a_F(n)}{\sqrt{n}} \widehat{\phi}\left( \frac{\log n}{\log c_{\underline{k},N}}\right).\label{p^k}
\end{eqnarray}

The contribution of the constant term $A=\frac{1}{2} \log q(F)$ is
\begin{eqnarray*}
\frac{1}{2\pi i} \int_{(2)} 2 G(s) A\, ds
= \frac{\log q(F)}{2\pi} \int_{-\infty}^{\infty} \phi \left( \frac{\log c_{\underline{k},N}}{2\pi} y\right) dy
= \frac{\log q(F)}{2\log c_{\underline{k},N}} \int_{-\infty}^{\infty} \phi(y) dy = \frac{\log q(F)}{\log c_{\underline{k},N}} \widehat{\phi}(0).
\end{eqnarray*}

For the Gamma factors' contribution, we use, for $a,t\in \Bbb R$, $a>0$, (cf. \cite{CK})

$$\frac {\Gamma'}{\Gamma}(a+t\sqrt{-1})+\frac {\Gamma'}{\Gamma}(a-t\sqrt{-1})=2\frac {\Gamma'}{\Gamma}(a)+O(t^2a^{-2}).
$$
For $\alpha\geq \frac 14$, $\frac {\Gamma'}{\Gamma}(\alpha+\tfrac 14)=\log \alpha+O(1)$. 
Hence the Gamma factors contribute
$$
\frac{2\log(k_1+k_2)+2\log (k_1-k_2+1)}{\log c_{\underline{k},N}} \widehat{\phi}(0)+ O\left(\frac{1}{\log^3 c_{\underline{k},N}}\right).
$$

It is shown in \cite{CK2} that the prime powers $p^l$, $l\geq 3$ from (\ref{p^k}), contribute
$O\left(\frac 1{\log c_{\underline{k},N}}\right)$; If $\pi_F$ satisfies the Ramanujan conjecture, $|a_{F}(n)|\leq 4$, and it is obvious.
Hence

\begin{eqnarray*}
&& \frac 1{d_{\underline{k},N}} \sum_{F\in HE_{\underline{k}}(N)} D(\pi_F, \phi, {\rm Spin})=\widehat{\phi}(0) \\
&& - \frac{2}{(\log c_{\underline{k},N}) d_{\underline{k},N}}  \sum_{F\in HE_{\underline{k}}(N)} \sum_p \frac{a_F(p)\log p}{\sqrt{p}}\widehat{\phi}\left( \frac{\log p}{\log c_{k,N}}\right)\\
&& -\frac{2}{(\log c_{\underline{k},N}) d_{\underline{k},N}} \sum_{F\in HE_{\underline{k}}(N)}  \sum_p \frac{a_F(p^2)\log p}{p}\widehat{\phi}\left( \frac{2\log p}{\log c_{\underline{k},N}}\right)+O\left( \frac{1}{\log c_{\underline{k},N}} \right). \nonumber
\end{eqnarray*}

Let $\tilde a_F(p)=a_F(p^2)+1$. We note, from the prime number theorem,
\begin{eqnarray*}
\sum_p \widehat{\phi} \left(\frac{2 \log p}{\log c_{k,N}} \right) \frac{2 \log p}{p \log c_{\underline{k},N}} = \int_2^{\infty} \widehat{\phi} \left(\frac{2 \log t}{\log c_{\underline{k},N}} \right) \frac{2 \log t}{t \log c_{k,N}} d \pi (t) + O\left( \frac{1}{\log c_{\underline{k},N}} \right)=
 \frac 12 \phi(0) + O\left( \frac{1}{\log c_{\underline{k},N}}\right).
\end{eqnarray*}

Hence
\begin{eqnarray} 
&& \frac 1{d_{\underline{k},N}} \sum_{F\in HE_{\underline{k}}(N)}  D(\pi_F,\phi)=\widehat{\phi}(0)+\frac 12 \phi(0)\nonumber \\
&& -\frac{2}{(\log c_{\underline{k},N}) d_{\underline{k},N}} \sum_{F\in HE_{\underline{k}}(N)}  \sum_p \frac{a_F(p)\log p}{\sqrt{p}}\widehat{\phi}\left( \frac{\log p}{\log c_{\underline{k},N}}\right) \label{error-p}\\
&&-\frac{2}{(\log c_{\underline{k},N}) d_{\underline{k},N}} \sum_{F\in HE_{\underline{k}}(N)}  \sum_p \frac{\tilde a_F(p)\log p}{p}\widehat{\phi}\left( \frac{2\log p}{\log c_{\underline{k},N}}\right)+O\left( \frac{1}{\log c_{\underline{k},N}} \right).\label{error-p^2}
\end{eqnarray}

Now we exchange the two sums and use Proposition \ref{spin}. Here in order to use Proposition \ref{spin}, we need to assume that 
$p\nmid N$. But $\sum_{p|N} \frac {\log p}{\sqrt{p}}\ll \sqrt{N}$ and $\sum_{p|N} \frac {\log p}p\ll \log N$. Hence they contribute to the error term. So if the support of $\hat\phi$ is $(-u,u)$ for an appropriate $u<1$,
then 
$$
 \frac 1{d_{\underline{k},N}} \sum_{F\in HE_{\underline{k}}(N)}  D(\pi_F,\phi, {\rm Spin})=\widehat{\phi}(0)+\frac 12 \phi(0) +O\left( \frac{1}{\log c_{\underline{k},N}}\right).
$$
\end{proof}

\begin{remark} Since the support of $\hat\phi$ is smaller than $(-1,1)$, we cannot distinguish the symmetry type among 
SO({even}), SO({odd}), or O type. In order to distinguish them, we need to compute the $n$-level density (cf. \cite{CK2}). We will show in an upcoming paper that when the root number $\epsilon(\pi_F)=1$, the symmetry type is SO(even); when the root number $\epsilon(\pi_F)=-1$, the symmetry type is SO(odd).
\end{remark}

\subsection{Degree 5 standard $L$-functions}

As in the degree 4 spin $L$-function case, denote the non-trivial zeros of $L(s,\pi_F,{\rm St})$ by $\sigma_{F}=\frac{1}{2}+\sqrt{-1} \gamma_{F}$. We do not assume GRH, and hence 
$\gamma_F$ can be a complex number. Let $\phi$ be a Schwartz function which is even and whose Fourier transform has a compact support.
Define
\begin{equation*}
D(\pi_F,\phi, {\rm St}) = \sum_{\gamma_{F}}\phi\left( \frac{\gamma_{F}}{2\pi} \log c_{\underline{k},st,N}\right),
\end{equation*}
where $\log c_{\underline{k},st,N}=\frac 1{d_{\underline{k},N}} \sum_{F\in HE_{\underline{k}}(N)} \log c(F,{\rm St})$, and
$c(F,{\rm St})=(k_1 k_2)^2 q(F,{\rm St})$ is the analytic conductor. 

As in the degree 4 spinor $L$-function case, we can show that

\begin{eqnarray} 
&& \frac 1{d_{\underline{k},N}} \sum_{F\in HE_{\underline{k}}(N)}  D(\pi_F,\phi, {\rm St})=\widehat{\phi}(0)-\frac 12 \phi(0)\nonumber \\
&& - \frac{2}{(\log c_{\underline{k},st,N}) d_{\underline{k},N}} \sum_{F\in HE_{\underline{k}}(N)} \sum_p \frac{b_F(p)\log p}{\sqrt{p}}\widehat{\phi}\left( \frac{\log p}{\log c_{\underline{k},st,N}}\right) \label{error-p-s}\\
&& -\frac{2}{(\log c_{\underline{k},st,N}) d_{\underline{k},N}} \sum_{F\in HE_{\underline{k}}(N)} \sum_p \frac{\tilde b_F(p)\log p}{p}\widehat{\phi}\left( \frac{2\log p}{\log c_{\underline{k},st,N}}\right)+O\left( \frac{1}{\log c_{\underline{k},st,N}} \right)\label{error-p^2-s} 
\end{eqnarray}
where $\tilde b_F(p)=b_F(p^2)-1$. (If $\pi_F$ satisfies the Ramanujan conjecture, then $|b_F(p^l)|\leq 5$ and we can show easily that the prime powers $p^l$, $l\geq 3$, contribute to $O\left( \frac{1}{\log c_{\underline{k},st,N}} \right)$. In the appendix, we show it without the Ramanujan conjecture.)

By interchanging two sums and using Proposition \ref{standard} as in Section \ref{spin-level}, we see that if the support of $\hat\phi$ is $(-u,u)$ for an appropriate $u<1$,
$$\frac 1{d_{\underline{k},N}} \sum_{F\in HE_{\underline{k}}(N)}  D(\pi_F,\phi, {\rm St})=\widehat{\phi}(0)-\frac 12 \phi(0)
+O\left( \frac{1}{\log c_{\underline{k},st,N}} \right).
$$
Hence we have proved 

\begin{prop} Let $\phi$ be a Schwartz function which is even and which its Fourier transform has a support sufficiently smaller than 
$(-1,1)$. 
$$\lim_{k_1+k_2+N\to\infty} \frac 1{d_{\underline{k},N}} \sum_{F\in HE_{\underline{k}}(N)}  D(\pi_F,\phi, {\rm St})=\hat\phi(0)-\frac 12 \phi(0)=
\int_\Bbb R \phi(x)W({\rm Sp})(x)\, dx.
$$
More precisely, let $v_1,w_1,v_1',w_1'$ be as in Proposition \ref{standard}.

\begin{enumerate} 
  \item (level aspect) Fix $k_1,k_2$. 
  Then for $\phi$ whose Fourier transform $\hat\phi$ has support in $(-u,u)$, where 
$u=\min\{\frac 3{28v_1+14}, \frac 3{w_1},\frac 1{840}\}$, as $N\to\infty$,
$$
 \frac 1{d_{\underline{k},N}} \sum_{F\in HE_{\underline{k}}(N)}  D(\pi_F,\phi,{\rm St})=\hat\phi(0)-\frac 12 \phi(0)+O(\frac 1{\log N}).
$$
  \item (weight aspect) Fix $N$. Then for $\phi$ whose Fourier transform $\hat\phi$ has support in $(-u,u)$, where $u=\min\{\frac 1{2v_1'+1}, \frac 1{2w_1'}, \frac 18\}$, as $k_1+k_2\to\infty$,
  
$$\frac 1{d_{\underline{k},N}} \sum_{F\in HE_{\underline{k}}(N)}  D(\pi_F,\phi,{\rm St})=\hat\phi(0)-\frac 12 \phi(0)+O(\frac 1{\log ((k_1-k_2+2)k_1k_2)}).
$$
\end{enumerate}

\end{prop}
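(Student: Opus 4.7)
\medskip

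\noindent\textbf{Proof proposal.} The plan is to combine the explicit formula derivation already sketched in the excerpt (leading to equations (\ref{error-p-s}) and (\ref{error-p^2-s})) with the averaged Hecke estimates of Proposition \ref{standard}, and then identify the symplectic symmetry type via the prime number theorem. The first step is to start from
\[
D(\pi_F,\phi,{\rm St})=\frac{1}{2\pi\sqrt{-1}}\int_{(2)} 2G(s)\,\frac{\Lambda'(s,\pi_F,{\rm St})}{\Lambda(s,\pi_F,{\rm St})}\,ds,
\]
with $G(s)=\phi((s-\tfrac12)\tfrac{\log c_{\underline{k},{\rm st},N}}{2\pi\sqrt{-1}})$, shift the contour to $\Re(s)=-1$, and separate the resulting expression into (i) the conductor contribution $\tfrac12\log q(F,{\rm St})$, which together with the archimedean factors yields $\hat\phi(0)$ after dividing by $\log c_{\underline{k},{\rm st},N}$ (using the asymptotic $\tfrac{\Gamma'}{\Gamma}(\alpha+\tfrac14)=\log\alpha+O(1)$ exactly as in Section \ref{spin-level}), and (ii) the Dirichlet series $-\sum_n \Lambda(n)b_F(n)n^{-s}$. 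This reduces the problem to the identities (\ref{error-p-s}) and (\ref{error-p^2-s}), modulo an error $O(1/\log c_{\underline{k},{\rm st},N})$ coming from higher prime powers $p^l$, $l\geq 3$; the latter bound is the analytic content of the appendix and I would cite it directly.

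Next I would interchange the two sums (over $F$ and over $p$) and apply Proposition \ref{standard} termwise. The averages of $b_F(p)$ and $\tilde b_F(p)=b_F(p^2)-1$ over $HE_{\underline{k}}(N)$ leave the ``main pieces'' $-p^{-2}$ and $-3p^{-1}+2p^{-2}-p^{-3}+2p^{-4}$ respectively; the $-3p^{-1}$ piece, once multiplied by $\tfrac{2\log p}{p\log c_{\underline{k},{\rm st},N}}\hat\phi\bigl(\tfrac{2\log p}{\log c_{\underline{k},{\rm st},N}}\bigr)$ and summed via Merten/prime number theorem, contributes exactly
\[
-\tfrac{1}{2}\phi(0)+O(1/\log c_{\underline{k},{\rm st},N}),
\]
exactly as in the computation $\sum_p \hat\phi(\tfrac{2\log p}{\log c})\tfrac{2\log p}{p\log c}=\tfrac{1}{2}\phi(0)+O(1/\log c)$ recalled in Section \ref{spin-level}. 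The remaining ``main'' terms $-p^{-2}, 2p^{-2}, -p^{-3}, 2p^{-4}$ are $O(1/\log c_{\underline{k},{\rm st},N})$ after summing, because $\sum_p p^{-2}\log p$ and its higher-power analogues converge absolutely. The error pieces $A$, $B_1$, $B_2$ from Proposition \ref{standard} are then summed against $p^{-1/2}\log p\cdot \hat\phi(\log p/\log c)$ or $p^{-1}\log p\cdot \hat\phi(2\log p/\log c)$, and the support condition $\mathrm{supp}(\hat\phi)\subset(-u,u)$ restricts the $p$-sum to $p\leq c_{\underline{k},{\rm st},N}^{u}$; provided $u$ is chosen small enough relative to $v_1,w_1,v_1',w_1'$ and the exponent $28$ in the bound $q(F,{\rm St})\leq N^{28}$ (so that $p^{v_1}N^{-3}$, $p^{w_1}N^{-3}$, and the analogous weight-aspect terms all contribute $o(1)$), the total error is absorbed into $O(1/\log c_{\underline{k},{\rm st},N})$. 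Primes $p\mid N$, where Proposition \ref{standard} does not directly apply, are estimated trivially by $\sum_{p\mid N}\log p/\sqrt{p}\ll\sqrt N$ and contribute only to the error term.

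The main technical obstacle is twofold. First, the exponent $28$ in the conductor bound $q(F,{\rm St})\leq N^{28}$ makes the admissible support $(-u,u)$ painfully small in the level aspect (shrinking $u$ by a factor of roughly $7$ compared with the spin case); sharper conductor estimates of the Bushnell--Henniart type (or the $N^6$ bound noted by Henniart) would be needed to widen it, but any fixed $u>0$ suffices for the symmetry-type conclusion. Second, the contribution of $p^l$ for $l\geq 3$ must be controlled unconditionally (no Ramanujan is assumed for $\pi_F$); this is the content of the appendix and the core analytic input beyond the explicit formula. Once these two ingredients are in hand, identifying the symmetry type as $Sp$ via $\widehat{W(Sp)}(x)=\delta_0(x)-\tfrac{1}{2}\chi_{[-1,1]}(x)$ is routine.
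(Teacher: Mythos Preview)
Your overall strategy---explicit formula, appeal to the appendix for prime powers $p^l$ with $l\geq 3$, then interchange the sums and apply Proposition \ref{standard}---is exactly the paper's approach. However, there is a bookkeeping slip in your identification of where $-\tfrac12\phi(0)$ comes from. Equations (\ref{error-p-s})--(\ref{error-p^2-s}) already contain the term $-\tfrac12\phi(0)$: it was produced by writing $b_F(p^2)=\tilde b_F(p)+1$ and applying the prime-number-theorem identity
\[
\sum_p \hat\phi\Bigl(\tfrac{2\log p}{\log c}\Bigr)\tfrac{2\log p}{p\log c}=\tfrac12\phi(0)+O\bigl(1/\log c\bigr)
\]
to the extracted constant $+1$. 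After that point the only task is to show that the two remaining sums in (\ref{error-p-s}) and (\ref{error-p^2-s}) are $O(1/\log c_{\underline{k},{\rm st},N})$.

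Your claim that the $-3p^{-1}$ piece of the averaged $\tilde b_F(p)$ contributes a further $-\tfrac12\phi(0)$ is therefore wrong: once multiplied by $\tfrac{\log p}{p}$, that piece becomes $\sum_p \tfrac{\log p}{p^{2}}\,\hat\phi(\cdot)$, which converges absolutely and is $O(1/\log c_{\underline{k},{\rm st},N})$, exactly like the $2p^{-2}$, $-p^{-3}$, $2p^{-4}$ terms you correctly dismissed. Carried through as written, your argument would yield $\hat\phi(0)-\phi(0)$ rather than $\hat\phi(0)-\tfrac12\phi(0)$. The fix is trivial: simply fold $-3p^{-1}$ into the convergent pile along with the other negative powers, and the remainder of your outline matches the paper.
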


\section{Stable vs unstable pseudo-coefficients}\label{Shin}
In this section we compare Shin's results \cite{Shin} with ours. This would explain how 
using a single pseudo-coefficient violates a symmetry, and 
how the defect corresponds to the non-semisimple contributions on the geometric side and non-holomorphic endoscopic lifts on the spectral side. 

Let $(l_1,l_2)=(k_1-1,k_2-2)$ be the Harish-Chandra parameter and let $D^{{\rm large}}_{l_1,l_2}$ be the large discrete series of $G(\R)=GSp_4(\R)$ so that 
$\{D^{{\rm hol}}_{l_1,l_2}, D^{{\rm large}}_{l_1,l_2}\}$ makes up an L-packet of $\prod(G(\R))$ 
(see Section 2.3 of \cite{Wakatsuki1} for $D^{{\rm large}}_{l_1,l_2}$ and \cite{Oda} for an interpretation as $C^\infty$ classical forms).
Note that irreducible components of $D^{{\rm hol}}_{l_1,l_2}|_{Sp_4(\R)}$ and $D^{{\rm large}}_{l_1,l_2}|_{Sp_4(\R)}$ 
form an L-packet of $\prod(Sp_4(\R))$ which consists of four elements. 

Let $\widetilde S_{\underline{k}}(N)$ be the set introduced in Remark \ref{Shin-T}. 

Suppose $\pi\in \widetilde S_{\underline{k}}(N)$ is not a CAP form, and non-endoscopic. Let $\pi=\pi_\infty\otimes\pi_f$. 
By Weissauer \cite{Wei1}, if $\pi_\infty\simeq D_{l_1,l_2}^{\rm hol}$, there exists a cuspidal representatin $\pi'=\pi_\infty'\otimes\pi_f'$ such that $\pi_\infty'\simeq D_{l_1,l_2}^{\rm large}$ and $\pi_f'\simeq \pi_f$. Conversely, if $\pi_\infty\simeq D_{l_1,l_2}^{\rm large}$, there exists a cuspidal representatin $\pi'=\pi_\infty'\otimes\pi_f'$ such that $\pi_\infty'\simeq D_{l_1,l_2}^{\rm hol}$ and $\pi_f'\simeq \pi_f$. 

Now suppose $\pi$ is endoscopic and $\pi_\infty\simeq D_{l_1,l_2}^{\rm hol}$. Then by Roberts \cite{Roberts}, there exists a cuspidal representatin $\pi'=\pi_\infty'\otimes\pi_f'$ such that $\pi_\infty'\simeq D_{l_1,l_2}^{\rm large}$ and $\pi_f'\sim \pi_f$. (Here $\sim$ means weak equivalence, and in fact equivalent outside the ramification of $\pi$.)

However, if $\pi$ is endoscopic and $\pi_\infty\simeq D_{l_1,l_2}^{\rm large}$, there does not exist a cuspidal representation $\pi'$ such that $\pi_\infty'\simeq D_{l_1,l_2}^{\rm large}$ and $\pi_f'\sim \pi_f$. (For example, we cannot construct a holomorphic Siegel cusp form from a pair of two elliptic cusp forms of level 1, but we can construct a cuspidal representation with the infinity type $D_{l_1,l_2}^{\rm large}$.)

Therefore, holomorphic Siegel cusp forms always appear in pairs with cuspidal representations with the infinity type $D_{l_1,l_2}^{\rm large}$, but there are cuspidal representations with the infinity type $D_{l_1,l_2}^{\rm large}$, which do not appear in pairs.
Let $\widetilde S_{\underline{k}}(N)^{{\rm en,large}}$ be the subset of $\widetilde S_{\underline{k}}(N)$ consisting of $\Pi$ such that 
$\Pi$ is endoscopic and $\Pi_\infty$ is isomorphic to the large discrete series $D^{{\rm large}}_{l_1,l_2}$.    
Then the same argument in Section \ref{class-endo} works for the theta lift from $GSO(2,2)$ to $GSp_4$ and we have 
$$\dim \widetilde S_{\underline{k}}(N)^{{\rm en,large}}=O((k_1-k_2+1)(k_1+k_2-3)N^{8+\epsilon}),\quad {\rm as}\ k_1+k_2+N\to \infty.
$$     
Therefore   
$$\ds\frac {\dim \widetilde S_{\underline{k}}(N)^{{\rm en, large}}}{\dim S_{\underline{k}}(N)}=O(((k_1-1)(k_1-2))^{-1}N^{-2+\epsilon}), \ {\rm as}\ k_1+k_2+N\to \infty,
$$
which might be related to the second main term $A, B_1$ of Theorem \ref{main}. 

For $\ast \in \{{\rm hol},{\rm large}\}$ and each $D^{\ast}_{l_1,l_2}$,  
we choose a pseudo coefficient $f^\ast_{\xi_{\underline{k}}}\in C^\infty_c(G(\R))$. 
Put $f^{{\rm tot}}_{\xi_{\underline{k}}}:=f^{{\rm hol}}_{\xi_{\underline{k}}}+f^{{\rm large}}_{\xi_{\underline{k}}}$, where we may  
call it ``stable" pseudo-coefficient. (This is called Euler-Poincar\'e function in \cite{Shin}.) 
Note that if we work on $Sp_4$, we would consider  
$f^{{\rm tot}}_{\xi_{\underline{k}}}:=f^{{\rm hol}}_{\xi_{\underline{k}}}+f^{{\rm large}}_{\xi_{\underline{k}}}+ 
f^{{\rm anti-large}}_{\xi_{\underline{k}}}+f^{{\rm anti-hol}}_{\xi_{\underline{k}}}$. 

As Shin \cite{Shin} did, by using $f^{{\rm tot}}_{\xi_{\underline{k}}}$ in the Arthur-Selberg trace formula, we can avoid 
the non-semisimple contributions. However the trace ${\rm tr}(f^{{\rm tot}}_{\xi_{\underline{k}}})$ collects various 
automorphic forms both holomorphic cusp forms and non-holomorphic cusp forms. 
On the other hand in this paper we used a single pseudo-coefficient $f^{{\rm hol}}_{\xi_{\underline{k}}}$ 
to collect only holomorphic cusp forms. Such a pseudo-coefficient might be called ``unstable".  
Thereby we had to calculate non-semisimple contributions whose 
behaviors have not been understood well.    

Let $\widehat{\mu}_{{\rm Shin}}$ be the measure for $U=K(N)$ introduced in \cite{Shin}. 
As in Section \ref{pm}, we can define, by using a pseudo-coefficient of  
$D^{{\rm large}}_{l_1,l_2}$ (cf. Section 2.3 of \cite{Wakatsuki1}),
the counting measure $\widehat{\mu}^{{\rm en}}_{K(N),\xi_{\underline{k}}, D^{{\rm large}}_{l_1,l_2}}$  on 
$\widetilde S_{\underline{k}}(N)^{{\rm en,large}}$.

It is not difficult to estimate non-holomorphic residual spectrum part as in Section \ref{resi}. 
Then we would have the following: 
for any $f=f_{S}$ in Proposition \ref{semisimple-est}, the difference 
$$\widehat{\mu}_{{\rm Shin}}(\widehat{f})
-2\widehat{\mu}_{K(N),\xi_{\underline{k}},D^{{\rm hol}}_{l_1,l_2}}(\widehat{f})=
\widehat{\mu}^{en}_{K(N),\xi_{\underline{k}},D^{{\rm large}}_{l_1,l_2}}(\widehat{f})+({\rm remainder})
$$ 
would be 
\begin{enumerate}
\item (level-aspect) 
$A+O(p^{a\kappa+b}_{S'}\varphi(N)N^{-3})$, as $N\to\infty$;
\item  (weight-aspect) 
$B_1+B_2+O(\ds\frac{p^{a'\kappa+b'}_{S'}}{(k_1-k_2+1)(k_1-1)(k_2-2)})$, as $k_1+k_2\to\infty$,
\end{enumerate}
where $A, B_1, B_2$ are the second main terms in Theorem \ref{main}. Note also that the remainder in RHS comes from CAP 
representations and residual spectrum, and it would be subsumed into the error term.
 
Hence we expect that the second terms $A, B_1$ correspond to $S_{\underline{k}}(\G(N))^{{\rm en,large}}$. However, $B_2$ is still 
mysterious and it seems interesting to figure out what kind of representations contribute to $B_2$. 
We will confirm the above speculation elsewhere.  

\section{Appendix} 
We prove 
$$\sum_{p^l,\, l\geq 3} \frac{|b_F(p^l)|\log p}{p^{\frac l2}},
$$
converges, without the assumption of Ramanujan conjecture.

Recall the bound on the Satake parameters \cite{LRS}: 
Let $\pi$ be a cuspidal representation of $GL_m$, and let
 $\{\alpha_1(p),...,\alpha_m(p)\}$ be Satake parameters. Then
$|\alpha_i(p)|\leq p^{\frac 12-\frac 1{m^2+1}}.$

Hence $|b_F(p^l)|\leq 5 p^{\frac l2-\frac l{26}}$.
So
$$
\sum_p \sum_{l\geq 52} \frac {|b_F(p^l)|\log p}{p^\frac l2}\ll \sum_p \log p \sum_{l\geq 52} (p^{-\frac 1{26}})^l
\ll 
\sum_p \frac {\log p}{p^2}=O(1).
$$

Hence it is enough to prove that for each $l\geq 3$, the series $\sum_{p} \frac{|b_F(p^l)|\log p}{p^{\frac l2}}$ converges.

Recall the classification of spherical generic unitary representations of $GSp_4(\Bbb Q_p)$:

\begin{enumerate}
\item $L(\mu_1,\mu_2, \eta)$; $\mu_1, \mu_2, \eta$ are unitary characters;
\item $L(\nu^\beta \mu, \nu^{\beta}\mu^{-1}, \nu^{-\beta}\eta)$; $\mu, \eta$ are unitary characters and $\mu^2\ne 1$ and $0<\beta<\frac 12$;
\item $L(\nu^\beta, \mu, \nu^{-\frac {\beta}2}\eta)$; $\mu\ne 1, \nu$ are unitary characters and $0<\beta<1$;
\item $L(\nu^{\beta_1} \mu, \nu^{\beta_2}\mu, \nu^{-\frac {\beta_1+\beta_2}2}\eta)$; $\mu, \eta$ are unitary characters and $\chi^2=1$ and $0<\beta_2\leq \beta_1<1$, $\beta_1+\beta_2<1$.
\end{enumerate}

Hence Satake parameters of $\Pi_p$ are of the form

\begin{enumerate}
\item $S_1: 1, \alpha_{1p}, \alpha_{2p}, \alpha_{1p}^{-1}, \alpha_{2p}^{-1}$, where $|\alpha_{ip}|=1$;
\item $S_2: 1, p^\beta \alpha_p, p^\beta \alpha_p^{-1}, p^{-\beta} \alpha_p, p^{-\beta} \alpha_p^{-1}$, where $|\alpha_p|=1$;
\item $S_3: 1, p^\beta, p^{-\beta}, \alpha_{p}, \alpha_{p}^{-1}$, where $|\alpha_p|=1$;
\item $S_4: 1, p^{\beta_1} \alpha_p, p^{\beta_2} \alpha_p, p^{-\beta_1} \alpha_p^{-1}, p^{-\beta_2} \alpha_p^{-1}$, where $|\alpha_p|=1$.
\end{enumerate}

Clearly, $\sum_{p\in S_1} \frac {|b_F(p^l)|\log p}{p^{\frac l2}}$ converges.

For $S_2$, note that
$$|b_F(p^l)|=|1+(p^{l\beta}+p^{-l\beta})(\alpha_p^l+\alpha_p^{-l})|\leq 2p^{l\beta}+3.
$$
and $|a_F(p)|=|p^{\beta}+p^{-\beta}+\alpha_p+\alpha_p^{-1}|\geq p^\beta-3$. Hence
$$|b_F(p^l)|\ll |a_F(p)|^l\ll |a_F(p)|^2 p^{\frac {l-2}2-\frac {l-2}{17}}.
$$
Therefore,
$$\sum_{p\in S_2} \frac {|b_F(p^l)|\log p}{p^{\frac l2}}\ll \sum_p \frac {|a_F(p)|^2}{p^{1+\frac {l-2}{17}}}.
$$
Since $L(s,\pi\times\pi)$ converges absolutely for $Re(s)>1$, the above series converges.

For $S_3$, note that
$$1+p^{\beta}+p^{-\beta}+|\alpha_p|+|\alpha_p^{-1}|\leq |1+p^{\beta}+p^{-\beta}+\alpha_p+\alpha_p^{-1})|+6.
$$
Hence $|b_F(p^l)|\ll |b_F(p)|^l\ll |b_F(p)|^2 p^{\frac {l-2}2-\frac {l-2}{26}}$. So
$$\sum_{p\in S_3} \frac {|b_F(p^l)|\log p}{p^{\frac l2}}\ll \sum_p \frac {|b_F(p)|^2}{p^{1+\frac {l-2}{26}}}.
$$
Since $L(s,\Pi\times\Pi)$ converges absolutely for $Re(s)>1$, the above series converges.

For $S_4$, note that
$$1+p^{\beta_1}+p^{-\beta_1}+p^{\beta_2}+p^{-\beta_2}\leq |1+p^{\beta_1}\alpha_p+p^{\beta_2}\alpha_p+ p^{-\beta_1}\alpha_p^{-1} +p^{-\beta_2}\alpha_p^{-1}|+6.
$$
Hence $|b_F(p^l)|\ll |b_F(p)|^l$, and it is similar to $S_2$ case.

\end{document}